\newcommand{\m}{{\mathsf{m}}}
\newcommand{\unmarked}{{\mathsf{u}}}
\newcommand{\nd}{{\mathsf{nd}}}
\newcommand{\termaug}{{\mathsf{t}}}
\renewcommand{\fib}{{\mathsf{fib}}}
\newcommand{\M}{\widehat{\Delta}_\m}
\newcommand{\Mfib}{\widehat{\Delta}_{\m,\fib}}
\newcommand{\MS}{(\widehat{\Delta_+})_\m}
\newcommand{\MSfib}{(\widehat{\Delta_+})_{\m,\fib}}
\newcommand{\core}{\mathsf{core}}
\newcommand{\aug}{{\mathsf{aug}}}
\newcommand{\SemiCat}{\mathbf{SemiCat}}
\newcommand{\inv}{\operatorname{inv}}
\newcommand{\anod}{\mathsf{anod}}
\newcommand{\sd}{\operatorname{sd}}
\newcommand{\frames}{\operatorname{N}_{\operatorname{f}}}
\newcommand{\spans}{\operatorname{N}_{\operatorname{s}}}
\newcommand{\ev}{\operatorname{ev}}
\newcommand{\cel}{{\textstyle \int}}
\newcommand{\mcel}{{\textstyle{\int}\vphantom{\int}_{\hspace{-0.2em}\m}}}
\newcommand{\scel}{{\textstyle{\int}\vphantom{\int}_{\hspace{-0.2em}+}}}
\newcommand{\smcel}{{\textstyle{\int}\vphantom{\int}_{\hspace{-0.2em}\m,+}}}
\newcommand{\arrfib}{\twoheadrightarrow}
\newcommand{\arrcof}{\rightarrowtail}
\newcommand{\arrtrivfib}{\mathrel{\mathrlap{\hspace{1pt}\raisebox{6pt}{$\scriptscriptstyle\triv$}}\mathord{\twoheadrightarrow}}}
\newcommand{\arranod}{\mathrel{\mathrlap{\hspace{1pt}\raisebox{6pt}{$\scriptscriptstyle\anod$}}\mathord{\rightarrowtail}}}
\begin{document}

\title[{Constructive homotopy theory of marked semisimplicial sets}]{Constructive homotopy theory \\ of marked semisimplicial sets}

\author{Christian Sattler}

\begin{abstract}
We develop the homotopy theory of semisimplicial sets constructively and without reference to point-set topology to obtain a constructive model for $\omega$-groupoids.
Most of the development is folklore, but for a few results the author is unaware of previously known constructive proofs.
These include the statements that the unit of the free simplicial set adjunction is valued in weak equivalences and that the geometric product and cartesian product of fibrant semisimplicial sets are weakly equivalent.

We then extend the development to marked semisimplicial sets in order to obtain a constructive model for $(\omega, 1)$-categories.
\end{abstract}

\maketitle

\tableofcontents

\section{Introduction}

Semisimplicial sets are a well-known model for spaces~\cite{rourke-sanderson:delta-sets} (there called $\Delta$-sets).
We give a constructive development of their homotopy theory; the development should for most parts be folklore, but we take care to only use constructive and synthetic arguments.
Cornerstones are the facts that semisimplicial sets form a fibration category when restricted to fibrant objects (\cref{semisimplicial-sets-fib-cat}) as well as a cofibration category (\cref{semisimplicial-sets-cofibration-category}), weakly equivalent in several ways to the classical structures on simplicial sets once classical logic is assumed (\cref{summary-equiv}).
A constructive proof that the geometric product is weakly equivalent to the cartesian product on fibrant inputs is recorded in the dependency chain of \cref{cartesian-vs-geometric}.

\medskip

We then extend the development to marked semisimplicial sets, a surprisingly underexplored constructive model for higher categories (meaning $(\omega, 1)$-categories for us).
Marked (semi)simplicial sets are a variation on (semi)simplicial sets where every object comes equipped with a subset of its edges (including all degenerate edges in the simplicial case), called marked, that are to be regarded as invertible in the higher category presented by that object.
Markings of simplicial sets are used in~\cite{lurie:htt}, but there primarily for the purpose of modelling cartesian morphisms in cartesian fibrations.
In the context of presenting invertible cells of a higher category (possibly even $(\omega, \omega)$-category), markings (under the name of thin elements) of simplicial sets are used extensively by Verity~\cite{verity:complicial}.

Simplicial sets supports a nice model of higher categories (meaning $(\omega, 1)$-categories), namely quasicategories~\cite{joyal-quaderns,lurie:htt}, without markings, but this is not the case for semisimplicial sets: since the semisimplicial structure does not include degeneracies, we cannot obtain identity edges as in the quasicategorical setting.
Instead, we have to rely on fillings of one-dimensional horn inclusions to obtain sufficiently many marked edge that can then be used to produce edges that behave like identities.
This idea is explored in~\cite{harpaz:semi-segal} in the context of complete semi-Segal spaces, but spaces there are modelled by simplicial sets (although one would expect a possible replacement by semisimplicial sets), so the homotopy theory of simplicial sets still underlies that development.

This second role played by markings is visible also in another phenomenon.
Recall that quasicategories, fibrant objects in the Joyal model structure, are characterized by lifts against inner horns, whereas fibrations are furthermore required to be isofibration in the higher sense, \ie lift equivalences.
In marked semisimplicial sets, this difference vanishes: the isofibration lifting condition is already necessary for objects to obtain identities as explained above.

Mirroring the unmarked case, we show that marked semisimplicial sets form a fibration category when restricted to fibrant objects (\cref{semisimplicial-sets-fib-cat-marked}) as well as a cofibration category (\cref{semisimplicial-sets-cofibration-category-marked}), weakly equivalent to the Joyal model in simplicial sets once classical logic is assumed (\cref{summary-equiv-marked,summary-equiv-marked-composite}).
We also lift the weak equivalence of the geometric product and the cartesian product on fibrant inputs to the marked case (\cref{cartesian-vs-geometric}).

\medskip

This development of semisimplicial analogues of the homotopy theories of Kan complexes and quasicategories is motivated by the following points.

\begin{itemize}
\item
There are problems with simplicial homotopy theory in a constructive setting.
One usually defines the cofibrations as generated by boundary inclusions $\partial\Delta^n \to \Delta^n$.
Clasically, cofibrations coincide with monomorphisms, implying the highly desirable property that every object is cofibrant.
This is no longer provable constructively: cofibrancy of a simplicial set is equivalent to decidability of degeneracy of its elements, which in general requires using the axiom of excluded middle.
As a consequence, proofs by induction on the skeletal decomposition of a simplicial sets, pervasive in simplicial homotopy theory, no longer work constructively and must be restricted to cofibrant inputs.
Unfortunately, restricting ones attention to only the full subcategory of cofibrant simplicial sets is not a workable solution to these problems: many desirable constuctions cannot be shown to preserve cofibrancy, the core example being exponentials (this problem can already be seen in the full subcategory of sets when the exponent is infinite).
\item
There is an asymmetry in the simplicial model of $\omega$-groupoids with respect to nullary and binary composition.
Whereas binary composition amd associated coherence is modelled up to homotopy using lifting conditions, nullary compositions (\ie identities) and certain coherence conditions involving them are modelled strictly using degeneracies as part of the structure of presheaves over the simplex category.
This asymmetry is rectified in the semisimplicial model, where both cases are modelled up to homotopy.
\item
There are other proposed models of higher categories based on presheaves over a direct category, most prominently opetopic sets~\cite{baez-dolan:opetopes} (which seeks to model $(\omega, \omega)$-categories in general).
So far, the opetopic model, even restricted to the $\omega$-groupoidal and $(\omega, 1)$-categorical case, has not been related to better developed models such as Kan complexes and quasicategories.
As both models use presheaves over a direct category, it is relatively reasy to define change of shape comparison functors: in one direction, we have the semisimplicial subdivision of an opetope (given by the nerve of its category of elements), in the other direction, we have the opetopic subdivision of a simplex (this latter constuction is functorial only semisimplicially, not simplicially).
We expect the cocontinuous extensions of these maps to be inverse to each other up to weak equivalence, but leave this topic to future work.

Another model of $(\omega, \omega)$-categories is weak complicial sets~\cite{verity:complicial}.
The equivalence (using classical logic) of the homotopy theory of marked semisimplicial sets and the homotopy theory of quasicategories should lift analogously to an equivalence between a homotopy theory of stratified semisimplicial sets (having a notion of marked simplex in each positive dimensions) and the homotopy theory of weak complicial sets.
As such, one could hope to use the stratified semisimplicial model as an intermediate step in a conjectured equivalence between the opetopic and the weak complicial worlds.
\item
Semisimplicial sets have been gainfully employed in the theory of fibration categories~\cite{schwede:top-triang-categories}.
The key point there is that the restriction of the simplex category to injections is direct, so one has a notion of fibration category of Reedy fibrant semisimplicial object in a given fibration category.

Recently, Szumi{\l}o~\cite{szumilo:cofibration-categories} has given a direct construction associating to a given fibration category $\mathcal{C}$ a (finitely complete) quasicategory of frames $\frames \mathcal{C}$.
The combinatorial details are quite involved: already a vertex of $\frames \mathcal{C}$ is a Reedy fibrant semisimplicial object whose maps are weak equivalences; in general; the $n$-simplices of $\frames \mathcal{C}$ are generalizations of such semisimplicial objects to $n$-dimensional grids.
Szumi{\l}o observes that the complexity of the construction is due to the need for degeneracies.
If one takes as model for higher categories not quasicategories but fibrant marked semisimplicial sets, then this combinatorial complexity disappears.
The fibrant marked semisimplicial set $\spans \mathcal{C}$ associated to $\mathcal{C}$ has as vertices simply the objects of $\mathcal{C}$, as edges the Reedy fibrant spans whose left leg is a weak equivalence (with an edge being marked if also its right leg is a weak equivalence), and so on.
In fact, Szumi{\l}o's equivalence between fibration categories and finitely complete quasicategories factors via the equivalence of fibrant marked semisimplicial sets and quasicategories of~\cref{summary-equiv-marked} (with finite completeness restrictions on both sides).
This part of the construction also encapsulates the use of classical logic.
The remaining part, the equivalence between fibration categories and finitely complete fibrant marked semisimplicial sets, is combinatorially simpler and constructive (this will be the topic of a companion note).
\end{itemize}

This treatment of (marked) semisimplicial sets was developed and written independently to closely related results obtained in~\cite{henry:semisimplicial} (we thank Thierry Coquand for pointing out this recent preprint).

\section{Preliminaries}

Given a category $\mathcal{C}$, we write $\mathcal{C}^\to$ for its arrow category and $\widehat{\mathcal{C}}$ for the category of presheaves on $\mathcal{C}$.

\subsection{Weak factorization systems}

Let $\mathcal{C}$ be a category.
Given maps $l$ and $r$, we say that $l$ lifts against $r$ from the left (or $r$ lifts against $l$ from the right, the direction will always be clear from the context) if in any commuting square
\[
\xymatrix{
  \bullet
  \ar[r]
  \ar[d]_{l}
&
  \bullet
  \ar[d]^{r}
\\
  \bullet
  \ar[r]
  \ar@{.>}[ur]
&
  \bullet
\rlap{,}}
\]
we have a diagonal filler as indicated.
In that case, we write $l \pitchfork r$.
We extend this notation to $L \pitchfork R$ for classes of maps $L$ and $R$ in the evident way.
Given a class of maps $L$, the right lifting closure $\liftr{L}$ denotes the class of maps that lift against $L$.
We have a dual notion of left lifting closure $\liftl{R}$ for a class of maps $R$.

From a constructive perspective, to avoid instances of the axiom of choice, it is important that the notation $L \pitchfork R$ is read as a lifting operation (that takes elements $l \in L$ and $r \in R$ and horizontal arrows forming a lifting problem as in the above diagram and returns the diagonal filler) rather than existence of a lift in any specific situation as in the above diagram.
We will however continue to use intuitive language; with this remark, it should be clear how to interpret statements constructively.

A \emph{weak factorization system} (wfs) $(L, R)$ on a category $\mathcal{C}$ consists of classes $L, R$ of maps in $\mathcal{C}$ such that:
\begin{itemize}
\item every map $f$ in $\mathcal{C}$ factors as $f = r l$ with $l \in L$ and $r \in R$,
\item $L \pitchfork R$,
\item $L$ and $R$ are closed under retracts.
\end{itemize}
One may equivalently replace the second and third conditions by the requiring that $L = \liftl{R}$ and $R = \liftr{L}$.

A weak factorization system $(L, R)$ is \emph{generated} by a set of maps $I$ if $R = \liftr{I}$.
In that case, we call $L$ the \emph{weak saturation} of $I$.
For cocomplete $\mathcal{C}$, the small object argument provides a way to generate a weak factorization system on a given set of maps $I$ provided that the domain of each map in $I$ is $\omega$-compact.
The class $L$ is then given by the retract closure of \emph{relative $I$-cell complexes}, $\omega$-compositions of pushouts of coproducts of maps in $I$.

\subsection{Cofibration categories}

We work with the notion cofibration category as defined in~\cite{szumilo:cofibration-categories}.
A \emph{cofibration category} is a category $\mathcal{E}$ with subcategories of \emph{weak equivalences} and \emph{cofibrations} containing all isomorphisms that satisfy the following properties:
\begin{enumerate}[label=(\arabic*)]
\item \label{cofib-cat:initial}
there is an initial object and every object $A$ is cofibrant, \ie $0 \to A$ is a cofibration,
\item \label{cofib-cat:pushout}
pushouts along cofibrations exist and cofibrations are stable under pushout,
\item \label{cofib-cat:pushout-triv-cofib}
trivial cofibrations, \ie cofibrations that are weak equivalences, are stable under pushout,
\item \label{cofib-cat:factorization}
every map factors as a cofibration followed by a weak equivalence,
\item \label{cofib-cat:2-out-of-6}
weak equivalences are closed under 2-out-of-6.
\end{enumerate}
An \emph{exact functor} between cofibration categories is a functor of underlying categories that preserves weak equivalences, cofibrations, initial objects, and pushouts along cofibrations.
An exact functor is called a \emph{weak equivalence} if it induces an equivalence upon taking homotopy categories.
We refer to~\cite{radulescu:cofibration-categories} for an overview of the theory of cofibration categories.

As shown in~\cite{szumilo:cofibration-categories,kapulkin-szumilo:quasicat-of-cof-cat}, cofibration categories provide a model for finitely cocomplete homotopy theories (which we take as a synonym for higher categories); the higher category presented by a cofibration category in this way coincides the one presented by the underlying category with weak equivalences.

We have an evident dual notion of fibration category, modelling finitely complete higher categories.

\subsection{Leibniz construction}

We will frequently make use of the theory of Leibniz constructions as discussed in~\cite[Section~4]{riehl-verity:reedy}.
This is also known as the Joyal-Tierney calculus, or the calculus of pushout (monoidal) products and pullback (monoidal) exponentials.
Explicitly, the \emph{Leibniz construction} of a functor $F \co \mathcal{C} \times \mathcal{D} \to \mathcal{E}$ with $\mathcal{E}$ finitely cocomplete is the functor $\widehat{F} \co \mathcal{C}^\to \times \mathcal{D}^\to \to \mathcal{E}^\to$ that sends $f \co A \to B$ in $\mathcal{C}$ and $g \co C \to D$ in $\mathcal{D}$ to the induced map from the pushout to the bottom right object in the diagram
\[
\xymatrix{
  F(A, C)
  \ar[r]^-{F(A, g)}
  \ar[d]_{F(f, C)}
&
  F(A, D)
  \ar[d]^{F(f, D)}
\\
  F(B, C)
  \ar[r]^-{F(B, g}
&
  F(B, D)
\rlap{,}}
\]
\ie the map
\[
\widehat{F}(f, g) : F(A, D) +_{F(A, C)} F(B, C) \to F(B, D)
.\]
For a (monoidal) product $\otimes$ on a finitely cocomplete category $\mathcal{C}$, one obtains the associated Leibniz or pushout (monoidal) product $\widehat{\otimes}$.
For a (monoidal) exponential $\exp \co \mathcal{C}^\op \times \mathcal{C} \to \mathcal{C}$ with $\mathcal{C}$ finitely complete, one would apply the above construction to the opposite functor $\exp \co \mathcal{C} \times \mathcal{C}^\op \to \mathcal{C}^\op$ to obtain the associated Leibniz or pullback (monoidal) exponential $\widehat{\exp}$.
The above binary version of the Leibniz construction has $n$-ary analogues for finite $n$, for example the nullary Leibniz construction takes an object $X \in \mathcal{E}$ and returns the arrow $0 \to X$.
For finitely cocomplete $\mathcal{E}$, this provides a way of lifting a (symmetric) monoidal structure on $\mathcal{E}$ to an associated Leibniz (symmetric) monoidal structure on $\mathcal{E}^\to$; if $\mathcal{E}$ is additionally finitely complete, then the (right or left) closure of the original monoidal structure lifts to the Leibniz monoidal structure.

The Leibniz construction enjoys various formal properties such as preservation of argumentwise cocontinuity and preservation of adjunctions.
In particular, the Leibniz construction of an argumentwise cocontinuous functor preserves weak saturation in the sense of weak factorization systems in each argument.
Properties such as these will be made use of frequently in our development.

\subsection{Kan complexes}

We refer to~\cite{goerss-jardine} for a modern development of the homotopy theory of simplicial sets.
We will not assume any knowledge about Kan complexes except in the classical parts of~\cref{subsection:comparison-kan-complexes}.

\subsection{Quasicategories}

Our development of the homotopy theory of marked semisimplicial sets follows quite closely the development of the homotopy theory of quasicategories by Joyal~\cite{joyal-quaderns}.
We refer to~\cite{lurie:htt} for a thorough development of the theory of quasicategories, there called $\infty$-categories, but will not assume any knowledge about quasicategories in our development (also not wishing to bring in any hidden sources of non-constructivity) except in the classical parts of~\cref{subsection:comparison-quasicategories}.

\section{Semisimplicial sets}

\subsection{Basic notions}

\subsubsection{Categories and semicategories}

We write $\Cat$ for the category of categories.
We write $\SemiCat$ for the category of semicategories.
The forgetful functor $\Cat \to \SemiCat$ has both a left and right adjoints, the free and cofree category on a semicategory functor.
The free category on a semicategory $\mathcal{C}$ has the same objects and as morphisms the disjoint union of the morphisms of $\mathcal{C}$ together with fresh identities for every object.
The cofree category on semicategory $\mathcal{C}$ has as objects pairs $(A, u)$ where $A \in \mathcal{C}$ and $u$ is an idempotent on $A$ and as morphisms $(A, u) \to (B, v)$ maps $f \co A \to B$ such that $fu = f = vf$; the identity on $(A, u)$ is given by $u$ and composition is inherited from $\mathcal{C}$.

Let for this paragraph $L$ denote the free category on a semicategory functor.
Given a semicategory $\mathcal{C}$ and an object $X \in \mathcal{C}$, note that the slice $L\mathcal{C}_{/X}$ is freely generated by a semicategory.
This semicategory is called the \emph{categorical slice} $\mathcal{C}_{/X}$ (to distinguish it from the ordinary semicategorical slice); its formation is functorial in $X$.
One can compute that $\mathcal{C}_{/X}$ is the semicategorical slice joined with $\top$.
It can also be defined via the pullback
\[
\xymatrix{
  \mathcal{C}_{/X}
  \ar[r]
  \ar[d]
  \pullbackcorner{dr}
&
  U(L\mathcal{C}_{/LX})
  \ar[d]
\\
  \mathcal{C}
  \ar[r]
&
  UL\mathcal{C}
\rlap{.}}
\]
The \emph{categorical coslice} $\mathcal{C}_{\backslash X}$ is defined dually.

There is a \emph{geometric symmetric monoidal structure} $(\top, \otimes)$ on $\SemiCat$ where $\top$ is the semicategory with one object and no morphisms and $A \otimes B$ is the semicategory with objects pairs of objects of $A$ and $B$ and a morphism $(X, Y) \to (X', Y')$ given by either a pair of morphisms $X \to X'$ and $Y \to Y'$, just a morphism $X' \to X$, or just a morphism $Y' \to Y$, with composition defined in the evident manner.
The free category functors lifts to a symmetric monoidal functor between the geometric symmetric monoidal structure on $\SemiCat$ and the cartesian monoidal structure on $\Cat$.

\subsubsection{Semisimplicial sets}

We write $\Delta$ for the simplex category.
The augmented simplex category is denoted $\Delta_\aug$.
Both are Reedy categories.
Given a Reedy category $\mathcal{R}$, we write $\mathcal{R}_+$ and $\mathcal{R}_-$ for the direct and inverse subcategories; in diagrams, we will write arrows in $\mathcal{R}_+$ as $\arrcof$ and arrows in $\mathcal{R}_-$ as $\arrfib$ (not to be confused with cofibrations or fibrations in a different context).
In particular, we write $\Delta_+$ ($\Delta_{\aug,+}$) for the (augmented) semisimplex category, the restriction of $\Delta$ ($\Delta_\aug)$ to monomorphisms.
Note that our notation differs from another common convention, which instead denotes $\Delta_+$ the augmented simplex category.

\emph{Semisimplical sets} $\widehat{\Delta_+}$ are given by presheaves over semisimplices.
Given a semisimplicial set $X$, we adopt some shorthand notation for low-dimensional elements with specified boundary.
For $a, b \in X_0$, we write $X_1(a, b)$ for the set of edges $f$ from $a$ to $b$, \ie $f d_1 = a$ and $f d_0 = b$.
For $a, b, c \in X_1$ and $f \in X_1(a, b), g \in X_1(b, c), h \in X_1(a, c)$, we write $X_2(f, g, h)$ for the set of triangles $\sigma$ with boundary $f, g, h$, \ie $\sigma d_2 = f$, $\sigma d_0 = g$, and $\sigma d_1 = h$.

The inclusion $\Delta_+ \to \Delta$ gives rise to a triple of adjoint functors: every simplicial set has an underlying semisimplicial set obtained by forgetting degeneracies and every semisimplicial set induces a free and a cofree simplicial set on it.

\medskip

Recall the \emph{geometric symmetric monoidal structure} $(\top, \otimes)$ on semisimplicial sets.
It is induced by the geometric symmetric monoidal structure of semicategories, but can also be described as the cartesian monoidal structure of simplicial sets with results restricted to non-degenerate elements.
Explicitly, we have $\top = y[0]$ and an $n$-simplex of $A \otimes B$ consists of a pair of jointly monic epimorphisms $p \co [n] \to [a]$ and $q \co [n] \to [b]$ in $\Delta_+$ together with $a \in A_a$ and $b \in B_b$; the functorial action of a monomorphism $f \co [n'] \to [n]$ on this data is given by Reedy factorizing $p f = f_0 p'$ and $q f = f_1 q'$ and returning the pair of jointly monic epimorphisms $(p', q')$ together with $(a[f_0], b[f_1])$.
The geometric symmetric monoidal structure is closed; we denote its internal hom by $\hom(-, -)$.
The free simplicial set functor lifts to a symmetric monoidal functor.

\begin{lemma} \label{tensor-as-right-adjoint}
The geometric product functor is parametric right adjoint.
\end{lemma}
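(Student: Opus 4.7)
The plan is to prove parametric right adjointness one variable at a time. By symmetry of $\otimes$, it suffices to show that for each $A \in \widehat{\Delta_+}$, the lifted functor
\[
\widehat{\Delta_+} \to \widehat{\Delta_+}/(A \otimes 1), \quad Y \mapsto (A \otimes Y \to A \otimes 1),
\]
is a right adjoint, where $1$ denotes the terminal semisimplicial set.

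The idea is to exploit the standard equivalence $\widehat{\Delta_+}/(A \otimes 1) \simeq \widehat{\int (A \otimes 1)}$ between the slice and presheaves on the category of elements. Under this equivalence, the lifted functor above will be realized as a restriction functor $\Phi^*$ along an explicit $\Phi \co \int (A \otimes 1) \to \Delta_+$. Since restriction along any functor between small categories admits a left adjoint, given by left Kan extension, this closes the argument.

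To define $\Phi$, send an object $([n], (p, q, a))$---where $(p \co [n] \to [k],\, q \co [n] \to [\ell])$ is the jointly monic epi pair and $a \in A_k$---to $[\ell]$. A morphism is given by a mono $f \co [n'] \hookrightarrow [n]$ whose face action sends $(p, q, a)$ to the chosen datum over $[n']$; Reedy-factor $q f = f_1 q'$ (with $f_1$ a mono and $q'$ an epi in $\Delta$) and put $\Phi(f) := f_1$. Functoriality follows from uniqueness of Reedy factorizations applied to the composite.

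Finally, one checks that $Y \mapsto (A \otimes Y \to A \otimes 1)$ corresponds under the equivalence to $\Phi^*$: the fiber of $(A \otimes Y)_n \to (A \otimes 1)_n$ over $(p, q, a)$ is exactly the set of additional data $y \in Y_\ell$, matching $(\Phi^* Y)([n], (p, q, a))$; compatibility with the face actions is a direct unfolding of the formula for the tensor given just before the statement. The only real work is this bookkeeping with Reedy factorizations; no conceptual difficulty arises.
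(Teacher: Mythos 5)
Your category-of-elements mechanism (slice $\simeq$ presheaves on $\int(A\otimes 1)$, identify the sliced functor, take a left Kan extension as left adjoint) is sound, and the Reedy-factorization bookkeeping in the one-variable case is correct. The gap is the opening reduction. The lemma is about the two-variable functor $\otimes \co \widehat{\Delta_+} \times \widehat{\Delta_+} \to \widehat{\Delta_+}$, and parametric right adjointness means that the induced functor $(A,B) \mapsto (A \otimes B \to 1 \otimes 1)$ into $(\widehat{\Delta_+})_{/1\otimes 1}$ is a right adjoint; this joint form is what the paper proves and what is used downstream, e.g.\ in \cref{cof-tensor} one needs the square with corners $A\otimes C$, $A\otimes D$, $B\otimes C$, $B\otimes D$ to be a pullback for arbitrary $f \co A \to B$ and $g \co C \to D$, with both variables moving at once. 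Being a parametric right adjoint in each variable separately, even for a symmetric functor, does not imply the joint property, so ``by symmetry it suffices to fix $A$'' is not a valid step: separate preservation of connected limits does not give joint preservation, because the diagonal of a connected index category need not be initial in its square (this fails already for cospan shapes). Concretely, on sets the symmetric bifunctor $G(X,Y) = \{(x_0,x_1,y_0,y_1) \in X^2 \times Y^2 : x_0 = x_1 \text{ or } y_0 = y_1\}$ is a parametric right adjoint in each variable separately (for fixed $X$ it is $\coprod_{x \in X} Y^2 \sqcup \coprod_{x_0 \neq x_1} Y$, and symmetrically in the other variable), but not jointly: it fails to preserve the pullback $(2,2) \cong (2,1) \times_{(1,1)} (1,2)$ in $\Set \times \Set$, since $|G(2,2)| = 12$ while $|G(2,1) \times_{G(1,1)} G(1,2)| = 16$.

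The repair is small and lands you exactly on the paper's proof: run your construction jointly over $1 \otimes 1$ instead of $A \otimes 1$. An element of $(1\otimes 1)_n$ is a jointly monic pair of epis $[n] \to [k]$, $[n] \to [\ell]$; send it to the pair $([k],[\ell])$ and extend to morphisms of $\int(1\otimes 1)$ by performing both Reedy factorizations, giving $S \co \int(1\otimes 1) \to \widehat{\Delta_+} \times \widehat{\Delta_+}$. Your fiber computation, carried out in both components, shows that the fiber of $(A\otimes B)_n \to (1\otimes 1)_n$ over such an element is $A_k \times B_\ell$, naturally in the element and in $(A,B)$; equivalently, $\widehat{\Delta_+}_{/(1\otimes 1)}((y[n],x), A\otimes B) \simeq (\widehat{\Delta_+}\times\widehat{\Delta_+})(S([n],x),(A,B))$, so the sliced tensor is right adjoint to the cocontinuous extension of $S$. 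Note that jointly this is no longer literally a restriction functor (the fiber is a product of two sets, not a single evaluation), so one invokes the nerve--realization adjunction for $S$ rather than restriction along a functor of index categories; with that adjustment, the rest of your argument goes through verbatim.
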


\begin{proof}
This holds by construction.
Let
\[
S \co \textstyle\int (1 \otimes 1) \to \widehat{\Delta_+} \times \widehat{\Delta_+}
\]
that sends an element of $1 \otimes 1$, corresponding to a jointly monic pair of epimorphisms $[n] \to [a]$ and $[n] \to [b]$, to $([a], [b])$.
By construction of the geometric product, we have an isomorphism
\[
\widehat{\Delta_+}_{/(1 \otimes 1)}((y[n], f), A \otimes B) \simeq (\widehat{\Delta_+} \times \widehat{\Delta_+})(S([n], f), (A, B))
.\]
natural in $([n], f) \in \int (1 \otimes 1)$ and $A, B \in \widehat{\Delta_+}$.
This shows that $\otimes$ factors as
\[
\xymatrix{
  \widehat{\Delta_+} \times \widehat{\Delta_+}
  \ar[r]
&
  (\widehat{\Delta_+})_{/1 \otimes 1}
  \ar[r]
&
  \widehat{\Delta_+}
}
\]
where the first arrow is right adjoint to the cocontinuous extension of $S$ and the second arrow is the forgetful functor.
\end{proof}

\medskip

Recall the simplicial nerve $N$, arising from the fully faithful inclusion of $\Delta$ into categories $\Cat$.
There is also a semisimplicial nerve $N$, arising from the fully faithful inclusion of $\Delta_+$ into semicategories $\SemiCat$.
Observe that the diagram of inclusions
\[
\xymatrix{
  \Delta_+
  \ar[r]
  \ar[d]
&
  \SemiCat
  \ar[d]
\\
  \Delta
  \ar[r]
&
  \Cat
}
\]
commutes strictly.
Using left Kan extension, we obtain a diagram
\[
\xymatrix{
  \widehat{\Delta_+}
  \ar[r]
  \ar[d]
&
  \SemiCat
  \ar[d]
\\
  \widehat{\Delta}
  \ar[r]
&
  \Cat
}
\]
commuting up to canonical natural isomorphism.
The bottom functor is a monoidal functor between cartesian symmetric monoidal structures, and the top functor lifts to a symmetric monoidal functor between the geometric symmetric monoidal structures.
By adjointness, we obtain a diagram
\[
\xymatrix{
  \Cat
  \ar[r]^{N}
  \ar[d]
&
  \widehat{\Delta}
  \ar[d]
\\
  \SemiCat
  \ar[r]^-{N}
&
  \widehat{\Delta_+}
\rlap{,}}
\]
which commutes up to canonical natural isomorphism; the vertical maps denote forgetful functors.
By adjointness, the bottom functor is lax symmetric monoidal; it is in fact symmetric monoidal.
As a horizontal and vertical transpose of the penultimate diagram, the diagram
\[
\xymatrix{
  \SemiCat
  \ar[r]^-{N}
  \ar[d]
&
  \widehat{\Delta_+}
  \ar[d]
\\
  \Cat
  \ar[r]^-{N}
&
  \widehat{\Delta}
}
\]
commutes up to canonical natural isomorphism.

\subsubsection{Join and slice}

Recall the monoidal structure of $\Delta_\aug$ given by the join of finite total posets.
By Day convolution, it induces a closed monoidal structure on augmented simplicial sets.
Via the forgetful functor induced by $\Delta \to \Delta_\aug$ and its right adjoint, it induces the \emph{join monoidal structure} $(0, \star)$ on simplicial sets.
Although not closed, it preserves connected colimits in each argument.
Given a simplicial set $A$, the functors
\[
- \star A, A \star - \co \widehat{\Delta} \to \widehat{\Delta}_{\backslash A}
\]
admit right adjoints.
Their respective action on an object $p \co A \to X$ is written $X_{/p}$ and $X_{\backslash p}$.

Note that the monoidal structure of $\Delta_\aug$ descends to $\Delta_{\aug,+}$.
Thus, we analogously obtain a join monoidal structure on semisimplicial sets with analogous properties.
It can equivalently be described as induced by the join monoidal structure of free simplicial sets with results restricted to non-degenerate elements.
We use the notation introduced above also in that setting.
By construction, the forgetful functor from simplicial to semisimplicial sets is monoidal with respect to the join.
The induced oplax monoidal structure of the free simplicial set functor is a monoidal structure.

\begin{lemma} \label{join-as-right-adjoint}
The join functor is a parametric right adjoint.
Concretely, it is the composite
\[
\xymatrix{
  (\widehat{\Delta_+})_{/(1 + 1)}
  \ar[r]
&
  (\widehat{\Delta_+})_{/(1 \star 1)}
  \ar[r]
&
  \widehat{\Delta_+}
}
\]
where the first map is pushforward (right adjoint to pullback) along $1 + 1 \to 1 \star 1$ and the second map is the forgetful functor.
\end{lemma}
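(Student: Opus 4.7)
The plan is to follow the strategy of the proof of \cref{tensor-as-right-adjoint} and unwind the Day-convolution description of the join. First, one identifies $1 \star 1$: an $n$-simplex of $A \star B$ is by construction a decomposition $[n] = [k] \star [l]$ in $\Delta_{\aug,+}$ (allowing $k = -1$ or $l = -1$) together with $\alpha \in A_k$ and $\beta \in B_l$, with the convention $A_{-1} = B_{-1} = \ast$. Specializing $A = B = 1$ shows that $(1 \star 1)_n$ is precisely the set of such decompositions, and the canonical map $f \co 1 + 1 \to 1 \star 1$ classifies the two ``degenerate'' decompositions in which one side is $[-1]$.

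Next, I identify $(\widehat{\Delta_+})_{/(1+1)} \simeq \widehat{\Delta_+} \times \widehat{\Delta_+}$ via $(A, B) \mapsto (A + B \to 1 + 1)$. For each $n$-simplex $\sigma$ of $1 \star 1$ corresponding to a decomposition $[n] = [k] \star [l]$, I compute the pullback
\[
f^{*}(y[n], \sigma) \simeq y[k] + y[l]
\]
in $(\widehat{\Delta_+})_{/(1+1)}$ (with $y[-1] := 0$) by observing that an $m$-simplex on the left is a monomorphism $[m] \to [n]$ factoring through either $[k] \hookrightarrow [n]$ or $[l] \hookrightarrow [n]$.

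Writing $G$ for pushforward along $f$ and applying the adjunction $f^{*} \dashv G$ together with Yoneda then gives
\[
G(A, B)(y[n], \sigma) \simeq (\widehat{\Delta_+})_{/(1+1)}\bigl(y[k] + y[l], (A, B)\bigr) \simeq A_k \times B_l,
\]
so composition with the forgetful functor to $\widehat{\Delta_+}$ sends $(A, B)$ to the semisimplicial set whose $n$-simplices are $\sum_{k + l + 1 = n,\ k, l \ge -1} A_k \times B_l$, matching $A \star B$ on the nose; functoriality in $[n] \in \Delta_+$ then follows by naturality of the identifications above. The main bookkeeping obstacle is the augmented index $[-1]$: one must check that the summands with $k = -1$ or $l = -1$ correspond exactly to the two copies of $1$ in the source of $f$, so that they are contributed by the Hom-sets $(\widehat{\Delta_+})_{/(1+1)}(0 + y[n], (A, B)) \simeq B_n$ and symmetrically.
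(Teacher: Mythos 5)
Your proof is correct and is essentially the paper's argument in a different packaging: the paper verifies that $A \star B \to 1 \star 1$ has the universal property of the pushforward (terminality among objects $C \to 1 \star 1$ equipped with maps $\iota_0^* C \to A$ and $\iota_1^* C \to B$), reducing to representable $C$, whereas you compute the pushforward fiberwise on representables via $f^*(y[n], \sigma) \simeq y[k] + y[l]$ and match the result with the explicit simplex formula for the join. The key computation, namely the pullback of a representable over $1 \star 1$ along $1 + 1 \to 1 \star 1$, is identical in both (it appears in the paper as $\iota_0^* \Delta^n \simeq \Delta^a$ and $\iota_1^* \Delta^n \simeq \Delta^b$), including the bookkeeping at the augmented index, so no gap remains.
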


\begin{proof}
Same as in simplicial sets.

Denote $\iota_0, \iota_1 \co 1 \to 1 \star 1$ the evident inclusions.
Given semisimplicial sets $A$ and $B$, let us check that $A \star B \to 1 \star 1$ is terminal in the category of maps $C \to 1 \star 1$ with maps $h_0 \co \iota_0^* C \to A$ and $h_1 \co \iota_1^* C \to B$.
Given such an object, we have to show there is a unique map $h \co C \to A \star B$ over $1 \star 1$ making the triangles
\begin{align*}
\xymatrix@C-1em{
  \iota_0^* C
  \ar[rr]^-{\iota_0^* h}
  \ar[dr]
&&
  \iota_0^*(A \star B)
  \ar[dl]^{\simeq}
\\&
  A
\rlap{,}}
&&
\xymatrix@C-1em{
  \iota_1^* C
  \ar[rr]^-{\iota_1^* h}
  \ar[dr]
&&
  \iota_1^*(A \star B)
  \ar[dl]^{\simeq}
\\&
  B
}
\end{align*}
commute.
By cocontinuity, it will suffice to look at the representable case $C = \Delta^n$.
The given map $C \to 1 \star 1$ corresponds to a decomposition $[n] \simeq [a] \star [b]$; denote the associated inclusions $f \co [a] \to [n]$ and $g \co [b] \to [n]$.
Then $\iota_0^* C \simeq \Delta^a$ and $\iota_1^* C \simeq \Delta^b$.
By naturality, we are forced to let $h$ be the element of $A \star B$ given by $[n] \simeq [a] \star [b]$ with $h_0 \in A_a$ and $h_1 \in B_b$.
\end{proof}

\begin{remark}
The above proof should be reduced to some abstract nonsense of Day convolution monoidal products.
\end{remark}

\subsection{Weak factorization systems}

Let us recall some terminology in semisimplicial sets.

We write $i^n \co \partial\Delta^n \to \Delta^n$ for the \emph{boundary inclusion} of dimension $n$.
The \emph{cofibrations} are the levelwise decidable monomorphisms.
In diagrams, arrows that denote cofibrations will be written as $\arrcof$.
Cofibrations are generated by boundary inclusions under weak saturation, concretely write as $\omega$-compositions of pushouts of coproducts of boundary inclusions.
Note that every object is cofibrant.

\begin{lemma} \label{cof-tensor}
Cofibrations are closed under the Leibniz geometric symmetric monoidal structure.
\end{lemma}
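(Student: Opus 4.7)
The plan is to reduce the statement to a single combinatorial check on the generating boundary inclusions. Recall that the class of cofibrations is the weak saturation of the set $\{i^n\}_{n \geq 0}$. Since the geometric product $\otimes$ is closed by construction (with internal hom $\hom$), it is cocontinuous in each argument. The formal property of the Leibniz construction recalled in the preliminaries then tells us that $\widehat{\otimes}$ preserves weak saturations in each argument. Hence it suffices to show that $i^m \mathrel{\widehat{\otimes}} i^n$ is a cofibration for every pair $m, n \geq 0$.

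To verify this, I would unfold the explicit description of the geometric product. A $k$-simplex of $\Delta^m \otimes \Delta^n$ consists of a jointly monic pair of epimorphisms $p \co [k] \twoheadrightarrow [a]$ and $q \co [k] \twoheadrightarrow [b]$ in $\Delta_+$ together with injections $\alpha \co [a] \arrcof [m]$ (i.e., $\alpha \in \Delta^m_a$) and $\beta \co [b] \arrcof [n]$ (i.e., $\beta \in \Delta^n_b$). Under this identification, the subpresheaf $\partial\Delta^m \otimes \Delta^n \hookrightarrow \Delta^m \otimes \Delta^n$ is cut out by the decidable property ``$\alpha$ is not surjective'', and $\Delta^m \otimes \partial\Delta^n$ by ``$\beta$ is not surjective''; their pullback inside $\Delta^m \otimes \Delta^n$ is exactly $\partial\Delta^m \otimes \partial\Delta^n$, picked out by the conjunction of both properties. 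Each condition is decidable because surjectivity of a map between finite ordinals is decidable, so each of the three subobjects is levelwise decidable.

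It then follows from general presheaf-topos reasoning that the pushout $(\partial\Delta^m \otimes \Delta^n) +_{\partial\Delta^m \otimes \partial\Delta^n} (\Delta^m \otimes \partial\Delta^n)$ coincides with the union of the two larger subobjects inside $\Delta^m \otimes \Delta^n$, and a union of two complemented subpresheaves is again complemented. Hence $i^m \mathrel{\widehat{\otimes}} i^n$ is a levelwise decidable monomorphism, i.e., a cofibration. The only mildly delicate ingredient is the identification of the pullback of the two subobjects with $\partial\Delta^m \otimes \partial\Delta^n$; this follows directly from the explicit formula for $\otimes$ quoted above (and could be phrased as a lemma saying that $\otimes$ preserves intersections of subobjects in each variable), so I do not expect any real obstacle.
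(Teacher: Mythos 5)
Your argument is correct, and its essential content — identifying the Leibniz product with the inclusion of a union of two levelwise decidable subobjects, using that their intersection is the expected tensor of the smaller pieces — is the same as in the paper's proof; the packaging, however, is genuinely different. The paper avoids generators altogether: for arbitrary cofibrations $f \colon A \to B$ and $g \colon C \to D$ it notes that the square on $A \otimes C$, $A \otimes D$, $B \otimes C$, $B \otimes D$ is a pullback because the geometric product is a parametric right adjoint (\cref{tensor-as-right-adjoint}), so $f \mathbin{\widehat{\otimes}} g$ is the inclusion of the union of $f \otimes D$ and $B \otimes g$ into $B \otimes D$, and it only remains to observe that $f \otimes D$ is a cofibration, which is clear from the explicit formula. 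You instead reduce to the generating boundary inclusions via cocontinuity of $\otimes$ in each variable (legitimate, since the structure is closed and the Leibniz construction preserves weak saturation; note you also implicitly use that levelwise decidable monomorphisms form a weakly saturated class, i.e.\ that they coincide with the weak saturation of the $i^n$), and then verify the pullback identification $\partial\Delta^m \otimes \partial\Delta^n = (\partial\Delta^m \otimes \Delta^n) \cap (\Delta^m \otimes \partial\Delta^n)$ by direct combinatorics before invoking effectivity of unions. Both routes are sound: the paper's buys the pullback square for free from parametric right adjointness and needs no appeal to the generating set, while yours needs nothing beyond the explicit description of $\otimes$; in fact your computation works verbatim for arbitrary cofibrations (replace ``$\alpha$ not surjective'' by the decidable membership condition), so the detour through weak saturation could be dropped. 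One small omission: the statement also covers the nullary Leibniz construction, i.e.\ that the unit map $0 \to \top$ is a cofibration — trivial, since it is exactly $i^0$, but it should be said.
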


\begin{proof}
The unit $0 \to \top$ of the Leibniz geometric monoidal structure is clearly a cofibration.
It remains to discuss the binary Leibniz monoidal product.
Here, we could appeal to weak saturation and explicitly exhibit the Leibniz geometric product of two boundary inclusions as a relative cell complex of boundary inclusions in the same manner as in simplicial sets.
We will instead sketch a less generator-focussed approach.

Given maps $f \co A \to B$ and $g \co C \to D$, the square
\[
\xymatrix{
  A \otimes C
  \ar[r]
  \ar[d]
&
  A \otimes D
  \ar[d]
\\
  B \otimes C
  \ar[r]
&
  B \otimes D
}
\]
is a pullback by \cref{tensor-as-right-adjoint}.
If $f$ and $g$ are cofibrations, it follows that $f \otimes g$ coincides with the binary union of $f \otimes D$ and $B \otimes g$ over $B \otimes D$.
Binary union clearly preserves cofibrations, so without loss of generality it remains to show that $f \otimes D$ is a cofibration if $f$ is.
This is clear from the definition of the geometric product.
\end{proof}

\begin{lemma} \label{cof-join}
Cofibrations are closed under the Leibniz join monoidal structure.
\end{lemma}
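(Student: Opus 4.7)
The plan is to follow the same strategy as the proof of \cref{cof-tensor}, substituting \cref{join-as-right-adjoint} for \cref{tensor-as-right-adjoint}. The nullary piece of the Leibniz join structure is the map from the initial object to the monoidal unit of the join, but the unit of the join is itself the initial semisimplicial set, so this map is an identity and trivially a cofibration.

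For the binary piece, given cofibrations $f \co A \to B$ and $g \co C \to D$, \cref{join-as-right-adjoint} tells us that the join is a parametric right adjoint and hence preserves pullbacks. Consequently, the square
\[
\xymatrix{
  A \star C
  \ar[r]
  \ar[d]
&
  A \star D
  \ar[d]
\\
  B \star C
  \ar[r]
&
  B \star D
}
\]
is a pullback. Since $f$ and $g$ are monomorphisms, the Leibniz join then identifies with the inclusion of the union $(A \star D) \cup (B \star C)$ into $B \star D$. Binary unions of decidable monomorphisms are again decidable monomorphisms, so it suffices to show that $f \star D$ is a cofibration when $f$ is, which is clear from the explicit semisimplicial description of the join in each dimension as a coproduct of $f_n$, an identity on $D_n$, and the products $f_i \times \mathrm{id}_{D_j}$ for $i + 1 + j = n$.

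The only subtle point, exactly as in \cref{cof-tensor}, is the verification that $\star$ preserves pullbacks: one uses that the factorizing right adjoint $\widehat{\Delta_+} \times \widehat{\Delta_+} \to (\widehat{\Delta_+})_{/(1 \star 1)}$ preserves all limits and that the forgetful functor out of the slice preserves connected limits. Since this runs in parallel to the corresponding step in the tensor case, I anticipate no genuine obstacle beyond redoing the same formal manipulations.
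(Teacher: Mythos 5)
Your proof is correct, but it takes a genuinely different route from the paper's. The paper's entire proof is the generator computation: the Leibniz join of a boundary inclusion with a boundary inclusion is again a boundary inclusion (the Leibniz join of $i^a$ and $i^b$ is $i^{a+b+1}$), and since cofibrations are the weak saturation of boundary inclusions and the Leibniz join---computed in augmented semisimplicial sets, where the join is a Day convolution and hence cocontinuous in each argument---preserves weak saturation in each argument, the claim follows at once; this is the same pattern as the proof of \cref{anodyne-join-cof}. You instead transplant the ``less generator-focussed'' argument that the paper uses for \cref{cof-tensor}: parametric right adjointness of the join (\cref{join-as-right-adjoint}) makes the relevant square a pullback, so for cofibrations $f$ and $g$ the Leibniz join is the inclusion of the union of the decidable subobjects $A \star D$ and $B \star C$ of $B \star D$, and one checks directly from the levelwise formula $(A \star D)_n = A_n + D_n + \coprod_{i+1+j=n} A_i \times D_j$ that $f \star D$ (and, symmetrically, $B \star g$) is a levelwise decidable monomorphism. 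Both arguments are constructive; the paper's is a one-liner but leans on the reduction to generators, whereas yours avoids weak saturation altogether at the price of re-verifying pullback preservation and the explicit description of the join---exactly the trade-off the paper itself remarks on in the proof of \cref{cof-tensor}.
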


\begin{proof}
The Leibniz join of a boundary inclusion with a boundary inclusion is a boundary inclusion.
\end{proof}

For $n \geq 1$, we write $h_k^n \co \Lambda_k^n \to \Delta^n$ for the $k$-th \emph{horn inclusion} of dimension $n$.
The horn inclusions generate the weak factorization system (\emph{anodyne}, \emph{fibration}).
In diagrams, arrows that denote anodyne maps will be written as $\arranod$; arrows that denote fibration will be written as $\arrfib$.

\begin{remark} \label{fibrations-arbitrary}
In semisimplicial sets, there is more choice for the notion of fibration than in simplicial sets.
One could demand that fibrations lift against additional ``primitive'' cofibrations that are weak equivalences: an example would be the Dunce hat inclusion of \cref{dunce-hat}.
In simplicial sets, these inclusions would (at least classically) already lie in the weak saturation of horn inclusions, but here the class of fibrations would become more restrictive.
However, as per \cref{cof-weak-equiv-lift-fib-between-fibrant}, the fibrant objects and fibrations between fibrant objects would not change.
As such, one should regard the classes of fibrant objects and fibrations between fibrant objects as more canonical than the class of fibrations, the latter being rather arbitrary for non-fibrant codomain.
\end{remark}

\begin{lemma} \label{anodyne-tensor-cof}
Anodyne maps are closed under Leibniz geometric product with cofibrations.
\end{lemma}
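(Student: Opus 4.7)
The plan is to apply the Leibniz calculus to reduce to the generating case, and then to construct an explicit cellular filtration.

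First, since the geometric product is closed, hence cocontinuous in each argument (\cref{tensor-as-right-adjoint}), the Leibniz construction $\widehat{\otimes}$ preserves weak saturation in each argument. It therefore suffices to verify the generating instances: for all $n \geq 1$, $0 \leq k \leq n$, and $m \geq 0$, the Leibniz product $h_k^n \widehat{\otimes} i^m$ is anodyne.

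Next, I would use the explicit description of the geometric product: a nondegenerate $\ell$-simplex of $\Delta^n \otimes \Delta^m$ is a jointly injective pair of monotone maps $p \co [\ell] \to [n]$ and $q \co [\ell] \to [m]$. Such a cell lies in the subcomplex $\Lambda_k^n \otimes \Delta^m \cup_{\Lambda_k^n \otimes \partial\Delta^m} \Delta^n \otimes \partial\Delta^m$ if and only if either $q$ is non-surjective, or $p$ misses some index distinct from $k$. The cells to be attached are therefore exactly those with $q$ surjective and image of $p$ equal to $[n]$ or to $[n] \setminus \{k\}$.

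The heart of the argument is then to construct a total order on the top-dimensional such cells, the $(n,m)$-shuffles (encoded as monotone lattice paths in $[n] \times [m]$), so that the inclusion extends to $\Delta^n \otimes \Delta^m$ as a finite composition of pushouts of horn inclusions, one pushout per shuffle. At each stage, a shuffle is attached together with any still-absent faces; one verifies that at most one face of the shuffle remains missing at the moment of its attachment (typically the ``diagonal'' face shared with an adjacent, already-attached shuffle), so that the attachment is a pushout of a single horn inclusion $\Lambda_j^{n+m} \to \Delta^{n+m}$. The index $j$ may be inner or outer, depending on the position of the shuffle in the ordering and on the value of $k$; outer horns appear for the shuffles ``farthest from'' the missing $k$-th face.

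The main obstacle is this last step: choosing the shuffle ordering correctly and verifying the one-missing-face invariant uniformly in $n$, $m$, and $k$. This is the direct analogue of the classical filtration of $\Delta^n \times \Delta^m$ used to establish the pushout-product axiom in simplicial sets, with the notational simplification that every cell under consideration is automatically nondegenerate, so no degeneracy bookkeeping is needed.
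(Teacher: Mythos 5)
Your reduction to the generating case and your description of the cells missing from $\Lambda^n_k \otimes \Delta^m \cup \Delta^n \otimes \partial\Delta^m$ are both fine, but the central claim of the proposed filtration is false: the inclusion cannot be realized as one horn pushout per shuffle. A pushout of a single horn inclusion $\Lambda^{n+m}_j \to \Delta^{n+m}$ adjoins exactly two nondegenerate cells, namely the top cell and its $j$-th facet, so it only ever adds cells of dimensions $n+m$ and $n+m-1$; moreover, if at the moment of attachment a shuffle is missing only one facet, then all of its faces of codimension at least two are already present (each such face lies in a second, present facet). But the complement of the pushout-product subcomplex generally contains missing cells of codimension at least two. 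Take $n=2$, $k=1$, $m=1$: the missing cells are the edge $\langle (0,2),(0,1)\rangle$ from $(0,0)$ to $(2,1)$, four triangles, and the three shuffles --- eight cells in all, while three top-dimensional horn pushouts can add at most six, and the long edge, having codimension two, can never appear as the unique missing facet of a shuffle. Hence no ordering of the shuffles alone can sustain the ``at most one missing face'' invariant while exhausting $\Delta^n \otimes \Delta^m$; this is exactly the step you flagged as the main obstacle, and it is where the argument breaks.

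To repair a cell-by-cell approach you would have to attach \emph{all} missing cells --- all jointly monic $(p,q)$ with $q$ surjective and $\im(p) \supseteq [n]\setminus\{k\}$, in every dimension --- in a suitable order, each along a horn of its own dimension, and verifying that condition uniformly in $n,m,k$ is the real work; note too that the classical shortcut of exhibiting higher horns as retracts of pushout products of one-dimensional horns with boundary inclusions is unavailable semisimplicially, as remarked after the lemma. The paper instead follows Joyal's argument: it splits the simplex carrying the horn as a join $[n_0]\star[0]\star[n_1]$ at the horn vertex, covers the geometric product by subobjects $S \cup M_{\langle u_0,v_0\rangle,\langle u_1,v_1\rangle}$ indexed by simplices of the two side pieces, and shows this finite diagram of subobjects is Reedy anodyne, identifying the latching object inclusions as pushouts of generalized horn inclusions expressed as Leibniz joins of boundary inclusions, whose anodyneness comes from the join closure properties (compare \cref{anodyne-join-cof}). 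Either that route, or a genuinely dimension-by-dimension filtration, is needed; an ordering of the top-dimensional shuffles does not suffice.
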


\begin{proof}
The proof by Joyal~\cite[Theorem~H.0.20]{joyal-quaderns} works just as well for semisimplicial sets.
It uses ordered simplicial complexes, which embed fully faithfully into semisimplicial sets.

For completeness, let us include a proof nonetheless.
Unfortunately, it is not very abstract.
We will show that the pushout product of $\Lambda^m_k \to \Delta^m$ and $\partial\Delta^n \to \Delta^n$ is anodyne for $0 \leq k \leq m$ with $m \geq 1$ and $n \geq 0$.

Note that $\Delta^m \otimes \Delta^n$ is the nerve of the poset $[m] \otimes [n]$.
The given pushout product corresponds to the subobject $S \subseteq N([m] \otimes [n])$ of those simplices $\langle f_0, f_1\rangle \co [t] \to [m] \times [n]$ for which $\im(f_0) \cup {k} \neq [m]$ or $\im(f_1) \neq [n]$.
We write $[m] = [m_0] \star [0] \star [m_1]$ where $m_0, m_1 \geq -1$.
Note that $m_0$ and $m_1$ cannot both be $-1$ since $m \geq 1$.

Given monomorphisms
\begin{align*}
\langle u_0, v_0\rangle &\co [a_0] \to [m_0] \times [n]
,\\
\langle u_1, v_1\rangle &\co [a_1] \to [m_1] \times [n]
\end{align*}
with $a_0, a_1 \geq -1$, let $M_{\langle u_0, v_0\rangle, \langle u_1, v_1\rangle}$ be the subobject of $N([m] \otimes [n])$ of those simplices that are a face of
\begin{equation} \label{anodyne-join-cof:0}
((u_0 \star \const(0) \star u_1), [v_0, v, v_1]) \co [a_0] \star [a] \star [a_1] \to [m] \times [n]
\end{equation}
for some monomorphism $v \co [a] \to [n]$ making the map order-preserving.

Note that $S \cup M_{\langle u_0, v_0\rangle, \langle u_1, v_1\rangle}$ forms a finite diagram $D$ of subobjects of $N([m] \otimes [n])$ above $S$ indexed by categories of elements of nerves:
\begin{align*}
\langle u_0, v_0\rangle &\in \cel N([m_0] \otimes [n])
,\\
\langle u_1, v_1\rangle &\in \cel N([m_1] \otimes [n])
.\end{align*}
Every simplex of $N([m] \otimes [n])$ is contained in one of these subobject, so the colimit of this diagram is the entirety of $N([m] \otimes [n])$.
It will thus suffice to verify that the diagram is Reedy anodyne.

Let $\langle u_0, v_0\rangle$ and $\langle u_1, v_1\rangle$ be as above.
We will verify that the latching object inclusion of $D$ at this index is anodyne.
If $u_0$ and $u_1$ are not both surjective, then $M_{\langle u_0, v_0\rangle, \langle u_1, v_1\rangle} \subseteq S$ and the latching object inclusion is the identity at $S$.
Assume now that they are both surjective.
Let $v \co [a] \to [n]$ be mono with $[v]$ maximal making~\eqref{anodyne-join-cof:0} order-preserving (if such $v$ does not exist, then the latching object inclusion is an identity).
Note that $[a]$ is non-empty, $v$ is distance-preserving, and all elements of $M_{\langle u_0, v_0\rangle, \langle u_1, v_1\rangle}$ are faces of~\eqref{anodyne-join-cof:0}.
Removing a point in $[a_0]$ or $[a_1]$ will result in a face included in the latching object.
Let $E$ be the subposet of $i \in [a]$ such that $v(i) \in \im(v_0) \cup \im(v_1)$.
Note that $E$ can only contain endpoints of $[a]$, \ie $E \subseteq \braces{0, a}$.
Removing a point in $[a]$ not in $E$ will result in a face included in $S$, hence also the latching object.
Removing the points in $E$ yields a face not in the latching object.
This shows that the latching object is a pushout of the generalized horn incluion
\begin{equation} \label{anodyne-join-cof:1}
i^{[a_0]} \hatjoin \Delta^E \hatjoin i^{[a_1]} \co \Lambda_E^{[a_0] \star [a] \star [a_1]} \to \Delta^{[a_0] \star [a] \star [a_1]}
\end{equation}
(where the Leibniz join is either taken in augmented semisimplicial sets or the corresponding argument is removed if $a_0 = -1$ or $a_1 = -1$), which is anodyne by \cref{anodyne-tensor-cof}.

It remains to show that~\eqref{anodyne-join-cof:1} is anodyne.
For this, we only have to observe that $E$ is a proper subset of $[a_0] \star [a] \star [a_1]$ since $m_0 \geq 0$ or $m_1 \geq 0$ and $u_0, u_1$ are surjective, hence $a_0 \geq 0$ or $a_1 \geq 0$.
\end{proof}

\begin{remark}
In contrast to simplicial sets, higher horn inclusions do not write as retracts of Leibniz products of a one dimensional horn inclusions with a boundary inclusion.
In fact, these maps and their Leibniz product are nerves of direct semicategories and hence do not admit any non-trivial retracts.
\end{remark}

\begin{lemma} \label{anodyne-join-cof}
Anodyne maps are closed under Leibniz join with cofibrations.
Anodyne maps are closed under join with semisimplicial sets.
\end{lemma}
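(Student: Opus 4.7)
The plan is to reduce both claims to computing the Leibniz join on generators. Since $\star$ preserves connected colimits in each argument, and in the semisimplicial setting every cofibration (respectively anodyne map) factors as a transfinite composite of pushouts of individual boundary inclusions (respectively horn inclusions), the Leibniz construction $\widehat{\star}$ preserves weak saturation in each argument. It therefore suffices to show that $h_k^n \widehat{\star} i^m$ and symmetrically $i^m \widehat{\star} h_k^n$ are anodyne.

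Since $\star$ preserves monomorphisms, the domain of $h_k^n \widehat{\star} i^m$ embeds into $\Delta^n \star \Delta^m \cong \Delta^{n+m+1}$ as the subobject $(\Lambda^n_k \star \Delta^m) \cup (\Delta^n \star \partial\Delta^m)$. Writing $\Lambda^n_k = \bigcup_{i \neq k} \partial_i \Delta^n$ and distributing $\star$ across this connected colimit identifies $\Lambda^n_k \star \Delta^m$ as the subcomplex of $\Delta^{n+m+1}$ consisting of simplices missing some vertex of $[n]$ other than $k$; analogously, $\Delta^n \star \partial\Delta^m$ consists of simplices missing some vertex of $[m]$. Their union is precisely $\Lambda^{n+m+1}_k$, so $h_k^n \widehat{\star} i^m \cong h_k^{n+m+1}$ is a horn inclusion, hence anodyne; the case $i^m \widehat{\star} h_k^n$ symmetrically yields $h_{m+1+k}^{n+m+1}$.

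The second claim follows formally from the first. Given anodyne $f \co A \arranod B$ and a semisimplicial set $C$, the map $0 \to C$ is a cofibration since every object is cofibrant. Applying the first claim to $(f, 0 \to C)$ and using the unit isomorphisms $A \star 0 \cong A$ and $B \star 0 \cong B$ yields that the Leibniz join $(A \star C) +_A B \to B \star C$ is anodyne. The map $A \star C \to (A \star C) +_A B$ is the pushout of $f$ along $A \to A \star C$, hence also anodyne. Composing shows $A \star C \to B \star C$ is anodyne, and $C \star A \to C \star B$ is symmetric. The main technical point is the combinatorial identification of the Leibniz-join pushout as a horn inclusion; the rest is formal Leibniz calculus.
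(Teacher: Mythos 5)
Your proof is correct, and its core is the same as the paper's: reduce to the generating maps and identify the Leibniz join of a horn inclusion with a boundary inclusion as a horn inclusion, $h^n_k \hatjoin i^m \simeq h^{n+m+1}_k$ (the paper asserts this identification in one line; your vertex-set computation of the union $(\Lambda^n_k \star \Delta^m) \cup (\Delta^n \star \partial\Delta^m)$ inside $\Delta^{n+m+1}$ is a correct verification of it). Where you genuinely differ is the second claim: the paper unifies both statements by passing to augmented semisimplicial sets, where join with a terminally augmented $A$ is the Leibniz join with $0 \to A$, so closure under join with arbitrary objects becomes a special case of the first claim (the $(-1)$-dimensional boundary inclusion being among the augmented generators); you instead stay in unaugmented semisimplicial sets and factor $f \star C$ as a pushout of $f$ followed by $f \hatjoin (0 \to C)$, which is equally valid and perhaps more elementary, at the cost of treating the two claims separately. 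One caution for the constructive setting of this paper: your justification of the reduction to generators asserts that every cofibration, respectively anodyne map, is a transfinite composite of pushouts of \emph{individual} generating maps; this requires well-ordering the sets of attached cells and is not available constructively, and the general principle that Leibniz constructions preserve weak saturation is stated in the paper for argumentwise cocontinuous functors, whereas $\star$ only preserves connected colimits (it does not preserve coproducts). The reduction is nevertheless sound: keep coproducts in the cell decompositions and transpose lifting problems along the parametric right adjoints of $- \star A$ and $A \star -$ (the slice constructions), so that the class of maps whose Leibniz join with a fixed cofibration is anodyne is a left lifting class and hence weakly saturated, with closure under coproducts holding constructively once lifting is read as an operation; the paper glosses this point as well, but since your write-up makes the questionable decomposition explicit, it should be rephrased along these lines.
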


\begin{proof}
Both claims are unified in augmented semisimplicial sets where join with a semisimplicial set $A$ is Leibniz join with $0 \to A$ (with $A$ terminally augmented).
The Leibniz join of a horn inclusion with a boundary inclusion is a horn inclusion.
\end{proof}

\begin{lemma} \label{point-join-cofib}
Join with $\Delta^0$ sends cofibrations to anodyne maps.
\end{lemma}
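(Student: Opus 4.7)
My plan is to reduce, via weak saturation, to the generating boundary inclusions $i^n \co \partial\Delta^n \to \Delta^n$.

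For the base case, I would directly compute that $i^n \star \Delta^0 \cong h^{n+1}_{n+1}$. Since $\Delta^n \star \Delta^0 = \Delta^{n+1}$ (with the joined vertex as the top element), the subobject $\partial\Delta^n \star \Delta^0 \subseteq \Delta^{n+1}$ is the union of all proper faces of $\Delta^{n+1}$ except $d_{n+1}(\Delta^n)$, the unique face avoiding the apex vertex contributed by $\Delta^0$. Hence $\partial\Delta^n \star \Delta^0 = \Lambda^{n+1}_{n+1}$ and $i^n \star \Delta^0 = h^{n+1}_{n+1}$, which is anodyne by definition.

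To extend to arbitrary cofibrations, I would use that $- \star \Delta^0 \co \widehat{\Delta_+} \to \widehat{\Delta_+}_{\backslash \Delta^0}$ is a left adjoint (join being a parametric right adjoint, \cref{join-as-right-adjoint}) and therefore preserves all colimits. Let $\mathcal{A}$ denote the class of maps $f$ with $f \star \Delta^0$ anodyne; by the base case it contains all boundary inclusions, so it suffices to verify closure under the saturation operations. For pushouts and transfinite compositions, which are connected colimits, the coslice computation agrees with the one in $\widehat{\Delta_+}$, so preservation by $- \star \Delta^0$ together with closure of anodynes under these operations does the job. For a coproduct $\bigsqcup_i f_i$, one has $(\bigsqcup_i A_i) \star \Delta^0 = \bigsqcup_{i, \Delta^0} (A_i \star \Delta^0)$, the iterated pushout over $\Delta^0$ (coproducts in the coslice being pushouts over the base in the underlying category); thus $(\bigsqcup_i f_i) \star \Delta^0$ is an iterated pushout of the anodyne maps $f_i \star \Delta^0$ over $\Delta^0$, hence itself anodyne. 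Retracts are preserved by any functor.

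The main subtlety is the coproduct case: $- \star \Delta^0$ does not preserve coproducts on the nose, but converts them into pushouts over the apex $\Delta^0$. I do not expect this to be a genuine obstacle, as closure of anodynes under pushouts absorbs the discrepancy. The real content of the lemma is the identification $\partial\Delta^n \star \Delta^0 = \Lambda^{n+1}_{n+1}$; everything else is a routine weak saturation argument, and the symmetric statement for $\Delta^0 \star -$ follows by the dual calculation $\Delta^0 \star i^n = h^{n+1}_0$.
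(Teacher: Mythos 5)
Your proof is correct, but it takes a more hands-on route than the paper. The key computation is the same in substance: the join of a boundary inclusion with $\Delta^0$ is an outer horn inclusion (your $i^n \star \Delta^0 = h^{n+1}_{n+1}$, and dually $h^{n+1}_0$ on the other side), and this identification is indeed the entire content. The paper, however, disposes of the saturation bookkeeping in one line by passing to augmented semisimplicial sets, where join with $\Delta^0$ becomes the \emph{Leibniz} join with the ``$0$-dimensional horn inclusion'' $0 \to \Delta^0$ (the same device as in \cref{anodyne-join-cof}); since the Leibniz construction of an argumentwise cocontinuous functor preserves weak saturation in each argument, sending boundary inclusions to horn inclusions of positive dimension immediately gives the statement for all cofibrations. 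You instead work with the plain one-variable functor $- \star \Delta^0$ and verify closure under the saturation operations by hand, which forces you to address the fact that it only preserves connected colimits and turns coproducts into wide pushouts over the apex; your resolution of that point (replace one summand at a time, using closure of anodyne maps under pushout and transfinite composition) is fine, it is just the discrepancy that the augmented/Leibniz formulation is designed to absorb. One small correction: \cref{join-as-right-adjoint} (the join being a parametric \emph{right} adjoint) is not what makes $- \star \Delta^0 \co \widehat{\Delta_+} \to (\widehat{\Delta_+})_{\backslash \Delta^0}$ cocontinuous; the relevant facts are stated in the join-and-slice subsection, namely that $- \star A$ and $A \star -$ into the coslice admit right adjoints and that the join preserves connected colimits in each argument.
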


\begin{proof}
In augmented semisimplicial sets, join with the ``0-dimesional horn inclusion'' $0 \to \Delta^0$ sends boundary inclusions to horn inclusions of positive dimension.
\end{proof}

\begin{corollary} \label{yoneda-anodyne}
The Yoneda functor is valued in anodyne maps.
\qed
\end{corollary}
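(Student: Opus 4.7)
The plan is to decompose an arrow $y(f) \colon \Delta^m \to \Delta^n$ coming from a monomorphism $f \colon [m] \to [n]$ in $\Delta_+$ as a composite of face inclusions, and then to show each face inclusion is anodyne. Every monomorphism in $\Delta_+$ factors as a composite of face maps $d_j^{(k)}$, and anodyne maps are closed under composition (as part of being a weakly saturated class), so the problem reduces to showing that every face inclusion $d_j^{(n)} \colon \Delta^{n-1} \to \Delta^n$ is anodyne.

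For $d_j^{(n)}$ with $j \geq 1$, I would exploit the join decomposition $\Delta^n = \Delta^{j-1} \star \Delta^0 \star \Delta^{n-j-1}$, under which $d_j^{(n)}$ identifies with $\Delta^{j-1} \star d_0^{(n-j)}$, where $d_0^{(n-j)} \colon \Delta^{n-j-1} \to \Delta^{n-j}$ is the lower-dimensional $d_0$-face inclusion. By \cref{anodyne-join-cof} (anodyne maps are closed under join with a semisimplicial set), this reduces the claim to showing every $d_0^{(k)} \colon \Delta^{k-1} \to \Delta^k$ is anodyne.

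For $d_0^{(k)}$, the decompositions $\Delta^k = \Delta^{k-1} \star \Delta^0$ and $\Delta^{k-1} = \Delta^{k-2} \star \Delta^0$ together identify $d_0^{(k)}$ with the image of the cofibration $d_0^{(k-1)}$ under the functor $(-) \star \Delta^0$, so \cref{point-join-cofib} immediately yields that $d_0^{(k)}$ is anodyne. The base case is $d_0^{(1)}$, which coincides with the horn inclusion $\Lambda^1_1 \to \Delta^1$ and is therefore anodyne by definition.

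The only real obstacle is the bookkeeping needed to match the various join decompositions with the intended face maps; once the augmented-simplex conventions are fixed, no new ideas beyond the preceding three lemmas are required.
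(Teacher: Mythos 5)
Your strategy---factor $y(f)$ into coface inclusions and make each coface anodyne via the join lemmas---is exactly the argument the paper intends by leaving this corollary without proof, and most of it goes through. But your reduction for $j \geq 1$ silently fails at the top face $j = n$: there the identification reads $d_n^{(n)} = \Delta^{n-1} \star d_0^{(0)}$, and $d_0^{(0)}$ is the map $0 \to \Delta^0$, i.e.\ the boundary inclusion $i^0$, which is not anodyne (if it were, every fibrant semisimplicial set would have a vertex, yet the empty semisimplicial set is vacuously fibrant). Since \cref{anodyne-join-cof} only lets you join an \emph{anodyne} map with an object, and your induction on $d_0^{(k)}$ starts at $k = 1$, the face omitting the last vertex is never shown to be anodyne; note that for $n = 1$ this is the outer horn $\Lambda^1_0 \to \Delta^1$, so the case is certainly true, just not covered by your argument as written.

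The repair is the mirror image of what you already do, using \cref{point-join-cofib} with the join on the other side (its proof produces left as well as right outer horns): $d_n^{(n)} = \Delta^0 \star d_{n-1}^{(n-1)}$ is the image of a cofibration under $\Delta^0 \star (-)$, hence anodyne. In fact, once you use both sides of \cref{point-join-cofib} you can drop \cref{anodyne-join-cof} and the induction altogether: for $j \leq n-1$ one has $d_j^{(n)} = d_j^{(n-1)} \star \Delta^0$, and for $j \geq 1$ one has $d_j^{(n)} = \Delta^0 \star d_{j-1}^{(n-1)}$ (reading $d_0^{(0)}$ as $i^0$), so every coface is the join of a cofibration with $\Delta^0$ on one side and is therefore anodyne by \cref{point-join-cofib} alone; closure of anodyne maps under composition then finishes the proof.
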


We have corresponding adjoint closure statements for fibrations that we will use implicitly from now on.

A map between fibrant objects that lifts against cofibrations is called a \emph{trivial fibration}, written in diagrams as $\arrtrivfib$.
General maps lifting against cofibrations do not deserve to be called trivial fibration as can be seen in the case of $\Delta^0 + \Delta^0 \to \Delta^0$.

\subsection{Fundamental groupoid}

For this subsection, fix a fibrant semisimplicial set $X$.

\medskip

Let us write $L$ for the \emph{representable loop}, the quotient of $\Delta^1$ that identifies its vertices.
Note that the unique map $p \co \Delta^0 \to L$ is a cofibration.
A loop $l \co L \to X$ is called \emph{constant} on $x \defeq lp$ if it lifts to a map $\Delta^0 \star L \to X$, \ie if $X_{/l}$ contains a vertex.
Note that $X_{/x}$ is trivially fibrant and that $X_{/l} \to X_{/x}$ is a fibration.
Thus, if $l$ is constant then $X_{/l} \to X_{/x}$ is surjective on vertices.

Since $\Delta^0 \to 0 \star \Delta^0 \to \Delta^0 \star L$ is anodyne, the induced map $\hom(\Delta^0 \star L, X) \to X$ is a trivial fibration.
Thus, the constant loops on a given point are essentially unique in the sense of forming the vertices of a trivially fibrant object.
In particular, they can always be produced.

Note that we have anodyne maps as indicated:
\[
\xymatrix@R-0.5cm@C-0.5cm{
&
  L
  \ar@{>->}[dl]
  \ar@{>->}[dr]
\\
  \Delta^0 \star L
  \ar@{>->}[dr]_(0.4){\anod}
&&
  L \star \Delta^0
  \ar@{>->}[dl]^(0.4){\anod}
\\&
  \Delta^0 \star L \star \Delta^0
\rlap{.}}
\]
Given an extension of a loop $l \co L \to X$ to a constant loop $\Delta^0 \star L \to X$, we can thus also extend $l$ to a map $L \star \Delta^0 \to X$.
This shows that the notion of constant loop is invariant under taking opposites.

\medskip

Let us write $P \defeq \Delta^1 +_{\partial\Delta^1} \Delta^1$ for the \emph{representable parallel pair of edges}.
We say that parallel edges $f, g \in X_1(a, b)$ forming a map $[f, g] \co P \to X$ are \emph{related}, written $f \approx g$, if $P \to X$ lifts to a map $\Delta^0 \star P \to X$, \ie if $X_{/[f, g]}$ contains a vertex.
As for constant loops, one sees that $X_{/[f, g]} \to X_{/f}$ and $X_{/[f, g]} \to X_{/g}$ are surjective on vertices if $f$ and $g$ are related and that the notion of relatedness is invariant under taking opposites.

Using the unique map $P \to \Delta^1$ and that $X_{/f}$ is trivially fibrant for $f \in X_1$, one sees that relatedness is reflexive.
Using the unique non-identity automorphism on $P$, one sees that relatedness is symmetric.
Given $f, g, h \in X_1(a, b)$ with $f \approx g$ and $g \approx h$, then $X_{/[f, g]} \to X_{/g}$ and $X_{/[g, h]} \to X_{/g}$ are surjective on vertices.
Since $X_{/g}$ is trivially fibrant, one obtains vertices of $X_{/[f, g]}$ and $X_{/[g, h]}$ that agree on $X_{/g}$, \ie in particular a vertex of $X_{[f, h]}$.
This shows that relatedness is transitive.
Altogether, relatedness is an equivalence relation.

Note that $P$ is the cokernel of the cofibration $\partial\Delta^1 \to \Delta^1$.
Let $Q$ be the cokernel of the cofibration $[0, d_0] \co \Delta^0 + \Delta^1 \to \Delta^2$.
The map from $\partial\Delta^1 \to \Delta^1$ to $\Delta^0 + \Delta^1 \to \Delta^2$ given on codomains by $d_2$ is Reedy anodyne.
We thus obtain an anodyne map $P \to Q$.
Given triangles $\sigma_0 \in X_2(f_0, g, h_0)$ and $\sigma_1 \in X_2(f_1, g, h_1)$, we form a map $[\sigma_0, \sigma_1] \co Q \to X$ and then have a span
\[
\xymatrix@C-1cm{
&
  X_{/[\sigma_0, \sigma_1]}
  \ar@{->>}[dl]_{\triv}
  \ar@{->>}[dr]
\\
  X_{/[f_0, f_1]}
&&
  X_{/[h_0, h_1]}
\rlap{.}}
\]
It follows that $f_0 \approx f_1$ implies $h_0 \approx h_1$.
We obtain analogous statements for triangles that share the $d_1$- or $d_2$-indexed edge.
Combining these using transitivity, given triangles $\sigma_0 \in X_2(f_0, g_0, h_0)$ and $\sigma_1 \in X_2(f_1, g_1, h_1)$, if two of the pairs $(f_0, f_1), (g_0, g_1), (h_0, h_1)$ are in the relatedness relation, then so is the third.

The \emph{fundamental groupoid} $\tau_1 X$ is defined as follows.
The objects are given by vertices $X_0$ and the morphisms from $a$ to $b$ are given by the quotient $X_1(a, b)/{\approx}$ of edges under relatedness.
Composition is given by filling of $h^2_1$-horns.
This is invariant under choice of representatives and filling by the above.
Composition is associative by fillings of $h^3_1$-horns.
The identity on $a \in X_0$ is given by a constant loop $l \co L \to X$ on $a$.
Right neutrality holds since $X_{/l} \to X_{/x}$ is surjective on vertices; left neutrality holds by duality.
Left and right inverses for a given edge are constructed using $h^2_0$- and $h^2_1$-horn filling.
This makes $\tau_1 X$ into a groupoid.

\subsection{Enrichment in groupoids}

We have the constant functor from sets to semisimplicial sets.
Its left adjoint is the \emph{fundamental set} functor $\tau_0$.
Given a semisimplicial set $A$, one may describe $\tau_0 A$ as the set $A_0$ quotiented under the edge relation.
It thus depends only on the truncation of $A$ to two levels.
Note that if $A$ is fibrant, the edge relation is already an equivalence relation.

Note that $\tau_0$ lifts to a symmetric monoidal functor
\[
\tau_0 \co (\widehat{\Delta_+}, \top, \otimes) \to (\Set, 1, \times)
.\]
We thus can regard semisimplicial sets as enriched in sets (not to be confused with the original hom-sets) with the hom-set from $A$ to $B$ given by $\tau_0(\hom(A, B))$.
When restricted to fibrant objects, this will also be called the \emph{homotopy category} of semisimplicial sets (we will see later that this coincides with the localization of semisimplicial sets at a certain class of weak equivalences).

\medskip

By precomposing with forgetful functors, we obtain a nerve
\[
\xymatrix{
  \Gpd
  \ar[r]
&
  \Cat
  \ar[r]
&
  \SemiCat
  \ar[r]
&
  \widehat{\Delta_+}
}
\]
from groupoids.
Its left adjoint is the \emph{fundamental groupoid} functor $\tau_1$.
Given a semisimplicial set $A$, one may describe $\tau_1 A$ as the free groupoid on the graph $A_1 \rightrightarrows A_0$ quotiented under the relation on morphisms given by triangles $A_2$.
It thus depends only on the truncation of $A$ to three levels.
Note that $\tau_0$ is given by composing $\tau_1$ with the connected components functor.

If $A$ is fibrant, we see that $\tau_1 A$ as defined here coincides up to isomorphism with $\tau_1 A$ as defined in the previous subsection.
That construction has the important property that it produces, for some chosen notion of smallness, a locally small groupoid if $X_1(a, b)$ is small for all $a, b \in X_0$.
This is contrary to the generic case, which would require $X_0$ and $X_1$ to be small.

Since the free groupoid functor on categories preserves finite products, we have that $\tau_1$ lifts to a symmetric monoidal functor
\[
\tau_1 \co (\widehat{\Delta_+}, \top, \otimes) \to (\Gpd, 1, \times)
.\]
We thus can regard semisimplicial sets as enriched in groupoids with the hom-groupoid from $A$ to $B$ given by $\tau_1(\hom(A, B))$ (note that $\hom(A, B)$ is fibrant if $B$ is).
When restricted to to fibrant objects, we will also call this the \emph{homotopy $(2, 1)$-category} of semisimplicial sets.
In order to avoid excessive overloading of notation, identities, composition, and inverses in hom-groupoids will be written $\id_2$, $- \circ_2 -$, and $\inv_2$.
The homotopy $(2, 1)$-category plays a role similar to the 2-category of quasicategories of~\cite{riehl-verity:2-cat-of-quasicat} (in the marked case, we will obtain a homotopy 2-category that is even more analogous and could be used for formally developing the theory of semisimplicial quasicategories, although we will not do so here).

\medskip

In the category of groupoids, let us call cofibrations the functors whose action on objects is a decidable monomorphism, fibrations the isofibrations, and weak equivalences the equivalences.
Recall that this forms a model category.

We have that $\tau_1$ maps cofibrations and anodyne maps of semisimplicial sets to cofibrations and trivial cofibrations of groupoids, respectively: by cocontinuity, it suffices to check this for the respective generators; for these, it is clear.

We also have that $\tau_1$ maps fibrations of semisimplicial sets to fibration of groupoids: given a fibration $Y \to X$ of semisimplicial sets, a morphism $a \to b$ in $\tau_1(X)$ and a lift $y$ of $b$ to $\tau_1(Y)$, recall that $a \to b$ is represented by a zig-zag of edges between the vertices $a$ and $b$ of $X$; using $h^1_0$- and $h^1_1$-horn lifting, we can lift the zig-zag to $Y$.

Given a semisimplicial set $X$ and $f, g \in X_1(a, b)$, we call $f$ and $g$ \emph{related}, written $f \approx g$, if $f, g \co \Delta^1 \to X$ become equal after applying $\tau_1$.
If $X$ is fibrant, we see that this notion of relatedness coincides with the one defined in the previous subsection.

\begin{lemma} \label{lifting-relatedness}
Let $p \co Y \to X$ be a fibration in semisimplicial sets.
Given $f \in Y_1(a, b)$ and $\overline{g} \in X_1(pa, pb)$ such that $pf \approx \overline{g}$, there is $g \in Y_1(a, b)$ such that $pg = \overline{g}$ and $f \approx g$.
\end{lemma}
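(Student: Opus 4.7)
The plan is to use the isofibration $\tau_1 p \co \tau_1 Y \to \tau_1 X$ of groupoids (established earlier in this subsection) together with horn lifting for $p$ against $\Lambda^2_1 \to \Delta^2$.

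First I would set up the filling square
\[
\xymatrix{
  \Lambda^2_1
  \ar[r]^{[f, \tilde c]}
  \ar@{>->}[d]_{\anod}
&
  Y
  \ar@{->>}[d]^{p}
\\
  \Delta^2
  \ar[r]^{\overline{\sigma}}
&
  X
\rlap{,}}
\]
in which $\overline{\sigma}$ is a triangle of $X$ with $\overline{\sigma} d_2 = pf$, $\overline{\sigma} d_1 = \overline{g}$, and $\overline{\sigma} d_0 = c$ for some $c \in X_1(pb, pb)$ representing the identity at $pb$ in $\tau_1 X$, and $\tilde c \in Y_1(b, b)$ is a lift of $c$ representing the identity at $b$ in $\tau_1 Y$. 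A filler $\sigma$ then yields $g \defeq \sigma d_1 \in Y_1(a, b)$; by construction $pg = \overline{g}$, and the triangle relation $[\tilde c] \circ [f] = [g]$ in $\tau_1 Y$ combined with $[\tilde c] = \operatorname{id}_b$ forces $[g] = [f]$, i.e., $f \approx g$ as desired.

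The main obstacle is producing the witness triangle $\overline{\sigma}$ in $X$ and the identity-representing lift $\tilde c$ in $Y$ directly from the abstract hypothesis $\tau_1 pf = \tau_1 \overline{g}$. In general this equality unpacks, via the presentation of $\tau_1 X$ as the free groupoid on the graph $X_1 \rightrightarrows X_0$ modulo triangle relations, to a finite zig-zag of elementary moves witnessed by triangles of $X_2$. I would handle each elementary move by the construction above and chain them together using transitivity of $\approx$ in $\tau_1 Y$, reducing the general case to the single-move case sketched here. The loop $\tilde c$ is obtained by $h^1_0$-horn lifting of $c$ starting at $b$ and relocating its free endpoint to $b$ via a further $h^2_k$-fill; since $\tau_1 p$ is an isofibration and $[c] = \operatorname{id}_{pb}$, the $\tau_1$-class of $\tilde c$ can be arranged to be $\operatorname{id}_b$.
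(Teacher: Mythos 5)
Your route is genuinely different from the paper's, and as it stands it has a real gap: the conversion of the hypothesis $pf \approx \overline{g}$ --- which is nothing more than an equality in $\tau_1 X$ --- into the concrete lifting data your square requires. Your square needs a triangle $\overline{\sigma} \in X_2$ with all three faces prescribed on the nose ($pf$, $\overline{g}$, and a loop $c$ at $pb$ with $[c] = \id_{pb}$), together with an edge $\tilde{c} \in Y_1(b,b)$ lying exactly over $c$ and with $[\tilde{c}] = \id_b$. Neither piece of data exists in general: $X$ need not contain any loop at $pb$ at all (two parallel edges can already be identified in $\tau_1 X$ by a pair of triangles sharing their other two faces, in a semisimplicial set with no loops whatsoever), and even when a suitable $c$ exists, the isofibration property of $\tau_1 p$ only lifts isomorphisms of $\tau_1 X$ --- i.e.\ classes of zig-zags --- with a prescribed lift of one endpoint; it does not produce a single edge of $Y$ over $c$ with both endpoints equal to $b$ and trivial class, and the suggested ``relocating its free endpoint via a further $h^2_k$-fill'' is not an operation that horn filling provides. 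Even granting $X$ fibrant, a $2$-simplex with a fully prescribed, merely $\tau_1$-commuting boundary is not directly supplied by horn filling, since the third face of a horn filler is fresh rather than prescribed. Finally, your fallback --- unwinding $[pf]=[\overline{g}]$ into elementary moves of the free-groupoid presentation --- does not reduce to the single-move configuration you sketched: an elementary application of a triangle relation rewrites a word (a formal composite, possibly involving inverses), not a pair of parallel edges, and it has no identity-loop face; the intermediate stages are not edges of $X$, so they cannot be lifted to edges of $Y$ and chained by transitivity of $\approx$.

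The paper sidesteps all of this. It first reduces to the case that $X$ is fibrant, where $pf \approx \overline{g}$ is equivalent to a single cone witness $\Delta^0 \star P \to X$, and then solves exactly one lifting problem: the inclusion $\Delta^1 \to \Delta^0 \star P$ of the copy of $\Delta^1$ carrying $f$ is anodyne (a composite of three pushouts of horn inclusions: attach the edge into $a$, fill a $\Lambda^2_1$-horn, fill a $\Lambda^2_0$-horn), so it lifts against the fibration $p$ with the cone witness as bottom map. The lift restricted to the other copy of $\Delta^1$ is the desired $g$, with $pg = \overline{g}$ on the nose, and the lifted cone itself witnesses $f \approx g$ (a cone always implies $\tau_1$-equality, fibrant or not). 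If you want to salvage your approach, the missing ingredient is precisely such a device --- fibrancy of $X$ together with the cone characterization of relatedness, or something equivalent --- that turns the abstract $\tau_1$-equality into a finite piece of geometry in $X$ against which $p$ can be lifted in one step.
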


\begin{proof}
After taking a fibrant replacement of $X$, we see that the claim reduces to the case where $X$ is fibrant.
In that case, we way work with the explicit description of $\tau_1 Y$ and $\tau_1 X$ from the previous subsection.
The claim then follows by solving a lifting problem
\[
\xymatrix{
  \Delta^1
  \ar[r]^{f}
  \ar@{>->}[d]_{\anod}
&
  Y
  \ar@{->>}[d]
\\
  \Delta^0 \star P
  \ar[r]
  \ar@{.>}[ur]
&
  X
}
\]
where the bottom map is the witness of $pf \approx pg$ and the left map comes from one of the inclusions $\Delta^1 \to P$ and is observed to be anodyne.
\end{proof}

\subsection{Homotopies}

We have an \emph{interval} $(I, \delta_0, \delta_1)$ (without contraction) given by $I \defeq \Delta^1$ with endpoint inclusions $\delta_0, \delta_1$ given by $h_0^1, h_1^1 \co \Delta^0 \to \Delta^1$, respectively.
This interval together with the geometric monoidal structure induces a notion of \emph{homotopy} $h \co f_0 \sim f_1$ between maps $f_0, f_1 \co X \to Y$ of semisimplicial sets, consisting of a map $h \co \Delta^1 \otimes X \to Y$ that restricts to $f_0$ and $f_1$ on $\delta_0 \otimes X$ and $\delta_1 \otimes X$, respectively.
This is equivalent to an edge between their transposes $\overline{f_0}$ and $\overline{f_1}$ in $\hom(X, Y)$.

A zig-zag of such edges (or just an edge if $Y$ is fibrant) exists precisely if there is a morphism connecting the images of $\overline{f_0}$ and $\overline{f_1}$ in $\tau_1(\hom(X, Y))$ (or an equality $\overline{f_0} = \overline{f_1}$ in $\tau_0(\hom(X, Y))$.
Note that this is different from a natural isomorphism $\tau_1(f_0) \to \tau_1(f_1)$ (or an equality $\tau_0(f_0) = \tau_0(f_1)$).
Thus, the existence of zig-zags of homotopies can be tested in the set enrichment and groupoidal enrichment of $\widehat{\Delta_+}$; when the target is fibrant, this also applies to single homotopies instead of zig-zags.

In the spirit of~\cite{riehl-verity:2-cat-of-quasicat}, when working with homotopies targetting fibrant objets, we will freely exploit the relation to the groupoidal enrichment and regard homotopies as morphisms in hom-groupoids with relatedness of homotopies given by equality in the hom-groupoid, yielding notions of horizontal composition and vertical composition and inversion for homotopies that is closed under relatedness.
Given a morphism in $\hom(B, C)$ represented by homotopy $H \co I \otimes B \to C$, note that the right whiskering with an object of $\hom(A, B)$ given by a map $f \co A \to B$ is represented by $H \circ (I \otimes f)$ and that the left whiskering with an object of $\hom(C, D)$ given by a map $g \co C \to D$ is represented by $g \circ H$.
This approach will enable us make use of classical 2-categorical statements such as the following.

\begin{lemma} \label{graduate-prelemma}
Let $\mathcal{C}$ be a 2-category.
Given $f \co A \to A$ and an isomorphism $H \co f \to \id_A$, we have $fH = Hf$.
\end{lemma}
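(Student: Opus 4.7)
The plan is to apply the middle-four interchange law to the horizontal composite $H * H \co ff \to \id_A \id_A = \id_A$ and then cancel $H$ using its invertibility.

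Concretely, in any 2-category the horizontal composite of two 2-cells $\alpha \co f \to g$ and $\beta \co h \to k$ (of composable 1-cells) admits the two expressions
\[
\beta * \alpha = (\beta g) \circ_2 (h \alpha) = (k \alpha) \circ_2 (\beta f),
\]
where we use the convention that whiskering by a 1-cell is the corresponding left/right whiskering of the 2-cell. Applying this to $\alpha = \beta = H \co f \to \id_A$, and using that whiskering by the identity 1-cell $\id_A$ of a 2-cell returns that 2-cell, we obtain the identity
\[
H \circ_2 (Hf) = H * H = H \circ_2 (fH).
\]

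Since $H$ is an isomorphism, it is left-cancellable under vertical composition, and we conclude $Hf = fH$. There is no real obstacle here; the only content is identifying whiskering-by-identity with the original 2-cell and then invoking interchange. Note that the same argument works in any groupoidal-enriched setting where the interchange law holds, so when we later apply this lemma in the homotopy $(2,1)$-category of fibrant semisimplicial sets, we may freely regard $H$ as a morphism in a hom-groupoid and take $fH$, $Hf$ as the corresponding left and right whiskerings represented in the sense discussed above.
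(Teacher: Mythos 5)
Your proof is correct and is essentially the paper's own argument: both expand the horizontal composite $H * H$ in two ways via the interchange law and then cancel the invertible $H$ under vertical composition. The only difference is the (immaterial) ordering convention for $\circ_2$ and which side the cancellation is performed on.
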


\begin{proof}
By interchange, we have
\[
fH \circ_2 H = HH = Hf \circ_2 H
.\]
The claim follows by invertibility of $H$.
\end{proof}

We say that $f \co A \to B$ is a \emph{homotopy equivalence} if there is $g \co B \to A$ with homotopies $H \co gf \sim \id_A$ and $K \co fg \sim \id_B$.
Homotopy equivalences become actual equivalences (or isomorphisms) in the groupoidal enrichment (or set enrichment, respectively) of semisimplicial sets (and come from those if $A$ and $B$ are fibrant).
We say this homotopy equivalence is \emph{coherent} if $f \circ H \approx K \circ (I \otimes f)$, \ie it is an adjoint equivalence (one of the equivalent conditions $fH = Kf$ and $gK = Hg$ holds) in the groupoidal enrichment of semisimplicial sets.

\begin{lemma}[Graduate Lemma] \label{graduate-lemma}
For any homotopy equivalence $(f, g, H, K)$ as above, there is a replacement $K'$ of $K$ such that $(f, g, H, K')$ forms a coherent homotopy equivalence.
\end{lemma}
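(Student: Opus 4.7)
The plan is to reduce the statement to a purely $2$-categorical construction in the groupoid enrichment of $\widehat{\Delta_+}$: the homotopy equivalence $(f,g,H,K)$ corresponds to an equivalence in a $2$-category with invertible $2$-cells, and the coherence condition $fH \approx K'(I \otimes f)$ translates to the triangle identity $K'f = fH$ there. The task thus becomes the classical one of promoting an equivalence to an adjoint equivalence by modifying the counit while keeping the unit fixed.

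Explicitly, I would set
\[
\xi \defeq (Kf) \circ_2 (fH)^{-1} \co f \to f,
\]
the invertible auto-$2$-cell measuring the failure of the triangle identity, and define
\[
K' \defeq K \circ_2 (\xi^{-1} g) \co fg \to \id_B,
\]
with $\xi^{-1} g \co fg \to fg$ the right whiskering of $\xi^{-1}$ by $g$. Right-whiskering with $f$ and expanding the formula for $\xi^{-1}$ gives
\[
K'f = (Kf) \circ_2 (fHgf) \circ_2 (Kfgf)^{-1},
\]
so the goal $K'f = fH$ reduces to the single equation $(Kf) \circ_2 (fHgf) = (fH) \circ_2 (Kfgf)$.

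This equation combines two ingredients. First, \cref{graduate-prelemma} applied to the endomorphism $gf \co A \to A$ and the invertible $2$-cell $H \co gf \to \id_A$ yields $(gf)H = H(gf)$; left-whiskering with $f$ gives $fgfH = fHgf$. Second, the interchange law applied to the $2$-cells $K \co fg \to \id_B$ and $fH \co fgf \to f$ gives $(fH) \circ_2 (Kfgf) = (Kf) \circ_2 (fgfH)$. Combining the two produces the desired equation. The real work is pure bookkeeping of whiskerings; the only conceptual point to justify is that the abstract $2$-cell $K'$ produced in the hom-groupoid is representable by a genuine homotopy $fg \sim \id_B$, which holds since vertical composition and inversion of homotopies are closed under relatedness as discussed in the preamble.
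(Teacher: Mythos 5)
Your proposal is correct and is essentially the paper's own argument: unwinding $\xi$, your $K' = K \circ_2 (\xi^{-1}g)$ is exactly the paper's $\inv_2(Kfg) \circ_2 fHg \circ_2 K$ (the apparent difference is only the composition-order convention), and the verification uses the same two ingredients, namely \cref{graduate-prelemma} applied to $H \co gf \to \id_A$ and the interchange law. The closing remark about realizing the hom-groupoid $2$-cell as an actual homotopy matches the paper's standing convention of working in the groupoidal enrichment restricted to fibrant targets.
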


\begin{proof}
We work in the groupoidal enrichment of semisimplicial sets restricted to fibrant objects.
We define a replacement morphism $K' \co fg \to \id_V$ for $K$ as follows:
\[
K' \defeq \inv_2(Kfg) \circ_2 fHg \circ_2 K
.\]
We now have
\begin{align*}
K' f
&=
\inv_2(Kfgf) \circ_2 fHgf \circ_2 Kf
\\&=
\inv_2(Kfgf) \circ_2 fgfH \circ_2 Kf \qquad \text{(by \cref{graduate-prelemma})}
\\&=
\inv_2(Kfgf) \circ_2 Kfgf \circ_2 fH
\\&=
fH
.\qedhere\end{align*}
\end{proof}

\medskip

Note that the usual notion of strong codeformation retract is not available for lack of degeneracies (though one could parametrize it over constant loops).
However, the notion of strong homotopy equivalence still makes sense.
Recall that $f \co A \to B$ is a \emph{strong homotopy equivalence} if there is a $g \co B \to A$ with homotopies $H \co gf \sim \id_A$ and $K \co fg \sim \id_B$ such that $f \circ H = K \circ (I \otimes f)$.
Note that this is a stricter requirement than merely $f \circ H \approx K \circ (I \otimes f)$ as for a coherent homotopy equivalence.
In contrast to coherent homotopy equivalences, note that $f \circ H = K \circ (I \otimes f)$ is not equivalent to the dual condition $g \circ K = H \circ (I \otimes g)$.

The formal importance of strong homotopy equivalences derives from the following observation.
A map $f$ is a strong homotopy equivalence exactly if $\theta \hatotimes f$ admits a retraction or equivalently
$\hathom(\theta, f)$ admits a section where $\theta$ is the square
\[
\xymatrix{
  0
  \ar[r]
  \ar[d]
&
  \top
  \ar[d]^{\delta_0}
\\
  \top
  \ar[r]^-{\delta_1}
&
  I
}
\]
seen in horizontal direction as a morphism in the arrow category.

\subsection{Homotopy theory}
\label{homotopy-theory}

\begin{lemma} \label{h-equiv-coherent-to-strong}
Let $(f, g, H, K)$ form a coherent homotopy equivalence between objects $U$ and $V$ with $V$ fibrant.
\begin{enumerate}
\item
If $f$ is a fibration, there is $H' \approx H$ such that $(f, g, H', K)$ forms a strong homotopy equivalence,
\item
If $f$ is a cofibration, there is $K' \approx K$ such that $(f, g, H, K')$ forms a strong homotopy equivalence.
\end{enumerate}
\end{lemma}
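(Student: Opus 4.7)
The plan is to deduce both statements from \cref{lifting-relatedness} applied to a suitably chosen fibration of function complexes, reading the coherence datum $f \circ H \approx K \circ (I \otimes f)$ as a relatedness of edges in $\hom(U, V)$ to be lifted.

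For part~(1), I would consider the postcomposition map $\hom(U, f) \co \hom(U, U) \to \hom(U, V)$. Since $U$ is cofibrant (every object is) and $f$ is a fibration, \cref{anodyne-tensor-cof} together with the adjunction $(- \otimes U) \dashv \hom(U, -)$ shows $\hom(U, f)$ is a fibration. Transposing $H \co I \otimes U \to U$ yields an edge $\tilde{H} \in \hom(U, U)_1$ whose image under $\hom(U, f)$ is the transpose of $f \circ H$, and coherence expresses exactly that $\hom(U, f)(\tilde{H}) \approx \widetilde{K \circ (I \otimes f)}$ in $\hom(U, V)_1$. Applying \cref{lifting-relatedness} produces $\tilde{H}' \in \hom(U, U)_1$ with $\tilde{H}' \approx \tilde{H}$ and $\hom(U, f)(\tilde{H}') = \widetilde{K \circ (I \otimes f)}$; transposing back gives the desired $H'$.

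For part~(2), I would dually consider the precomposition map $\hom(f, V) \co \hom(V, V) \to \hom(U, V)$. Since $f$ is a cofibration and $V$ is fibrant, the same anodyne-versus-cofibration closure (\cref{anodyne-tensor-cof}) combined with the pushout-product adjunction shows $\hom(f, V)$ is a fibration. Now $K$ transposes to an edge $\tilde{K} \in \hom(V, V)_1$, whose image under $\hom(f, V)$ is the transpose of $K \circ (I \otimes f)$, and coherence again says $\hom(f, V)(\tilde{K}) \approx \widetilde{f \circ H}$. Another application of \cref{lifting-relatedness} produces $\tilde{K}' \in \hom(V, V)_1$ with $\tilde{K}' \approx \tilde{K}$ and $\hom(f, V)(\tilde{K}') = \widetilde{f \circ H}$, giving $K'$ upon transposition.

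The only conceptual step is to recognize that the on-the-nose condition in the definition of strong homotopy equivalence is precisely what \cref{lifting-relatedness} is designed to produce from the up-to-relatedness condition of a coherent homotopy equivalence. The potential obstacle is verifying that the relevant hom-maps are fibrations, but this is routine given cofibrancy of $U$ (for (1)) and fibrancy of $V$ (for (2)) together with closure of anodynes under Leibniz tensor with cofibrations. No coherence juggling in the style of \cref{graduate-lemma} is needed here, since the coherence hypothesis is exactly the relatedness datum that \cref{lifting-relatedness} consumes.
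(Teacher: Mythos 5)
Your proposal is correct and is exactly the paper's argument: claim (1) is \cref{lifting-relatedness} applied to the fibration $\hom(U, f) \co \hom(U, U) \to \hom(U, V)$, and claim (2) is \cref{lifting-relatedness} applied to the fibration $\hom(f, V) \co \hom(V, V) \to \hom(U, V)$. You merely spell out the transposition of $H$ and $K$ and the (standard) verification that these hom-maps are fibrations, which the paper leaves implicit.
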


\begin{proof}
Claim~(i) is \cref{lifting-relatedness} applied to the fibration $\hom(U, f) \co \hom(U, U) \to \hom(U, V)$.
Claim~(ii) is \cref{lifting-relatedness} applied to the fibration $\hom(f, V) \co \hom(V, V) \to \hom(U, V)$.
\end{proof}

\begin{corollary} \label{anodyne-criteria}
The following are equivalent for a cofibration $m$ between fibrant objects:
\begin{enumerate}
\item $m$ is a homotopy equivalence,
\item $m$ is a strong homotopy equivalence,
\item $m$ is anodyne.
\end{enumerate}
\end{corollary}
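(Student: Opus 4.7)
The plan is to establish the cycle (iii) $\Rightarrow$ (ii) $\Rightarrow$ (i) $\Rightarrow$ (iii), where the middle implication is immediate from the definition of strong homotopy equivalence.

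\textbf{For (iii) $\Rightarrow$ (ii),} let $m \co U \arrcof V$ be anodyne between fibrant objects. Lift $\id_U$ along $m$ against the fibration $U \to \top$ to obtain $r \co V \to U$ with $r m = \id_U$. Produce a map $H \co I \otimes U \to U$ with $H \circ (\delta_i \otimes U) = \id_U$ for $i = 0, 1$ by viewing it as a loop at $\id_U$ in the fibrant object $\hom(U, U)$ and invoking the constant loop construction from the fundamental groupoid subsection. Then solve the lifting problem whose left map is $(\delta_0, \delta_1) \hatotimes m$, anodyne by~\cref{anodyne-tensor-cof} as the Leibniz geometric product of the cofibration $\partial\Delta^1 \to \Delta^1$ with the anodyne $m$; whose right map is the fibration $V \to \top$; and whose top map restricts to $[m r, \id_V]$ on $V + V$ and to $m H$ on $I \otimes U$, with the endpoint conditions on $H$ and $r m = \id_U$ providing compatibility. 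The resulting filler $K \co I \otimes V \to V$ satisfies $K \circ (I \otimes m) = m H$ by construction, giving strong homotopy equivalence data $(m, r, H, K)$.

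\textbf{For (i) $\Rightarrow$ (iii),} let $m \co U \arrcof V$ be a cofibration and homotopy equivalence between fibrant objects. Factor $m = p j$ with $j$ anodyne and $p$ a fibration; the intermediate object $W$ is fibrant as the domain of a fibration into a fibrant object. By (iii) $\Rightarrow$ (i), just established, $j$ is a homotopy equivalence, hence so is $p$ by 2-out-of-3 in the homotopy $(2,1)$-category of fibrant objects. Applying the Graduate Lemma~(\cref{graduate-lemma}) and then~\cref{h-equiv-coherent-to-strong}~(i) upgrades $p$ to a strong homotopy equivalence $(p, g, H, K)$ with $p H = K \circ (I \otimes p)$. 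To see $p$ is a trivial fibration, given a cofibration $i \co A \arrcof B$ with maps $\alpha \co A \to W$ and $\beta \co B \to V$ satisfying $p \alpha = \beta i$, solve the auxiliary problem
\[
\xymatrix{
  (I \otimes A) \cup_A B
  \ar[r]
  \ar@{>->}[d]_{\delta_0 \hatotimes i}
&
  W
  \ar@{->>}[d]^{p}
\\
  I \otimes B
  \ar[r]_-{K \circ (I \otimes \beta)}
  \ar@{.>}[ur]
&
  V
\rlap{,}}
\]
whose left map is anodyne by~\cref{anodyne-tensor-cof} and whose top map is $H \circ (I \otimes \alpha)$ on $I \otimes A$ glued with $g \beta$ on $B$ (compatibly via $g p \alpha$); the square commutes using $p H = K \circ (I \otimes p)$ and $p \alpha = \beta i$. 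For the filler $\tilde L$, setting $\gamma \defeq \tilde L \circ (\delta_1 \otimes B)$ yields $p \gamma = \beta$ and $\gamma i = \alpha$. Finally, lifting $j$ along $m$ against the trivial fibration $p$ exhibits $m$ as a retract of $j$ in the arrow category, so $m$ is anodyne by closure of anodyne maps under retracts.

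\textbf{Main obstacle.} The delicate step is the trivial-fibration argument inside (i) $\Rightarrow$ (iii): the strong homotopy data $H, K$ must be threaded through a single Leibniz-anodyne lifting problem against $\delta_0 \hatotimes i$, with all compatibilities simultaneously verified. A secondary subtlety is that $H$ in (iii) $\Rightarrow$ (ii) cannot be obtained by a naive lift against $U + U \to I \otimes U$, since $\partial\Delta^1 \to \Delta^1$ is not anodyne, so the detour through the constant loop construction on $\hom(U, U)$ is genuinely needed.
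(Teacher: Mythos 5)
Your proposal is correct, and your implication (iii) $\Rightarrow$ (ii) is exactly the paper's argument: lift the identity to get the retraction $r$, produce a loop at $\overline{\id_U}$ via the constant-loop construction on $\hom(U,U)$ (your remark that a naive lift against $i^1$ would not work is on point), and solve the lifting problem with left map $i^1 \hatotimes m$ and top map $[mr, mH, \id_V]$. Where you genuinely diverge is (i) $\Rightarrow$ (iii). The paper stays on the cofibration side: it applies \cref{graduate-lemma} together with part~(ii) of \cref{h-equiv-coherent-to-strong} (the cofibration case) to upgrade $m$ itself to a strong homotopy equivalence, and then uses the formal characterization recorded just before \cref{homotopy-theory} to see that such an $m$ is a retract of the anodyne map $\delta_0 \hatotimes m$, hence anodyne --- a two-line finish that also lets \cref{triv-fib-criteria} be proved ``dually''. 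You instead factor $m = pj$, use 2-out-of-3, pass to the fibration case of \cref{h-equiv-coherent-to-strong}, and then prove inline that the strong-homotopy-equivalence fibration $p$ lifts against all cofibrations (your lifting square does check out: the compatibility on $I \otimes A$ uses $pH = K \circ (I \otimes p)$, the $\delta_0$-end uses the endpoint conditions of $H$ and $K$, and evaluating the filler at $\delta_1$ gives the required lift), finishing with the standard retract argument. This is in effect an unfolded proof of the key implication of \cref{triv-fib-criteria}, so your route duplicates the content of the dual corollary and is longer, but it is self-contained and avoids invoking the retraction characterization of strong homotopy equivalences; the paper's route is shorter and keeps the two corollaries as formal duals.
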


\begin{proof}
Combining \cref{graduate-lemma} and \cref{h-equiv-coherent-to-strong}, we see that~(i) and~(ii) are equivalent.
If $m$ is a strong homotopy equivalence, then $m$ is a retract of the anodyne map $\delta_0 \hatotimes m$ and thus itself anodyne.

Let $m \co A \to B$ now be anodyne.
Lifting
\[
\xymatrix{
  A
  \ar[r]^{\id}
  \ar[d]_{m}
&
  A
  \ar[d]
\\
  B
  \ar[r]
  \ar@{.>}[ur]^{r}
&
  1
\rlap{,}}
\]
we obtain a retraction $r \co B \to A$.
Let $H \co I \otimes A \to A$ be some constant loop on $\overline{\id_A} \in \hom(A, A)_0$.
Lifting
\[
\xymatrix@C+1.5cm{
  B +_A I \otimes A +_A B
  \ar[r]^-{[mr, mH, \id_B]}
  \ar[d]_{i^1 \hatotimes m}
&
  B
  \ar[d]
\\
  I \otimes B
  \ar[r]
  \ar@{.>}[ur]^{K}
&
  1
\rlap{,}}
\]
we obtain a homotopy $K \co mr \sim \id_Y$ such that $m \circ H = K \circ (I \otimes m)$.
Thus, $(i, r, H, K)$ forms a strong homotopy equivalence.
\end{proof}

\begin{corollary} \label{triv-fib-criteria}
The following are equivalent for a fibration $p$ between fibrant objects:
\begin{enumerate}
\item $p$ is a homotopy equivalence,
\item $p$ is a strong homotopy equivalence,
\item $p$ is a trivial fibration.
\end{enumerate}
\end{corollary}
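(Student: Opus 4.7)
The plan is to mirror \cref{anodyne-criteria}, substituting each cofibration-side argument by its fibration-side dual. The equivalence (i) $\Leftrightarrow$ (ii) follows by the same two-step reasoning: \cref{graduate-lemma} promotes any homotopy equivalence to a coherent one, and \cref{h-equiv-coherent-to-strong}(i), whose fibration hypothesis is satisfied here, strictifies it to a strong homotopy equivalence.

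For (ii) $\Rightarrow$ (iii), given a strong equivalence datum $(q, H, K)$ with $pH = K(I \otimes p)$ and a lifting problem $a : A \to Y$, $b : B \to X$ against $p$ along some cofibration $m : A \to B$, I would assemble a lifting square whose left side is $\delta_0 \hatotimes m$, top map $[H(I \otimes a),\, qb] : (I \otimes A) \cup_A B \to Y$, and bottom map $K(I \otimes b) : I \otimes B \to X$. The strong identity is precisely what makes the two maps agree on the $I \otimes A$ component after postcomposition with $p$. Since $\delta_0$ is the horn inclusion $h_0^1$, the left side is anodyne by \cref{anodyne-tensor-cof}, so the fibration $p$ admits a lift $\tilde K$; restricting at $\delta_1 \otimes B$ then yields the required diagonal of the original square, using $H(\delta_1 \otimes Y) = \id_Y$ and $K(\delta_1 \otimes X) = \id_X$.

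For (iii) $\Rightarrow$ (ii) I would construct a strong equivalence datum directly. Lift $\id_X$ through the trivial fibration $p$ to obtain a section $q$ with $pq = \id_X$, and take $K$ to be the constant homotopy on $\id_X$, available as $I \otimes X \to \top \otimes X = X$ via the unique map $I \to \top$. A homotopy $H : qp \sim \id_Y$ satisfying $pH = K(I \otimes p)$ is then produced by lifting the evident square with cofibration side $\partial\Delta^1 \otimes Y \to I \otimes Y$, top map $[qp,\, \id_Y]$, and bottom map $K(I \otimes p)$ against the trivial fibration $p$; the square commutes because, with $pq = \id_X$, both sides restrict to $p$ at each endpoint of $\partial\Delta^1$.

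The main obstacle is orchestrating the compatibilities in (ii) $\Rightarrow$ (iii): the proposed top and bottom maps must glue along the anodyne cofibration $\delta_0 \hatotimes m$, and this is exactly the content of the strong identity $pH = K(I \otimes p)$. The remainder of each implication then reduces to standard lifting and endpoint bookkeeping, provided by \cref{anodyne-tensor-cof} and the definition of trivial fibration.
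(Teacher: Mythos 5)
Your handling of (i)$\Leftrightarrow$(ii) is exactly the paper's (the proof of \cref{triv-fib-criteria} is by dualizing \cref{anodyne-criteria}, so it combines \cref{graduate-lemma} with part~(i) of \cref{h-equiv-coherent-to-strong}). For (ii)$\Rightarrow$(iii) the paper's dual route is the retract argument: a strong homotopy equivalence $p$ is a retract of $\hathom(\delta_0, p)$, which lifts against cofibrations because $\delta_0 \hatotimes m$ is anodyne for every cofibration $m$ (\cref{anodyne-tensor-cof}); your argument simply unfolds that retract into a direct solution of an arbitrary lifting problem, and your bookkeeping (gluing along $\delta_0 \otimes A$ using $H(\delta_0 \otimes Y) = qp$ and $pa = bm$, commutativity from $pH = K(I \otimes p)$, then restricting the lift at $\delta_1 \otimes B$) is correct.

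The genuine gap is in (iii)$\Rightarrow$(ii): there is no map $I \to \top$ in semisimplicial sets, so your $K$ does not exist as described. Here $\top = \Delta^0$ is representable and has no $1$-simplices (there are no monomorphisms $[1] \to [0]$ in $\Delta_+$); this is precisely why the paper calls $(I, \delta_0, \delta_1)$ an interval \emph{without contraction} and remarks that strong deformation retract notions are unavailable ``for lack of degeneracies''. Constant homotopies cannot be formed by collapsing the interval; they must be produced from fibrancy. The repair is the dual of what the paper does in \cref{anodyne-criteria}: take $K \co I \otimes X \to X$ to be a constant loop on $\overline{\id_X} \in \hom(X, X)_0$, which can always be produced since $\hom(X, X)$ is fibrant ($X$ being fibrant); in fact any loop at $\overline{\id_X}$ suffices. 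With such a $K$ (whose two endpoints are $\id_X$), the rest of your step goes through verbatim: the square with cofibration side $i^1 \otimes Y$ commutes, and lifting against the trivial fibration $p$ yields $H \co qp \sim \id_Y$ with $pH = K(I \otimes p)$, so $(p, q, H, K)$ is a strong homotopy equivalence.
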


\begin{proof}
This proceeds dually to \cref{anodyne-criteria}.
\end{proof}

Since equivalences in a $(2, 1)$-category satisfy 2-out-of-6, so do homotopy equivalences between fibrant objects.
Thus, \cref{anodyne-criteria,triv-fib-criteria} show that the full subcategory $(\widehat{\Delta_+})_\fib$ of semisimplicial sets on fibrant objects form a model structure (cofibrations, homotopy equivalences, fibrations).
However, since this notion comes without any guaranteed limits or colimits, it is not particular useful.
The following proposition records some useful properties of finite limits in this context.

\begin{theorem} \label{semisimplicial-sets-fib-cat}
Fibrant semisimplicial sets form a fibration category.
\end{theorem}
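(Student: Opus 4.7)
The plan is to verify the five axioms of a fibration category (dual to the cofibration category axioms recalled in the preliminaries) for $(\widehat{\Delta_+})_\fib$, with weak equivalences the homotopy equivalences and fibrations the semisimplicial fibrations. Most of the substantive work has already been done in \cref{anodyne-criteria,triv-fib-criteria} and in the paragraph immediately preceding the statement, so the proof should amount to assembling these inputs and invoking standard closure properties of right lifting classes.

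First I would establish the terminal-object and universal fibrancy axiom: the terminal semisimplicial set $1$ has a unique simplex in each dimension, so every horn admits a (unique) filler, hence $1$ is fibrant and remains terminal in $(\widehat{\Delta_+})_\fib$; and for any fibrant $A$, the unique map $A \to 1$ is a fibration simply by fibrancy of $A$. Pullback-stability is handled uniformly: in the ambient presheaf topos all pullbacks exist, and both fibrations and trivial fibrations (the latter identified via \cref{triv-fib-criteria} with maps lifting against all cofibrations) are defined by right lifting properties and thus are stable under pullback; if we pull back such a map along a map with fibrant source, the pullback lies in $(\widehat{\Delta_+})_\fib$ because $P \to A \to 1$ is then a composite of fibrations.

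For the factorization axiom I would apply the (anodyne, fibration) weak factorization system to an arbitrary $f \co A \to B$ between fibrant objects, producing $A \arranod C \arrfib B$; fibrancy of $B$ forces fibrancy of $C$, and \cref{anodyne-criteria} identifies $A \to C$ as a homotopy equivalence, yielding the desired (weak equivalence, fibration) factorization. The 2-out-of-6 property for weak equivalences is precisely the observation recorded in the paragraph just above the theorem: homotopy equivalences between fibrant objects correspond to equivalences in the $(2, 1)$-categorical groupoidal enrichment, and equivalences in a $(2, 1)$-category satisfy 2-out-of-6. I do not anticipate a genuine obstacle; the one step worth flagging is this implicit use of \cref{anodyne-criteria} to pass from the generic-wfs factorization to one whose left factor is a weak equivalence in the fibrant-subcategory sense, since in $(\widehat{\Delta_+})_\fib$ weak equivalences are defined via homotopies rather than via anodyne cofibrations.
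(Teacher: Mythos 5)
Your proof is correct and follows essentially the same route as the paper: the paper likewise checks that the terminal object is fibrant and that pullbacks along fibrations exist and stay in the subcategory, and then refers the factorization, 2-out-of-6, and stability of trivial fibrations to the model structure on fibrant objects established just before the theorem via \cref{anodyne-criteria,triv-fib-criteria}. Your write-up merely unpacks those last steps explicitly (including the point you flag, which is exactly the role \cref{anodyne-criteria} plays there).
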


\begin{proof}
The embedding of fibrant semisimplicial sets into semisimplicial sets reflects limits.
The terminal object in semisimplicial sets is fibrant, hence also terminal in fibrant semisimplicial sets.
Fibrations in semisimplicial sets are closed under pullback as they are defined by a lifting property, hence pullbacks in fibrant semisimplicial sets along fibrations exist and are also fibrations there.
The rest of the properties of a fibration category follow from the model structure.
\end{proof}

Let us now extend the notion of weak equivalence to maps between arbitrary objects.
We say that a map $A \to B$ is a \emph{weak equivalence} if there is a commuting diagram
\[
\xymatrix{
  A  
  \ar[r]^{\anod}
  \ar[d]
&
  X
  \ar[d]
\\
  B
  \ar[r]^{\anod}
&
  Y
}
\]
with anodyne maps and fibration as indicated such that $X$ and $Y$ are fibrant and $X \to Y$ is a homotopy equivalence.
We will see below in \cref{def-weak-equiv-invariant} that $X \to Y$ being a homotopy equivalence does not depend on the choice of fibrant replacements $X$ and $Y$ and lift $X \to Y$ of $A \to B$.
In particular, a map between fibrant objects is a weak equivalence exactly if homotopy equivalence.
As such, we can also define the notion of weak equivalence with respect to a fixed notion of fibrant replacement and a fixed chosen lift $X \to Y$ so that the definition does not involve quantification over (potentially large) semisimplicial sets.

Note that isomorphisms are weak equivalences because the identity on a fibrant semisimplicial sets is a homotopy equivalence.

\begin{lemma} \label{anod-equal-homotopic}
Consider a diagram
\[
\xymatrix{
  A
  \ar[r]_{\anod}^{j}
&
  X
  \ar@<+0.5em>[r]^{f}
  \ar@<-0.5em>[r]_{g}
&
  Y
}
\]
with $X$ and $Y$ fibrant, $j$ anodyne, and $jf = jg$.
Then $f$ and $g$ are homotopic.
\end{lemma}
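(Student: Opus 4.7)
The plan is to reframe the claim as finding an edge between the transposes $\overline{f}, \overline{g} \co \Delta^0 \to \hom(X, Y)$, since a homotopy $f \sim g$ is by definition such an edge (using that $Y$, hence $\hom(X, Y)$, is fibrant).

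The crucial observation is that $\hom(j, Y) \co \hom(X, Y) \to \hom(A, Y)$ is a trivial fibration: both objects are fibrant since $Y$ is, and the lifting property against cofibrations follows by adjunction from \cref{anodyne-tensor-cof}, which gives that $i \hatotimes j$ is anodyne for any cofibration $i$. The hypothesis $jf = jg$ says precisely that $\overline{f}$ and $\overline{g}$ lie in the same fiber of this trivial fibration, over the vertex $\overline{jf} = \overline{jg}$ of $\hom(A, Y)$.

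To produce the desired edge, one then solves the lifting problem
\[
\xymatrix{
  \partial\Delta^1
  \ar[r]^-{[\overline{f}, \overline{g}]}
  \ar@{>->}[d]
&
  \hom(X, Y)
  \ar@{->>}[d]^{\triv}
\\
  \Delta^1
  \ar[r]_-{\ell}
  \ar@{.>}[ur]
&
  \hom(A, Y)
\rlap{,}}
\]
where $\ell$ is any self-edge of $\hom(A, Y)$ at $\overline{jf}$. The diagonal lift is then an edge from $\overline{f}$ to $\overline{g}$ in $\hom(X, Y)$, i.e., a homotopy $H \co f \sim g$.

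The main obstacle is producing the self-edge $\ell$: there is no degeneracy in semisimplicial sets (indeed no map $\Delta^1 \to \Delta^0$ at all), so one cannot simply take a constant edge. This is resolved by invoking the constant loop construction from the fundamental groupoid subsection, which applies to any vertex of any fibrant object; restricting a constant loop along the quotient $\Delta^1 \to L$ supplies the required $\ell$.
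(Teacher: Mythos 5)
Your proof is correct and is essentially the paper's argument seen through the tensor--hom adjunction: your lifting problem of $i^1$ against the trivial fibration $\hom(j, Y)$ with bottom edge $\ell$ is exactly the adjoint transpose of the paper's lifting of the anodyne map $i^1 \hatotimes j$ (anodyne by \cref{anodyne-tensor-cof}) against $Y \to 1$, with $\ell$ playing the role of the paper's reflexivity homotopy $H \co fj \sim gj$. Both arguments rest on the same two ingredients, namely constant loops at vertices of fibrant objects and \cref{anodyne-tensor-cof}.
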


\begin{proof}
Since $Y$ is fibrant, we have a reflexivity homotopy $H \co fj \sim gj$.
We then solve a lifting problem
\[
\xymatrix@C+1cm{
  X +_A I \otimes A +_A X
  \ar[r]^-{[f, H, g]}
  \ar[d]_{i^1 \hatotimes j}
&
  Y
  \ar[d]
\\
  I \otimes X
  \ar[r]
  \ar@{.>}[ur]
&
  1
\rlap{,}}
\]
noting that the left map is anodyne.
\end{proof}

\begin{corollary} \label{span-anod-h-equiv}
Given a span
\[
\xymatrix@C-0.5cm{
&
  A
  \ar[dl]_{\anod}
  \ar[dr]^{\anod}
\\
  X_1
&&
  X_2
}
\]
with $X_1$ and $X_2$ fibrant, there is a map $X_1 \to X_2$ under $A$ that is a homotopy equivalence.
\end{corollary}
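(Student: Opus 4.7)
The plan is to construct the map $X_1 \to X_2$ by lifting, and then verify it is a homotopy equivalence using \cref{anod-equal-homotopic} applied to both $X_1$ and $X_2$.

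First I would use the anodyne/fibration lifting property. Since $j_1 \co A \arranod X_1$ is anodyne and $X_2$ is fibrant, the lifting problem
\[
\xymatrix{
  A
  \ar[r]^{j_2}
  \ar@{>->}[d]_{\anod}^{j_1}
&
  X_2
  \ar@{->>}[d]
\\
  X_1
  \ar[r]
  \ar@{.>}[ur]^{f}
&
  1
}
\]
has a solution $f \co X_1 \to X_2$ satisfying $f j_1 = j_2$, so $f$ is a map under $A$. By the symmetric argument with $j_2 \co A \arranod X_2$ and $X_1$ fibrant, we obtain $g \co X_2 \to X_1$ with $g j_2 = j_1$.

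Next I would check that $f$ and $g$ are homotopy inverses. We compute $g f j_1 = g j_2 = j_1 = \id_{X_1} \circ j_1$, so by \cref{anod-equal-homotopic} (applied to the anodyne map $j_1$ into the fibrant $X_1$), the maps $gf$ and $\id_{X_1}$ are homotopic. Dually, $f g j_2 = f j_1 = j_2 = \id_{X_2} \circ j_2$ gives, via \cref{anod-equal-homotopic} applied to $j_2$, a homotopy $fg \sim \id_{X_2}$. Thus $f$ is a homotopy equivalence.

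There is essentially no obstacle here; the statement is a direct combination of the lifting property defining anodyne maps against fibrant objects and the uniqueness-up-to-homotopy result of \cref{anod-equal-homotopic}. The only small subtlety worth flagging is that we should not conflate the homotopies produced on the two sides with those of a \emph{coherent} or \emph{strong} homotopy equivalence — \cref{anod-equal-homotopic} produces them independently — but for the statement at hand, merely exhibiting $f$ as a homotopy equivalence under $A$ is all that is required.
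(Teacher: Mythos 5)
Your proposal is correct and matches the paper's own proof: produce $f \co X_1 \to X_2$ and $g \co X_2 \to X_1$ by lifting the anodyne maps against the fibrant objects, then apply \cref{anod-equal-homotopic} on each side to see that $gf \sim \id_{X_1}$ and $fg \sim \id_{X_2}$. No issues.
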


\begin{proof}
We can produce maps $X_1 \to X_2$ and $X_2 \to X_1$ using lifting problems of anodyne maps against fibrant objects.
These are homotopy inverses by \cref{anod-equal-homotopic}.
\end{proof}

\begin{lemma} \label{def-weak-equiv-invariant}
Given a commuting diagram
\[
\xymatrix{
  X_1
  \ar[d]
&
  A  
  \ar[r]^-{\anod}
  \ar[l]_-{\anod}
  \ar[d]
&
  X_2
  \ar[d]
\\
  Y_1
&
  B
  \ar[r]^-{\anod}
  \ar[l]_-{\anod}
&
  Y_2
}
\]
with anodyne maps and fibration as indicated such that $X_1, X_2, Y_1, Y_2$ are fibrant, if $X_1 \to Y_1$ is a homotopy equivalence, then so is $X_2 \to Y_2$.
\end{lemma}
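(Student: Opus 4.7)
The plan is to build a homotopy equivalence $X_1 \to X_2$ (respectively $Y_1 \to Y_2$) compatible with the given anodyne maps from $A$ (respectively $B$), show that these assemble into a homotopy-commutative square with the two vertical maps $X_i \to Y_i$, and then invoke 2-out-of-3 for homotopy equivalences to transport the property from $X_1 \to Y_1$ to $X_2 \to Y_2$.

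Let $a_i \co A \to X_i$ and $b_i \co B \to Y_i$ be the anodyne maps, let $\alpha \co A \to B$ be the middle vertical arrow, and write $f_i \co X_i \to Y_i$ for the vertical maps. First I would construct $g \co X_1 \to X_2$ with $g a_1 = a_2$ by lifting $a_2$ along the anodyne map $a_1$ into the fibrant object $X_2$; by \cref{span-anod-h-equiv}, this $g$ is automatically a homotopy equivalence. Analogously, I would produce a homotopy equivalence $h \co Y_1 \to Y_2$ with $h b_1 = b_2$.

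Next, I would verify that the square
\[
\xymatrix{
  X_1 \ar[r]^{g} \ar[d]_{f_1} & X_2 \ar[d]^{f_2} \\
  Y_1 \ar[r]_{h} & Y_2
}
\]
commutes up to homotopy. Both composites $h f_1$ and $f_2 g$ from $X_1$ to $Y_2$ agree after precomposition with the anodyne map $a_1$: one has $h f_1 a_1 = h b_1 \alpha = b_2 \alpha$ and $f_2 g a_1 = f_2 a_2 = b_2 \alpha$, both equalities coming from commutativity of the given diagram. Since $Y_2$ is fibrant and $a_1$ is anodyne, \cref{anod-equal-homotopic} yields $h f_1 \sim f_2 g$.

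Finally, I would conclude: homotopy equivalences between fibrant objects coincide with the equivalences in the groupoidal enrichment and thus satisfy 2-out-of-3. Assuming $f_1$ is a homotopy equivalence, the composite $h f_1$ is one, hence so is its homotopic partner $f_2 g$, and since $g$ is a homotopy equivalence, 2-out-of-3 forces $f_2$ to be a homotopy equivalence. There is no serious obstacle here; the only subtlety is the stability of "homotopy equivalence" under the relation $\sim$, which is automatic because the property descends to the homotopy $(2,1)$-category where it is an isomorphism condition.
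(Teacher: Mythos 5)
Your proposal is correct and follows essentially the same route as the paper: produce homotopy equivalences $X_1 \to X_2$ under $A$ and $Y_1 \to Y_2$ under $B$ via \cref{span-anod-h-equiv}, use \cref{anod-equal-homotopic} to get homotopy-commutativity of the resulting square, and conclude by 2-out-of-3 together with invariance of equivalences under 2-cell isomorphism in the groupoidal enrichment. Your explicit verification that the two composites agree after precomposition with the anodyne map is exactly the omitted detail behind the paper's appeal to \cref{anod-equal-homotopic}.
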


\begin{proof}
Using \cref{span-anod-h-equiv}, we produce maps $X_1 \to X_2$ under $A$ and $Y_1 \to Y_2$ under $B$ that are homotopy equivalences.
By \cref{anod-equal-homotopic}, the square
\[
\xymatrix{
  X_1
  \ar[r]
  \ar[d]
&
  X_2
  \ar[d]
\\
  Y_1
  \ar[r]
&
  Y_2
}
\]
commutes up to homotopy.
The claim then follows from 2-out-of-3 and invariance under 2-cell isomorphism of equivalences in the groupoidal enrichment of $\widehat{\Delta_+}$.
\end{proof}

\begin{corollary} \label{weak-equiv-2-out-of-6}
Semisimplicial sets form a homotopical category, \ie weak equivalences satisfy 2-out-of-6.
\end{corollary}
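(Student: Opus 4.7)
The plan is to reduce \cref{weak-equiv-2-out-of-6} to 2-out-of-6 for equivalences in a $(2,1)$-category, which was noted just before \cref{semisimplicial-sets-fib-cat}, by passing to a fibrant replacement of the composable string of three maps.

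Concretely, suppose we are given composable maps $f \co A \to B$, $g \co B \to C$, $h \co C \to D$ such that $gf$ and $hg$ are weak equivalences. First I choose fibrant replacements: anodyne maps $j_A \co A \arranod A'$, $j_B \co B \arranod B'$, $j_C \co C \arranod C'$, $j_D \co D \arranod D'$ with $A', B', C', D'$ fibrant. By lifting the anodyne maps against the fibrant targets, one obtains dashed fillers
\[
\xymatrix{
  A \ar@{>->}[r]^{j_A}_{\anod} \ar[d]_{f} & A' \ar@{.>}[d]^{f'} \\
  B \ar@{>->}[r]^{j_B}_{\anod} & B'
}
\qquad
\xymatrix{
  B \ar@{>->}[r]^{j_B}_{\anod} \ar[d]_{g} & B' \ar@{.>}[d]^{g'} \\
  C \ar@{>->}[r]^{j_C}_{\anod} & C'
}
\qquad
\xymatrix{
  C \ar@{>->}[r]^{j_C}_{\anod} \ar[d]_{h} & C' \ar@{.>}[d]^{h'} \\
  D \ar@{>->}[r]^{j_D}_{\anod} & D'
}
\]
that commute on the nose. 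Pasting these horizontally, the composites $g'f'$ and $h'g'$ are lifts of $gf$ and $hg$ to the chosen fibrant replacements. By \cref{def-weak-equiv-invariant} (applied with the fibrant-replacement squares for $gf$ and $hg$ produced using $g'f'$, resp.\ $h'g'$ as middle arrow), $g'f'$ and $h'g'$ are homotopy equivalences between fibrant objects.

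Now I pass to the groupoidal enrichment of $\widehat{\Delta_+}$ restricted to fibrant objects. In this $(2,1)$-category, homotopy equivalences are exactly the equivalences, so $g'f'$ and $h'g'$ are equivalences. Equivalences in any $(2,1)$-category satisfy 2-out-of-6, so $f'$, $g'$, $h'$, and $h'g'f'$ are all equivalences in the hom-groupoid enrichment, i.e.\ homotopy equivalences between fibrant objects.

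Finally I translate back: the squares above exhibit $f'$, $g'$, $h'$ as fibrant replacements of $f$, $g$, $h$, and the triple paste exhibits $h'g'f'$ as a fibrant replacement of $hgf$. To match the shape of the definition of weak equivalence, I replace each lifted map by a fibration between fibrant objects using the factorization part of the weak factorization system (anodyne, fibration) followed by composition of the anodyne component with the fibrant-replacement anodyne map; invariance under this refinement again follows from \cref{def-weak-equiv-invariant}. Hence $f$, $g$, $h$, and $hgf$ are weak equivalences, proving 2-out-of-6. The only slightly delicate point — and the main thing one must be careful about — is to keep track that \cref{def-weak-equiv-invariant} really lets one switch between a lift of a composite and a composite of lifts; but this is immediate since any two lifts through the same anodyne map into the same fibrant object agree by \cref{anod-equal-homotopic}.
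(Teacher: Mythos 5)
Your proof is correct and follows essentially the same route as the paper: choose an objectwise fibrant replacement of the string of three maps, invoke \cref{def-weak-equiv-invariant} to transfer the hypotheses, and conclude by 2-out-of-6 for equivalences in the groupoidal enrichment restricted to fibrant objects. The only quibble is your closing remark: \cref{anod-equal-homotopic} gives that two lifts through an anodyne map into a fibrant object are \emph{homotopic}, not equal, but this is harmless since the pasted squares commute on the nose and \cref{def-weak-equiv-invariant} already absorbs any change of lift.
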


\begin{proof}
Given a sequence of three composable arrows, we choose an objectwise fibrant replacement, yielding a sequence of three composable arrows between fibrant objects.
By \cref{def-weak-equiv-invariant}, 2-out-of-6 for weak equivalences of the original sequence reduces to 2-out-of-6 for homotopy equivalences between fibrant objects of the new sequence.
\end{proof}

\begin{lemma} \label{weak-equiv-to-fib-between-fibrant-factorization}
In the arrow category, any map from a weak equivalence to a fibration between fibrant objects factors via a homotopy equivalence between fibrant objects.
\end{lemma}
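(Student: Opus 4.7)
My plan is to build the intermediate homotopy equivalence between fibrant objects by a staged fibrant replacement over the target fibration. Starting from a commuting square with $w \co A \to B$ a weak equivalence, $p \co X \arrfib Y$ a fibration and $X, Y$ fibrant, I first factor $B \to Y$ as $B \arranod B' \arrfib Y$; since $Y$ is fibrant, so is $B'$. I then form the pullback $X' \defeq X \times_Y B'$, which is fibrant and carries a fibration $X' \arrfib B'$ pulled back from $p$ together with a fibration $X' \arrfib X$ pulled back from $B' \arrfib Y$. The universal property of this pullback produces a canonical map $A \to X'$ assembled from $A \to X$ and $A \to B \arranod B'$, which I factor as $A \arranod A' \arrfib X'$ to obtain a fibrant $A'$ sitting between $A$ and $X$.

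This yields a factorization of the original morphism in the arrow category through the composite $A' \arrfib X' \arrfib B'$, which is a fibration between fibrant objects. It remains to show that the middle map $A' \to B'$ is a homotopy equivalence. By definition of weak equivalence, there exists \emph{some} commuting fibrant-replacement square for $w$ whose right vertical is a fibration between fibrant objects and a homotopy equivalence. The anodyne maps $A \arranod A'$ and $B \arranod B'$ together with the fibration $A' \arrfib B'$ form another such fibrant-replacement square for $w$, so \cref{def-weak-equiv-invariant} transfers the homotopy equivalence property from the existential replacement to $A' \arrfib B'$.

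I anticipate no serious obstacle: the construction is essentially a two-step fibrant replacement, and \cref{def-weak-equiv-invariant} is precisely the tool that makes weak equivalences insensitive to the choice of fibrant replacement. The only subtlety worth flagging is that the second factorization is performed \emph{relative to} $X'$ rather than directly over $X$; fibrantly replacing $A$ over $X$ alone would not produce a morphism to $B'$ at all, and it is precisely this relativisation that ensures $A' \to B'$ is a fibration and that the factorization commutes with both the top and bottom arrows of the original square.
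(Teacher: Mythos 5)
Your construction is correct, but it takes a genuinely different route from the paper. The paper's proof stays with the replacement square supplied by the definition of weak equivalence (anodyne maps $A \to S$, $B \to T$ with $S \to T$ a homotopy equivalence between fibrant objects) and factors the original square through it directly, by first extending $B \to X$ along the anodyne $B \to T$ using fibrancy of the base, and then solving the lifting problem of the anodyne $A \to S$ against the given fibration; no new objects are built. You instead manufacture a fresh replacement adapted to the fibration: factor the bottom map as anodyne followed by fibration, pull the fibration back, factor the induced map out of $A$, and then invoke \cref{def-weak-equiv-invariant} to transfer the homotopy-equivalence property from the definitional replacement to your $A' \to B'$. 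All the steps check out constructively (fibrations are closed under composition and pullback, the (anodyne, fibration) factorization exists, the square $A \to A'$, $B \to B'$ over $A' \to B'$ commutes because the projection $X' \to B'$ composed with the induced map $A \to X'$ is $A \to B \to B'$, and \cref{def-weak-equiv-invariant} does not require its vertical maps to be fibrations, so your appeal to it is legitimate even though the definitional replacement need not have a fibration as its right leg). What your version buys is extra structure: the intermediate map $A' \to B'$ is itself a fibration between fibrant objects, and the comparison maps $A' \to X$ and $B' \to Y$ are fibrations as well, which is more than the statement asks for; the price is a longer construction. The paper's argument is shorter and uses only the two lifting steps, which is all that is needed for its subsequent application in \cref{cof-weak-equiv-lift-fib-between-fibrant}.
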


\begin{proof}
Consider a commuting square
\begin{equation} \label{weak-equiv-to-fib-between-fibrant-factorization:0}
\begin{gathered}
\xymatrix{
  A
  \ar[r]
  \ar[d]^{\sim}
&
  Y
  \ar@{->>}[d]
\\
  B
  \ar[r]
&
  X
}
\end{gathered}
\end{equation}
with $X$ fibrant.
Since $A \to B$ is a weak equivalence, we have a commuting square
\begin{equation} \label{weak-equiv-to-fib-between-fibrant-factorization:1}
\begin{gathered}
\xymatrix{
  A  
  \ar[r]^{\anod}
  \ar[d]^{\sim}
&
  S
  \ar[d]
\\
  B
  \ar[r]^{\anod}
&
  T
}
\end{gathered}
\end{equation}
with $S \to T$ a homotopy equivalence between fibrant objects.
The desired factoring of~\eqref{weak-equiv-to-fib-between-fibrant-factorization:0} through \eqref{weak-equiv-to-fib-between-fibrant-factorization:1} is then given by lifting first $B \to T$ against $X$ and then $A \to S$ against $Y \to X$.
\end{proof}

\begin{corollary} \label{weak-equiv-lift-hom-fibrant}
Weak equivalences lift against fibrant objects up to homotopy.
\qed
\end{corollary}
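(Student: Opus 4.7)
The plan is to unpack the statement as follows: given a weak equivalence $f \co A \to B$ and a map $g \co A \to Y$ with $Y$ fibrant, we want to produce a map $h \co B \to Y$ with $hf \sim g$. This should follow almost immediately from the preceding \cref{weak-equiv-to-fib-between-fibrant-factorization}, applied to the square whose right vertical edge is the fibration $Y \to 1$ between fibrant objects and whose top edge is $g$.

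The factorization lemma then supplies a commuting diagram
\[
\xymatrix{
  A
  \ar[r]^{a}
  \ar[d]_{f}^{\sim}
&
  S
  \ar[r]^{s}
  \ar[d]^{w}
&
  Y
  \ar[d]
\\
  B
  \ar[r]^{b}
&
  T
  \ar[r]
&
  1
}
\]
with $S, T$ fibrant, $w$ a homotopy equivalence, $sa = g$ and $wa = bf$. Pick a homotopy inverse $w' \co T \to S$ of $w$ (available since we work between fibrant objects), and set $h \defeq s w' b$. Then
\[
hf = s w' b f = s w' w a \sim s a = g,
\]
where the middle homotopy comes from post-composing $w' w \sim \id_S$ with $s$ and pre-composing with $a$, using that homotopies between maps into a fibrant object (here $Y$) behave functorially as morphisms in the groupoidal enrichment as discussed prior to \cref{graduate-prelemma}.

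There is essentially no obstacle here: the entire content is packaged into \cref{weak-equiv-to-fib-between-fibrant-factorization}. The only mildly non-trivial point is the interpretation of the statement — namely that ``lifts against a fibrant object $Y$'' refers to the fibration $Y \to 1$ — and the observation that the factorization through a homotopy equivalence between fibrant objects is exactly what is needed to invert, up to homotopy, along a diagonal that would not exist on the nose.
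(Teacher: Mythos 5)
Your proposal is correct and is exactly the argument the paper intends: the corollary is stated with an immediate \qed as a direct consequence of \cref{weak-equiv-to-fib-between-fibrant-factorization} applied to the square over the fibration $Y \to 1$, and your construction $h = s w' b$ with the whiskered homotopy $sw'wa \sim sa$ is the standard way to spell that out. No gaps.
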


\begin{lemma} \label{cofibration-h-equiv-fibration-lift}
Consider a commuting diagram
\[
\xymatrix{
  A
  \ar[r]
  \ar@{>->}[d]
&
  S
  \ar[r]
  \ar[d]
&
  Y
  \ar@{->>}[d]
\\
  B
  \ar[r]
  \ar@{.>}[urr]
&
  T
  \ar[r]
&
  X
}
\]
where the middle vertical map is a homotopy equivalence between fibrant objects.
Then the composite square has a diagonal filler as indicated.
\end{lemma}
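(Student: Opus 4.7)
The plan is to reduce to the two simpler lifting scenarios already at our disposal, namely cofibrations against trivial fibrations, and cofibrations against fibrations along anodyne maps. The key trick is to interpose a fibrant replacement of $S$ relative to $T$ so that the middle column splits into an anodyne map followed by a trivial fibration.

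First, I factor the middle map $S \to T$ as $S \xrightarrow{j} Z \xrightarrow{q} T$ with $j$ anodyne and $q$ a fibration. Since $T$ is fibrant, so is $Z$. The map $j$ is a homotopy equivalence between fibrant objects by \cref{anodyne-criteria}, and $S \to T$ is a homotopy equivalence by hypothesis, so by 2-out-of-3 for equivalences in the groupoidal enrichment, $q$ is also a homotopy equivalence between fibrant objects. By \cref{triv-fib-criteria}, $q$ is a trivial fibration.

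Next, I solve two lifting problems. The first uses the cofibration $A \rightarrowtail B$ against the trivial fibration $q \co Z \twoheadrightarrow T$, with top map $A \to S \xrightarrow{j} Z$ and bottom map $B \to T$; this commutes because $q \circ j = S \to T$ and $A \to S \to T = A \to B \to T$ by the left square of the original diagram. The resulting lift is $r \co B \to Z$ satisfying $r \circ (A \to B) = j \circ (A \to S)$ and $q \circ r = (B \to T)$. The second uses the anodyne $j \co S \arranod Z$ against the fibration $Y \twoheadrightarrow X$, with top map $S \to Y$ and bottom map $Z \xrightarrow{q} T \to X$; this commutes because $S \to Y \to X = S \to T \to X$ by the right square of the original diagram. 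The resulting lift is $s \co Z \to Y$ satisfying $s \circ j = (S \to Y)$ and $(Y \to X) \circ s = (Z \to T \to X)$.

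I then claim that $s \circ r \co B \to Y$ is the desired diagonal filler. Upper triangle: $s \circ r \circ (A \to B) = s \circ j \circ (A \to S) = (A \to S \to Y)$. Lower triangle: $(Y \to X) \circ s \circ r = (Z \to T \to X) \circ r = (B \to T \to X)$. No step is really an obstacle; the only mild subtlety is verifying that the intermediate square for the second lift commutes, which uses commutativity of the right square in the original hypothesis rather than that of the middle map.
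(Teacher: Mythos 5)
Your proof is correct and follows essentially the same route as the paper: factor the middle map $S \to T$ as an anodyne map followed by a fibration, use 2-out-of-3 together with \cref{anodyne-criteria,triv-fib-criteria} to see the second factor is a trivial fibration, and obtain the filler as the composite of a lift of the cofibration $A \rightarrowtail B$ against that trivial fibration with a lift of the anodyne map against $Y \twoheadrightarrow X$. The extra verifications of square commutativity and of fibrancy of the intermediate object are fine and just make explicit what the paper leaves implicit.
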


\begin{proof}
Factor $S \to T$ into an anodyne map $S \to M$ followed by a fibration $M \to T$.
By 2-out-of-3 for homotopy equivalences between fibrant objects and \cref{anodyne-criteria,triv-fib-criteria}, we have that $M \to T$ is a trivial fibration.
Then the desired lift is the composite of a lift of the cofibration $A \to B$ against the trivial fibration $M \to Y$ and a lift of the anodyne map $S \to M$ against the fibration $Y \to X$.
\end{proof}

The following statement is a relative version the ``generalized extension property of Kan $\Delta$-sets`` of \cite[Corollary~5.4]{rourke-sanderson:delta-sets}, but with our a priori weaker notion of weak equivalence.

\begin{corollary} \label{cof-weak-equiv-lift-fib-between-fibrant}
Cofibrations that are weak equivalences lift against fibrations between fibrant objects.
\end{corollary}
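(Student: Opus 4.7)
The plan is to combine the two preceding lemmas directly. Given a lifting problem
\[
\xymatrix{
  A \ar[r] \ar@{>->}[d]_{m}^{\sim} & Y \ar@{->>}[d]^{p} \\
  B \ar[r] & X
}
\]
with $m$ a cofibration that is a weak equivalence and $p$ a fibration between fibrant objects, I first invoke \cref{weak-equiv-to-fib-between-fibrant-factorization} on the given square (viewed as a morphism from the weak equivalence $m$ to the fibration $p$ in the arrow category). This produces a factorization of the square through a homotopy equivalence $S \to T$ between fibrant objects, giving a commuting diagram
\[
\xymatrix{
  A \ar[r] \ar@{>->}[d]_{m} & S \ar[r] \ar[d]^{\sim} & Y \ar@{->>}[d]^{p} \\
  B \ar[r] & T \ar[r] & X\rlap{.}
}
\]

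Now I apply \cref{cofibration-h-equiv-fibration-lift} to this very diagram: its hypotheses are precisely that the left vertical map is a cofibration, the middle vertical map is a homotopy equivalence between fibrant objects, and the right vertical map is a fibration. The lemma produces a diagonal $B \to Y$ making the outer square commute, which is the desired lift.

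The argument is essentially bookkeeping: the real work has been done in the two preceding lemmas. The nontrivial content of \cref{weak-equiv-to-fib-between-fibrant-factorization} is that, because the target fibration $p$ has fibrant codomain $X$, the two chosen fibrant replacements used to certify $m$ as a weak equivalence can be routed into the square by successive lifts (an anodyne-versus-fibrant-object lift on the bottom row and an anodyne-versus-fibration lift on the top). The nontrivial content of \cref{cofibration-h-equiv-fibration-lift} is the factorization of the middle homotopy equivalence between fibrant objects into an anodyne map followed by a trivial fibration, using \cref{anodyne-criteria,triv-fib-criteria} and 2-out-of-3 to identify the trivial fibration. Hence no obstacle remains at this stage beyond invoking the two lemmas in sequence.
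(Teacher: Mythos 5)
Your proposal is correct and is exactly the paper's argument: the paper's proof consists precisely of combining \cref{weak-equiv-to-fib-between-fibrant-factorization} and \cref{cofibration-h-equiv-fibration-lift} in the same way you do. Your extra commentary on where the real work happens in those two lemmas matches the paper's proofs of them as well.
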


\begin{proof}
Combine \cref{weak-equiv-to-fib-between-fibrant-factorization,cofibration-h-equiv-fibration-lift}.
\end{proof}

\begin{remark} \label{dunce-hat}
Any anodyne map is a cofibration and a weak equivalence, but the reverse direction does not hold.
The smallest counterexample is the \emph{Dunce hat} inclusion $\Delta^0 \to D$ where $D$ is the semisimplicial set with one vertex, one edge, and one triangle.
Taking the pushout
\[
\xymatrix{
  \Delta^0 \star L
  \ar[r]^{d_0 \star L}
  \ar[d]
&
  \Delta^1 \star L
  \ar[d]
\\
  D
  \ar[r]
&
  \pullbackcorner{ul}
  Q
\rlap{,}}
\]
the horizontal maps are anodyne, as can be checked for the composite
\[
\xymatrix{
  \Delta^0
  \ar[r]
&
  D
  \ar[r]
&
  Q
\rlap{,}}
\]
showing by 2-out-of-3 that the cofibration $\Delta^0 \to D$ is a weak equivalence.
If $\Delta^0 \to D$ was anodyne, it would arise as a codomain retract of an $\omega$-composition $\Delta^0 \to S$ of pushouts of coproducts of horn inclusions, but $S$ cannot contain a triangle with identical sides as any horn filling comes with a ``fresh'' side.
\end{remark}

Given a category $\mathcal{D}$, the geometric monoidal structure and the interval lift to $[\mathcal{D}, \widehat{\Delta_+}]$, inducing a notion of homotopy there.

\begin{lemma} \label{homotopies-limit-colimit}
Homotopies are closed under limit and colimit, \ie the limit and colimit functors from $[\mathcal{D}, \widehat{\Delta_+}]$ to $\widehat{\Delta_+}$ preserve the homotopy relation.
\end{lemma}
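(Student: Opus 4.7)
The plan is to treat colimits and limits separately. In both cases, a homotopy in $[\mathcal{D}, \widehat{\Delta_+}]$ between $f_0, f_1 \co X \to Y$ is a natural transformation $h \co I \otimes X \to Y$ (with $I \otimes X$ formed levelwise in the diagram category) whose restrictions along the endpoint inclusions $\delta_0 \otimes X$ and $\delta_1 \otimes X$ recover $f_0$ and $f_1$; the goal in each case is to produce a corresponding homotopy between the image maps in $\widehat{\Delta_+}$.

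For colimits, I would appeal to cocontinuity of the functor $I \otimes - \co \widehat{\Delta_+} \to \widehat{\Delta_+}$, which follows from closedness of the geometric symmetric monoidal structure (the internal hom $\hom(I, -)$ being a right adjoint). This yields a canonical natural isomorphism $I \otimes \colim X \cong \colim (I \otimes X)$. The induced homotopy is then the composite
\[
I \otimes \colim X \xrightarrow{\cong} \colim (I \otimes X) \xrightarrow{\colim h} \colim Y,
\]
and the endpoint conditions follow by naturality of the comparison isomorphism applied to the maps $\delta_i \otimes X(d)$ at each $d \in \mathcal{D}$.

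For limits, the dual argument does not apply directly since $I \otimes -$ need not be continuous (it fails, for example, to preserve the terminal object). Instead I would construct the required homotopy $\tilde h \co I \otimes \lim X \to \lim Y$ straight from the universal property of $\lim Y$: for each $d \in \mathcal{D}$, the composite
\[
I \otimes \lim X \xrightarrow{I \otimes \pi_d} I \otimes X(d) \xrightarrow{h_d} Y(d)
\]
is natural in $d$ by naturality of the projections $\pi_d$ and of $h$, hence assembles into a cone over $Y$ and factors uniquely through $\lim Y$. Each endpoint condition $\tilde h \circ (\delta_i \otimes \lim X) = \lim f_i$ is then verified by postcomposing with each projection $\pi_d \co \lim Y \to Y(d)$, which by the defining property of $\tilde h$ and naturality of $\delta_i$ reduces to the componentwise assumption $h_d \circ (\delta_i \otimes X(d)) = (f_i)_d$.

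No significant obstacle is expected here; the only conceptual subtlety is that the limit case cannot be reduced to preservation of limits by $I \otimes -$ and must instead rely directly on the universal property of the limit cone, whereas the colimit case is a clean consequence of cocontinuity.
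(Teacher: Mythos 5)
Your proof is correct. The limit half is exactly what the paper does: its proof just says the limit case is ``direct'', meaning the cone argument you spell out from the universal property of $\lim Y$. For the colimit half you take the adjoint-transposed route. The paper instead uses the path-object representation: a homotopy is a map $X \to \hom(I, Y)$ in $[\mathcal{D}, \widehat{\Delta_+}]$, so composing its components with the maps $\hom(I, Y(d)) \to \hom(I, \mathrm{colim}\, Y)$ induced by the coprojections gives a cocone, and the universal property of $\mathrm{colim}\, X$ yields the required map $\mathrm{colim}\, X \to \hom(I, \mathrm{colim}\, Y)$ -- the mirror image of your limit argument, with no need to identify $I \otimes \mathrm{colim}\, X$ with a colimit. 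You instead invoke cocontinuity of $I \otimes -$ (legitimate, since the geometric monoidal structure is closed, so $I \otimes -$ is a left adjoint) and transport the homotopy through the comparison isomorphism. The two arguments are transposes of one another under the tensor--hom adjunction, so nothing essential is gained or lost: your version makes the colimit case a one-line consequence of left-adjointness, while the paper's keeps both cases as bare universal-property arguments and never needs a (co)continuity property of the cylinder; your observation that the limit case cannot be handled by continuity of $I \otimes -$ (it does not preserve the terminal object) is also accurate.
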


\begin{proof}
For the limit functor this is direct, for the colimit functor use the path object representation (\ie a homotopy between maps $A \to B$ is a map $A \to [I, B]$).
\end{proof}

\begin{corollary} \label{homotopies-product-coproduct}
Homotopies are closed under product and coproduct (of arbitrary arity), \ie if $f_i \sim g_i$ for all $i \in I$, then $\prod_{i \in I} f_i \sim \prod_{i \in I} g_i$ and $\coprod_{i \in I} f_i \sim \coprod_{i \in I} g_i$.
\qed
\end{corollary}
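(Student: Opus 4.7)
The plan is to reduce the statement to \cref{homotopies-limit-colimit} by viewing products and coproducts over an index set $I$ as limits and colimits over the discrete category on $I$. Concretely, let $\mathcal{D}$ be the discrete category whose objects are elements of $I$, so that a functor $\mathcal{D} \to \widehat{\Delta_+}$ is exactly an $I$-indexed family of semisimplicial sets, and limits and colimits of such functors compute products and coproducts respectively.

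The key observation is that the geometric monoidal structure and the interval in $[\mathcal{D}, \widehat{\Delta_+}]$ are defined pointwise, since $\mathcal{D}$ is discrete (more generally this is immediate from how the lift in the paragraph preceding \cref{homotopies-limit-colimit} is constructed: colimits are computed pointwise in a presheaf category, and the interval is represented by $\Delta^1$ regarded as a constant functor). Therefore a homotopy in $[\mathcal{D}, \widehat{\Delta_+}]$ between $(f_i)_{i \in I}$ and $(g_i)_{i \in I}$ is precisely the data of a family of homotopies $h_i \colon f_i \sim g_i$, one for each $i \in I$. Given families of homotopies $f_i \sim g_i$ as in the hypothesis, we assemble them into a single homotopy in the functor category.

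Applying \cref{homotopies-limit-colimit} to this assembled homotopy, the limit functor sends it to a homotopy $\prod_{i \in I} f_i \sim \prod_{i \in I} g_i$ and the colimit functor sends it to a homotopy $\coprod_{i \in I} f_i \sim \coprod_{i \in I} g_i$, which is what we wanted.

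There is no real obstacle here: the only point to check is the pointwise nature of the lifted monoidal structure and interval on functor categories from a discrete source, which is immediate. The substantive content has been absorbed into \cref{homotopies-limit-colimit}, and this corollary is simply that lemma instantiated at discrete diagrams.
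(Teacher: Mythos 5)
Your proof is correct and is exactly the argument the paper intends: the corollary is stated with no written proof precisely because it is \cref{homotopies-limit-colimit} instantiated at a discrete indexing category, with the only (routine) check being that homotopies in the functor category from a discrete source amount to families of homotopies.
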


\begin{corollary} \label{h-equiv-product-coproduct}
Homotopy equivalences are closed under product and coproduct (of arbitrary arity).
\qed
\end{corollary}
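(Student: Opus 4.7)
The plan is to reduce this immediately to \cref{homotopies-product-coproduct}. Given a family of homotopy equivalences $f_i \co A_i \to B_i$, for each $i$ I would pick data $g_i \co B_i \to A_i$ and homotopies $H_i \co g_i f_i \sim \id_{A_i}$ and $K_i \co f_i g_i \sim \id_{B_i}$ witnessing the equivalence. The natural candidate for a homotopy inverse of $\prod_i f_i$ is $\prod_i g_i$, and for $\coprod_i f_i$ it is $\coprod_i g_i$.

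Next I would observe that products and coproducts are functorial, so $\prod_i (g_i f_i) = (\prod_i g_i)(\prod_i f_i)$ and likewise $\prod_i \id_{A_i} = \id_{\prod_i A_i}$, with analogous identities for coproducts. By \cref{homotopies-product-coproduct}, the family of homotopies $H_i$ assembles to a homotopy
\[
\prod_i H_i \co (\prod_i g_i)(\prod_i f_i) \sim \id_{\prod_i A_i},
\]
and symmetrically $\prod_i K_i$ witnesses the other triangle; this exhibits $\prod_i f_i$ as a homotopy equivalence. The coproduct case is identical, using the coproduct clause of \cref{homotopies-product-coproduct}.

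There is essentially no obstacle here: the content of the corollary is packaged entirely in \cref{homotopies-product-coproduct}, and the remaining work is the observation that a homotopy equivalence is determined by (in)equational data preserved by any functor compatible with $\otimes$ and $I$, which in this situation reduces to functoriality of $\prod$ and $\coprod$ together with the homotopy-preservation already established. Consequently, the proof can reasonably be left as the \texttt{\textbackslash qed}-ed one-liner already indicated in the statement.
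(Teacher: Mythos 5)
Your proposal is correct and is exactly the argument the paper intends: the corollary is left with a \qed precisely because it follows immediately from \cref{homotopies-product-coproduct} together with functoriality of $\prod$ and $\coprod$, which is what you spell out. No gap; constructively one just reads "homotopy equivalence" as coming equipped with the data $(g_i, H_i, K_i)$, as you implicitly do.
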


\begin{lemma} \label{hom-pres-homotopy}
The functors $- \otimes -$ and $\hom(-, -)$ preserve homotopies in each arguments.
\end{lemma}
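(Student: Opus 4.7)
The plan is to exploit the fact that the geometric product $\otimes$ and internal hom $\hom$ form a closed symmetric monoidal structure on $\widehat{\Delta_+}$. Recall a homotopy $h \co f_0 \sim f_1$ between maps $X \to Y$ is concretely a map $h \co I \otimes X \to Y$ with $h \circ (\delta_i \otimes X) = f_i$ (modulo left unitor). The idea is that each of the four cases reduces to assembling $h$ with associators, symmetries, and evaluations, so that no substantive homotopical content is required—only coherence bookkeeping.

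For $\otimes$ in the first argument, given $h$ as above and an object $Z$, I would take the homotopy $f_0 \otimes Z \sim f_1 \otimes Z$ to be the composite
\[
I \otimes (X \otimes Z) \xrightarrow{\sim} (I \otimes X) \otimes Z \xrightarrow{h \otimes Z} Y \otimes Z,
\]
where the first arrow is the associator. Endpoint agreement follows from naturality of the associator and unitor applied to $\delta_i$. The second argument is handled by first conjugating with the symmetry of $\otimes$.

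For $\hom$, I would pass to adjoint transposes via the closed structure. For the second (covariant) argument, the homotopy $\hom(Z, f_0) \sim \hom(Z, f_1)$ is the transpose under $Z \otimes - \dashv \hom(Z, -)$ of
\[
Z \otimes I \otimes \hom(Z, X) \xrightarrow{\sim} I \otimes Z \otimes \hom(Z, X) \xrightarrow{I \otimes \ev} I \otimes X \xrightarrow{h} Y,
\]
the first arrow being the symmetry swapping $Z$ and $I$. For the first (contravariant) argument, the homotopy $\hom(f_1, Z) \sim \hom(f_0, Z)$ is the transpose of
\[
X \otimes I \otimes \hom(Y, Z) \xrightarrow{\sim} I \otimes X \otimes \hom(Y, Z) \xrightarrow{h \otimes \hom(Y,Z)} Y \otimes \hom(Y, Z) \xrightarrow{\ev} Z.
\]

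There is no genuine obstacle; everything reduces to formal manipulation in a closed symmetric monoidal category, with the only labour being verification that the constructed maps restrict correctly at $\delta_0 \otimes -$ and $\delta_1 \otimes -$, which is immediate from the triangle identities for the $\otimes$--$\hom$ adjunction together with naturality of the coherence isomorphisms.
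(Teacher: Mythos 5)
Your proposal is correct and takes essentially the same route as the paper, whose entire proof reads ``by formal properties of the geometric symmetric monoidal structure''; you have simply spelled out those formal properties (associator/symmetry for $\otimes$, adjoint transposition and evaluation for $\hom$). One harmless bookkeeping slip: your construction for the contravariant argument restricts to $\hom(f_0, Z)$ at $\delta_0$ and $\hom(f_1, Z)$ at $\delta_1$, so it is a homotopy $\hom(f_0, Z) \sim \hom(f_1, Z)$ rather than the reversed direction you labelled.
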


\begin{proof}
By formal properties of the geometric symmetric monoidal structure.
\end{proof}

\begin{corollary} \label{hom-pres-h-equiv}
The functors $- \otimes -$ and $\hom(-, -)$ preserve homotopy equivalences in each arguments.
\qed
\end{corollary}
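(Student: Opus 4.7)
The plan is to reduce the statement directly to \cref{hom-pres-homotopy} via the observation that a homotopy equivalence is given by finitely many maps together with finitely many homotopies satisfying specified composition and identity constraints, and that the functors in question preserve composition (being functors) and send identities to identities.

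More concretely, given a homotopy equivalence $(f, g, H, K)$ between $B$ and $C$, where $f \co B \to C$, $g \co C \to B$, $H \co gf \sim \id_B$, and $K \co fg \sim \id_C$, I would first consider the covariant case, \ie fixing one argument in $- \otimes -$ or $\hom(-, -)$ on the appropriate side. Applying such a functor $F$ yields maps $F(f)$ and $F(g)$ with $F(g) F(f) = F(gf)$ and $F(\id_B) = \id_{F(B)}$ by functoriality, and \cref{hom-pres-homotopy} transports the homotopies $H$ and $K$ to homotopies $F(H) \co F(g) F(f) \sim \id_{F(B)}$ and $F(K) \co F(f) F(g) \sim \id_{F(C)}$. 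This exhibits $F(f)$ as a homotopy equivalence.

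For the contravariant argument of $\hom(-, -)$, the same reasoning applies with the only adjustment being that the direction of induced maps is reversed: fixing $A$, the functor $\hom(-, A)$ sends $f, g, H, K$ to $\hom(f, A)$, $\hom(g, A)$, $\hom(H, A)$, $\hom(K, A)$, and by functoriality $\hom(f, A) \hom(g, A) = \hom(gf, A)$, so the homotopy $\hom(H, A)$ witnesses $\hom(f, A) \hom(g, A) \sim \id_{\hom(B, A)}$, and dually for $\hom(K, A)$.

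There is no real obstacle here; the statement is a formal consequence of the preceding lemma combined with functoriality. The only mildly subtle point is keeping track of variance in $\hom$, but this is routine.
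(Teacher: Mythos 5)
Your argument is correct and is exactly the intended one: the paper leaves this corollary as immediate (\qed) from \cref{hom-pres-homotopy}, namely that applying the functor in one argument sends the data $(f, g, H, K)$ of a homotopy equivalence to the corresponding data, using functoriality for the composites and identities and the preceding lemma for the homotopies. Your handling of the variance in $\hom(-, A)$ is also the right bookkeeping, so there is nothing to add.
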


\begin{corollary} \label{hom-pres-weak-equiv-dom}
For $X$ fibrant, the functor $\hom(-, X)$ preserves weak equivalences.
\end{corollary}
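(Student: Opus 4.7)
The plan is to test a weak equivalence $f \co A \to B$ against the witnessing square provided by the definition of weak equivalence and verify that $\hom(-, X)$ carries its data to a homotopy equivalence between fibrant objects.

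First, I would pick anodyne maps $j_A \co A \to \tilde{A}$ and $j_B \co B \to \tilde{B}$ into fibrant objects together with a lift $\tilde{f} \co \tilde{A} \to \tilde{B}$ of $f$ that is a homotopy equivalence. Applying the contravariant functor $\hom(-, X)$ yields the commuting square
\[
\xymatrix{
  \hom(\tilde{B}, X)
  \ar[r]^{\hom(\tilde{f}, X)}
  \ar[d]_{\hom(j_B, X)}
&
  \hom(\tilde{A}, X)
  \ar[d]^{\hom(j_A, X)}
\\
  \hom(B, X)
  \ar[r]_{\hom(f, X)}
&
  \hom(A, X)
\rlap{.}}
\]
All four corners are fibrant: applying the adjoint closure of \cref{anodyne-tensor-cof} to the cofibration $0 \to Y$ and the fibration $X \to 1$ shows that $\hom(Y, X)$ is fibrant for every $Y$ whenever $X$ is.

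Next, I would identify each arm of the square. Applying the same adjoint closure, this time to the anodyne maps $j_A, j_B$ and the fibration $X \to 1$, shows that the verticals are trivial fibrations between fibrant objects, hence homotopy equivalences by \cref{triv-fib-criteria}. The top arrow $\hom(\tilde{f}, X)$ is a homotopy equivalence by \cref{hom-pres-h-equiv} applied to the homotopy equivalence $\tilde{f}$.

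Finally, since homotopy equivalences between fibrant objects satisfy 2-out-of-3 (they are the equivalences in the groupoidal enrichment of $(\widehat{\Delta_+})_\fib$), the bottom map $\hom(f, X)$ is itself a homotopy equivalence between fibrant objects, and therefore a weak equivalence (taking identity anodyne maps in the definition). I do not expect any genuine obstacle: the argument is a direct assembly of the preservation properties already established together with 2-out-of-3.
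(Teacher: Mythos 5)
Your argument is correct and is essentially the paper's own proof, just written out in full: the paper likewise observes that $\hom(j,X)$ is a trivial fibration between fibrant objects for $j$ anodyne, invokes \cref{hom-pres-h-equiv} for the lifted homotopy equivalence, and concludes by unfolding the definition of weak equivalence and applying 2-out-of-3.
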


\begin{proof}
Note first that $\hom(j, X)$ is a trivial fibration between fibrant objects for $j$ anodyne.
The claim then follows from this and \cref{hom-pres-h-equiv} by inspecting the definition of weak equivalence and 2-out-of-3.
\end{proof}

\begin{corollary} \label{cof-weak-equiv-hom-fib}
For $X$ fibrant, the functor $\hom(-, X)$ sends cofibrations that are weak equivalences to trivial fibrations.
\end{corollary}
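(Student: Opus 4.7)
The plan is to combine \cref{hom-pres-weak-equiv-dom} with \cref{triv-fib-criteria}: a fibration between fibrant objects is a trivial fibration exactly when it is a weak equivalence, so it suffices to verify that for a cofibration-and-weak-equivalence $m \co A \to B$, the map $\hom(m, X) \co \hom(B, X) \to \hom(A, X)$ is a fibration between fibrant objects, and is a weak equivalence.

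First I would verify the fibrancy and fibration claims. Both $\hom(A, X)$ and $\hom(B, X)$ are fibrant: by the closure of anodyne maps under Leibniz geometric product with cofibrations (\cref{anodyne-tensor-cof}) and the adjoint reformulation in terms of $\hathom(-, -)$, the pullback-exponential of any cofibration with $X \to 1$ is a fibration. To see that $\hom(m, X)$ itself is a fibration, note that it is the pullback-exponential $\hathom(m, X \to 1)$; lifting it against an arbitrary anodyne map $j$ transposes to lifting $j \hatotimes m$ against $X \to 1$, which is possible since $j \hatotimes m$ is anodyne by \cref{anodyne-tensor-cof} and $X$ is fibrant.

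Next I would invoke \cref{hom-pres-weak-equiv-dom}, applied to the weak equivalence $m$ and the fibrant object $X$, to conclude that $\hom(m, X)$ is a weak equivalence between fibrant objects. A weak equivalence between fibrant objects is a homotopy equivalence by \cref{def-weak-equiv-invariant} (taking identity fibrant replacements), so \cref{triv-fib-criteria} promotes our fibration $\hom(m, X)$ to a trivial fibration.

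There is no real obstacle: essentially all the input has already been proven. The only mildly delicate step is confirming that $\hom(m, X)$ is a fibration, which is the adjoint translation of \cref{anodyne-tensor-cof}; everything else is a direct appeal to prior results.
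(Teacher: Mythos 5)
Your proof is correct and matches the paper's intended argument (the paper leaves this corollary without an explicit proof, treating it as immediate): the adjoint form of \cref{anodyne-tensor-cof} makes $\hom(m,X)$ a fibration between fibrant objects, \cref{hom-pres-weak-equiv-dom} makes it a weak equivalence and hence a homotopy equivalence between fibrant objects, and \cref{triv-fib-criteria} upgrades it to a trivial fibration. No gaps.
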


\begin{lemma} \label{weak-equiv-joyal-characterization}
The following are equivalent for a map $f$:
\begin{enumerate}
\item $f$ is a weak equivalence,
\item $\hom(f, X)$ is a homotopy equivalence for all fibrant $X$,
\item $\tau_0(\hom(f, X))$ is a bijection all fibrant $X$, \ie precomposition with $f$ induces a bijection of homotopy classses when the target is fibrant.
\end{enumerate}
\end{lemma}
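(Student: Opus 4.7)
I plan to prove the equivalences in the cycle \mbox{(i)$\Rightarrow$(ii)$\Rightarrow$(iii)$\Rightarrow$(i)}.

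For (i)$\Rightarrow$(ii), I would apply \cref{hom-pres-weak-equiv-dom} to deduce that $\hom(f, X)$ is a weak equivalence. Since $X$ is fibrant, $\hom(A, X)$ and $\hom(B, X)$ are fibrant (as $\hom(-, X)$ takes cofibrations to fibrations, applied to $0 \to A$ and $0 \to B$). So it suffices to check that a weak equivalence $g \co Y_1 \to Y_2$ between fibrant objects is already a homotopy equivalence: unfolding the definition, there is a square with anodyne maps $Y_i \arranod Z_i$ into fibrant $Z_i$ and a homotopy equivalence $Z_1 \to Z_2$; by \cref{anodyne-criteria} the two anodyne cofibrations between fibrant objects are themselves homotopy equivalences, so 2-out-of-3 for equivalences in the groupoidal enrichment gives the claim.

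For (ii)$\Rightarrow$(iii), I would use that $\tau_0$ (viewed as the component functor of the enrichment) sends homotopy equivalences to isomorphisms of sets, since they represent equivalences in the groupoidal enrichment restricted to fibrant objects.

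The main content is (iii)$\Rightarrow$(i). Starting from $f \co A \to B$, I would factor $B \arranod Y$ into an anodyne map to a fibrant object, then factor the composite $A \to B \arranod Y$ as an anodyne map $A \arranod X$ followed by a fibration $f' \co X \arrfib Y$. This gives the square required by the definition of weak equivalence, so by \cref{def-weak-equiv-invariant} it suffices to show that $f'$ is a homotopy equivalence. For any fibrant $Z$, precomposition with the anodyne cofibrations $A \arranod X$ and $B \arranod Y$ yields trivial fibrations $\hom(X, Z) \arrtrivfib \hom(A, Z)$ and $\hom(Y, Z) \arrtrivfib \hom(B, Z)$ between fibrant objects (by \cref{cof-weak-equiv-hom-fib}), hence homotopy equivalences. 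Applying $\tau_0$ to the naturality square and using the hypothesis on the bottom edge, I conclude that $\tau_0(\hom(f', Z))$ is a bijection for every fibrant $Z$. A standard Yoneda-style extraction of a homotopy inverse finishes the proof: setting $Z = X$, surjectivity yields $g \co Y \to X$ with $g f' \sim \id_X$; setting $Z = Y$, both $f' g f'$ and $f' = \id_Y \circ f'$ map to the class of $f'$ in $\tau_0(\hom(X, Y))$, so injectivity of $\tau_0(\hom(f', Y))$ gives $f' g \sim \id_Y$.

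The hardest step is (iii)$\Rightarrow$(i): the delicate point is not the Yoneda argument itself, but first transferring the $\tau_0$-bijection hypothesis from $f$ to the replacement fibration $f'$, which relies on knowing that $\hom(-, Z)$ sends anodyne cofibrations to trivial fibrations between fibrant objects when $Z$ is fibrant—this is precisely what \cref{cof-weak-equiv-hom-fib} provides.
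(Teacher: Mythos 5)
Your proof is correct and follows essentially the same route as the paper: (i)$\Rightarrow$(ii) via \cref{hom-pres-weak-equiv-dom}, (ii)$\Rightarrow$(iii) trivially, and (iii)$\Rightarrow$(i) by passing to a fibrant replacement square and then running the Yoneda-style argument, which is exactly the paper's "reduce to fibrant $A$, $B$ and note that a map is an isomorphism in the homotopy category iff precomposition induces bijections", just spelled out concretely. The only cosmetic remark is that for the transfer step you could cite the fact that $\hom(j,Z)$ is a trivial fibration for $j$ anodyne and $Z$ fibrant directly (adjoint to \cref{anodyne-tensor-cof}) rather than going through \cref{cof-weak-equiv-hom-fib}, which presupposes that anodyne maps are weak equivalences.
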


\begin{proof}
The direction from~(i) to~(ii) is \cref{hom-pres-weak-equiv-dom}.
If~(ii) holds, then clearly does~(iii).
For the direction from~(iii) to~(i), we may reduce to the case that $A$ and $B$ are fibrant by unfolding the definition of weak equivalence and using 2-out-of-3 for bijections.
Then $f$ is a weak equivalence exactly if it becomes an isomorphism in the homotopy category of $\widehat{\Delta_+}$.
Noting that a map in an isomorphism exactly if precomposition with it induces bijections, this corresponds exactly to the given condition.
\end{proof}

\begin{corollary} \label{cof-weak-equiv-triv-fib-characterization}
A cofibration $m$ is a weak equivalence exactly if the fibration $\hom(m, Z)$ is trivial for all fibrant $Z$.
\qed
\end{corollary}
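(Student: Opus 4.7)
The plan is to split the biconditional and reduce each direction to a previously established result. The statement is essentially a direct packaging of \cref{weak-equiv-joyal-characterization,triv-fib-criteria,cof-weak-equiv-hom-fib}, but we should first observe why the word ``fibration'' in the statement is justified: for $Z$ fibrant and $m$ a cofibration, the map $\hom(m, Z)$ is a fibration between fibrant objects by the adjoint closure statement dual to \cref{anodyne-tensor-cof} (every horn inclusion $h$ satisfies $h \hatotimes m$ anodyne, hence lifts against $Z \to 1$).

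For the forward direction, if the cofibration $m$ is a weak equivalence, then $\hom(m, Z)$ is a trivial fibration for all fibrant $Z$ directly by \cref{cof-weak-equiv-hom-fib}.

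For the backward direction, suppose $\hom(m, Z)$ is a trivial fibration for all fibrant $Z$. Since $\hom(m, Z)$ is in particular a fibration between fibrant objects, \cref{triv-fib-criteria} upgrades the trivial fibration property to a homotopy equivalence. Thus $\hom(m, Z)$ is a homotopy equivalence for all fibrant $Z$, and criterion~(ii) of \cref{weak-equiv-joyal-characterization} yields that $m$ is a weak equivalence.

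There is no real obstacle here; the content of the corollary lies entirely in the preceding lemmas, which is consistent with the \texttt{\textbackslash qed} attached to the statement. The only point one must be careful about is verifying at the outset that $\hom(m, Z)$ is indeed a fibration between fibrant objects so that \cref{triv-fib-criteria} applies in the backward direction.
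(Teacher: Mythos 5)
Your proof is correct and matches the paper's intent: the corollary is marked \qed precisely because it is the combination of \cref{cof-weak-equiv-hom-fib} (forward direction) with \cref{triv-fib-criteria} and criterion~(ii) of \cref{weak-equiv-joyal-characterization} (backward direction), exactly as you argue. Your preliminary check that $\hom(m,Z)$ is a fibration between fibrant objects, via the adjoint form of \cref{anodyne-tensor-cof}, is the right justification for applying \cref{triv-fib-criteria} and for the wording of the statement.
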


\begin{corollary} \label{anodyne-is-weak-equiv}
Anodyne maps are weak equivalences.
\end{corollary}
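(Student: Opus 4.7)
The plan is to derive this directly from the definition of weak equivalence by constructing a suitable fibrant replacement square. Given an anodyne map $j \co A \arranod B$, I would first factor the map $B \to 1$ as an anodyne map $B \arranod Y$ followed by a fibration $Y \arrfib 1$, yielding a fibrant replacement of $B$. Since anodyne maps are closed under composition (being part of the left class of a weak factorization system, closed under $\omega$-composition of pushouts of coproducts of horn inclusions), the composite $A \arranod B \arranod Y$ is again anodyne.

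Now form the commuting square
\[
\xymatrix{
  A
  \ar[r]^-{\anod}
  \ar[d]_{j}
&
  Y
  \ar@{->>}[d]^{\id_Y}
\\
  B
  \ar[r]^-{\anod}
&
  Y
\rlap{.}}
\]
The horizontal arrows are anodyne, $Y$ is fibrant, and the right vertical map $\id_Y$ is certainly a fibration as well as a homotopy equivalence. This is exactly the data required by the definition of weak equivalence, so $j$ is a weak equivalence.

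There is no real obstacle here; the statement is essentially a formal unwinding of the definition together with closure of anodyne maps under composition. As an alternative route one could instead appeal to \cref{weak-equiv-joyal-characterization}: for any fibrant $Z$, adjoint closure gives that $\hom(j, Z) \arrtrivfib \hom(A, Z)$ is a trivial fibration between fibrant objects (as already noted inside the proof of \cref{hom-pres-weak-equiv-dom}), hence a homotopy equivalence by \cref{triv-fib-criteria}, which via criterion~(ii) of \cref{weak-equiv-joyal-characterization} shows $j$ is a weak equivalence. The direct argument above avoids even this detour.
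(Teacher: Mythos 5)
Your proposal is correct, but your primary argument takes a genuinely different route from the paper. The paper obtains the statement as an immediate corollary of the characterization \cref{weak-equiv-joyal-characterization} (equivalently \cref{cof-weak-equiv-triv-fib-characterization}): for $j$ anodyne and $X$ fibrant, $\hom(j, X)$ is a trivial fibration between fibrant objects (as noted in the proof of \cref{hom-pres-weak-equiv-dom}), hence a homotopy equivalence by \cref{triv-fib-criteria} — this is exactly the ``alternative route'' you sketch at the end. Your main argument instead unwinds the definition of weak equivalence directly: factor $B \to 1$ as an anodyne map followed by a fibration, take the composite anodyne map $A \to Y$ as the top horizontal, and put $\id_Y$ on the right. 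This is valid and arguably more elementary, using only the (anodyne, fibration) factorization and closure of anodyne maps under composition. The one step you wave through with ``certainly'' deserves a word: in the degeneracy-free setting even $\id_Y$ being a homotopy equivalence is not automatic, since it requires a homotopy $\id_Y \sim \id_Y$, i.e.\ a constant loop at $\overline{\id_Y}$ in $\hom(Y, Y)$; this exists because $Y$, and hence $\hom(Y, Y)$, is fibrant, and the paper records precisely this fact when observing that isomorphisms are weak equivalences. With that reference supplied, your square meets every clause of the definition ($\id_Y$ is trivially also a fibration), so the proof is complete. The trade-off: your route stands on its own without the hom-space machinery, whereas the paper's placement lets the statement fall out of the characterization it develops anyway for \cref{weak-equivalences-as-closure-of-andoyne} and the later comparison results.
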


The following statement is useful when reducing certain statements about weak equivalences to anodyne maps.

\begin{corollary} \label{weak-equivalences-as-closure-of-andoyne}
Weak equivalences are the closure of anodyne maps, more specifically just $\omega$-compositions of pushouts of coproducts of horn inclusions, under 2-out-of-6.
\end{corollary}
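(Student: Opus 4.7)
The plan is to prove that every weak equivalence lies in the 2-out-of-6 closure $\bar{J}$ of the class $J$ of $\omega$-compositions of pushouts of coproducts of horn inclusions; the reverse inclusion is immediate from \cref{anodyne-is-weak-equiv,weak-equiv-2-out-of-6}. As a preliminary, I would note that $J$ contains all identities (as empty $\omega$-compositions), so applying 2-out-of-6 to composable triples of the form $(\id_A, f, g)$, $(f, \id_B, g)$, and $(f, g, \id_C)$ upgrades $\bar{J}$ to satisfy 2-out-of-3 and to be closed under binary composition.

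The central step is a homotopy invariance principle for $\bar{J}$: if $f, g \co A \to B$ are homotopic and $f \in \bar{J}$, then $g \in \bar{J}$. Given a homotopy $H \co I \otimes A \to B$, the endpoint inclusions $\delta_0 \otimes A, \delta_1 \otimes A \co A \to I \otimes A$ arise as pushout products of the horn inclusions $\delta_0, \delta_1 \co \Delta^0 \to \Delta^1$ with $\emptyset \to A$, and the proof of \cref{anodyne-tensor-cof} exhibits such pushout products as concrete relative horn cell complexes, placing them in $J$. Then 2-out-of-3 cancels $\delta_0 \otimes A$ from $f = H \circ (\delta_0 \otimes A)$ to give $H \in \bar{J}$, and composition closure yields $g = H \circ (\delta_1 \otimes A) \in \bar{J}$. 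For a homotopy equivalence $\tilde{f} \co X \to Y$ between fibrant objects with inverse $g \co Y \to X$, applying this to $\id_X, \id_Y \in J$ places $g\tilde{f}$ and $\tilde{f}g$ in $\bar{J}$; then applying 2-out-of-6 to the composable sequence $X \xrightarrow{\tilde{f}} Y \xrightarrow{g} X \xrightarrow{\tilde{f}} Y$ puts $\tilde{f}$ in $\bar{J}$.

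Finally, for an arbitrary weak equivalence $f$, the definition supplies a commuting square with anodyne legs $j_A, j_B$ and a homotopy equivalence $\tilde{f}$ between fibrant objects as right-hand side. By \cref{def-weak-equiv-invariant} I can arrange for $j_A, j_B$ to be produced by the small object argument, so they are literal relative horn cell complexes, i.e., elements of $J$; by the previous paragraph $\tilde{f} \in \bar{J}$; hence $j_B f = \tilde{f} j_A \in \bar{J}$, and 2-out-of-3 cancels $j_B$ to conclude $f \in \bar{J}$. The main subtlety throughout is the gap between concrete cell complexes (elements of $J$) and the wfs saturation of horn inclusions (which additionally permits retracts): since 2-out-of-6 closure does not obviously pass through retracts, every auxiliary anodyne map that feeds into the argument — the endpoint inclusions of $I \otimes A$ and the fibrant replacement legs $j_A, j_B$ — must be shown to lie in $J$ by explicit construction, rather than invoked abstractly as anodyne.
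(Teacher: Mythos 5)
Your overall strategy is sound and is essentially the argument the paper leaves implicit behind its terse proof: the easy inclusion is \cref{anodyne-is-weak-equiv} together with \cref{weak-equiv-2-out-of-6}, and the hard inclusion goes exactly as you say --- identities give 2-out-of-3 and composition closure for the 2-out-of-6 closure $\bar J$ of your class $J$, homotopy invariance of $\bar J$ follows from the cylinder leg inclusions $\delta_\epsilon \otimes A$ together with 2-out-of-3, hence every homotopy equivalence between fibrant objects lies in $\bar J$ by the $(\tilde f, g, \tilde f)$ trick, and then an arbitrary weak equivalence lands in $\bar J$ after choosing replacements, with \cref{def-weak-equiv-invariant} covering the change of replacement.

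Where you diverge from the paper is in handling the refinement from anodyne maps to retract-free cell complexes, and this is also the one soft spot of your write-up. You assert that $\delta_\epsilon \otimes A$ lies in $J$ itself because the proof of \cref{anodyne-tensor-cof} ``exhibits such pushout products as concrete relative horn cell complexes''; but that lemma only yields anodyne-ness, i.e.\ membership in the weak saturation, which a priori involves retracts, and its proof presents the map as a colimit of a Reedy anodyne diagram whose latching inclusions are pushouts of \emph{generalized} horn inclusions --- that these are retract-free cell complexes of ordinary horns would need a further argument you do not give. Fortunately the detour is unnecessary, because the premise behind it (that 2-out-of-6 closure ``does not obviously pass through retracts'') is exactly what the paper's one remark resolves: codomain retracts do follow from 2-out-of-6 once identities are present. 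If $g$ is a codomain retract of $f \in \bar J$, say $sg = f$ and $rs = \id$, then applying 2-out-of-6 to the composable triple $(g, s, r)$ puts $g$ in $\bar J$; and the retract argument for the weak factorization system exhibits every anodyne map $j$ as a codomain retract (identity on domains) of the cell complex $i \in J$ from its small-object factorization, via the lift $t$ with $tj = i$ and $pt = \id$. With this observation you may cite $\delta_\epsilon \otimes A$ (and indeed $j_A$, $j_B$) simply as anodyne, and your proof closes without any claims about literal cell-complex presentations.
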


\begin{proof}
By \cref{anodyne-is-weak-equiv,weak-equiv-2-out-of-6}.
Note that closure under codomain retracts follows from closure under 2-out-of-6.
\end{proof}

The significance of the following (standard) corollary is in its applications to homotopy equivalences between non-fibrant objects.

\begin{corollary} \label{h-equiv-is-weak-equiv}
Homotopy equivalences are weak equivalences.
\qed
\end{corollary}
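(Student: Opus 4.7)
The plan is to route the proof through the characterization of weak equivalences in \cref{weak-equiv-joyal-characterization}, specifically the equivalence between (i) being a weak equivalence and (iii) inducing a bijection on homotopy classes into every fibrant object. This is much cleaner than trying to build the defining square directly from fibrant replacements and tracking a homotopy equivalence across the replacements.

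Concretely, let $f : A \to B$ be a homotopy equivalence. First I would apply \cref{hom-pres-h-equiv} to conclude that $\hom(f, X) : \hom(B, X) \to \hom(A, X)$ is a homotopy equivalence for every fibrant $X$ (fibrancy of the targets is not even needed here, since $\hom(-,-)$ preserves homotopy equivalences in each argument unconditionally). Then I would invoke the general fact, already noted in the subsection on enrichment in groupoids, that homotopy equivalences of semisimplicial sets descend to isomorphisms in the set enrichment $\tau_0 \hom(-,-)$. Thus $\tau_0(\hom(f, X))$ is a bijection for every fibrant $X$.

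Finally, \cref{weak-equiv-joyal-characterization} gives the implication from (iii) to (i), so $f$ is a weak equivalence.

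There is essentially no obstacle: the only thing to be mildly careful about is confirming that \cref{hom-pres-h-equiv} applies in the contravariant argument (it does, as stated: preservation ``in each argument''), and that the passage ``homotopy equivalence $\Rightarrow$ bijection on $\tau_0 \hom$'' is legitimate without fibrancy assumptions on $A$ and $B$ themselves (it is, since homotopic maps are identified in $\tau_0 \hom$ by construction, so a pair of mutually inverse-up-to-homotopy maps descends to a genuine inverse pair on $\tau_0 \hom(-, X)$). Thus the entire argument is one short chain of already-recorded facts.
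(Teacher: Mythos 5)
Your argument is correct and is exactly the route the paper intends: the corollary is stated without proof precisely because it follows immediately from \cref{hom-pres-h-equiv} together with the characterization in \cref{weak-equiv-joyal-characterization}. The only remark is that you could stop one step earlier and use the implication (ii)~$\Rightarrow$~(i) directly, rather than passing to $\tau_0$ and invoking (iii); both are fine.
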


\begin{corollary} \label{weak-equiv-coproduct}
Weak equivalences are closed under coproduct (of arbitrary arity).
\end{corollary}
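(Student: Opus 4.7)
The plan is to reduce the statement to homotopy equivalences via the characterization of weak equivalences by enriched hom-mapping in \cref{weak-equiv-joyal-characterization}, where the dual passage from coproducts to products makes the arity completely harmless.

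Concretely, let $(f_i \co A_i \to B_i)_{i \in I}$ be a family of weak equivalences and write $f = \coprod_i f_i \co \coprod_i A_i \to \coprod_i B_i$. To show $f$ is a weak equivalence, by \cref{weak-equiv-joyal-characterization} it suffices to verify that $\hom(f, X)$ is a homotopy equivalence for every fibrant $X$. Since $\hom(-, X)$ is a right adjoint, it sends coproducts to products, giving a natural isomorphism
\[
\hom(f, X) \;\simeq\; \prod_{i \in I} \hom(f_i, X)
\]
under which $\hom(f, X)$ identifies with the product of the maps $\hom(f_i, X)$.

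Now invoke \cref{weak-equiv-joyal-characterization} in the other direction: each $f_i$ being a weak equivalence implies each $\hom(f_i, X)$ is a homotopy equivalence (between the fibrant objects $\hom(B_i, X)$ and $\hom(A_i, X)$). By \cref{h-equiv-product-coproduct}, the product of an arbitrary family of homotopy equivalences is again a homotopy equivalence. Hence $\hom(f, X)$ is a homotopy equivalence for every fibrant $X$, and a second application of \cref{weak-equiv-joyal-characterization} yields that $f$ is a weak equivalence.

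There is essentially no obstacle here: the argument is entirely formal once one has both directions of the enriched characterization of weak equivalences and the closure of homotopy equivalences under arbitrary products. The only thing worth noting is that one should invoke the product (not coproduct) half of \cref{h-equiv-product-coproduct}, which in turn relies on the fact that limits preserve homotopies (\cref{homotopies-limit-colimit}) and thus commute with the hom-groupoid enrichment used to define homotopy equivalence.
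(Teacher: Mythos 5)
Your proof is correct and follows essentially the same route as the paper: the paper's own (terse) argument is exactly to invoke part~(ii) of \cref{weak-equiv-joyal-characterization} and reduce the claim to closure of homotopy equivalences between fibrant objects under product, given by \cref{h-equiv-product-coproduct}. Your write-up merely makes explicit the contravariant identification $\hom(\coprod_i f_i, X) \simeq \prod_i \hom(f_i, X)$ that the paper leaves implicit.
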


\begin{proof}
We use the characterization of weak equivalences given by part~(ii) of \cref{weak-equiv-joyal-characterization}.
Then the claim reduces to closure of homotopy equivalences between fibrant objects under product, given by \cref{h-equiv-product-coproduct}.
\end{proof}

\begin{corollary} \label{weak-equiv-retracts}
Weak equivalences are closed under retract.
\end{corollary}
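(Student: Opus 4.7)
The plan is to reduce the statement to the closure of bijections of sets under retract via the characterization in \cref{weak-equiv-joyal-characterization}.

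Suppose $f$ is a retract of $g$ in the arrow category and $g$ is a weak equivalence. For any fibrant $X$, applying the contravariant functor $\hom(-, X)$ yields that $\hom(f, X)$ is a retract of $\hom(g, X)$ in the arrow category of semisimplicial sets. Applying the further functor $\tau_0$, we obtain that $\tau_0(\hom(f, X))$ is a retract of $\tau_0(\hom(g, X))$ in the arrow category of sets. By \cref{weak-equiv-joyal-characterization}, $\tau_0(\hom(g, X))$ is a bijection since $g$ is a weak equivalence. Bijections of sets are clearly closed under retract (as isomorphisms in any category are), so $\tau_0(\hom(f, X))$ is also a bijection. Since this holds for all fibrant $X$, applying \cref{weak-equiv-joyal-characterization} in the other direction shows that $f$ is a weak equivalence.

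There is no real obstacle here: the work has already been done in establishing the hom-based characterization of weak equivalences. The only conceptual point is to use the version phrased with $\tau_0$ (bijections of sets) rather than homotopy equivalences directly, though the latter would work equally well since homotopy equivalences between fibrant objects are isomorphisms in the homotopy category and so also closed under retract.
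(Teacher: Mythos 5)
Your proof is correct and follows essentially the same route as the paper, which invokes \cref{weak-equiv-joyal-characterization} and reduces to closure of homotopy equivalences under retract (its part~(ii)) rather than bijections (your part~(iii)). The difference is immaterial, as you yourself observe.
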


\begin{proof}
We use the characterization of weak equivalences given by part~(ii) of \cref{weak-equiv-joyal-characterization}.
Then the claim reduces to closure of homotopy equivalences under retracts.
\end{proof}

%

\begin{lemma} \label{cof-weak-equiv-closure}
Weak equivalences that are cofibrations are closed under pushout as well as $\omega$-composition.
\end{lemma}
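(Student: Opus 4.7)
The plan is to use the characterization from \cref{cof-weak-equiv-triv-fib-characterization}: a cofibration $m$ is a weak equivalence exactly if $\hom(m, Z)$ is a trivial fibration for every fibrant $Z$. Since cofibrations are already closed under pushout and $\omega$-composition as the left class of a weak factorization system, only the weak-equivalence part needs verification. This reduces the problem to showing that trivial fibrations between fibrant objects are stable under pullback (for the pushout case) and under $\omega$-cotower limit projections (for the $\omega$-composition case).

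For pushouts, given a cofibration-weak-equivalence $m \co A \to B$ and its cobase change $m' \co C \to D$, and any fibrant $Z$, the contravariant functor $\hom(-, Z)$ turns the pushout square into a pullback square. Hence $\hom(m', Z)$ is the pullback of the trivial fibration $\hom(m, Z)$, and trivial fibrations are stable under pullback since they are characterized by a right lifting property against cofibrations.

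For $\omega$-compositions, let $A_0 \to A_1 \to \dotsb$ be a tower of cofibration-weak-equivalences with colimit $A_\infty$, yielding a cofibration $A_0 \to A_\infty$. For fibrant $Z$, applying $\hom(-, Z)$ converts this into the limit projection of an inverse $\omega$-tower $\dotsb \to \hom(A_1, Z) \to \hom(A_0, Z)$ whose transition maps are all trivial fibrations. To verify the right lifting property against a cofibration $i \co X \to Y$, given compatible maps $Y \to \hom(A_0, Z)$ and $X \to \lim_n \hom(A_n, Z)$, I would inductively construct a compatible family of lifts $Y \to \hom(A_n, Z)$: at each stage, one lifts $i$ against the trivial fibration $\hom(A_{n+1}, Z) \to \hom(A_n, Z)$, arranging that the chosen lift extends the previous level and agrees with the fixed map out of $X$. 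The resulting cone assembles to the required lift $Y \to \hom(A_\infty, Z)$.

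The main obstacle is essentially just the bookkeeping for the inductive lifting in the $\omega$-composition case; both closure results otherwise follow immediately from \cref{cof-weak-equiv-triv-fib-characterization} together with the fact that $\hom(-, Z)$ sends colimits to limits.
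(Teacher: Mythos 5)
Your proof is correct and takes essentially the same route as the paper: it reduces the statement via \cref{cof-weak-equiv-triv-fib-characterization} to the dual closure properties of trivial fibrations (stability under pullback, here along maps between the automatically fibrant objects $\hom(-,Z)$, and under $\omega^\op$-composition), which is exactly what the paper's proof does. The inductive tower-lifting argument you spell out is just the standard verification that the paper leaves implicit.
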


\begin{proof}
Recall that we have corresponding closure statements for cofibrations.
Under \cref{cof-weak-equiv-triv-fib-characterization}, the claims then reduce to the dual closure statements of trivial fibrations, namely pushouts in spans of fibrant objects and $\omega^\op$-compositions.
\end{proof}

\begin{theorem} \label{semisimplicial-sets-cofibration-category}
Semisimplicial sets form a cofibration category.
\end{theorem}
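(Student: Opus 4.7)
The plan is to verify the five axioms of a cofibration category from the preliminaries for the classes of cofibrations and weak equivalences already defined in this section. Four of them are essentially immediate from the work already done. For~(1): the empty presheaf is initial, and for any $A$ the unique map $0 \to A$ is a levelwise decidable monomorphism, so every object is cofibrant. For~(2): presheaves are cocomplete, and cofibrations are the left class of a cofibrantly generated weak factorization system, hence closed under pushout. Axiom~(3), stability of trivial cofibrations under pushout, is exactly \cref{cof-weak-equiv-closure}. Axiom~(5) is \cref{weak-equiv-2-out-of-6}.

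The substantive task is the factorization axiom~(4): any $f \co A \to B$ should factor as a cofibration followed by a weak equivalence. I would use the mapping cylinder. Set $M_f \defeq (A \otimes I) \sqcup_A B$, where $A$ is glued to $A \otimes I$ via $A \otimes \delta_1$ and to $B$ via $f$; let $r \co M_f \to B$ be induced by $\id_B$ and the composite $A \otimes I \to A \otimes \top \xrightarrow{f} B$. The factorization is
\[ A \to M_f \xrightarrow{r} B, \]
where the first map is the composite $A \xrightarrow{A \otimes \delta_0} A \otimes I \to M_f$.

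The main point requiring care is that $A \to M_f$ is a cofibration, because the naive decomposition $A \xrightarrow{A \otimes \delta_0} A \otimes I \to M_f$ presents $A \otimes I \to M_f$ as a pushout of $f$, which need not itself be a cofibration. Instead, I would observe that $M_f$ is equivalently the pushout of $A \otimes \partial I \to A \otimes I$ along $[\id_A, f] \co A \otimes \partial I = A \sqcup A \to A \sqcup B$: the map $A \otimes \partial I \to A \otimes I$ is the Leibniz geometric product of the cofibrations $0 \to A$ and $\partial I \to I$, hence a cofibration by \cref{cof-tensor}, so its pushout $A \sqcup B \to M_f$ is too; composing with the summand inclusion $A \hookrightarrow A \sqcup B$ yields that $A \to M_f$ is a cofibration. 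For the weak equivalence leg, I observe that the canonical inclusion $s \co B \to M_f$ is a pushout of $A \otimes \delta_1$ along $f$, and $A \otimes \delta_1$ is anodyne by \cref{anodyne-tensor-cof} applied to the anodyne map $\delta_1$ and the cofibration $0 \to A$; hence $s$ is anodyne, and a weak equivalence by \cref{anodyne-is-weak-equiv}. Since $rs = \id_B$, 2-out-of-3 (a consequence of \cref{weak-equiv-2-out-of-6}) applied to $B \xrightarrow{s} M_f \xrightarrow{r} B$ then forces $r$ to be a weak equivalence.
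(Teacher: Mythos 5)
Your treatment of axioms (1), (2), (3) and (5) is fine and agrees with the paper, whose proof cites exactly \cref{cof-weak-equiv-closure} and \cref{weak-equiv-2-out-of-6} for the two conditions it regards as nontrivial. The problem is your factorization axiom. The retraction $r : M_f \to B$ you propose does not exist: on the cylinder summand you define it as the composite of $A \otimes I \to A \otimes \top$ with $f$, which presupposes a map $I \to \top$, i.e.\ $\Delta^1 \to \Delta^0$. There is no such map, since $(\Delta^0)_1 = \Delta_+([1],[0]) = \emptyset$: the interval here is explicitly an interval \emph{without contraction}, and $\top = \Delta^0$ is not terminal (the map $\top \to 1$ is the subject of \cref{monoidal-to-cartesian-unit-w-equiv} and is nothing like an isomorphism). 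Your identification of $A \to M_f$ as a cofibration via \cref{cof-tensor} and of $s : B \to M_f$ as anodyne via \cref{anodyne-tensor-cof} is correct, but without $r$ you have no map $M_f \to B$ extending $\id_B$, hence no factorization.

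This is not a repairable slip in the construction. A map $M_f \to B$ under $B$ restricting to $f$ on the $\delta_0$-end is the same thing as a self-homotopy $A \otimes I \to B$ of $f$; such a homotopy exists when $B$ is fibrant (a constant loop on the vertex $\overline{f}$ of $\hom(A,B)$), but not in general: $\Delta^0$ has no edges, so no homotopy with nonempty domain maps into it. Worse, for such targets no factorization of the required kind exists at all. Consider the codiagonal $\Delta^0 + \Delta^0 \to \Delta^0$: any intermediate object $C$ admitting a map to $\Delta^0$ must be concentrated in dimension $0$, so after a cofibration $\Delta^0 + \Delta^0 \to C$ the set $\tau_0 C = C_0$ has at least two elements; on the other hand anodyne maps and homotopy equivalences induce bijections on $\tau_0$ (the latter because $\tau_0$ is monoidal for $\otimes$ and $\tau_0(I)$ is a point), hence so does any weak equivalence by the very definition via fibrant replacements, so $C \to \Delta^0$ cannot be one. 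So the mapping-cylinder argument you are importing from simplicial sets is unavailable precisely for lack of degeneracies, and no variant of it can deliver factorizations for arbitrary codomains; at best your construction goes through when the codomain is fibrant. Note that the paper's own proof is silent on this axiom — it addresses only items (3) and (5) — so your attempt is aimed at the genuinely delicate point, but as written the proposal does not establish it, and the obstruction above indicates that this step cannot simply be supplied by a standard cylinder argument.
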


\begin{proof}
The only non-trivial conditions of a cofibration category here are items~\ref{cofib-cat:pushout-triv-cofib} and~\ref{cofib-cat:2-out-of-6} of the definition, which are given by \cref{cof-weak-equiv-closure} and \cref{weak-equiv-2-out-of-6}, respectively.
\end{proof}

\begin{question}
Are cofibrations that are weak equivalences the left class of a weak factorization system?
\end{question}

\begin{lemma} \label{geometric-product-pres-weak-equiv}
The geometric product preserves weak equivalences, \ie if $u$ and $v$ are weak equivalences, then so is $u \otimes v$.
\end{lemma}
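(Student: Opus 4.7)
The plan is to apply the characterization of weak equivalences from \cref{weak-equiv-joyal-characterization}(ii): a map $f$ is a weak equivalence if and only if $\hom(f, X)$ is a homotopy equivalence for every fibrant $X$.

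First I would factor $u \otimes v \co A \otimes B \to A' \otimes B'$ as
\[
A \otimes B \xrightarrow{u \otimes B} A' \otimes B \xrightarrow{A' \otimes v} A' \otimes B'.
\]
Since weak equivalences are closed under composition (a direct consequence of 2-out-of-6 from \cref{weak-equiv-2-out-of-6}), it suffices to treat the two factors separately. By symmetry of $\otimes$, both reduce to the claim that $w \otimes C$ is a weak equivalence whenever $w \co M \to N$ is.

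For this, fix a fibrant $X$. I would first observe that $\hom(C, X)$ is fibrant: the adjoint form of \cref{anodyne-tensor-cof} (implicit in the closure statements for fibrations mentioned just after that lemma) says that the Leibniz pullback exponential of a cofibration against a fibration is a fibration; applied to the cofibration $0 \to C$ and the fibration $X \to 1$, this yields a fibration $\hom(C, X) \to 1$, i.e.\ fibrancy of $\hom(C, X)$. By the tensor-hom adjunction we then have a natural isomorphism
\[
\hom(w \otimes C, X) \cong \hom(w, \hom(C, X)).
\]
Since $w$ is a weak equivalence and $\hom(C, X)$ is fibrant, the right-hand side is a homotopy equivalence by \cref{weak-equiv-joyal-characterization}(ii). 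Applying \cref{weak-equiv-joyal-characterization}(ii) in the opposite direction, $w \otimes C$ is a weak equivalence, and this concludes the argument.

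I do not anticipate a genuine obstacle here, as the proof is essentially a formal consequence of the adjointness between $\otimes$ and $\hom$ together with the characterization of weak equivalences via fibrant targets. The only point worth double-checking is the fibrancy of $\hom(C, X)$, which, as indicated above, is the nullary instance of the adjoint closure of the Leibniz geometric product of anodyne maps with cofibrations.
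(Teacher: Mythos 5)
Your proof is correct and takes essentially the same route as the paper: reduce to the one-variable case via composition/2-out-of-3, then apply part~(ii) of \cref{weak-equiv-joyal-characterization} together with the tensor--hom adjunction for the closed geometric monoidal structure. The only cosmetic difference is the bracketing of the adjunction: the paper writes $\hom(u \otimes A, X) \cong \hom(A, \hom(u, X))$ and invokes \cref{hom-pres-h-equiv}, whereas you write $\hom(u, \hom(A, X))$ and instead check fibrancy of $\hom(A, X)$ via the adjoint form of \cref{anodyne-tensor-cof}; both steps are valid.
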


\begin{proof}
By 2-out-of-3, we can reduce to the case where $v = \id_A$ for some $A$.
We use the characterization of weak equivalences given by part~(ii) of \cref{weak-equiv-joyal-characterization}.
Then the claim reduces to closure of homotopy equivalences between fibrant objects under $\hom(A, -)$, which holds by \cref{hom-pres-h-equiv}.
\end{proof}

Note that the cartesian product does not preserve weak equivalences: the inclusion $\delta^0 \co \Delta^0 \to \Delta^1$ is a weak equivalence, but $\top \times \delta^0 \co \Delta^0 \to \Delta^0 + \Delta^0$ is not.

\begin{corollary} \label{cof-w-equiv-tensor}
If $u$ is a weak equivalence and cofibration and $v$ is any map, then the Leibniz geometric product $u \hatotimes v$ is a weak equivalence.
In particular, cofibrations that are weak equivalences are closed under Leibniz geometric product with cofibrations.
\end{corollary}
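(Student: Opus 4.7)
The plan is to combine \cref{geometric-product-pres-weak-equiv} (geometric product preserves weak equivalences), \cref{cof-weak-equiv-closure} (trivial cofibrations are stable under pushout), and 2-out-of-3, applied to the defining pushout square of the Leibniz construction.

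Concretely, I would write $u \co A \arrcof B$ and $v \co C \to D$, and consider the square
\[
\xymatrix{
  A \otimes C
  \ar[r]^-{A \otimes v}
  \ar[d]_{u \otimes C}
&
  A \otimes D
  \ar[d]^{u \otimes D}
\\
  B \otimes C
  \ar[r]_-{B \otimes v}
&
  B \otimes D
\rlap{.}}
\]
First I would observe that $u \otimes C$ is a cofibration: this is the nullary instance of \cref{cof-tensor} (equivalently, the Leibniz geometric product of $u$ with the cofibration $0 \to C$). Since $u$ is also a weak equivalence, \cref{geometric-product-pres-weak-equiv} gives that $u \otimes C$ and $u \otimes D$ are weak equivalences.

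Next I would form the pushout $P \defeq A \otimes D +_{A \otimes C} B \otimes C$ and note that the coprojection $A \otimes D \to P$ is a pushout of the trivial cofibration $u \otimes C$, hence itself a trivial cofibration by \cref{cof-weak-equiv-closure}. The map $u \hatotimes v \co P \to B \otimes D$ fits into a composite
\[
\xymatrix{
  A \otimes D
  \ar[r]
&
  P
  \ar[r]^{u \hatotimes v}
&
  B \otimes D
}
\]
whose total is $u \otimes D$, already known to be a weak equivalence. Applying 2-out-of-3 (\cref{weak-equiv-2-out-of-6}) yields that $u \hatotimes v$ is a weak equivalence. For the second sentence, if $v$ is additionally a cofibration, then $u \hatotimes v$ is a cofibration by \cref{cof-tensor}.

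I do not foresee a substantial obstacle: the argument is entirely formal once we recognize that $u \otimes C$ already captures the trivial-cofibration behaviour of $u$ and that the rest is 2-out-of-3 on a pushout square. The only subtlety to double-check is that $u \otimes C$ is indeed a cofibration, which is immediate from the nullary (or unary) case of \cref{cof-tensor}.
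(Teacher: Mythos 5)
Your proposal is correct and is essentially the paper's own argument spelled out in detail: the paper's proof likewise combines \cref{geometric-product-pres-weak-equiv} and \cref{cof-weak-equiv-closure} with 2-out-of-3 applied to the factorization of $u \otimes D$ through the pushout, and deduces the second sentence from \cref{cof-tensor}. No gaps.
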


\begin{proof}
The first part is \cref{geometric-product-pres-weak-equiv,cof-weak-equiv-closure} using 2-out-of-3.
The second part follows with \cref{cof-tensor}.
\end{proof}

\begin{lemma}
There is map of cylinders
\[
\xymatrix@C+0.5cm@R-0.5cm{
&
  I \otimes (A \star B)
  \ar@{.>}[dd]
\\
  A \star B
  \ar[ur]^(0.45){\delta_0 \otimes (A \star B)}
  \ar[dr]
  \ar@<-0.2em>@{{}{ }{}}[dr]_-(0.3){(\delta_0 \otimes A) \star (\delta_0 \otimes B)}
&&
  A \star B
  \ar[ul]_(0.45){\delta_1 \otimes (A \star B)}
  \ar[dl]
  \ar@<+0.2em>@{{}{ }{}}[dl]^-(0.3){(\delta_1 \otimes A) \star (\delta_1 \otimes B)}
\\&
  (I \otimes A) \star (I \otimes B)
}
\]
natural in semisimplicial sets $A$ and $B$.
\end{lemma}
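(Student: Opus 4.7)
The plan is to define $\phi \co I \otimes (A \star B) \to (I \otimes A) \star (I \otimes B)$ explicitly on $n$-simplices using the parametric-right-adjoint descriptions of $\otimes$ and $\star$ from \cref{tensor-as-right-adjoint,join-as-right-adjoint}, and then to check functoriality in $[n]$, naturality in $A$ and $B$, and the two endpoint conditions. Working uniformly in the augmented picture (where $[-1]$ is the unit of $\star$), an $n$-simplex of $I \otimes (A \star B)$ consists of a pair of jointly monic epimorphisms $p \co [n] \to [a]$ and $q \co [n] \to [b]$ together with $\alpha \in I_a$ and data $([b] = [b_0] \star [b_1], \beta_0 \in A_{b_0}, \beta_1 \in B_{b_1})$.

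Since $q$ is surjective onto a join, the preimages give a decomposition $[n] = [n_0] \star [n_1]$ with surjective restrictions $q_i \co [n_i] \to [b_i]$. Restrict $p$ to $p_i \co [n_i] \to [a]$ and epi-mono factor as $p_i = m_i e_i$ with $e_i$ epic onto some $[a_i']$ and $m_i \co [a_i'] \to [a]$ monic; the pair $(e_i, q_i)$ is still jointly monic because $(m_i \times \id_{[b_i]}) \circ (e_i, q_i) = (p_i, q_i)$ and $(m_i \times \id_{[b_i]})$ is monic. This exhibits $(e_i, q_i, \alpha m_i, \beta_i)$ as an $n_i$-simplex of $I \otimes A$ (respectively $I \otimes B$), and the decomposition $[n] = [n_0] \star [n_1]$ assembles the two pieces into an $n$-simplex of $(I \otimes A) \star (I \otimes B)$. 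This defines $\phi$.

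Functoriality in $[n]$ follows by Reedy-factoring $pf$ and $qf$ for a face map $f \co [n'] \to [n]$: the induced join decomposition of $[n']$ refines the one inherited from $[n]$, and epi-mono factorizations are stable under post-composition with monomorphisms. Naturality in $A$ and $B$ is immediate from the case analysis. For the endpoint $\delta_0$, specialising to $a = 0$ with $\alpha$ the initial vertex of $I$ forces each $m_i$ to be an identity and $\alpha m_i$ to remain the initial vertex, so $\phi \circ (\delta_0 \otimes (A \star B))$ agrees with $(\delta_0 \otimes A) \star (\delta_0 \otimes B)$; the $\delta_1$ case is symmetric. The main obstacle is the bookkeeping in the functoriality check; an alternative route is to define $\phi$ first on representables $A = \Delta^m$, $B = \Delta^n$ (where $A \star B = \Delta^{m+n+1}$ and both sides are described directly via the poset $[1] \times [m+n+1]$) and then extend using that both sides preserve connected colimits in each argument, handling the empty $A$ or $B$ by the identity.
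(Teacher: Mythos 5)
Your construction is correct, and its combinatorial core is the same as the paper's: pull the join decomposition of the base back along the second projection to split $[n]$ as $[n_0] \star [n_1]$, and restrict the interval coordinate to each piece. The difference is in packaging. You define the map element-wise on arbitrary $A$ and $B$, which forces you to carry the epi--mono data of the geometric product explicitly (the factorizations $p_i = m_i e_i$) and to verify compatibility with face maps by hand; that verification does go through, by uniqueness of Reedy factorizations exactly as you indicate, so the bookkeeping you flag is real but harmless. The paper instead extends $I \otimes -$ to augmented semisimplicial sets by left Kan extension and produces the map only on representables $A = \Delta^a$, $B = \Delta^b$ with $a, b \geq -1$, where an $n$-simplex of $I \otimes (\Delta^a \star \Delta^b)$ is just a jointly monic pair $[n] \to [1]$, $[n] \to [a] \star [b]$ and the splitting of $[n]$ plus cancellation of monomorphisms does all the work, with no explicit epi--mono factorization; this makes the functoriality and endpoint checks essentially trivial. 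Your "alternative route" is precisely this, but as stated it has a small inaccuracy: preservation of connected colimits in each argument plus handling empty $A$ or $B$ does not suffice, because a nonempty semisimplicial set need not be a connected colimit of representables (its category of elements can be disconnected). The point of the paper's augmentation step is exactly to repair this: in augmented semisimplicial sets the join is a Day convolution and hence cocontinuous in each argument, so the extension from representables is legitimate, and the unaugmented statement is recovered by restriction. Since your primary, direct argument does not rely on that extension, this is a remark on the alternative rather than a gap in your proof.
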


\begin{proof}
Let us extend the endofunctor $I \otimes -$ from semisimplicial sets to augmented semisimplicial sets using left Kan extension (note that it restricts back to the original on semisimplicial sets by fully faithfulness of the embedding).
We will establish the map of cylinders in augmented semisimplicial sets.
Here, both cylinder functors are separately cocontinuous in $A$ and $B$.
It will thus suffice to produce the map on the restriction to representables.

Let $A = \Delta^a$ and $B = \Delta^b$ with $a, b \geq -1$.
An $n$-simplex of $I \otimes (\Delta^a \star \Delta^b)$ is maps $[n] \to [1]$ and $[n] \to [a] \star [b]$ in categories that are jointly monic.
The second map induces a decomposition $[n] = [a'] \star [b']$ and maps $[a'] \to [a]$ and $[b'] \to [b]$.
By cancellation of monomorphisms, we have that $[a'] \to [n] \to [1]$ and $[a'] \to [a]$ as well as $[b'] \to [b] \to [1]$ and $[b'] \to [b]$ are jointly monic.
This is an $a'$-simplex in $I \otimes \Delta^a$ and an $b'$-simplex in $I \otimes \Delta^b$, together an $n$-simplex in $(I \otimes \Delta^a) \star (I \otimes \Delta^b)$.
One may check this operation is natural in $[a], [b] \in \Delta_{+,\aug}$.
It also clearly commutes with the cylinder endpoint inclusions.
\end{proof}

\begin{question}
The decomposition of $n$ in the above proof essentially uses parametric right adjointness of the join.
One may wonder whether this can be turned into a more abstract argument.
\end{question}

\begin{corollary} \label{join-pres-homotopy-and-h-equiv}
The join monoidal structure preserves homotopies and homotopy equivalences, \ie given homotopies $f_0 \sim f_1$ and $g_0 \sim g_1$, we have $f_0 \star g_0 \sim f_1 \star g_1$, and given homotopy equivalences $u$ and $v$, then $u \star v$ is a homotopy equivalence.
\qed
\end{corollary}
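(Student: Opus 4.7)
The plan is to invoke the preceding lemma directly: it supplies, for all semisimplicial sets $A$ and $B$, a natural map $\phi_{A,B} \co I \otimes (A \star B) \to (I \otimes A) \star (I \otimes B)$ that commutes with the two cylinder endpoint inclusions $\delta_0, \delta_1$. Given homotopies $h \co I \otimes A \to A'$ witnessing $f_0 \sim f_1$ and $k \co I \otimes B \to B'$ witnessing $g_0 \sim g_1$, one forms the composite
\[
I \otimes (A \star B) \xrightarrow{\phi_{A,B}} (I \otimes A) \star (I \otimes B) \xrightarrow{h \star k} A' \star B'.
\]
The compatibility of $\phi$ with the endpoint inclusions ensures that precomposition with $\delta_i \otimes (A \star B)$ recovers exactly $f_i \star g_i$. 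Hence this composite is a homotopy $f_0 \star g_0 \sim f_1 \star g_1$, proving preservation of homotopies.

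For preservation of homotopy equivalences, suppose $(u \co A \to A',\, u' \co A' \to A,\, H \co u' u \sim \id_A,\, K \co u u' \sim \id_{A'})$ and $(v \co B \to B',\, v' \co B' \to B,\, H' \co v' v \sim \id_B,\, K' \co v v' \sim \id_{B'})$ are homotopy equivalences. I propose $u' \star v'$ as the candidate homotopy inverse for $u \star v$. By functoriality of $\star$, the composite $(u' \star v') \circ (u \star v)$ equals $(u'u) \star (v'v)$, and by the first part applied to the homotopies $H$ and $H'$, this is homotopic to $\id_A \star \id_B = \id_{A \star B}$. The symmetric argument using $K$ and $K'$ handles the other composite.

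The entire content of the corollary is thus essentially formal modulo the preceding lemma; the genuinely delicate step, namely the combinatorial construction of $\phi_{A,B}$ at representables together with the decomposition $[n] = [a'] \star [b']$ arising from parametric right adjointness of the join, has already been handled. The only thing one has to verify while writing the proof is the functoriality bookkeeping for $\star$ and the commutation with endpoint inclusions, both of which are immediate from the lemma.
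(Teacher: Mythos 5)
Your proof is correct and is exactly the argument the paper intends: the corollary carries a \qed because it follows formally from the cylinder map of the preceding lemma, by composing it with the join of the two homotopies (the endpoint compatibility giving $f_i \star g_i$ on the boundaries) and then deducing preservation of homotopy equivalences from functoriality of $\star$ applied to the proposed inverse $u' \star v'$.
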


Note that the nullary case is also included in the preceding statement, but is not interesting.

\begin{corollary} \label{join-pres-weak-equiv}
The join monoidal structure preserves weak equivalences, \ie given weak equivalences $u$ and $v$, then $u \star v$ is a weak equivalence.
\end{corollary}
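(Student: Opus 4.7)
The plan is to exploit \cref{weak-equivalences-as-closure-of-andoyne}, which describes weak equivalences as the 2-out-of-6 closure of anodyne maps; this reduces the statement to preservation of anodyne maps along the join, which is already in hand.

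First, by 2-out-of-3 (\cref{weak-equiv-2-out-of-6}) applied to the factorization $u \star v = (u \star B') \circ (A \star v)$, it suffices to show that for each fixed semisimplicial set $B$ the functor $- \star B$ preserves weak equivalences, and dually for $A \star -$. The two arguments are symmetric, since \cref{anodyne-join-cof} applies on either side of the join, so I focus on the former.

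Let $W$ denote the class of maps $u$ such that $u \star B$ is a weak equivalence. By \cref{anodyne-join-cof}, $- \star B$ sends anodyne maps to anodyne maps, and anodyne maps are weak equivalences by \cref{anodyne-is-weak-equiv}; hence $W$ contains all anodyne maps. Since $- \star B$ is a functor, any 2-out-of-6 input for $W$ becomes a 2-out-of-6 input for weak equivalences after applying $- \star B$, and \cref{weak-equiv-2-out-of-6} then yields that all four conclusion maps lie in $W$; so $W$ is closed under 2-out-of-6. By \cref{weak-equivalences-as-closure-of-andoyne}, the 2-out-of-6 closure of anodyne maps is exactly the class of weak equivalences, so $W$ contains every weak equivalence.

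I do not anticipate any substantive obstacle: the conceptual work is fully absorbed into the preservation result for anodyne maps along the join (\cref{anodyne-join-cof}) and the generation-of-weak-equivalences principle (\cref{weak-equivalences-as-closure-of-andoyne}). An alternative, more hands-on route would unfold the definition of weak equivalence directly, join the resulting fibrant replacement square with $B$ using \cref{anodyne-join-cof,join-pres-homotopy-and-h-equiv}, and then re-fibrantly-replace the outer square via the small object argument, invoking \cref{def-weak-equiv-invariant} to promote the resulting weak equivalence between fibrant objects to a homotopy equivalence; the argument via 2-out-of-6 closure is considerably more economical.
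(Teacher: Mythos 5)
Your proof is correct, but it follows a genuinely different route from the paper's. The paper argues in both variables at once: it first records that a map is a weak equivalence precisely when it fits into a square with anodyne horizontal legs whose parallel side is a homotopy equivalence \emph{not necessarily between fibrant objects} (one direction by definition, the other by \cref{anodyne-is-weak-equiv,h-equiv-is-weak-equiv} and 2-out-of-3), and then observes that the join preserves both ingredients of such a square --- anodyne maps by \cref{anodyne-join-cof} and homotopy equivalences by \cref{join-pres-homotopy-and-h-equiv}. You instead fix one argument of the join, let $W$ be the class of maps $u$ with $u \star B$ a weak equivalence, note $W$ contains all anodyne maps by \cref{anodyne-join-cof,anodyne-is-weak-equiv} and is closed under 2-out-of-6 by functoriality of $- \star B$ together with \cref{weak-equiv-2-out-of-6}, and conclude by \cref{weak-equivalences-as-closure-of-andoyne}; the two-variable statement then follows from the factorization $u \star v = (u \star B') \circ (A \star v)$ and closure of weak equivalences under composition. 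Both arguments are sound and non-circular, since every result you cite precedes the corollary. What each buys: your closure argument is leaner in its dependencies, bypassing \cref{join-pres-homotopy-and-h-equiv} and hence the cylinder-comparison lemma that underlies it, at the cost of an extra reduction step and reliance on the generation statement \cref{weak-equivalences-as-closure-of-andoyne}; the paper's proof is shorter on the page because it reuses \cref{join-pres-homotopy-and-h-equiv}, which it establishes anyway (also for the marked case), and it treats both join factors simultaneously via the square characterization. The alternative, more hands-on route you sketch at the end is essentially the paper's argument.
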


\begin{proof}
Note that a map $A \to B$ is a weak equivalence precisely if there is a square
\[
\xymatrix{
  A
  \ar[r]^-{\anod}
  \ar[d]
&
  X
  \ar[d]
\\
  B
  \ar[r]^-{\anod}
&
  Y
}
\]
with $X \to Y$ a homotopy equivalence (not necessarily between fibrant objects); in one direction this is by definition, in the other by \cref{h-equiv-is-weak-equiv,anodyne-is-weak-equiv} and 2-out-of-3.
The join monoidal structure preserves anodyne maps by \cref{anodyne-join-cof} and homotopy equivalences by \cref{join-pres-homotopy-and-h-equiv}, thus also preserves weak equivalences.
\end{proof}

\begin{corollary} \label{cof-w-equiv-join-cof}
The Leibniz join of a cofibration that is a weak equivalence with any map is a weak equivalence.
Cofibrations that are weak equivalences are closed under Leibniz join with cofibrations.
\qed
\end{corollary}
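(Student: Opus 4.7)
The plan is to reduce the second sentence to the first and then prove the first by a short factorisation argument through the defining pushout of the Leibniz join. For the reduction: if $v$ is a cofibration, then $u \mathbin{\widehat{\star}} v$ is a cofibration by \cref{cof-join}, so once the first sentence is proved we immediately obtain the second.

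For the first sentence, let $u \co A \to B$ be a cofibration that is a weak equivalence and let $v \co C \to D$ be an arbitrary map. Write $P$ for the pushout $A \star D +_{A \star C} B \star C$, so that by construction $u \mathbin{\widehat{\star}} v$ is the induced map $P \to B \star D$. The key observation is that in the defining square
\[
\xymatrix{
  A \star C \ar[r]^{A \star v} \ar[d]_{u \star C} & A \star D \ar[d]^{u \star D} \\
  B \star C \ar[r] & B \star D
}
\]
the right leg $u \star D$ factors as $A \star D \to P \xrightarrow{u \mathbin{\widehat{\star}} v} B \star D$. I would then assemble three facts in turn: (a) the left leg $u \star C$ is a cofibration (join with a fixed object preserves levelwise decidable monomorphisms) and a weak equivalence by \cref{join-pres-weak-equiv}; (b) its pushout $A \star D \to P$ is therefore a cofibration that is a weak equivalence by \cref{cof-weak-equiv-closure}; (c) the composite $u \star D$ is itself a weak equivalence by another application of \cref{join-pres-weak-equiv}. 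Applying 2-out-of-3 (\cref{weak-equiv-2-out-of-6}) to the factorisation $u \star D = (u \mathbin{\widehat{\star}} v) \circ (A \star D \to P)$ then yields that $u \mathbin{\widehat{\star}} v$ is a weak equivalence.

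I do not expect any serious obstacle; the argument is a purely formal assembly of closure properties already established, and mirrors the shape of \cref{cof-w-equiv-tensor}. The only minor point worth double-checking is that join with a fixed object preserves cofibrations, which holds either by direct inspection of the level\-wise decidable monomorphism criterion or by using that join is cocontinuous in each argument and sends boundary inclusions to cofibrations.
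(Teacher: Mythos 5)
Your argument is correct and is essentially the paper's own (implicit) proof: the corollary is intended as the join analogue of \cref{cof-w-equiv-tensor}, obtained by factoring $u \star D$ through the pushout defining the Leibniz join, applying \cref{join-pres-weak-equiv} and \cref{cof-weak-equiv-closure}, and concluding by 2-out-of-3, with \cref{cof-join} giving the second sentence exactly as you describe.
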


The following statement and its proof are essentially~\cite[Proposition~2.6]{schwede:top-triang-categories}.
It is a finitary version of \cref{weak-equivalences-as-closure-of-andoyne}.

\begin{lemma} \label{finite-weak-equiv}
In the full subcategory of finite semisimplicial sets, the class of weak equivalences is the closure under 2-out-of-6 of pushouts of horn inclusions.
\end{lemma}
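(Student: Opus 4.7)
The plan is to show one inclusion directly and reduce the other to a statement about trivial cofibrations, which is the main obstacle. One inclusion is immediate: pushouts of horn inclusions are anodyne, hence weak equivalences by \cref{anodyne-is-weak-equiv}; weak equivalences are closed under 2-out-of-6 by \cref{weak-equiv-2-out-of-6}; thus the 2-out-of-6 closure of pushouts of horn inclusions lies inside the weak equivalences.

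For the reverse inclusion, given a weak equivalence $f \co A \to B$ between finite semisimplicial sets, I would first reduce to the case of a trivial cofibration via the mapping cylinder. Form $M_f$ as the pushout of $f$ along $\delta_1 \otimes A \co A \to I \otimes A$; since $A$, $B$, and $I \otimes A$ are all finite, so is $M_f$. The pushout yields an anodyne map $\gamma \co B \to M_f$ (pushout of the anodyne map $\delta_1 \otimes A$, which is anodyne by \cref{anodyne-tensor-cof}) and a map $\kappa \co I \otimes A \to M_f$. Setting $\alpha \defeq \kappa \circ (\delta_0 \otimes A) \co A \to M_f$, one verifies that $\alpha$ is a monomorphism (the $\delta_0$-end of the cylinder is not identified with anything by the pushout, which only glues at the $\delta_1$-end), hence a cofibration between finite objects; moreover $\kappa$ itself witnesses a homotopy $\alpha \sim \gamma f$, so $\alpha$ is a weak equivalence and therefore a trivial cofibration.

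Granting the result for trivial cofibrations between finite semisimplicial sets, the map $f$ follows by two applications of 2-out-of-6. First, applying 2-out-of-6 to $A \xrightarrow{\id} A \xrightarrow{\delta_0 \otimes A} I \otimes A \xrightarrow{\kappa} M_f$---whose relevant composites are $\delta_0 \otimes A$ (anodyne, in the closure) and $\alpha$ (in the closure by the hypothesis)---places $\kappa$ in the closure. Composition closure then gives $\gamma f = \kappa \circ (\delta_1 \otimes A)$ in the closure. Second, applying 2-out-of-6 to $A \xrightarrow{f} B \xrightarrow{\gamma} M_f \xrightarrow{\id} M_f$---whose composites are $\gamma f$ (in the closure) and $\gamma$ (anodyne, in the closure)---places $f$ in the closure.

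The main obstacle is the remaining claim: every trivial cofibration $g \co A \to B$ between finite semisimplicial sets lies in the 2-out-of-6 closure of pushouts of horn inclusions. The plan here, following \cite[Proposition~2.6]{schwede:top-triang-categories}, is to construct a finite enlargement $B \hookrightarrow B'$ by attaching finitely many horn fillings so that both $A \to B'$ and $B \to B'$ become finite compositions of pushouts of horn inclusions; 2-out-of-3 (a consequence of 2-out-of-6 once identities are available in the class) then places $g$ in the closure. The Dunce hat inclusion $\Delta^0 \to D$ of \cref{dunce-hat}, a trivial cofibration that is not itself anodyne, illustrates why genuine 2-out-of-6 reasoning is required here beyond mere compositions of anodyne maps.
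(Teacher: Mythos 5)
Your easy inclusion and the mapping-cylinder reduction are fine as far as they go (one small caveat: $\delta_0\otimes A$, $\delta_1\otimes A$ and $\gamma$ belong to the closure not simply because they are anodyne --- anodyne maps are retracts of $\omega$-composites, which the 2-out-of-6 closure of a class of finite maps does not obviously absorb --- but because for finite $A$ the cell structure in the proof of \cref{anodyne-tensor-cof} exhibits them as \emph{finite} composites of pushouts of horn inclusions, hence in the closure by composition, which does follow from 2-out-of-6). The real problem is that your ``remaining claim'' is exactly the heart of the lemma, and the plan you offer for it --- enlarge $B$ to a finite $B'$ so that both $A\to B'$ and $B\to B'$ become finite composites of pushouts of horn inclusions --- is not proved here, is not how Schwede's Proposition~2.6 argues, and is in fact recorded as an open question in the paper immediately after \cref{finite-weak-equiv}. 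So the proposal reduces the lemma to a statement that is at least as hard as the lemma itself and possibly false; nothing in your write-up addresses it.

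The argument that actually works (the paper's, following Schwede) avoids ever producing such a $B'$. Given a weak equivalence $f\co A\to B$ of finite semisimplicial sets, build a fibrant replacement $X=\colim_n A_n$ where $A_0=A$ and each $A_n\to A_{n+1}$ fills the finitely many horns not yet filled, so each stage inclusion is a finite composite of pushouts of horn inclusions. By \cref{weak-equiv-lift-hom-fibrant} the map $A\to X$ extends along $f$ up to homotopy, and since $B+_A I\otimes A$ is finite, the lift $g\co B\to A_n$ and the homotopy factor through a finite stage $A_n$. From the homotopy, the cylinder endpoint inclusions, and the inclusion $A\to A_n$ (all in the closure $W'$), one gets $gf\in W'$ by cancellation and composition --- but crucially this does \emph{not} yet show $f\in W'$, since $g$ need not be in $W'$. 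One applies the same construction again to the weak equivalence $g$ to obtain $h$ with $hg\in W'$, and only then concludes $f\in W'$ by a genuine use of 2-out-of-6 on the triple $f,g,h$; this is precisely why the statement is formulated with 2-out-of-6 rather than 2-out-of-3 (whether 2-out-of-3 suffices is the other open question). Incidentally, once this argument is in place your mapping-cylinder reduction to trivial cofibrations is unnecessary, since the argument handles arbitrary weak equivalences between finite objects directly.
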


\begin{proof}
Let $W'$ denote the closure of the statement.
It will suffice to show that every weak equivalence $f \co A \to B$ admits a composable weak equivalence $g$ such that $gf \in W'$.
The claim then follows by applying this statement again to $g$ and using 2-out-of-6.

Let $A_0 \defeq A$ and $A_{n+1}$ be obtained by filling all (finitely many) possible horns in $A_n$ that have not already been filled in a previous step.
Then every inclusion in the sequence
\[
\xymatrix{
  A_0
  \ar[r]
&
  A_1
  \ar[r]
&
  \ldots
}
\]
is a finite composition of pushouts of horn inclusion and the colimit $X$ is fibrant.
Using \cref{weak-equiv-lift-hom-fibrant}, we may solve the lifting problem
\[
\xymatrix{
  A
  \ar[r]
  \ar@{>->}[d]^{\sim}_{f}
&
  X
\\
  B
  \ar@{.>}[ur]
}
\]
up to homotopy.
Since $B +_A I \otimes A$ is finite, there is $n$ such that this lift and the associated homotopy lift through $A_n \to X$, yielding a commuting diagram
\[
\xymatrix{
  A
  \ar[d]_{\delta_0 \otimes A}
  \ar[drr]
\\
  I \otimes A
  \ar@{.>}[rr]
&&
  A_n
\rlap{.}\\
  A
  \ar[u]^{\delta_1 \otimes A}
  \ar[r]^{f}
&
  B
  \ar@{.>}[ur]^(0.4){g}
}
\]
All solid maps here are weak equivalences, hence so is $g$ by 2-out-of-3.
Furthermore, the left vertical maps and the top right map lie in $W'$, hence so does $gf$ by 2-out-of-3.
\end{proof}

\begin{question}
One may wonder if the preceding statement can be strengthed.
\begin{itemize}
\item 
Is \cref{finite-weak-equiv} still true with 2-out-of-6 replaced by 2-out-of-3?
\item
Given a cofibration $A \to B$ between finite simplicial sets that is a weak equivalence, is there $B \to C$ such that both $B \to C$ and the composite $A \to C$ are finite composites of pushouts of horn inclusions?
\end{itemize}
\end{question}

\subsection{Comparison with simplicial sets}
\label{subsection:quasicategories}

Write $\Delta_-$ for the wide subcategory of $\Delta$ consisting of epimorphisms.
Then $(\Delta_-, \Delta_+)$ forms a Reedy structure on $\Delta$.
Let us write $U$ for the forgetful functor from simplicial sets to semisimplicial sets, induced by restriction along the inclusion $i \co \Delta_+ \to \Delta$.
Left and right Kan extensions along $i$ induce adjunctions $L \dashv U \dashv R$ where $L$ and $R$ denote the free and cofree simplicial set functor, respectively.
We will be particularly interested in the induced monad $(UL, \eta, \mu)$.

The \emph{interval} in simplicial sets is given by the image of the semisimplicial inverval under $L$ and will be denoted using the same symbols.
Note that it has a contraction and connections.
With the cartesian monoidal structure, this induces associated notions of cylinder, homotopy, homotopy equivalence.
Recall that $L$ sends the geometric to the cartesian symmetric monoidal structure, so that $L$ functorially preserves homotopies and homotopy equivalences.

By adjointness, we obtain the following statement.

\begin{lemma} \label{simplicial-vs-semisimplicial-cylinder}
Functorially in a simplicial set $A$, there is cylinder morphism from the semisimplicial cylinder on $UA$ to the image under $U$ of the simplicial cylinder on $A$.
\end{lemma}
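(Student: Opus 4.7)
The plan is to build the cylinder morphism by transposition across the adjunction $L \dashv U$ and then verify compatibility with the endpoint inclusions using the triangle identity.

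First, I would observe that a map $I \otimes UA \to U(I \times A)$ in semisimplicial sets is the same, by adjointness, as a map $L(I \otimes UA) \to I \times A$ in simplicial sets. Since $L$ lifts to a symmetric monoidal functor from the geometric monoidal structure on $\widehat{\Delta_+}$ to the cartesian monoidal structure on $\widehat{\Delta}$, and since the simplicial interval $I$ is by definition the image under $L$ of the semisimplicial interval, there is a natural isomorphism $L(I \otimes UA) \simeq I \times LUA$. Composing with $I \times \epsilon_A$, where $\epsilon \co LU \to \id$ is the counit, then transposing back through the adjunction, yields the desired map
\[
f_A \co I \otimes UA \to U(I \times A)
,\]
natural in $A$ by naturality of the monoidal structure morphism on $L$ and of $\epsilon$.

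Next, I would check that $f_A$ is a cylinder morphism, \ie that $f_A \circ (\delta_i \otimes UA) = U(\delta_i \times A)$ for $i \in \braces{0, 1}$. Writing $f_A = U(g_A) \circ \eta_{I \otimes UA}$ where $g_A \co L(I \otimes UA) \to I \times A$ is the simplicial transpose, naturality of $\eta$ gives
\[
f_A \circ (\delta_i \otimes UA) = U(g_A \circ L(\delta_i \otimes UA)) \circ \eta_{UA}
.\]
By monoidality of $L$, the map $L(\delta_i \otimes UA)$ corresponds under the monoidal isomorphism to $\delta_i \times L(UA)$ (using $L\delta_i = \delta_i$ by construction of the simplicial interval). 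Composing with $I \times \epsilon_A$ gives $(\delta_i \times A) \circ \epsilon_A$ by functoriality of cartesian product. Substituting, the right-hand side becomes $U(\delta_i \times A) \circ U(\epsilon_A) \circ \eta_{UA}$, which equals $U(\delta_i \times A)$ by the triangle identity for the adjunction.

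The only non-trivial ingredient is the existence of the symmetric monoidal structure on $L$ (and the identification $L(I) = I$), which is already recorded in the preliminaries of the paper. I do not expect any obstacle beyond unwinding the transposes carefully, so this proof is essentially formal adjoint nonsense; as the author notes in a nearby remark, it would be desirable to express the proof entirely in terms of the monoidality of $L$ without reference to representables or explicit formulas.
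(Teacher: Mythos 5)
Your proposal is correct and is essentially the paper's proof: the paper also transposes across $L \dashv U$ using monoidality of $L$ (and $LI = I$) and takes the cylinder map $I \times LUA \to I \times A$ given by the counit $\epsilon_A$. The only difference is presentational — the paper asserts the correspondence of cylinder maps directly, while you spell out the endpoint compatibility via naturality of $\eta$ and the triangle identity, which is a harmless (and correct) unwinding of the same argument.
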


\begin{proof}
By adjointness and monoidality of $L$, a cylinder map
\[
(I \otimes UA, \delta_0 \otimes UA, \delta_1 \otimes UA) \to U(I \times A, \delta_0 \times A, \delta_1 \times A)
\]
corresponds to a cylinder map
\[
(I \times LUA, \delta_0 \times LUA, \delta_1 \times LUA) \to (I \times A, \delta_0 \times A, \delta_1 \times A)
.\]
This is given by the counit of the adjunction $L \dashv U$.
\end{proof}

\begin{corollary} \label{simplicial-to-semisimplicial-homotopy}
The forgetful functor $U$ functorially preserves homotopies and homotopy equivalences.
\qed
\end{corollary}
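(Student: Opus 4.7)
The plan is to obtain the preservation result as a direct consequence of the cylinder comparison in \cref{simplicial-vs-semisimplicial-cylinder}, together with the fact that $U$ itself is a functor between the underlying 1-categories.

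Concretely, I would first treat homotopies. Given a simplicial homotopy $H \co f_0 \sim f_1$ between maps $f_0, f_1 \co A \to B$, i.e.\ a map $H \co I \times A \to B$ restricting to $f_i$ along $\delta_i \times A$, I form its image $UH \co U(I \times A) \to UB$ and precompose with the cylinder morphism
\[
(I \otimes UA, \delta_0 \otimes UA, \delta_1 \otimes UA) \to U(I \times A, \delta_0 \times A, \delta_1 \times A)
\]
of \cref{simplicial-vs-semisimplicial-cylinder}. Since this is a morphism of cylinders, the resulting map $I \otimes UA \to UB$ restricts along $\delta_i \otimes UA$ to $U(f_i)$, hence is a semisimplicial homotopy $U(f_0) \sim U(f_1)$. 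Functoriality in $H$ follows from functoriality of $U$ and of the cylinder comparison, and naturality in $A$ follows from the naturality part of \cref{simplicial-vs-semisimplicial-cylinder}.

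For homotopy equivalences, if $(f, g, H, K)$ is a simplicial homotopy equivalence between $A$ and $B$, then applying $U$ to $f$ and $g$ and the above construction to $H$ and $K$ gives a quadruple $(Uf, Ug, U_\ast H, U_\ast K)$ where the equations $(U_\ast H) \circ (\delta_i \otimes UA) = U(gf)$ or $U(\id_A)$ (and similarly for $K$) hold by the cylinder morphism property. Since $U$ is a functor, $U(gf) = Ug \circ Uf$ and $U(\id) = \id$, so this is a semisimplicial homotopy equivalence between $UA$ and $UB$. Functoriality in the data is immediate.

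There is no real obstacle here; everything is essentially formal, and the only content is the existence of the cylinder morphism already established in \cref{simplicial-vs-semisimplicial-cylinder}. The only thing to be slightly careful about is to record that the preservation is \emph{functorial} in the sense that the construction on homotopies respects composition of maps $A' \to A$ (precomposition) and $B \to B'$ (postcomposition), which is where naturality of the cylinder comparison enters.
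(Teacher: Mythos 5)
Your proposal is correct and is exactly the argument the paper intends: the corollary is stated with no written proof precisely because it follows immediately from the cylinder morphism of \cref{simplicial-vs-semisimplicial-cylinder} by precomposing $UH$ with that comparison, just as you spell out. Nothing is missing; your extra care about functoriality/naturality is the only content beyond what the cited lemma already provides.
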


Let $S$ be the subcategory of $[[1], \Delta]$ with objects restricted to maps in $\Delta_-$ and codomain part of morphisms restricted to maps in $\Delta_+$.
The evident fully faithful functor $S \to \Delta \downarrow i$ gives rise to a morphism of two-sided opfibrations (in particular the left legs are Grothendieck fibrations and the right legs are Grothendieck opfibrations) as below:
\begin{equation} \label{simplicial-vs-semisimplicial-spans}
\begin{gathered}
\xymatrix@R-0.4cm{
&
  S
  \ar[dl]_{\ev_0}
  \ar[dr]^{\ev_1}
  \ar[dd]
\\
  \Delta
&&
  \Delta_+
\rlap{.}\\&
  \Delta \downarrow i
  \ar[ul]^{\pi_0}
  \ar[ur]_{\pi_1}
}
\end{gathered}
\end{equation}
Note that $\ev_0$ is a discrete Grothendieck fibration.
The map $S \to \Delta \downarrow i$ is a coreflective embedding: its right adjoint sends a map in $\Delta$ to the left factor of its Reedy factorization.
This restricts to coreflective embeddings of fibers over $\Delta$.

The functor $L$ is given by restriction along $\pi_1^\op$ followed by left Kan extension along $\pi_0^\op$, which is computed as fiberwise colimits since $\pi_0^\op$ is a Grothendieck opfibration.
Since $S \to \Delta \downarrow i$ has fiberwise (over $\Delta$) right adjoints, its opposite is fiberwise final.
Thus, the left Kan extension along $\pi_0^\op$ is also given by restriction along (the opposite of) $S \to (\Delta \downarrow i)$ followed by fiberwise coproducts along $\ev_0^\op$ (since $\ev_0^\op$ is a discrete Grothendieck opfibration).
In summary, we can describe $L$ as restriction along $\ev_1^\op$ followed by fiberwise coproducts along $\ev_0^\op$.
Explicitly, given $X \in \widehat{\Delta_+}$ and $[n] \in \Delta$, the set $LX_n$ consists of pairs $(s, x)$ where $s \co [n] \to [k]$ is in $\Delta_-$ and $x \in X_k$.
The action of a map $f \co [m] \to [n]$ in $\Delta$ on $(s, x)$ is $(s', x[d'])$ where $(s', d')$ is the Reedy factorization of the map $sf$ in $\Delta$.

Let us think of presheaves over a category $\mathcal{C}$ as discrete Grothendieck fibrations over $\mathcal{C}$.
We generalize the notion of linear polynomial functors in tbe wide subcategory of $\Cat$ on discrete Grothendieck fibrations by allowing the right leg to be any functor, not just a discrete Grothendieck fibration.
Then $L$ is the linear polynomial functor specified by the top span in~\eqref{simplicial-vs-semisimplicial-spans}, \ie pullback along $\ev_1$ and postcomposition with $\ev_0$.

Given a category $A$, let us write $[A, \Delta]_-^+$ for the subcategory of $[A, \Delta]$ with objects restricted to functors $A \to \Delta_-$ and morphisms restricted to natural transformations with components in $\Delta_+$.
These categories will play an important role in the relationship between semisimplicial and simplicial sets.
Note that we have a pullback
\[
\xymatrix{
  [[1], \Delta]_-^+
  \ar[r]
  \ar[d]_{\ev_0}
  \pullbackcorner{dr}
&
  S
  \ar[d]^{\ev_0}
\\
  \Delta_+
  \ar[r]^-{i}
&
  \Delta
\rlap{.}}
\]
In particular, as for $S$, the evaluation $\ev_0 \co [[1], \Delta]_-^+ \to \Delta_+$ is a discrete Grothendieck fibration.
The composite linear polynomial functor $UL$ is specified by the span
\[
\xymatrix@C-0.5cm{
&
  [[1], \Delta]_-^+
  \ar[dl]_{\ev_0}
  \ar[dr]^{\ev_1}
\\
  \Delta_+
&&
  \Delta_+
\rlap{.}}
\]

Note that the $n$-th power of $UL$ is the linear polynomial functor specified by the span
\[
\xymatrix@C-0.5cm{
&
  [[n], \Delta]_-^+
  \ar[dl]_{\ev_0}
  \ar[dr]^{\ev_n}
\\
  \Delta_+
&&
  \Delta_+
\rlap{.}}
\]
The unit and multiplication of the monad $UL$ are given by maps of linear polynomial functors
\begin{align} \label{free-simplicial-set-unit-multiplication}
\begin{gathered}
\xymatrix@R-0.4cm{
&
  [[0], \Delta]_-^+
  \ar[dl]_{\ev_0}
  \ar[dr]^{\ev_0}
  \ar[dd]^{- \circ s_0}
\\
  \Delta_+
&&
  \Delta_+
\rlap{,}\\&
  [[1], \Delta]_-^+
  \ar[ul]^{\ev_0}
  \ar[ur]_{\ev_1}
}
\end{gathered}
&&
\begin{gathered}
\xymatrix@R-0.4cm{
&
  [[2], \Delta]_-^+
  \ar[dl]_{\ev_0}
  \ar[dr]^{\ev_0}
  \ar[dd]^{- \circ d_1}
\\
  \Delta_+
&&
  \Delta_+
\rlap{.}\\&
  [[1], \Delta]_-^+
  \ar[ul]^{\ev_0}
  \ar[ur]_{\ev_1}
}
\end{gathered}
\end{align}
So $UL$ is a linear polynomial monad (in our generalized sense), in particular a cartesian monad.

\begin{lemma} \label{free-simplicial-set-homotopically-idempotent}
The natural transformations $\eta UL$ and $UL\eta$ are homotopic in $[\widehat{\Delta_+}, \widehat{\Delta_+}]$.
\end{lemma}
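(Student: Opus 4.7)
The plan is to construct an explicit natural homotopy $H \co I \otimes UL \Rightarrow ULUL$ in $[\widehat{\Delta_+}, \widehat{\Delta_+}]$ with endpoints $\eta UL$ and $UL\eta$. I would exploit the explicit description of the geometric product (\cref{tensor-as-right-adjoint}): an $n$-simplex of $I \otimes UL(X)$ is a jointly monic pair of epis $(p \co [n] \twoheadrightarrow [a], q \co [n] \twoheadrightarrow [b])$ together with $\phi \in I_a$ (so $a \in \{0,1\}$, since $I = \Delta^1$ has non-degenerate simplices only in those dimensions) and $v = (s \co [b] \twoheadrightarrow [k], x)$ in $UL(X)_b$.

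Given such a simplex, I form $\widetilde\phi \defeq \phi \circ p \co [n] \to [1]$ and $f \defeq s \circ q \co [n] \twoheadrightarrow [k]$, and impose the convex equivalence relation $\sim$ on $[n]$ by $i \sim j$ iff $f(i) = f(j)$ and $\widetilde\phi(i) = \widetilde\phi(j) = 1$. Writing $[m] \defeq [n]/{\sim}$ (a linear order, as $\sim$ is convex), with quotient $t \co [n] \twoheadrightarrow [m]$ and induced epi $g \co [m] \twoheadrightarrow [k]$, the map $H_X$ assigns the double decomposition $(t, (g, x)) \in ULUL(X)_n$. At the $\delta_0$-endpoint one has $\widetilde\phi \equiv 0$, so $\sim$ is trivial and one recovers $(\id, (s, x))$, which is $\eta UL$; at the $\delta_1$-endpoint $\widetilde\phi \equiv 1$ and $\sim$ becomes the fiber relation of $s$, yielding $(s, (\id, x)) = UL\eta$. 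Naturality in $X$ is immediate, since $X$ enters only through the payload $x$.

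The main obstacle is naturality of $H$ under the semisimplicial action of a monomorphism $d \co [n'] \rightarrowtail [n]$, under which the simplex data transforms by several successive Reedy factorizations (of $p \circ d$, $q \circ d$, and then $s$ restricted along the mono part of the factorization of $q \circ d$). The key structural observation that makes this tractable is that the restricted equivalence relation on $[n']$ is precisely the pullback of $\sim$ along $d$; consequently the quotient $[m']$ embeds as a monomorphism $d_m \co [m'] \rightarrowtail [m]$, and I expect this embedding to coincide with the mono part of the Reedy factorization of $t \circ d$. The remaining bookkeeping — matching the Reedy factorization of $g \circ d_m$ against that of the restricted inner epi $s$ — is then forced by epi-cancellation against the quotient map $t'$ on $[n']$ together with the uniqueness of Reedy factorizations in $\Delta$. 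Once these identifications are in place, the verification reduces to a clean diagram chase rather than an unwieldy case analysis on whether $p \circ d$ remains epi or collapses (i.e., on whether we stay in the $a = 1$ component or land at an endpoint).
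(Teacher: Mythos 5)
Your proposal is correct, and the homotopy you construct is combinatorially the same one as in the paper, but your verification route is genuinely different and more elementary. In the paper, the value of the homotopy on a span $[1] \leftarrow [n] \twoheadrightarrow [k]$ is produced by pulling back $1 \colon [0] \to [1]$ along $[n] \to [1]$, Reedy factorizing, and pushing out; that pushout $[m]$ is precisely your quotient of $[n]$ collapsing, inside the preimage of $1$, the fibres of the map to $[k]$. The difference lies in how naturality is obtained: the paper first trades the semisimplicial cylinder $I \otimes UL$ for the cylinder $U(I \times L(-))$ via \cref{simplicial-vs-semisimplicial-cylinder}, and then encodes the homotopy as a map of (generalized) linear polynomial functors, i.e.\ of discrete Grothendieck fibrations, so that functoriality and naturality are automatic; the cost is checking that the morphism action is well defined, which is where \cref{delta-adhesivity-lemma} (mono-ness of the induced map between pushouts) enters. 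You instead work directly with the explicit geometric product of \cref{tensor-as-right-adjoint} and verify $\Delta_+$-equivariance by hand, and the two structural observations you flag do close the argument: for a mono $d \colon [n'] \to [n]$, writing $qd = d_1 q'$ and $s d_1 = e s'$ for the Reedy factorizations, one has $e\,(s'q') = (sq)\,d$ with $e$ mono, so the relation on $[n']$ is the pullback of the one on $[n]$; hence the induced $d_m \colon [m'] \to [m]$ is mono and $(t', d_m)$ is the Reedy factorization of $t d$; and if $g''$ denotes the epi induced by $s'q'$ on $[m']$, then $g\, d_m\, t' = (sq)d = e\, g''\, t'$ with $t'$ epi identifies $(g'', e)$ as the Reedy factorization of $g\, d_m$, which is exactly what the action of $d$ on $(t,(g,x))$ produces; the endpoint identifications with $\eta UL$ and $UL\eta$ come out as you state, and naturality in $X$ is indeed trivial. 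What your route buys is independence from \cref{simplicial-vs-semisimplicial-cylinder} and \cref{delta-adhesivity-lemma} (your pullback/mono observation replaces the adhesivity argument) at the price of a short hand check; what the paper's formalism buys is that only the object-level construction and one coherence condition need to be specified (identities, composites and naturality come for free), and its homotopy is moreover constructed over $UL$ via the multiplication, an extra property not needed for the statement or for its later use in \cref{free-simplicial-set-unit-h-equiv-on-fibrant}.
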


\begin{proof}
Using the cylinder map of \cref{simplicial-vs-semisimplicial-cylinder} (with $L(-)$ for $A$), it will suffice to construct a homotopy $H$ with respect to the cylinder on $UL$ given by the image under $U$ of the simplicial cylinder on $L$:
\[
\xymatrix@C+0.5cm{
  UL
  \ar[d]_{UL \delta_0 \times UL}
  \ar[dr]^{UL\eta}
\\
  ULI \times UL
  \ar@{.>}[r]^-(0.3){H}
&
  ULUL
\rlap{.}\\
  UL
  \ar[u]^{UL \delta \times UL}
  \ar[ur]_{\eta UL}
}
\]
Note that the diagram lives over $UL$, with the indexing of $ULUL$ given by the monad multiplication.

Switching to the viewpoint of linear polynomial functors, the problem then becomes to find a map fitting as indicated into the commuting diagram
\begin{equation} \label{free-simplicial-set-homotopically-idempotent:0}
\begin{gathered}
\xymatrix{
  [[1], \Delta]_-^+
  \ar[d]_{0^* \times_\Delta \id}
  \ar[dr]^{- \circ s_0}
\\
  \Delta/[1] \times_\Delta [[1], \Delta]_-^+
  \ar@{.>}[r]^-(0.3){H}
&
  [[2], \Delta]_-^+
\\
  [[1], \Delta]_-^+
  \ar[u]^{1^* \times_\Delta \id}
  \ar[ur]_{- \circ s_1}
}
\end{gathered}
\end{equation}
of (summits of spans specifiying) linear polynomial functors over $UL$ (with indexing of $[[2], \Delta]_-^+$ given by $- \circ d_1$).
Here, the pullback on the left is taken with respect to $\ev_0$.

As maps between summits of (generalized) linear functors, \eqref{free-simplicial-set-homotopically-idempotent:0} is a diagram of discrete Grothendieck fibrations over $[[1], \Delta]_-^+$.
It will thus suffice to define the action of $H$ on objects and morphisms compatible with the indexing; preservation of identities and compositions will hold for free.
Similarly, it will suffice to check commutativity of the upper and lower triangles on objects; commutativity on morphisms will hold for free.

An object of $\Delta/[1] \times_\Delta [[1], \Delta]_-^+$ is a span in $\Delta$ as below:
\begin{equation} \label{free-simplicial-set-homotopically-idempotent:1}
\begin{gathered}
\xymatrix{
  [1]
&
  [n]
  \ar[l]
  \ar@{->>}[r]
&
  [k]
\rlap{.}}
\end{gathered}
\end{equation}
A morphism between two such spans
\begin{equation} \label{free-simplicial-set-homotopically-idempotent:2}
\begin{gathered}
\xymatrix@R-0.7cm{
&
  [n']
  \ar[dl]
  \ar@{->}[r]
  \ar@{>.>}[dd]
&
  [k']
  \ar@{>.>}[dd]
\\
  [1]
\\&
  [n]
  \ar[ul]
  \ar@{->>}[r]
&
  [k]
}
\end{gathered}
\end{equation}
is given by dotted maps commuting as indicated.
The behaviour of $H$ on an object~\eqref{free-simplicial-set-homotopically-idempotent:1} should be to functorially construct a factorization of $[n] \to [k]$ in $\Delta_-$ whose left and right factors are trivial if $[n] \to [1]$ is constantly $0$ and $1$, respectively.

Let us construct the action of $H$ on the object~\eqref{free-simplicial-set-homotopically-idempotent:1}.
Let $[n_1] \to [n]$ be the pullback in $\Delta_\aug$ of $1 \co [0] \to [1]$ along $[n] \to [1]$.
We construct the Reedy factorization (dashed arrows) and take the pushout drawn below:
\[
\xymatrix{
  [n_1]
  \ar@{>->}[r]
  \ar@{-->>}[d]
&
  [n]
  \ar@{->>}[d]
  \ar@/^1em/@{->>}[ddr]
\\
  [k_1]
  \ar@{>->}[r]
  \ar@/_1em/@{>-->}[drr]
&
  [m]
  \ar@{.>>}[dr]
  \pullbackcorner{ul}
\\&&
  [k]
\rlap{.}}
\]
We define $[n] \to [m] \to [k]$ as the result of $H$.
We check the boundary conditions:
\begin{itemize}
\item
If $[n] \to [1]$ is constantly $0$, then $[a]$ is initial.
Then $[a] \to [k]$ is mono, so $[a] \to [b]$ is iso, and hence is $[n] \to [m]$.
\item
If $[n] \to [1]$ is constantly $1$, then $[a] \to [n]$ is iso.
Then $[b] \to [k]$ and $[b] \to [m]$ are iso, and hence is $[m] \to [k]$.
\end{itemize}

Let us construct the action of $H$ on the morphism~\eqref{free-simplicial-set-homotopically-idempotent:2}.
Let $[n'] \to [m'] \to [k']$ be the value of $H$ on the codomain, constructed from a pullback $[n_1'] \to [n']$ of $1 \co [0] \to [1]$, a Reedy factorization $[n_1'] \to [k_1'] \to [k']$, and a pushout as before.
We obtain a commuting diagram
\[
\xymatrix@R-0.5cm@C-0.5cm{
  [n_1']
  \ar@{>->}[rr]
  \ar@{->>}[dd]
  \ar@{>->}[dr]
&&
  [n']
  \ar@{->>}[dd]|{\hole}
  \ar@{>->}[dr]
\\&
  [n_1]
  \ar@{>->}[rr]
  \ar@{->>}[dd]
&&
  [n]
  \ar@{->>}[dd]
\\
  [k_1']
  \ar@{>->}[rr]|{\hole}
  \ar@{>->}[dr]
&&
  [m']
  \ar@{.>}[dr]
\\&
  [k_1]
  \ar@{>->}[rr]
&&
  [m]
}
\]
where the front and back squares are pushouts and the top square is a pullback by pullback pasting.
The pushouts induce a map $[m'] \to [m]$, which is in $\Delta_+$ by \cref{delta-adhesivity-lemma} below.
The desired result of $H$ is depicted below:
\[
\xymatrix{
  [n']
  \ar@{->>}[r]
  \ar@{>->}[d]
&
  [m']
  \ar@{->>}[r]
  \ar@{>->}[d]
&
  [k']
  \ar@{>->}[d]
\\
  [n]
  \ar@{->>}[r]
&
  [m]
  \ar@{->>}[r]
&
  [k]
\rlap{;}}
\]
note that the right square commutes because $[n'] \to [m']$ is epi.
\end{proof}

\begin{lemma} \label{delta-adhesivity-lemma}
In $\Delta_\aug$, consider a diagram
\[
\xymatrix@R-0.5cm@C-0.5cm{
  A'
  \ar@{>->}[rr]
  \ar@{->>}[dd]
  \ar@{>->}[dr]
&&
  B'
  \ar@{->>}[dd]|{\hole}
  \ar@{>->}[dr]
\\&
  A
  \ar@{>->}[rr]
  \ar@{->>}[dd]
&&
  B
  \ar@{->>}[dd]
\\
  X'
  \ar@{>->}[rr]|{\hole}
  \ar@{>->}[dr]
&&
  Y'
  \ar[dr]
\\&
  X
  \ar@{>->}[rr]
&&
  Y
}
\]
where the vertical maps are in $\Delta_{\aug,-}$ and the horizontal maps except for $Y' \to Y$ are in $\Delta_{\aug,-}$.
Assume that the front square is a pushout and the top square is a pullback.
Then the back square is a pushout exactly if the bottom right map $Y' \to Y$ is in $\Delta_{\aug,+}$.
\end{lemma}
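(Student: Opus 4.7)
The plan is to transfer the problem to $\Set$ via the underlying-set functor $\Delta_\aug \to \Set$, which preserves and reflects monomorphisms, epimorphisms, arbitrary pullbacks, and pushouts of monomorphisms along epimorphisms (the pushout carries a unique total order induced by either coprojection). Key observation: for a span $B \hookleftarrow A \twoheadrightarrow X$ in $\Set$ with $m \co A \hookrightarrow B$ mono and $f \co A \twoheadrightarrow X$ epi, the pushout admits the concrete description
\[
B +_A X \;\simeq\; X \sqcup (B \setminus m(A)),
\]
where $X$ embeds into the first summand, $b = m(a)$ in $B$ is sent to $f(a) \in X$, and $b \in B \setminus m(A)$ is sent to itself.

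Applying this to the front (given) and back (tentative) pushouts yields $Y \simeq X \sqcup (B \setminus m(A))$ and $Y' \simeq X' \sqcup (B' \setminus m'(A'))$. The top square being a pullback gives the crucial set-level identity $m'(A') = \{b' \in B' : \text{image of } b' \text{ in } B \text{ lies in } m(A)\}$, so the map $B' \to B$ restricts to $B' \setminus m'(A') \to B \setminus m(A)$. For the forward direction, the canonical map $Y' \to Y$ then splits as $X' \to X$ on the first summand and this restriction on the second, with images landing in the disjoint summands $X$ and $B \setminus m(A)$ of $Y$; since $X' \to X$ and $B' \to B$ are mono, so is $Y' \to Y$.

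For the converse, form the actual pushout $Y'' \defeq B' +_{A'} X'$ and the universal comparison $Y'' \to Y'$. By the forward direction applied to this honest pushout (together with the assumed pullback on top), the composite $Y'' \to Y' \to Y$ is mono, so $Y'' \to Y'$ is mono in $\Set$. It is also epi because $B' \to Y'$ is epi and factors as $B' \to Y'' \to Y'$. A bijection in $\Delta_\aug$ is an isomorphism, so $Y'' \simeq Y'$ and the back square is a pushout.

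The main obstacle is simply the bookkeeping at one point: pinning down, via the pullback hypothesis on top, that the image of $B' \setminus m'(A')$ in $B$ lands cleanly in the summand $B \setminus m(A)$ of $Y$, disjoint from the image of $X'$. Once that is established the rest is formal, and no further properties of $\Delta_\aug$ beyond its embedding into $\Set$ (preserving the relevant limits and colimits) are needed.
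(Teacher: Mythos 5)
Your argument is essentially the paper's own proof: the forward direction is the same reduction to underlying finite sets, using the coproduct description $B +_A X \cong X \sqcup (B \setminus m(A))$ together with the top pullback to see that $Y' \to Y$ respects the two summands and is injective on each (the paper packages this as a three-case element chase), and your converse — comparing the genuine pushout $Y''$ with $Y'$ and concluding from mono plus epi that the comparison map is an isomorphism — is exactly the paper's argument with its $Y'_0$. The only shaky point is your parenthetical justification that the underlying-set functor preserves pushouts of monos along epis because ``the pushout carries a unique total order induced by either coprojection'': for a general such span this fails — for the epi $[1] \to [0]$ and the mono $[1] \to [2]$ with image $\{0,2\}$ no compatible order on the two-element set pushout exists, and the pushout in $\Delta_\aug$ is $[0]$, so the set-level and $\Delta_\aug$-level pushouts genuinely differ. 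However, the paper's proof rests on the very same unargued assertion that the forgetful functor ``preserves the pushouts in the diagram'' (and in the situation where the lemma is actually applied, where the monos are final-segment inclusions, the preservation does hold), so on this point your proposal is no weaker than, and otherwise coincides with, the paper's proof.
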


\begin{proof}
Assume that the forward direction holds.
Then the reverse direction can be derived as follows.
We introduce the pushout $Y'_0$ in the back square (recall that $\Delta_\aug$ has pushouts along maps in $\Delta_{\aug,+}$, that these pushouts preserve maps in $\Delta_{\aug,-}$, and that $\Delta_{\aug,+}$ is closed under pushout).
Thus, we obtain another cubical diagram with $Y'$ replaced by $Y'_0$.
By assumption, the induced map $Y'_0 \to Y$ is in $\Delta_{\aug,+}$.
By cancellation, so is the induced map $Y'_0 \to Y'$.
Again by cancellation, since $B' \to Y'$ is in $\Delta_-$, so is $Y'_0 \to Y'$.
Thus, $Y'_0 \to Y'$ is iso, making the back square a pushout.

It will thus suffice to show the forward direction.
Note that the forgetful functor to finite sets creates epis and monos and preserves the pullbacks and pushouts in the diagram.
Thus, it is enough to verify the corresponding assertion for finite sets.
Let us do this now.

Let $y'_0, y'_1 \in Y'$ have the same image $y \in Y$.
Note that $Y'$ can be descibed as the coproduct of $X'$ with the complement of the image of $A'$ in $B'$ (using that finite sets are Boolean), and similarly for the front pushout.
We perform case distinction on which component $y'_0, y'_1$ lie in.
\begin{itemize}
\item
Assume there are respective preimages $x'_0, x'_1 \in X'$.
Since $X' \to Y$ is mono, we have $x'_0 = x'_1$, hence $y'_0 = y'_1$.
\item
Assume they have respective preimages $b'_0, b'_1 \in B'$ that do not lie in the image of $A' \to B'$.
Because the top square is a pullback, the induced elements $b_0, b_1 \in B$ do not lie in the image of $A \to B$.
Since both map to $y \in Y$, we have $b_0 = b_1$.
Since $B' \to B$ is mono, we then have $b'_0 = b'_1$, hence $y'_0 = y'_1$.
\item
Assume $y'_0$ has a preimage $x'$ in $X'$ and $y'_1$ has a preimage $b'$ in $B'$ (the dual case is analogous).
As a van Kampen pushout, the front square is also a pullback.
By pullback pasting, the composite square from $A' \to B'$ to $X \to Y$ is a pullback.
Let $x \in X$ be the image of $x'$.
By assumption, $b'$ and $x$ map to the same element $y \in Y$.
Thus, they have a common preimage $a' \in A'$.
By construction, $a'$ and $x'$ have the same image in $X$.
Since $X' \to X$ is mono, $x'$ is the image of $a'$.
Since $x'$ and $b'$ have a common preimage in $A'$, they map to the same element in $Y'$, \ie $y'_0 = y'_1$.
\qedhere
\end{itemize}
\end{proof}

\begin{question}
It would be nice to have an abstract categorical argument replacing the last paragraph of the above proof that works in any suitably exact category without the assumption that it is Boolean.
\end{question}

\subsection{Comparison with Kan complexes}
\label{subsection:comparison-kan-complexes}

Let us define weak factorization systems (cofibration, trivial fibration) and (trivial cofibration, fibration) in simplicial sets generated by the images of boundary inclusions and horn inclusions under $L$, respectively.
These are exactly the usual boundary and horn inclusions in simplicial sets.
By definition, $U$ creates fibrations and creates trivial fibrations from maps lifting against cofibrations.
By cocontinuity, $L$ preserves cofibrations and sends anodyne maps to trivial cofibrations.

The cofibrations in simplicial sets are the monomorphisms $A \to B$ for which the relative latching object inclusions are decidable.
In particular, the cofibrant objects are those for which it is decidable if a simplex is degenerate.
By induction on dimension, one sees that a cofibration in simplicial sets is in particular levelwise decidable.
It follows that $U$ preserves cofibrations.

\begin{remark}
Let $X$ be fibrant and have a vertex $a$.
One might hope that $LX$ is fibrant.
However, this is not the case.
Consider the $\Lambda^2_2$-horn in $LX$ that on $d_1$ is $(s_0, a)$ and on $d_2$ is $(\id_{[1]}, l)$ for a loop $l$ on $a$ given by fibrancy.
Assume that this horn has a filler $(t, y)$.
Let $t d_1 = u t'$ for uniquely determined face map $u$ and degeneracy map $t'$; then $(s_0, a) = (t, y)[d_1] = (t', y[u])$.
Since $t'$ is constant and $d_1$ preserves terminal and initial objects, also $t$ is constant, \ie $t \co [2] \to [0]$.
This gives a contradiction $(\id_{[1]}, l) = (t, y)[d_2] = (s_0, y)$.

One might think that this sort of problem only appears for outer horns.
Unfortunately, it is also not generally the case that $LX$ lifts against inner horn inclusions if $X$ does.
Lifts for $LX$ against horn inclusions of dimension up to three exist.
However, there is another problem in dimension four.
Consider a $\Lambda^4_3$-horn in $LX$ that on $d_0$ is an $s_0$-degeneracy, on $d_4$ is an $s_1$-degeneracy, and on $d_1$ and $d_2$ is different tetrahedra ($X$ can be freely generated to admit this data).
Then any filler would be an $s_1$-degeneracy and have identical sides on $d_1$ and $d_2$, a contradiction.
\end{remark}

Constructively, the problem with simplicial sets over semisimplicial sets is that not every simplicial set is cofibrant, only those for which we can decide whether a given element is degenerate.
The property that every object is cofibrant permeates the theory of Kan complexes (\ie fibrant objects).
For example, by formal properties one has that $X^A$ is fibrant if $A$ is cofibrant and $X$ is fbrant.
One would like this to be valid for general $A$, but removing the cofibrancy assumption results in a statement that is not valid constructively~\cite{bezem:kripke-countermodel,simplicial-sets-non-constructive} (note that the arguably more primitive problem that there can be non-cofibrant objects, although not discussed in the cited articles, can be observed in an even simpler Kripke model).
Restricting attenting to cofibrant objects is also insufficient as for example the cotensor of a cofibrant object with a countably infinite set is not in general cofibrant.

Using classical logic, one may prove that the two weak factorization systems in simplicial sets as above form a model structure.
In fact, the only classical statement needed is precisely that every object is cofibrant; this is the development in~\cite{sattler:equivalence-extension}.
One may then ask how the forgetful functor and its adjoints relate the homotopy theories of simplicial and semisimplicial sets.
Previous developments of the homotopy theory of semisimplicial sets have usually defined weak equivalences as being created by the free simplicial set functor (or equivalently the realization to CW-complexes).
Since we took a more synthetic approach, letting weak equivalences essentially be generated by semisimplicial horn inclusions, we have an a priori weaker notion of weak equivalence.
It thus remains to show that $L$ reflects weak equivalences.
We will prove this classically in a way that enables us to extract a few useful constructive facts in semisimplicial sets along the way.
For differentiation, those statements that depend on classical logic will be marked as such.

Recall that $\Sk^n(1)$ is the unique (up to isomorphism) semisimplicial set consisting of just one simplex in each dimension up to $n \geq 0$.
We write $t^{k,n}$ for the unique map $1^{(k)} \to 1^{(n)}$ for $-1 \leq k \leq n$.
This is a cofibration: there is a unique cocartesian morphism (pushout square) from $t^{n-1,n}$ to $i^n$ for $n \geq 0$.

\begin{lemma} \label{quotient-boundary-inclusion-join-closure}
There is a unique and cocartesian morphism from the Leibniz join of $t^{a-1,a}$ and $t^{b-1,b}$ to $t^{a+b,a+b+1}$ for $a, b \geq 0$.
\end{lemma}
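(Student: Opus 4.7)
The plan is to exhibit a unique commuting square
\[
\xymatrix{
(1^{(a-1)} \star 1^{(b)}) \cup (1^{(a)} \star 1^{(b-1)}) \ar[r] \ar[d] & 1^{(a+b)} \ar[d]^{t^{a+b,a+b+1}} \\
1^{(a)} \star 1^{(b)} \ar[r] & 1^{(a+b+1)}
}
\]
with left edge $t^{a-1,a} \hatjoin t^{b-1,b}$ and then verify that it is a pushout, in direct analogy with the recalled pushout from $t^{n-1,n}$ to $i^n$. Working in augmented semisimplicial sets (where join is a genuine monoidal structure), one describes an $n$-simplex of $1^{(a)} \star 1^{(b)}$ as a pair $(a',b')$ with $-1 \leq a' \leq a$, $-1 \leq b' \leq b$, and $a'+b' = n-1$, contributing exactly one simplex; no further identifications occur because each $1^{(m)}$ has at most one simplex per dimension.

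The first key observation is that the pair $(a,b)$ yields the unique $(a+b+1)$-dimensional simplex of $1^{(a)} \star 1^{(b)}$, call it $\tau$. Every other simplex $(a',b')$ satisfies $a' < a$ or $b' < b$ and hence lies in the Leibniz join domain. The $a+b+2$ faces of $\tau$ are all of shape $(a-1,b)$ or $(a,b-1)$ and therefore are $(a+b)$-dimensional simplices lying in the Leibniz join domain. Since $1^{(a+b)}$ and $1^{(a+b+1)}$ have at most one simplex in each dimension, the two horizontal maps in the square above exist and are forced uniquely; commutativity is automatic because both composites send each $k$-simplex to the unique $k$-simplex of $1^{(a+b+1)}$.

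For the pushout property, a compatible pair of maps from $1^{(a+b)}$ and $1^{(a)} \star 1^{(b)}$ into a test object $Z$ provides simplices $x_0, \ldots, x_{a+b}$ in $Z$ (with each $x_k$ having all faces equal to $x_{k-1}$) together with an $(a+b+1)$-simplex $y$, the image of $\tau$. Compatibility along the Leibniz join domain, combined with the description of the faces of $\tau$, forces every face of $y$ to equal $x_{a+b}$. Hence the data $(x_0, \ldots, x_{a+b}, y)$ defines the required unique extension $1^{(a+b+1)} \to Z$.

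The main obstacle is purely combinatorial: identifying the complement of the Leibniz join domain in $1^{(a)} \star 1^{(b)}$ with the single simplex $\tau$ and tracking the shapes of its faces. Once this bookkeeping is settled, both uniqueness and the universal property reduce to the triviality that $1^{(m)}$ has at most one simplex in each dimension, making all face-compatibilities automatic.
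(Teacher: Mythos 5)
Your proof is correct, but it takes a genuinely more hands-on route than the paper. The paper argues abstractly: uniqueness of the morphism of arrows is immediate because the source and target of $t^{a+b,a+b+1}$ are subterminal, and existence together with cocartesianness is obtained by pasting — one already has the cocartesian squares exhibiting each $t^{n-1,n}$ as a pushout of $i^n$, and the isomorphism $i^a \hatjoin i^b \simeq i^{a+b+1}$ coming from \cref{cof-join}, so by cancellation of cocartesian morphisms the problem reduces to producing the map on domains into $1^{(a+b)}$, which exists uniquely since $1^{(a+b)}$ is terminal among $(a+b)$-truncated semisimplicial sets. You instead verify the pushout universal property directly from the explicit description of the simplices of $1^{(a)} \star 1^{(b)}$ as pairs $(a',b')$: your identification of the complement of the Leibniz-join domain with the single top simplex $\tau = (a,b)$, and of its faces as the simplices $(a-1,b)$ and $(a,b-1)$ lying in that domain, is exactly right, and the extension-and-uniqueness argument for a test object $Z$ is sound. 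The one step you leave implicit is that the Leibniz join domain (a pushout over $1^{(a-1)} \star 1^{(b-1)}$) really is the union of the two subobjects inside $1^{(a)} \star 1^{(b)}$; this is harmless and immediate from your simplex description (or from the fact that the Leibniz join of cofibrations is a cofibration). The trade-off is clear: your argument is self-contained and transparent at the element level, but redoes combinatorics the paper has already packaged into the recorded cocartesian squares; the paper's argument is shorter and stays at the level of cocartesian morphisms, in keeping with its stated preference for avoiding simplex-level bookkeeping, though both proofs ultimately rest on the same observation that $1^{(m)}$ has at most one simplex in each dimension.
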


\begin{proof}
Uniqueness holds since source and target of $t^{a+b,a+b+1}$ are subterminal.

For existence, we have isomorphisms and cocartesian morphisms indicated by solid arrows below:
\[
\xymatrix@C+0.5cm{
  i^a \hatjoin i^n
  \ar[r]^-{\simeq}
  \ar[d]^{\cocart}
&
  i^{a+b+1}
  \ar[d]^{\cocart}
\\
  t^{a-1,a} \hatjoin t^{b-1,b}
  \ar@{.>}[r]^-{\cocart}
&
  t^{a+b,a+b+1}
\rlap{.}}
\]
By cocartesianness, to find a cocartesian arrow commuting as indicated is to find that arrow after applying the domain functor to the whole diagram.
Then the diagram lives in $n$-truncated semisimplicial sets and the bottom right object is terminal.
\end{proof}

\begin{corollary} \label{i-zero-join-quotient-boundary-inclusion}
There is a unique and cocartesian morphism $i^0 \hatjoin t^{n-1,n} \to t^{n,n+1}$ for $n \geq 0$.
\end{corollary}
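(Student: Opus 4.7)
The plan is to recognize the statement as the $a = 0$ specialization of \cref{quotient-boundary-inclusion-join-closure} with $b = n$, upon identifying $i^0$ with $t^{-1,0}$. I will argue that $1^{(-1)}$ should be taken as the empty semisimplicial set (consistent with the earlier statement of the cocartesian morphism $t^{n-1,n} \to i^n$ at $n = 0$), so that $t^{-1,0}$ is literally the boundary inclusion $i^0$. Under this identification, plugging $a = 0$, $b = n$ into the lemma's conclusion delivers exactly the required cocartesian morphism $i^0 \hatjoin t^{n-1,n} \to t^{n,n+1}$.

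If instead one prefers to keep the corollary self-contained (without extending the lemma to $a = -1$), I would replay the lemma's proof strategy with the diagram
\[
\xymatrix@C+0.5cm{
  i^0 \hatjoin i^n
  \ar[r]^-{\simeq}
  \ar[d]^{\cocart}
&
  i^{n+1}
  \ar[d]^{\cocart}
\\
  i^0 \hatjoin t^{n-1,n}
  \ar@{.>}[r]^-{\cocart}
&
  t^{n,n+1}
\rlap{.}}
\]
The top isomorphism is the standard fact that the Leibniz join of boundary inclusions is a boundary inclusion. The left vertical cocartesian morphism comes from applying $i^0 \hatjoin -$ to the cocartesian morphism $i^n \to t^{n-1,n}$, using argumentwise cocontinuity of the Leibniz join; the right vertical is the recalled cocartesian morphism $i^{n+1} \to t^{n,n+1}$. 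As in the proof of the preceding lemma, producing the dotted arrow reduces by cocartesianness to finding a map between the domains in the arrow category, and this problem lives in the category of $n$-truncated semisimplicial sets, where the domain $1^{(n)}$ of $t^{n,n+1}$ is terminal. This yields existence and uniqueness simultaneously.

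I expect no real obstacle: every ingredient is already in place from the preceding lemma and its preparatory remarks. The only judgment call is presentational, namely whether to extend \cref{quotient-boundary-inclusion-join-closure} to allow $a = -1$ or $b = -1$ (so that the corollary becomes a one-line invocation), or to present this corollary as a parallel, mildly degenerate case mimicking the same truncation-plus-cocartesianness pattern.
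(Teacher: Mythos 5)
Your first route is exactly the paper's proof: the paper simply observes $t^{-1,0} \simeq i^0$ (with $1^{(-1)}$ the empty semisimplicial set) and invokes \cref{quotient-boundary-inclusion-join-closure} at $a=0$, $b=n$; note that no extension of that lemma is required, since $a=0$ is already within its stated range. Your alternative self-contained replay of the truncation-plus-cocartesianness argument is also correct but unnecessary.
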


\begin{proof}
We have $t^{-1,0} \simeq i^0$.
\end{proof}

The following statement is false for odd $n$.

\begin{lemma} \label{maximal-quotient-horn}
The map $\Sk^n(1) \to \Sk^{n+2}(1)$ is a weak equivalence for even $n$.
\end{lemma}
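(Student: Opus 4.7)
The plan is to proceed by induction on even $n \geq 0$, with the base case $n = 0$ being precisely \cref{dunce-hat}, which shows the Dunce hat inclusion $\Delta^0 \to D = \Sk^2(1)$ is a weak equivalence. For the inductive step on even $n \geq 2$, I would generalize that pushout construction by replacing $L = \Sk^1(1)$ throughout with $\Sk^{n+1}(1)$.

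Concretely, form the pushout
\[
\xymatrix{
  \Delta^0 \star \Sk^{n+1}(1)
  \ar[r]^-{d_0 \star \Sk^{n+1}(1)}
  \ar[d]_{q}
&
  \Delta^1 \star \Sk^{n+1}(1)
  \ar[d]
\\
  \Sk^{n+2}(1)
  \ar[r]
&
  \pullbackcorner{ul}
  Q
\rlap{,}}
\]
where $q$ is the natural collapse: it identifies the cone apex with the base vertex and the two copies of each simplex in dimensions $0, 1, \ldots, n+1$ with the unique simplex of $\Sk^{n+2}(1)$ in that dimension, while sending the unique top $(n+2)$-cone simplex to the top simplex of $\Sk^{n+2}(1)$. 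The top horizontal is anodyne since $d_0 \co \Delta^0 \to \Delta^1$ is anodyne by \cref{yoneda-anodyne} and join with a semisimplicial set preserves anodyne maps by \cref{anodyne-join-cof}; hence its pushout $\Sk^{n+2}(1) \to Q$ is also anodyne.

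By 2-out-of-3 for weak equivalences (\cref{weak-equiv-2-out-of-6}) applied to $\Sk^n(1) \to \Sk^{n+2}(1) \to Q$, it then suffices to show that the composite $\Sk^n(1) \to Q$ is a weak equivalence. Using the pushout square, together with the fact that $\Sk^n(1) \to \Sk^{n+2}(1)$ factors through $\Delta^0 \star \Sk^{n+1}(1)$ via $\Sk^n(1) \hookrightarrow \Sk^{n+1}(1) \hookrightarrow \Delta^0 \star \Sk^{n+1}(1)$ followed by $q$, this composite can be rewritten as
\[
\Sk^n(1) \to \Delta^0 \star \Sk^n(1) \to \Delta^1 \star \Sk^n(1) \to \Delta^1 \star \Sk^{n+1}(1) \to Q.
\]
The first map is anodyne by \cref{point-join-cofib} and the second is anodyne for the same reason as the top horizontal above.

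The main obstacle is then to show the final composite $\Delta^1 \star \Sk^n(1) \to \Delta^1 \star \Sk^{n+1}(1) \to Q$ is a weak equivalence, since its first piece $\Delta^1 \star t^{n, n+1}$ is a non-anodyne cofibration and its second piece is the right-hand leg of the pushout. I expect this to require a nested induction or a further Dunce-hat-style auxiliary pushout one dimension lower, invoking the Leibniz join closure of trivial cofibrations (\cref{cof-w-equiv-join-cof}) together with the inductive hypothesis. This is the combinatorially substantial part of the argument, and the parity assumption on $n$ should enter precisely at this point to make the relevant sub-pushouts trace back to the base case rather than to the non-weak-equivalence $\Delta^0 \to L$.
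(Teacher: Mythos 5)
There is a genuine gap, in two places. The smaller one: your claim that the first map $\Sk^n(1) \to \Delta^0 \star \Sk^n(1)$ is anodyne by \cref{point-join-cofib} misreads that lemma. \cref{point-join-cofib} concerns the functor $\Delta^0 \star (-)$ applied to a \emph{cofibration}; it gives, for instance, that the apex inclusion $\Delta^0 \to \Delta^0 \star \Sk^n(1)$ is anodyne. The map you need is the base-of-cone inclusion $B \to \Delta^0 \star B$, which for general $B$ is neither anodyne nor even a weak equivalence (take $B = \partial\Delta^1$). For $B = \Sk^n(1)$ with $n$ even it \emph{is} a weak equivalence, but only because the induction hypothesis already makes $\Sk^n(1)$ weakly contractible: one can argue that $\Delta^0 \star (\Delta^0 \to \Sk^n(1))$ is a weak equivalence by \cref{join-pres-weak-equiv} and then apply 2-out-of-3 twice. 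So this step is repairable, but not by the cited lemma and not without invoking the induction hypothesis.

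The essential gap is that the whole content of the inductive step sits in the part you defer. After your reductions, what remains is to show that $\Delta^1 \star \Sk^n(1) \to \Delta^1 \star \Sk^{n+1}(1) \to Q$ is a weak equivalence. Here the first map is $\Delta^1 \star t^{n,n+1}$, whose second factor passes from an even to an odd skeleton; it is not covered by the induction hypothesis (and the lemma is false for odd skeleta), so no formal join/Leibniz argument applies directly, and everything hinges on how the collapse $q$ identifies simplices inside $Q$. Announcing that this ``should require a nested induction or a further Dunce-hat-style auxiliary pushout'' is precisely where the proof has to happen, and the paper spends essentially all of its argument there: instead of your single quotient square it builds two auxiliary pushouts, gluing $\Delta^1 \star \Sk^{n-2}(1)$ onto $\Sk^n(1)$ and $\Delta^1 \star \Sk^{n+1}(1)$ onto $\Sk^{n+2}(1)$, interpolates between the resulting objects by a chain $X_0 \to X_1 \to X_2 \to X_3 \to X_4$ of colimits of cube-shaped diagrams, and identifies each step as a pushout of one of $i^0 \hatjoin t^{n-2,n}$, $\Delta^0 \star t^{n,n+1}$, $i^1 \hatjoin t^{n-2,n}$, $h^1_0 \hatjoin t^{n,n+1}$; the induction hypothesis enters through $t^{n-2,n}$ via \cref{cof-w-equiv-join-cof,cof-weak-equiv-closure}, and the identifications rest on \cref{quotient-boundary-inclusion-join-closure,i-zero-join-quotient-boundary-inclusion}. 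Your outline has the correct base case and a plausible general shape (cone off a skeleton and compare via 2-out-of-3 against an anodyne pushout), but without carrying out a decomposition of this kind it is not yet a proof.
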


\begin{proof}
This is by induction on $n$.
The base case $n = 0$ is given by \cref{dunce-hat}, although it can also be seen as an instance of the induction step once $\Sk^{-2}(1)$ is treated as the initial augmented semisimplicial set.

Let now $n \geq 2$.
Just for this proof, we will abbreviate $1^{(k)} \defeq \Sk^k(1)$ and denote $d_1 \co \braces{a} \to \Delta^1$ and $d_0 \co \braces{b} \to \Delta^1$ so that some arrow annotations can be left implicit.

We construct pushouts
\[
\xymatrix@C+1cm{
  \braces{b} \star 1^{(n-2)}
  \ar[r]
  \ar[d]
&
  \Delta^1 \star 1^{(n-2)}
  \ar[d]
\\
  1^{(n)}
  \ar[r]_-{\anod}
&
  A
  \pullbackcorner{ul}
}
\]
and
\[
\xymatrix@C+1cm{
  \braces{b} \star 1^{(n+1)}
  \ar[r]
  \ar[d]
&
  \Delta^1 \star 1^{(n+1)}
  \ar[d]
\\
  1^{(n+2)}
  \ar[r]_-{\anod}
&
  B
  \pullbackcorner{ul}
}
\]
where the anodyne maps come from \cref{anodyne-join-cof}.
We have an evident morphism from the first pushout span to the second, so by functoriality of pushouts we obtain a map $A \to B$ commuting as below:
\[
\xymatrix@C+1cm{
  1^{(n)}
  \ar[r]_{\anod}
  \ar[d]
&
  A
  \ar@{.>}[d]
\\
  1^{(n+2)}
  \ar[r]_{\anod}
&
  B
\rlap{.}}
\]
By 2-out-of-3, it will suffice to show that $A \to B$ is a weak equivalence.

\begin{figure}
\begin{align*}
\scriptsize
\xymatrix@R-2em@C-2em{
  1^{(n-2)}
  \ar[rr]
  \ar[dd]
  \ar@{=}[dr]
&&
  \braces{a} \star 1^{(n-2)}
  \ar[dd]|{\hole}
  \ar@{=}[dr]
\\&
  1^{(n-2)}
  \ar[rr]
  \ar[dd]
&&
  \braces{a} \star 1^{(n-2)}
  \ar@{.>}[dd]
\\
  \braces{b} \star 1^{(n-2)}
  \ar[rr]|{\hole}
  \ar[dr]
&&
  \Delta^1 \star 1^{(n-2)}
  \ar@{.>}[dr]
\\&
  1^{(n)}
  \ar@{.>}[rr]
&&
  X_0
}
&&
\scriptsize
\xymatrix@R-2em@C-2em{
  1^{(n-2)}
  \ar[rr]
  \ar[dd]
  \ar[dr]
&&
  \braces{a} \star 1^{(n-2)}
  \ar[dd]|{\hole}
  \ar[dr]
\\&
  1^{(n)}
  \ar[rr]
  \ar@{=}[dd]
&&
  \braces{a} \star 1^{(n)}
  \ar@{.>}[dd]
\\
  \braces{b} \star 1^{(n-2)}
  \ar[rr]|{\hole}
  \ar[dr]
&&
  \Delta^1 \star 1^{(n-2)}
  \ar@{.>}[dr]
\\&
  1^{(n)}
  \ar@{.>}[rr]
&&
  X_1
}
\\
\scriptsize
\xymatrix@R-2em@C-2em{
  1^{(n-2)}
  \ar[rr]
  \ar[dd]
  \ar[dr]
&&
  \braces{a} \star 1^{(n-2)}
  \ar[dd]|{\hole}
  \ar[dr]
\\&
  1^{(n+1)}
  \ar[rr]
  \ar@{=}[dd]
&&
  \braces{a} \star 1^{(n+1)}
  \ar@{.>}[dd]
\\
  \braces{b} \star 1^{(n-2)}
  \ar[rr]|{\hole}
  \ar[dr]
&&
  \Delta^1 \star 1^{(n-2)}
  \ar@{.>}[dr]
\\&
  1^{(n+1)}
  \ar@{.>}[rr]
&&
  X_2
}
&&
\scriptsize
\xymatrix@R-2em@C-2em{
  1^{(n)}
  \ar[rr]
  \ar[dd]
  \ar[dr]
&&
  \braces{a} \star 1^{(n)}
  \ar[dd]|{\hole}
  \ar[dr]
\\&
  1^{(n+1)}
  \ar[rr]
  \ar@{=}[dd]
&&
  \braces{a} \star 1^{(n+1)}
  \ar@{.>}[dd]
\\
  \braces{b} \star 1^{(n)}
  \ar[rr]|{\hole}
  \ar[dr]
&&
  \Delta^1 \star 1^{(n)}
  \ar@{.>}[dr]
\\&
  1^{(n+1)}
  \ar@{.>}[rr]
&&
  X_3
}
\\
\scriptsize
\xymatrix@R-2em@C-2em{
  1^{(n+1)}
  \ar[rr]
  \ar[dd]
  \ar@{=}[dr]
&&
  \braces{a} \star 1^{(n+1)}
  \ar[dd]|{\hole}
  \ar@{=}[dr]
\\&
  1^{(n+1)}
  \ar[rr]
  \ar[dd]
&&
  \braces{a} \star 1^{(n+1)}
  \ar@{.>}[dd]
\\
  \braces{b} \star 1^{(n+1)}
  \ar[rr]|{\hole}
  \ar[dr]
&&
  \Delta^1 \star 1^{(n+1)}
  \ar@{.>}[dr]
\\&
  1^{(n+2)}
  \ar@{.>}[rr]
&&
  X_4
}
\end{align*}
\caption{Colimiting cubes for $X_0$ to $X_4$ used in the proof of \cref{maximal-quotient-horn}}
\label{cubes-figure}
\end{figure}
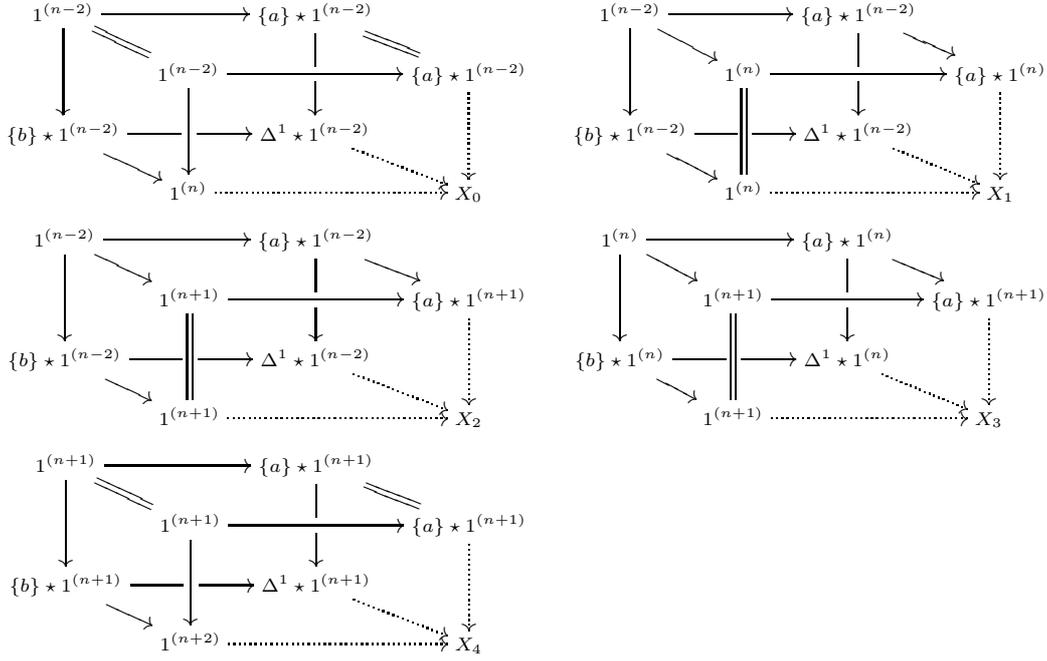

Now consider the colimits in \cref{cubes-figure}, each over a diagram of shape $S(\braces{0, 1, 2})^\op$ where $S(T)$ is the category of non-empty subobjects of a finite set $T$ (note that $S([n]) \simeq \Delta_+/[n]$).
Any map $T_1 \to T_2$ of finite sets induces a Grothendieck fibration $S(T_1) \to S(T_2)$ given by postcomposition followed by taking the image; this operation forms a functor $\FinSet \to \Cat$.
Colimits over $S(\braces{0, 1, 2})^\op$, visualized as a cube, reduce to pushouts in three different ways, each corresponding to the choice of an axis $i \in \braces{0, 1, 2}$, by first taking pushouts in the squares orthogonal to this axis and then taking a final pushout of the induced map between pushouts with an induced pushout corner map; this reduction is itself functorial and justified by pasting of left Kan extensions along
\[
\xymatrix{
  S(\braces{0, 1, 2})^\op
  \ar[r]
&
  S(\braces{0, 1, 2}/{\sim})^\op
  \ar[r]
&
  S(\braces{i})^\op
}
\]
where $i \sim i$ and $j \sim k$ if $j, k \neq i$ since left Kan extensions along Grothendieck opfibrations compute as fiberwise colimits.
This reduction to pushouts will be made use of frequently in the following.
In particular, a cocone under a diagram over $S(\braces{0, 1, 2})^\op$ is colimiting exactly if the induced square between pushout corner maps in the faces parallel to the axis $i$ is a pushout.

There are canonical maps between the diagrams in \cref{cubes-figure} that by functoriality of colimits give rise to a sequence
\begin{equation} \label{maximal-quotient-horn:0}
\begin{gathered}
\xymatrix{
  X_0
  \ar[r]
&
  X_1
  \ar[r]
&
  X_2
  \ar[r]
&
  X_3
  \ar[r]
&
  X_4
\rlap{.}}
\end{gathered}
\end{equation}
Reducing the colimit diagrams for $X_0$ and $X_4$ to pushouts with respect to the axis from top to bottom, the pushout corner maps in the top faces are isomorphisms and the bottom faces coincide with the pushout squares defining $A$ and $B$.
So we see that the map $X_0 \to X_4$ above coincides (up to isomorphism) with the map $A \to B$ constructed earlier.
It will thus suffice to show that each step $X_i \to X_{i+1}$ for $i \in \braces{0, 1, 2, 3}$ is a weak equivalence.

For the first step, we apply the reduction to pushouts with respect to the axis from back to front.
Since the back faces in the colimit diagrams for $X_0$ and $X_1$ are identical, by pushout pasting the map $X_0 \to X_1$ writes as a pushout of the induced map between the pushouts in the front faces:
\[
\xymatrix{
  1^{(n)} +_{1^{(n-2)}} \Delta^0 \star 1^{(n-2)}
  \ar[r]
  \ar[d]_{i^0 \hatjoin t^{n-2,n}}
&
  X_0
  \ar[d]
\\
  \Delta^0 \star 1^{(n)}
  \ar[r]
&
  X_1
  \pullbackcorner{ul}
\rlap{.}}
\]
Thus, $X_0 \to X_1$ is a weak equivalence by induction hypothesis, \cref{cof-w-equiv-join-cof}, and \cref{cof-weak-equiv-closure}.

For the second step, we also use the reduction to pushouts with respect to the axis from back to front.
Again the back faces in the colimit diagrams for $X_1$ and $X_2$ are identical, and the left maps of the front faces are identities.
By pushout pasting, it follows that $X_1 \to X_2$ writes as a pushout of the map between the remaining objects:
\[
\xymatrix{
  \Delta^0 \star 1^{(n)}
  \ar[r]
  \ar[d]_{\Delta^0 \star t^{n,n+1}}
&
  X_1
  \ar[d]
\\
  \Delta^0 \star 1^{(n+1)}
  \ar[r]
&
  X_2
  \pullbackcorner{ul}
\rlap{.}}
\]
Hence, $X_1 \to X_2$ is a weak equivalence by \cref{point-join-cofib,cof-weak-equiv-closure} (in fact, the left map in the above diagram is just a pushout of $h^{n+2}_0$).

For the third step, we yet again use the reduction to pushouts with respect to the axis from back to front.
This time, the spans in the front faces in the colimit diagrams for $X_2$ and $X_3$ are identical.
By pushout pasting, it follows that $X_2 \to X_3$ writes as a pushout of the pushout corner map in the induced square between the pushout corner maps in the back faces.
Since the join with fixed second argument preserves pushouts, the pushout corner maps in the back faces are just $i^1 \star 1^{(n-2)}$ and $i^1 \star 1^{(n)}$, respectively.
Hence, $X_2 \to X_3$ writes as a pushout of a Leibniz join:
\[
\xymatrix{
  \partial\Delta^1 \star 1^{(n)} +_{\partial\Delta^1 \star 1^{(n-2)}} \Delta^1 \star 1^{(n-2)}
  \ar[r]
  \ar[d]_{i^1 \hatjoin t^{n-2,n}}
&
  X_2
  \ar[d]
\\
  \Delta_1 \star 1^{(n)}
  \ar[r]
&
  X_3
  \pullbackcorner{ul}
\rlap{.}}
\]
Thus, $X_2 \to X_3$ is a weak equivalence by induction hypothesis, \cref{cof-w-equiv-join-cof}, and \cref{cof-weak-equiv-closure}.

For the fourth step, we use the reduction to pushouts with respect to the axis from left to right.
We first claim that the induced square between pushout corner maps in the left faces in the colimit diagrams for $X_3$ and $X_4$ is a pushout.
This means that the cube between the left faces, depicted below, is a colimit diagram:
\[
\xymatrix@R-0.5cm@C-0.5cm{
  1^{(n)}
  \ar[rr]
  \ar[dd]
  \ar[dr]
&&
  1^{(n+1)}
  \ar[dd]|!{[dl];[dr]}{\hole}
  \ar[dr]
\\&
  1^{(n+1)}
  \ar[rr]
  \ar[dd]
&&
  1^{(n+1)}
  \ar[dd]
\\
  \braces{b} \star 1^{(n)}
  \ar[rr]|!{[ur];[dr]}{\hole}
  \ar[dr]
&&
  \braces{b} \star 1^{(n+1)}
  \ar[dr]
\\&
  1^{(n+1)}
  \ar[rr]
&&
  1^{(n+2)}
\rlap{.}}
\]
Using that same equivalence in a different direction, this means that the induced square between pushout corner maps in the back and front faces of the above cube is a pushout.
But that is precisely the cocartesian morphism of \cref{i-zero-join-quotient-boundary-inclusion}.
Having dealt with that claim, it follows by pushout pasting that $X_3 \to X_4$ writes as a pushout of the induced map between the pushouts in the right faces:
\[
\xymatrix{
  1^{(n+1)} +_{1^{(n)}} \Delta^0 \star 1^{(n)}
  \ar[r]
  \ar[d]_{h^1_0 \hatjoin t^{n,n+1}}
&
  X_3
  \ar[d]
\\
  \Delta^1 \star 1^{(n+1)}
  \ar[r]
&
  X_4
  \pullbackcorner{ul}
\rlap{.}}
\]
Therefore, $X_3 \to X_4$ is a weak equivalence by \cref{anodyne-join-cof,cof-weak-equiv-closure} (in fact, the left map in the above diagram is just a pushout of $h^{n+3}_0$).
\end{proof}

Note that the preceding proof, while staying at an abstract level of chasing pushouts through diagrams, is completely formal in the following sense: every detail of the proof is specified, there is no hand-waving that obscures hidden laborous details such as manually having to check that two abstractly specified arrows indeed coincide or separately needed manual verification of functoriality or naturality of certain constructions.
One would like every combinatorial proof here to have this level of formality without needing to go down to the level of set-based reasoning.

\begin{remark}
All colimits considered in the proof of \cref{maximal-quotient-horn} are actually homotopy colimits as they decompose into pushouts along cofibrations.
As such, instances of the glueing lemma (weakly equivalent spans with one leg a cofibration have weakly equivalent pushouts)  in the cofibration category of semisimplicial sets might be used to substitute certain parts of the argument.
It would be nice if it could be used to actually simplify the proof.
\end{remark}

\begin{corollary} \label{monoidal-to-cartesian-unit-w-equiv}
The map $\top \to 1$ is a cofibration and a weak equivalence.
\end{corollary}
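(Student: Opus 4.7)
The plan is to reduce the statement to an $\omega$-composition of weak equivalences that we have already established in \cref{maximal-quotient-horn}.

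First I would identify the map $\top \to 1$ with the skeleton inclusion $\Sk^0(1) \to 1$, using that $\top = y[0] \simeq \Sk^0(1)$ and that the terminal semisimplicial set $1$ is the colimit of its skeleton filtration $\Sk^0(1) \to \Sk^1(1) \to \Sk^2(1) \to \ldots$. Each $\Sk^n(1) \to \Sk^{n+1}(1)$ is a pushout of the boundary inclusion $i^{n+1} \co \partial\Delta^{n+1} \to \Delta^{n+1}$ (this being the definition of the skeleton cocartesian morphisms $t^{n,n+1}$ discussed just before \cref{quotient-boundary-inclusion-join-closure}), so in particular a cofibration. Thus the whole tower is a sequence of cofibrations, and $1 = \colim_n \Sk^n(1)$.

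Next I would restrict to the cofinal subsequence of even skeleta
\[
\Sk^0(1) \to \Sk^2(1) \to \Sk^4(1) \to \ldots
\]
whose colimit is again $1$. By \cref{maximal-quotient-horn}, each step $\Sk^{2n}(1) \to \Sk^{2n+2}(1)$ is a weak equivalence, and as noted above it is also a cofibration (being a composite of two cofibrations).

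Finally, I would apply \cref{cof-weak-equiv-closure}, which asserts that cofibrations that are weak equivalences are closed under $\omega$-composition. This immediately yields that the composite $\top \simeq \Sk^0(1) \to 1$ is a cofibration and a weak equivalence, which is the claim. There is no real obstacle here; the only mild subtlety is to note that one must pass to the even subsequence, since \cref{maximal-quotient-horn} is explicitly stated to fail for odd $n$, but cofinality of the even skeleta in the skeleton filtration of $1$ makes this harmless.
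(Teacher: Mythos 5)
Your proposal is correct and is essentially the paper's own argument: the paper also identifies $\top \to 1$ with the $\omega$-composite $\Sk^0(1) \to \Sk^2(1) \to \ldots$ and concludes from \cref{maximal-quotient-horn} together with the closure of cofibrations that are weak equivalences under $\omega$-composition (\cref{cof-weak-equiv-closure}). Your extra remarks about the skeleton filtration and passing to the even subsequence just make explicit what the paper leaves implicit.
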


\begin{proof}
Note that $\top \to 1$ arises as the $\omega$-composite
\[
\xymatrix{
  \Sk^0(1)
  \ar[r]
&
  \Sk^2(1)
  \ar[r]
&
  \ldots
\rlap{.}}
\]
Thus, the claim follows from \cref{maximal-quotient-horn}.
\end{proof}

\begin{lemma} \label{simplicial-unit-w-equiv}
The unit $\eta \co \Id \to UL$ is valued in cofibrations that are weak equivalences.
\end{lemma}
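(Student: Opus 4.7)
The plan is to prove separately that $\eta_X$ is a cofibration and a weak equivalence, with the weak equivalence part reduced by skeletal induction to the case of representables.

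First, $\eta_X$ is a cofibration. An $n$-simplex of $UL X$ is a pair $(s, x)$ with $s \co [n] \twoheadrightarrow [k]$ an epimorphism in $\Delta_-$ and $x \in X_k$, and such a pair lies in the image of $\eta_X$ precisely when $s$ is an identity, a decidable property. Combined with injectivity of $\eta_X$, this makes $\eta_X$ a levelwise decidable monomorphism.

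For the weak equivalence, I first handle representables. If $n = 0$, then $UL(\Delta^0) = 1$ and $\eta_{\Delta^0}$ agrees with the map $\top \to 1$, which is a weak equivalence by \cref{monoidal-to-cartesian-unit-w-equiv}. For $n \ge 1$, the key input is the classical simplicial deformation retraction of $L(\Delta^n) = \Delta^n$ onto its maximum vertex $n$, given explicitly by the simplicial map $H \co \Delta^n \times \Delta^1 \to \Delta^n$ sending a pair $(\alpha, \beta) \co [m] \to [n] \times [1]$ to the map $i \mapsto \alpha(i)$ if $\beta(i) = 0$ and $i \mapsto n$ otherwise. Applying $U$ and precomposing with the cylinder comparison morphism from \cref{simplicial-vs-semisimplicial-cylinder} produces a semisimplicial homotopy $UL(\Delta^n) \otimes I \to UL(\Delta^n)$ from the identity to the constant map at the maximum vertex, exhibiting the maximum vertex inclusion $v \co \Delta^0 \to UL(\Delta^n)$ as a homotopy equivalence, hence a weak equivalence by \cref{h-equiv-is-weak-equiv}. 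By naturality of $\eta$, the map $v$ factors as $\Delta^0 \to \Delta^n \to UL(\Delta^n)$ with the first arrow the maximum vertex inclusion in $\widehat{\Delta_+}$, which is anodyne by \cref{yoneda-anodyne}. Hence 2-out-of-3 yields that $\eta_{\Delta^n}$ is a weak equivalence.

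For a general $X$, I would proceed by induction on the skeletal filtration $X = \operatorname{colim}_n X^{(n)}$, where $X^{(n)}$ is obtained from $X^{(n-1)}$ by pushout along the cofibration $\coprod_{X_n} \partial \Delta^n \arrcof \coprod_{X_n} \Delta^n$. Since $UL$ preserves colimits ($L$ is a left adjoint and $U$ has a further right adjoint $R$), naturality of $\eta$ yields a map of pushout spans whose vertical arrows $\eta_{X^{(n-1)}}$, $\eta_{\coprod \partial \Delta^n}$, and $\eta_{\coprod \Delta^n}$ are trivial cofibrations by the inductive hypothesis (with $\partial \Delta^n$ reducing to lower-dimensional representables), the representable case, and closure of trivial cofibrations under coproducts (the cofibration part being trivial; the weak equivalence part by \cref{weak-equiv-coproduct}). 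The glueing lemma in the cofibration category of semisimplicial sets (\cref{semisimplicial-sets-cofibration-category}) then shows that $\eta_{X^{(n)}}$ is a trivial cofibration. Finally, $\eta_X$ is the $\omega$-colimit of these trivial cofibrations along cofibrations, so closure under $\omega$-composition from \cref{cof-weak-equiv-closure} finishes the proof.

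The main technical obstacle is the representable case: one must transport the standard simplicial deformation retraction across the forgetful functor $U$, which requires the semisimplicial-versus-simplicial cylinder comparison from \cref{simplicial-vs-semisimplicial-cylinder}. The skeletal induction afterwards is a routine gluing argument in the ambient cofibration category.
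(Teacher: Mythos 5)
There is a genuine gap in the representable case for $n \ge 1$. Your deformation retraction $H$ does \emph{not} exhibit the map $v \co \Delta^0 \to UL\Delta^n$ (from the semisimplicial representable point $\top$) as a homotopy equivalence: a homotopy inverse would in particular be a semisimplicial map $UL\Delta^n \to \Delta^0$, and no such map exists at all, since $UL\Delta^n$ has (degenerate) simplices in every dimension while the semisimplicial $\Delta^0$ is empty above dimension $0$. What $U(H)$, precomposed with the cylinder comparison of \cref{simplicial-vs-semisimplicial-cylinder}, actually shows is that the inclusion $1 = UL\Delta^0 \to UL\Delta^n$ of the totally degenerate simplices on the top vertex is a (strong deformation retract, hence) homotopy equivalence. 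The distinction between $\top$ and $1$ is exactly the delicate point here, and it is where the hard combinatorial input lives: to pass from the homotopy equivalence $1 \to UL\Delta^n$ to a statement about $v$, you must know that $\top \to 1$ is a weak equivalence, \ie \cref{monoidal-to-cartesian-unit-w-equiv} (resting on \cref{maximal-quotient-horn}), which your argument invokes only for $n = 0$ but which is in fact needed for every $n$.

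The gap is repairable with the tools you already cite: factor $v$ as $\Delta^0 \xrightarrow{\eta_{\Delta^0}} 1 \to UL\Delta^n$, where the first map is $\top \to 1$ (a weak equivalence by \cref{monoidal-to-cartesian-unit-w-equiv}) and the second is the homotopy equivalence supplied by your retraction (a weak equivalence by \cref{h-equiv-is-weak-equiv}); then $v$ is a weak equivalence, and 2-out-of-3 against the anodyne top-vertex inclusion $\Delta^0 \to \Delta^n$ (\cref{yoneda-anodyne}) gives that $\eta_{\Delta^n}$ is a weak equivalence. Once repaired, your route through the deformation retract is a legitimate alternative to the paper's treatment of the representable case, which instead uses that $L$ and $U$ are monoidal for the join together with \cref{join-pres-weak-equiv} to reduce $\eta_{\Delta^n}$ for all $n$ directly to $\eta_\top$; both routes bottom out in the same lemma. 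Your cofibration argument and the skeletal/glueing reduction to representables match the paper's proof in substance.
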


\begin{proof}
Viewing $\eta$ as a map of linear polynomial functors, note that the map between the span summits in~\eqref{free-simplicial-set-unit-multiplication} is a decidable inclusion.
It follows that $\eta$ is valued in cofibrations.
Since $UL$ preserves cofibrations, it follows that Leibniz application of $\eta$ preserves cofibrations.

It remains to prove that $\eta$ is valued in weak equivalences.
We generalize to Leibniz application of $\eta$ sending cofibrations to weak equivalences.
By \cref{semisimplicial-sets-cofibration-category}, we know that semisimplicial sets form a cofibration category, so we may apply the glueing lemma in the standard way to reduce the claim to Leibniz applications of $\eta$ to boundary inclusions.
By induction on dimension and the usual uses of 2-out-of-3 and closure of cofibrations that are weak equivalences under pushout, the claim reduces further to $\eta_{\Delta^n} \co \Delta^n \to UL\Delta^n$ being a weak equivalence for $[n] \in \Delta_+$.
Recall that both $L$ and $U$ are monoidal with respect to the join and that weak equivalences are closed under join (\cref{join-pres-weak-equiv}),
Since $\Delta^n$ is the $(n+1)$-fold join of $\top$, it thus suffices to check that $\eta_\top \co \top \to UL\top$ is a weak equivalence.
This is the statement of \cref{monoidal-to-cartesian-unit-w-equiv}.
\end{proof}

\begin{remark}
In a classical setting, \cref{simplicial-unit-w-equiv} is usually proved by first showing that $\epsilon$ is valued in weak equivalences in simplicial sets and then appealing to a triangle law of the adjunction $L \dashv U$ and the fact that $L$ reflects weak equivalences (which is often taken to be the definition of weak equivalences in semisimplicial sets).
However, in order to be able to use this strategy, we would need to know that $L$ reflects weak equivalences, which will be proven (for a certain definition of weak equivalences in simplicial sets) precisely using \cref{simplicial-unit-w-equiv}.
\end{remark}

\begin{remark}
Let a \emph{horn extension} be a pushout of a horn inclusion and a \emph{simultaneous horn extension} be a pushout of a coproduct of horn inclusions.
It is possible to continue differently in the second half of the proof of \cref{simplicial-unit-w-equiv} and directly exhibit the Leibniz application of $\eta$ to boundary inclusions as $\omega$-composites of pushout joins of maps of the form in \cref{maximal-quotient-horn} with boundary inclusions (allowing suitably augmented the case of dimension $-1$) on both sides.
After unfolding various pushout joins and the induction of \cref{maximal-quotient-horn}, this shows that for any semisimplicial set $A$ there are $\omega$-composites of simultaneous horn extensions $f$ and $g$ such that $g = f \eta_A$, \ie $ULA$ can be obtained from $A$ by first countably infinitely many times filling systems of independent horns and then countably infinitely many times removing fillers for systems of independent horns.
\end{remark}

\begin{corollary} \label{UL-create-w-equiv}
The functor $UL$ creates weak equivalences.
\end{corollary}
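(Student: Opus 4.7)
The plan is to deduce this corollary from \cref{simplicial-unit-w-equiv} together with the 2-out-of-3 property of weak equivalences (\cref{weak-equiv-2-out-of-6}). Recall that $UL$ creating weak equivalences means: for any map $f$, the map $f$ is a weak equivalence exactly if $ULf$ is.

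The key tool is the naturality square of the unit $\eta \co \Id \to UL$ at a map $f \co A \to B$:
\[
\xymatrix{
  A
  \ar[r]^-{\eta_A}
  \ar[d]_{f}
&
  ULA
  \ar[d]^{ULf}
\\
  B
  \ar[r]^-{\eta_B}
&
  ULB
\rlap{.}}
\]
By \cref{simplicial-unit-w-equiv}, the horizontal maps $\eta_A$ and $\eta_B$ are weak equivalences.

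For the forward direction, suppose $f$ is a weak equivalence. Then the left, top, and bottom sides of the square are weak equivalences; by 2-out-of-3 applied to the composite $\eta_B \circ f = ULf \circ \eta_A$, we conclude that $ULf$ is a weak equivalence. For the reverse direction, suppose $ULf$ is a weak equivalence. Then the top, right, and bottom sides are weak equivalences, so by 2-out-of-3 applied to the same composite, $f$ is a weak equivalence.

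There is no substantive obstacle here: the content of the corollary is entirely concentrated in the preceding lemma, and this result is a formal consequence via naturality and 2-out-of-3. The proof should fit in one or two lines.
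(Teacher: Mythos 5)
Your proof is correct and is exactly the paper's argument: the paper's proof is simply ``By \cref{simplicial-unit-w-equiv} and 2-out-of-3,'' which, spelled out, is precisely your naturality-square argument applying 2-out-of-3 (a consequence of \cref{weak-equiv-2-out-of-6}) to $\eta_B \circ f = ULf \circ \eta_A$.
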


\begin{proof}
By \cref{simplicial-unit-w-equiv} and 2-out-of-3.
\end{proof}

\begin{corollary} \label{U-triv-cofib-to-w-equiv}
$U$ sends trivial cofibrations to cofibrations that are weak equivalences.
\end{corollary}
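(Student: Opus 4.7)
The plan is to decompose a trivial cofibration in simplicial sets via the cellular description of the weak saturation and then assemble it back together using closure properties in semisimplicial sets. Concretely, a trivial cofibration in simplicial sets is a retract of an $\omega$-composition of pushouts of coproducts of simplicial horn inclusions, and each simplicial horn inclusion is exactly the image under $L$ of a semisimplicial horn inclusion. So it suffices to verify closure of ``cofibrations that are weak equivalences'' under the relevant operations together with the single fact that $ULj$ is a cofibration and a weak equivalence for every semisimplicial horn inclusion $j$.

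First I would observe that $U$, sitting in the chain $L \dashv U \dashv R$, preserves all small colimits as well as retracts. Therefore the image under $U$ of a trivial cofibration in simplicial sets is a retract of an $\omega$-composition of pushouts of coproducts of maps of the form $ULj$ with $j$ a semisimplicial horn inclusion.

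Next, the class of cofibrations that are weak equivalences in semisimplicial sets is closed under pushout and $\omega$-composition by \cref{cof-weak-equiv-closure}, under coproduct by \cref{weak-equiv-coproduct} (combined with the evident closure of cofibrations under coproduct), and under retract by \cref{weak-equiv-retracts} (combined with the corresponding fact for cofibrations). So the problem really does reduce to showing that each $ULj$ is a cofibration and a weak equivalence.

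For the cofibration part, I would appeal to the two facts recalled at the start of \cref{subsection:comparison-kan-complexes}: $L$ preserves cofibrations by cocontinuity, and $U$ preserves cofibrations. For the weak equivalence part, I would invoke \cref{UL-create-w-equiv}, which says that $UL$ creates weak equivalences; since $j$ is anodyne and therefore a weak equivalence by \cref{anodyne-is-weak-equiv}, it follows that $ULj$ is a weak equivalence. There is no genuine obstacle here: the substantive content lives in \cref{simplicial-unit-w-equiv} (hence \cref{UL-create-w-equiv}) and in the assembled closure properties, and the present statement is just their natural packaging.
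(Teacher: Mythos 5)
Your proposal is correct and follows essentially the same route as the paper: reduce along the cellular presentation of trivial cofibrations (using that $U$, having a right adjoint, preserves colimits) to the generating maps $ULj$ for semisimplicial horn inclusions $j$, apply the closure properties \cref{cof-weak-equiv-closure,weak-equiv-coproduct}, and conclude that each $ULj$ is a weak equivalence via \cref{UL-create-w-equiv} (with \cref{anodyne-is-weak-equiv}). Your explicit treatment of retract closure via \cref{weak-equiv-retracts} is a small point the paper's proof leaves implicit, but otherwise the arguments coincide.
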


\begin{proof}
By closure properties \cref{weak-equiv-coproduct,cof-weak-equiv-closure} of cofibrations that are weak equivalences, it suffices to show that $U$ sends horn inclusions to weak equivalences, \ie that $UL$ sends generating anodyne maps to weak equivalences.
This holds by \cref{UL-create-w-equiv}.
\end{proof}

\begin{lemma} \label{U-triv-fib-to-w-equiv}
The functor $U$ sends trivial fibrations to weak equivalences.
\end{lemma}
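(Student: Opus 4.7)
The plan is to show that $U(p)$ is a homotopy equivalence; the lemma will then follow from \cref{h-equiv-is-weak-equiv}.

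The essential input is that $U(p)$ lifts against every semisimplicial cofibration. This is just adjointness: $L$ sends semisimplicial boundary inclusions to simplicial boundary inclusions (which generate the cofibrations in simplicial sets), so $p$ lifting against all simplicial cofibrations translates into $U(p)$ lifting against all semisimplicial cofibrations.

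From this, I would first extract a section $s \co U(X) \to U(Y)$ by lifting $U(p)$ against the cofibration $0 \to U(X)$, obtaining $U(p) \circ s = \id_{U(X)}$ on the nose. For the other composite I would solve the lifting problem
\[
\xymatrix{
  \partial\Delta^1 \otimes U(Y)
  \ar[r]^-{[s \circ U(p),\, \id]}
  \ar@{>->}[d]
&
  U(Y)
  \ar@{->>}[d]^{U(p)}
\\
  \Delta^1 \otimes U(Y)
  \ar[r]_-{U(p) \circ \pi}
  \ar@{.>}[ur]^-{H}
&
  U(X)
\rlap{,}}
\]
where $\pi \co \Delta^1 \otimes U(Y) \to U(Y)$ is induced by the unique map $\Delta^1 \to \top$. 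The left map is a cofibration by \cref{cof-tensor} (as the Leibniz geometric product of the cofibrations $i^1$ and $0 \to U(Y)$), and the square commutes since $U(p) \circ s \circ U(p) = U(p)$. The diagonal filler $H$ is then a homotopy $s \circ U(p) \sim \id_{U(Y)}$, so $U(p)$ is a homotopy equivalence with inverse $s$.

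I do not anticipate any real obstacle: the argument avoids path objects entirely and works with no fibrancy assumption on $U(X)$, precisely because the projection $\Delta^1 \otimes U(Y) \to U(Y)$ induced by $\Delta^1 \to \top$ supplies a canonical ``constant path'' over $U(X)$ into which the boundary data of the homotopy can be embedded.
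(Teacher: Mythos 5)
There is a genuine gap, and it sits exactly at the point you declare unproblematic. Your homotopy $s \circ U(p) \sim \id_{U(Y)}$ is built from a ``projection'' $\pi \co \Delta^1 \otimes U(Y) \to U(Y)$ induced by the unique map $\Delta^1 \to \top$. But in semisimplicial sets there is no map $\Delta^1 \to \top$: the unit $\top$ of the geometric monoidal structure is $\Delta^0$, which has no $1$-simplices, precisely because there are no degeneracies. So $\pi$, and with it the bottom map $U(p) \circ \pi$ of your lifting square, does not exist; there is no canonical constant path in this setting. The gap is not cosmetic: if your argument worked it would show that \emph{every} map lifting against cofibrations is a homotopy equivalence, whereas the paper's example $\Delta^0 + \Delta^0 \to \Delta^0$ lifts against all cofibrations and is not a weak equivalence. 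Hence any correct proof must use more than the bare lifting property of $U(p)$ --- it must use that $U(p)$ comes from a simplicial map --- and your proposal never does.

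The paper's proof isolates and repairs exactly this point. It observes that the argument of \cref{triv-fib-criteria} (which is what your lifting scheme reproduces, with the left map a Leibniz product as in \cref{cof-tensor}) uses fibrancy of the base only to produce a constant loop on the identity, and that for $UX$ with $X$ simplicial such a loop can be manufactured from the degeneracies of $X$ via
\[
\xymatrix@C-0.5em{
  I \otimes UX
  \ar[r]^-{\eta}
&
  UL(I \otimes UX)
  \ar[r]^-{\simeq}
&
  U(I \times LUX)
  \ar[r]^-{U\pi_1}
&
  ULUX
  \ar[r]^-{U\epsilon_X}
&
  UX
\rlap{,}}
\]
using that $L$ carries $\otimes$ to $\times$, where the projection does exist. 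If you replace your $\pi$ by the analogous composite (on $UY$, or equivalently precompose the paper's loop on $UX$ with $I \otimes U(p)$), your lifting argument goes through and recovers the paper's proof.
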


\begin{proof}
Let $Y \to X$ be a trivial fibration in simplicial sets.
By adjointness, $UY \to UX$ is a map lifting against cofibrations in semisimplicial sets.
If $UX$ was fibrant, we could use \cref{triv-fib-criteria} to show that $UY \to UX$ is a homotopy equivalence.
Note that the only use of fibrancy of $UX$ in the proof of that statement is to produce a constant loop $I \otimes UX \to UX$ on $\overline{\id_{UX}} \in \hom(UX, UX)_0$.
However, we can use the simplicial structure to produce such a constant loop via
\[
\xymatrix@C+0.2cm{
  I \otimes UX
  \ar[r]^-{\eta_{I \otimes UX}}
&
  UL(I \otimes UX)
  \ar[r]^-{\simeq}
&
  U(I \times LUX)
  \ar[r]^-{U \pi_1}
&
  ULUX
  \ar[r]^-{U \epsilon_X}
&
  UX
\rlap{.}}
\qedhere
\]
\end{proof}

Note that the above proof does not make use of cofibrancy of $UX$ in simplicial sets, which we cannot have constructively.

Define now the \emph{weak equivalences} in simplicial sets as the closure of trivial cofibrations and trivial fibrations under 2-out-of-6.
Classically, note that (cofibration, weak equivalence, fibration) is the usual Kan model structure: the cofibrations and trivial cofibrations are defined as usual and that is enough to determine the model structure.

\begin{corollary} \label{U-pres-w-equiv}
The functor $U$ preserves weak equivalences.
\end{corollary}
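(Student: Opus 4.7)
The plan is to unfold the definition of simplicial weak equivalence and then invoke the three results that make the claim essentially tautological. Recall that weak equivalences in simplicial sets were just defined as the closure of trivial cofibrations and trivial fibrations under 2-out-of-6. So it suffices to check three things: (a) $U$ sends trivial cofibrations to semisimplicial weak equivalences, (b) $U$ sends trivial fibrations to semisimplicial weak equivalences, and (c) semisimplicial weak equivalences are closed under 2-out-of-6.

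First I would observe that (a) is \cref{U-triv-cofib-to-w-equiv} and (b) is \cref{U-triv-fib-to-w-equiv}, both proven just above; they actually give the slightly stronger conclusions that trivial cofibrations land in cofibrations that are weak equivalences and that trivial fibrations land in weak equivalences. Condition (c) is \cref{weak-equiv-2-out-of-6}. Thus the class of simplicial maps that $U$ sends into semisimplicial weak equivalences contains both trivial cofibrations and trivial fibrations and is closed under 2-out-of-6 (because $U$ is a functor and the target class satisfies 2-out-of-6), hence contains every simplicial weak equivalence.

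The proof therefore amounts to a single sentence of the form: \emph{combine \cref{U-triv-cofib-to-w-equiv}, \cref{U-triv-fib-to-w-equiv}, and \cref{weak-equiv-2-out-of-6}.} There is no obstacle here; the only thing worth flagging is that the argument relies on the specific definition chosen just before the corollary (closure under 2-out-of-6 rather than some a priori different formulation), so the proof is sensitive to having fixed that definition immediately beforehand. Correspondingly, there is nothing to compute and no auxiliary construction to build.
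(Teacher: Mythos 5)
Your proposal is correct and coincides with the paper's own proof, which is exactly ``By \cref{U-triv-cofib-to-w-equiv,U-triv-fib-to-w-equiv,weak-equiv-2-out-of-6}'': since simplicial weak equivalences are by definition the 2-out-of-6 closure of trivial cofibrations and trivial fibrations, the cited lemmas immediately give the claim. Your additional remark that the argument hinges on the definition fixed just before the corollary is accurate but does not change the substance.
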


\begin{proof}
By \cref{U-triv-cofib-to-w-equiv,U-triv-fib-to-w-equiv,weak-equiv-2-out-of-6}.
\end{proof}

\begin{corollary} \label{L-create-w-equiv}
The functor $L$ creates weak equivalences.
\end{corollary}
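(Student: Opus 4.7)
The plan is to split the statement into preservation and reflection of weak equivalences by $L$. Reflection is immediate from what has already been set up: if $Lf$ is a weak equivalence in simplicial sets, then \cref{U-pres-w-equiv} gives that $ULf$ is a weak equivalence in semisimplicial sets, and \cref{UL-create-w-equiv} then yields that $f$ itself is a weak equivalence.

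For preservation, I would unfold the definition of weak equivalence in semisimplicial sets. Given a weak equivalence $f \co A \to B$, there is by definition a commuting square with anodyne horizontal maps, a fibration on the right, and the right vertical a homotopy equivalence between fibrant semisimplicial sets. Apply $L$ to this square. The horizontal maps become trivial cofibrations in simplicial sets: $L$ sends each semisimplicial horn inclusion to the corresponding simplicial horn inclusion and is cocontinuous, so the weak saturation of semisimplicial horn inclusions lands in the weak saturation of simplicial horn inclusions, and trivial cofibrations in simplicial sets are weak equivalences by the paper's definition. The right vertical map remains a homotopy equivalence between simplicial sets, since $L$ is monoidal (sending the geometric tensor to the cartesian product) and preserves the interval, hence preserves cylinders and homotopies. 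Granted that homotopy equivalences in simplicial sets are weak equivalences, 2-out-of-3 in simplicial sets then yields that $Lf$ is a weak equivalence.

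The main obstacle is this last ingredient: that a homotopy equivalence in simplicial sets (with respect to the $\Delta^1$-cylinder) is a weak equivalence. This is where classical logic enters. It is standard once one knows that every simplicial set is cofibrant, so $\Delta^1$ is a cylinder object on every object and the usual Whitehead-style argument applies in the Kan model structure. Since the paper has already observed that classically its notion of weak equivalence in simplicial sets coincides with that of the Kan model structure, invoking this fact is appropriate in the present subsection, which is explicitly labelled as a classical comparison with Kan complexes.
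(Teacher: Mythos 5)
Your reflection half is exactly the paper's: apply \cref{U-pres-w-equiv} and then the reflection part of \cref{UL-create-w-equiv}. For preservation you take a genuinely different and heavier route, and the difference matters. The paper's preservation argument is a one-liner and fully constructive: by \cref{weak-equivalences-as-closure-of-andoyne}, weak equivalences of semisimplicial sets are the closure of anodyne maps under 2-out-of-6; since $L$ sends anodyne maps to trivial cofibrations and the simplicial weak equivalences are by definition closed under 2-out-of-6, the class of maps $f$ with $Lf$ a weak equivalence contains the anodyne maps and is closed under 2-out-of-6, hence contains all weak equivalences. No unfolding of the definition, no homotopy equivalences, no classical input.

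Your route instead unfolds the definition of weak equivalence and then needs that a $\Delta^1$-homotopy equivalence between arbitrary simplicial sets is a weak equivalence in the paper's sense --- and ``arbitrary'' is unavoidable here, since $LX$ and $LY$ are typically not fibrant even for fibrant $X$, $Y$ (see the paper's remark on non-fibrancy of $LX$). As you acknowledge, this input rests on every simplicial set being cofibrant and on the identification with the Kan model structure, i.e.\ on classical logic; the paper's constructive toolkit only provides the implication ``homotopy equivalence $\Rightarrow$ weak equivalence'' on the semisimplicial side (\cref{h-equiv-is-weak-equiv}, via \cref{weak-equiv-joyal-characterization}), and no simplicial analogue is developed. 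The problem is that \cref{L-create-w-equiv} is \emph{not} flagged ``Classical logic'', and the paper's stated convention in this subsection is that unflagged statements are constructive (the classical flag is reserved for \cref{summary-equiv}, and in the marked case for \cref{U-pres-w-equiv-marked} and \cref{L-create-w-equiv-marked}). So your argument proves only a classical weakening of the stated corollary; to recover the statement as intended, replace your preservation step by the closure argument above.
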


\begin{proof}
For preservation, recall weak equivalences in semisimplicial sets by definition lie in the closure of anodyne maps under 2-out-of-6 and that $L$ sends anodyne maps to trivial cofibrations.
Reflection is \cref{U-pres-w-equiv} and reflection in \cref{UL-create-w-equiv}.
\end{proof}

Finally, we have related our definition of weak equivalence in semisimplicial sets with the one of the classical development.
This allows us to conclude as follows.

\begin{theorem}[Classical logic] \label{summary-equiv}
In the triple of functors
\[
\xymatrix@C+1.5cm{
  \widehat{\Delta_+}
  \ar@/^2em/[r]^{L}
  \ar@/^-2em/[r]_{R}
  \ar@{{}{ }{}}@/^1em/[r]|{\bot}
  \ar@{{}{ }{}}@/^-1em/[r]|{\bot}
&
  \widehat{\Delta}
  \ar[l]|{U}
}
\]
relating semisimplicial sets and simplicial sets where $U$ is the forgetful functor:
\begin{itemize}
\item
the maps $L$ and $U$ are weak equivalences of cofibration categories,
\item
the maps $U$ and $R$, restricted to fibrant objects, are weak equivalences of fibration categories,
\item
both adjunctions descend to equivalences of homotopy categories.
\end{itemize}
\end{theorem}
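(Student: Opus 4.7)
My plan is to reduce everything to \cref{simplicial-unit-w-equiv} together with the classical existence of the Kan model structure on $\widehat{\Delta}$. Classical logic is needed only to ensure every simplicial set is cofibrant, which is what makes $\widehat{\Delta}$ a cofibration category and the Kan complexes a fibration category. Exactness of $L$ and $U$ between the two cofibration categories is then routine: $L$ preserves initial objects and pushouts as a left adjoint, cofibrations by cocontinuity on the generators, and weak equivalences by \cref{L-create-w-equiv}; $U$ preserves initial objects and pushouts as the left adjoint to $R$, cofibrations by the analysis preceding \cref{U-pres-w-equiv}, and weak equivalences by that corollary.

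For the equivalence of homotopy categories induced by $L \dashv U$, I would show that both the unit and counit are valued in weak equivalences. The unit is \cref{simplicial-unit-w-equiv}. For the counit $\epsilon \co LU \to \Id$, note that classically $L$ is left Quillen: it preserves generating cofibrations by cocontinuity and generating trivial cofibrations since $L$ sends semisimplicial horn inclusions to simplicial horn inclusions. The derived unit $A \to U(L'A)$, where $L'A$ denotes a fibrant replacement of $LA$, is a weak equivalence by \cref{simplicial-unit-w-equiv} combined with $U$ preserving weak equivalences (\cref{U-pres-w-equiv}) and 2-out-of-3. The standard Quillen equivalence criterion then gives that the derived counit is a weak equivalence too; since every semisimplicial set is cofibrant, the derived counit at fibrant $Y$ is exactly $\epsilon_Y$, so $\epsilon_Y$ is a weak equivalence for fibrant $Y$, and a further 2-out-of-3 using a fibrant replacement $Y \to RY$ together with $L$ preserving weak equivalences extends this to arbitrary $Y$. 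Thus $\mathrm{Ho}(L) \dashv \mathrm{Ho}(U)$ is an adjoint equivalence and both $L$ and $U$ are weak equivalences of cofibration categories.

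For the fibration-category portion, $U$ preserves and reflects fibrant objects by the very definition of fibration in $\widehat{\Delta}$ through $U$; restricted to fibrant objects it is exact, preserving terminal objects and pullbacks along fibrations as a right adjoint to $L$, fibrations by definition, and weak equivalences by \cref{U-pres-w-equiv}. Classically, $U$ is left Quillen (its preservation of cofibrations is constructive; its preservation of trivial cofibrations is \cref{U-triv-cofib-to-w-equiv} combined with the Kan model structure), so $R$ is right Quillen and thereby preserves fibrant objects, fibrations between them, and, by Ken Brown, weak equivalences between them, making $R$ exact on fibrant objects. The adjoint equivalence on homotopy categories from the previous paragraph restricts to fibrant objects since the inclusion of either fibrant homotopy category into its ambient homotopy category is already an equivalence via fibrant replacement; uniqueness of right adjoints then forces the right adjoint $\mathrm{Ho}(R)$ of $\mathrm{Ho}(U)$ to be a quasi-inverse as well, yielding the second equivalence.

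The main obstacle is keeping track of where classical logic is needed---essentially only the cofibrancy of every simplicial set and its immediate consequence, the Kan model structure---and of the fact that both adjunctions fit together compatibly on homotopy categories; given the constructive machinery already in place, the rest is formal.
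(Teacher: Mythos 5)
Your overall architecture matches the paper's: exactness of $L$ and $U$ from cocontinuity, preservation of cofibrations, and \cref{U-pres-w-equiv,L-create-w-equiv}; exactness of $U$ and $R$ on the fibration side by adjointness; classical logic entering only through cofibrancy of all simplicial sets; and the equivalence of homotopy categories driven by \cref{simplicial-unit-w-equiv}. The gap is in your treatment of the counit. You invoke ``the standard Quillen equivalence criterion,'' but this machinery is not available here: no model structure on $\widehat{\Delta_+}$ is established in this development (even classically --- whether cofibrations that are weak equivalences form the left class of a weak factorization system is explicitly left open), so ``$L$ is left Quillen'' and ``Quillen equivalence'' are not meaningful as stated. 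More seriously, even granting a model-categorical setting, the implication you use --- derived unit a weak equivalence at every (cofibrant) object, hence derived counit a weak equivalence at every fibrant object --- is false in general: the correct criterion requires in addition that the right adjoint reflect weak equivalences between fibrant objects (left Bousfield localizations give adjunctions where one derived (co)unit condition holds and the other fails). You never verify that $U$ reflects weak equivalences, and doing so non-circularly is essentially as hard as showing $\epsilon$ is a weak equivalence: the natural attempts (naturality of $\epsilon$, or the triangle identity $U\epsilon_X \circ \eta_{UX} = \id$, which only yields that $U\epsilon_X$ is a weak equivalence) all loop back to the statement being proved, and note also that $LUY$ need not be fibrant even for fibrant $Y$, so a reflection statement restricted to fibrant objects would not even apply to $\epsilon_Y$.

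The paper closes this gap differently, and you should adopt its reduction: since classically every simplicial set is cofibrant and $LU$ and $\Id$ are cocontinuous and preserve cofibrations, the usual skeletal induction and glueing-lemma argument in the cofibration category reduces ``$\epsilon$ is valued in weak equivalences'' to the components $\epsilon_{\Delta^n}$. There one observes $\Delta^n = L\Delta^n_+$, so the triangle identity $\epsilon_{L\Delta^n_+} \circ L\eta_{\Delta^n_+} = \id$ together with \cref{simplicial-unit-w-equiv}, the fact that $L$ preserves weak equivalences (\cref{L-create-w-equiv}), and 2-out-of-3 gives the claim (alternatively, source and target are seen directly to be contractible). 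With unit and counit both valued in weak equivalences, the adjunction descends to an equivalence of homotopy categories, and the remaining bullet points follow as in your last paragraph, since weak equivalences of cofibration and fibration categories are detected on homotopy categories.
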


\begin{proof}
As left adjoints, $L$ and $U$ preserve all colimits.
We have already noted that $L$ and $U$ preserve cofibration; they preserve weak equivalences by \cref{U-pres-w-equiv,L-create-w-equiv}.
Thus, they are exact functors of cofibration categories.
By adjointness, $U$ and $R$ become exact functors of fibration categories.

As a consequence, both adjunctions descend to adjunctions of homotopy categories (which for the case of the right adjoints are constructed equivalently using only the fibrant objects).
Recall that weak equivalences between fibration categories can be tested at the level of homotopy categories.
Note that $U$ as an exact functor between cofibration categories is a weak equivalence exactly if it is a weak equivalence as an exact functor between fibration categories, as the weak equivalence coincide.
As claims will thus follow if we can verify that $U$ descends to an equivalence on homotopy categories.

By \cref{simplicial-unit-w-equiv}, the unit $\eta \co \Id \to UL$ is valued in weak equivalences.
Using classically logic, we can also show that the counit $\epsilon \co LU \to \Id$ is valued in weak equivalences: since every simplicial set is cofibrant, the claim reduces as usual to $\epsilon_{\Delta^n}$ being weak equivalence for $n \geq 0$.
It can be seen directly that its source and target are contractible (see for instance \cite[Lemma~6.2]{kapulkin-szumilo:internal-language}), but we can also deduce it in a way that will generalize to the marked case using 2-out-of-3 in the triangle identity $\epsilon L \circ L\eta = \id$ since $L$ preserves weak equivalences by \cref{L-create-w-equiv-marked}, 
From the above facts about $\eta$ and $\epsilon$, it follows that the adjunction $L \dashv U$ descends to an equivalence between homotopy categories.
\end{proof}

\bigskip

One might wonder when a semisimplicial set can be endowed with degeneracy operators, \ie lies in the essential image of $U$.
As proven in~\cite[Lemma~5.6]{rourke-sanderson:delta-sets}, the free simplicial set adjunction is monadic.
(As the author learned from Paolo Capriotti, using classical logic, it is also comonadic.)
So to endow a semisimplicial set with degeneracy operators is to find an algebra structure for it with respect to the monad $UL$.

As shown geometrically in \cite{rourke-sanderson:delta-sets}, every fibrant semisimplicial set admits degeneracy operators.
A purely combinatorial proof of this statement is~\cite{mcclure:semisimplicial-degeneracies}; however, the proof method is non-constructive, namely by choosing degeneracies at every level only for those simplices which do not lie in the image of certain degeneracy operators already constructed at lower levels (the images of the constructed degeneracy operators are not decidable subsets).
It seems unlikely that the statement holds constructively.

Given a fibrant semisimplicial set $A$, note that we can find an algebra structure with respect to $UL$ as a pointed endofunctor, \ie a retraction $r \co ULX \to X$ to $\eta_X$, as in the proof of the following lemma.
This corresponds to a system of degeneracy operators $s$ that are not required to satisfy the simplicial identities of the form $ss = s$.
For a direct constructive of such pseudo-degeneracy operators, see \cite[Proposition~7.10]{coquand:semisimplicial}.

\begin{lemma} \label{free-simplicial-set-unit-h-equiv-on-fibrant}
For fibrant $X$, the map $\eta_X \co X \to ULX$ is a homotopy equivalence.
\end{lemma}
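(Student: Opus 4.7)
The plan is two-fold: first construct a retraction $r \co ULX \to X$ of $\eta_X$, then upgrade it to a homotopy inverse by producing a homotopy $\eta_X r \sim \id_{ULX}$.

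For the retraction, we are in the situation advertised in the discussion preceding the lemma. Since $\eta_X$ is a cofibration that is a weak equivalence by \cref{simplicial-unit-w-equiv} and $X$ is fibrant, applying \cref{cof-weak-equiv-lift-fib-between-fibrant} to the fibration $X \to 1$ lets us lift $\id_X$ along $\eta_X$ to obtain $r$ with $r \eta_X = \id_X$.

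For the homotopy $\eta_X r \sim \id_{ULX}$, the main obstacle is that $ULX$ is typically not fibrant, so the direct lifting strategy used in the proof of \cref{anodyne-criteria} is unavailable: the cofibration that is a weak equivalence $i^1 \hatotimes \eta_X$ does not lift against $ULX \to 1$ because the latter is not a fibration between fibrant objects. My plan is to lift the problem one level up the monad. By naturality of $\eta$ applied at $r$ we have $\eta_X r = UL r \circ \eta_{ULX}$, while by the monad unit law $\id_{ULX} = \mu_X \circ \eta_{ULX}$. Hence both maps factor through $\eta_{ULX}$, so it suffices to construct a homotopy $UL r \sim \mu_X$ of maps $ULULX \to ULX$ and then pre-compose with $\eta_{ULX}$. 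Note that $UL r$ and $\mu_X$ are both retractions of $UL \eta_X$: the former because $UL r \circ UL \eta_X = UL(r \eta_X) = \id_{ULX}$, the latter by the monad unit law.

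To produce this homotopy $UL r \sim \mu_X$, I would combine the natural homotopy $\tau \co \eta_{UL(-)} \sim UL \eta_{(-)}$ from \cref{free-simplicial-set-homotopically-idempotent} with the simplicial cylinder that $ULULX$ inherits from $LULX$ via the cylinder comparison map of \cref{simplicial-vs-semisimplicial-cylinder}, together with the retraction $r$. Post-composing $\tau_X$ with $\mu_X$ already yields a canonical reflexivity homotopy on $\id_{ULX}$, and together with the simplicial projection $U(I \times LULX) \to ULULX$ and with $r$ these should assemble into the desired homotopy with endpoints $UL r$ and $\mu_X$. The hardest step, and the main obstacle, will be to arrange the boundary data of the resulting cylinder so that the endpoints match on the nose; this will require careful bookkeeping using naturality of $\eta$ and $\mu$ together with the compatibility between $I \otimes -$ and the simplicial cylinder.
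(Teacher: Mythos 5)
Your first half is exactly the intended argument: $\eta_X$ is a cofibration and weak equivalence by \cref{simplicial-unit-w-equiv}, so lifting $\id_X$ against the fibration $X \to 1$ via \cref{cof-weak-equiv-lift-fib-between-fibrant} gives the retraction $r$, and you have also written down the two identities that matter, namely $\eta_X r = ULr \circ \eta_{ULX}$ (naturality of $\eta$) and $ULr \circ UL\eta_X = UL(r\eta_X) = \id_{ULX}$. The gap is in the second half: the homotopy $ULr \sim \mu_X$ on which your reduction rests is never constructed. The sketch you give does not produce it: post-composing $\tau_X$ with $\mu_X$ has endpoints $\mu_X \circ \eta_{ULX} = \id_{ULX}$ and $\mu_X \circ UL\eta_X = \id_{ULX}$, i.e.\ it is a reflexivity homotopy on the identity and says nothing about $ULr$; and the "careful bookkeeping" needed to force the endpoints to be $ULr$ and $\mu_X$ is precisely where an actual argument would have to live. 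Note also that $ULX$ and $ULULX$ are not fibrant, so none of the relatedness/lifting machinery is available to repair endpoints after the fact; in fact the easiest way to see that $ULr \sim \mu_X$ holds at all is to deduce it from the statement you are trying to prove, so the reduction points in the wrong direction.

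The fix is to drop $\mu$ entirely and whisker $\tau_X$ with $ULr$ rather than trying to compare $ULr$ with $\mu_X$: since homotopies are preserved by post-composition with an arbitrary map, $ULr \circ \tau_X$ is a homotopy between $ULr \circ \eta_{ULX} = \eta_X r$ and $ULr \circ UL\eta_X = UL(r\eta_X) = \id_{ULX}$, which finishes the proof. This three-line computation is the paper's own argument, and both of its ingredients already appear verbatim in your write-up; the detour through $\mu_X$, and with it the unresolved endpoint-matching problem, can simply be deleted.
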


\begin{proof}
Construct a retraction $r$ to $\eta_X$ by taking a lift in the diagram
\[
\xymatrix{
  X
  \ar[r]^{\id}
  \ar[d]_{\eta_A}
&
  X
\\
  ULX
  \ar@{.>}[ur]_{r}
}
\]
using \cref{simplicial-unit-w-equiv,cof-weak-equiv-lift-fib-between-fibrant}.
Then note that
\begin{align*}
\eta_X \circ r
&=
ULr \circ \eta_{ULX}
\\&\sim
ULr \circ UL \eta_X
\\&=
\id_{ULX}
\end{align*}
where the homotopy is due to \cref{free-simplicial-set-homotopically-idempotent}.
\end{proof}

We have a unique map $\mathcal{C} \times \mathcal{D} \to \mathcal{C} \otimes \mathcal{D}$ natural in semicategories $\mathcal{C}$ and $\mathcal{D}$ and compatible with associativity and symmetry of the two symmetric monoidal structures.
This induces also a map $A \times B \to A \otimes B$ natural in semisimplicial sets $A$ and $B$ with corresponding properties.

The following statement is usually proved by choosing degeneracies as explained earlier.
As this does not work in a constructive setting, we are forced to come up with a different proof.

\begin{theorem} \label{cartesian-vs-geometric}
Given fibrant semisimplicial sets $X$ and $Y$, the canonical comparison map
\[
X \times Y \to X \otimes Y
\]
between the cartesian and the geometric product is a weak equivalence.
\end{theorem}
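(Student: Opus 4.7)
The plan is to reduce the statement to the fact that $\eta_X \colon X \to ULX$ is a homotopy equivalence for fibrant $X$ (\cref{free-simplicial-set-unit-h-equiv-on-fibrant}) together with the monoidality of the free simplicial set functor $L$ between the geometric and cartesian symmetric monoidal structures.

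First, I would form the naturality square of the unit $\eta \colon \Id \to UL$ applied to the canonical comparison map:
\[
\xymatrix{
  X \times Y
  \ar[r]
  \ar[d]_{\eta_{X \times Y}}
&
  X \otimes Y
  \ar[d]^{\eta_{X \otimes Y}}
\\
  UL(X \times Y)
  \ar[r]
&
  UL(X \otimes Y)
\rlap{.}}
\]
By \cref{simplicial-unit-w-equiv}, the right-hand vertical map is a weak equivalence. By 2-out-of-3 (a consequence of \cref{weak-equiv-2-out-of-6}), it now suffices to show that the composite $X \times Y \to X \otimes Y \to UL(X \otimes Y)$ is a weak equivalence.

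Next, I would use that $L$ is monoidal from the geometric to the cartesian symmetric monoidal structure, so that $L(X \otimes Y) \cong LX \times LY$ naturally, and that $U$ preserves products as a right adjoint. Thus we have a natural isomorphism $UL(X \otimes Y) \cong ULX \times ULY$. I would verify, by unfolding the definitions on representables (tracing that an $n$-simplex $(x, y) \in X_n \times Y_n$ maps to the jointly monic epi pair $(\id_{[n]}, \id_{[n]}, x, y)$ in $X \otimes Y$, and then into $UL(X \otimes Y)$ by pairing with $\id_{[n]}$), that the composite $X \times Y \to UL(X \otimes Y) \cong ULX \times ULY$ coincides with $\eta_X \times \eta_Y$.

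Finally, since both $X$ and $Y$ are fibrant, \cref{free-simplicial-set-unit-h-equiv-on-fibrant} says $\eta_X$ and $\eta_Y$ are homotopy equivalences, so $\eta_X \times \eta_Y$ is a homotopy equivalence by \cref{h-equiv-product-coproduct}, hence a weak equivalence by \cref{h-equiv-is-weak-equiv}. Combined with the fact that $\eta_{X \otimes Y}$ is a weak equivalence, 2-out-of-3 gives that $X \times Y \to X \otimes Y$ is a weak equivalence.

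\textbf{Main obstacle.} The conceptual content is carried by \cref{free-simplicial-set-unit-h-equiv-on-fibrant} and \cref{simplicial-unit-w-equiv}, both already established. The main bookkeeping obstacle is the identification of the composite $X \times Y \to X \otimes Y \to UL(X \otimes Y) \cong ULX \times ULY$ with $\eta_X \times \eta_Y$; this could be phrased more abstractly by observing that the canonical map $X \times Y \to X \otimes Y$ is precisely the transpose, under the adjunction $L \dashv U$, of the composite $L(X \times Y) \to LX \times LY \xleftarrow{\simeq} L(X \otimes Y)$ whose right part is the monoidal isomorphism of $L$, so that post-composition with $\eta_{X \otimes Y}$ factors correspondingly through $\eta_X \times \eta_Y$ by naturality and a triangle identity.
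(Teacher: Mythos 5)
Your proposal is correct and follows essentially the same route as the paper's proof: postcompose with $\eta_{X \otimes Y}$ (a weak equivalence by \cref{simplicial-unit-w-equiv}), identify $UL(X \otimes Y) \simeq ULX \times ULY$ via monoidality of $L$, recognize the composite as $\eta_X \times \eta_Y$, and conclude with \cref{free-simplicial-set-unit-h-equiv-on-fibrant}, \cref{h-equiv-product-coproduct}, and 2-out-of-3. The only difference is presentational: the paper states the identification directly rather than via the naturality square, but the argument is the same.
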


\begin{proof}
Note that $\eta_{X \otimes Y} \co X \otimes Y \to UL(X \otimes Y)$ is a weak equivalence by \cref{simplicial-unit-w-equiv}.
Postcomposing with it, it will suffice by 2-out-of-3 to show that $X \times Y \to UL(X \otimes Y)$ is a weak equivalence.
Note that $UL(X \otimes Y) \simeq U(LX \times LY) \simeq ULX \times ULY$.
Under this isomorphism, the map in question corresponds to the product $\eta_X \times \eta_Y$.
Since $\eta_X$ and $\eta_Y$ are homotopy equivalences by \cref{free-simplicial-set-unit-h-equiv-on-fibrant}, so is their product by \cref{h-equiv-product-coproduct}.
\end{proof}

Note that closure under product fails to hold for weak equivalence.
This explains why the combinatorial complexity of \cref{free-simplicial-set-homotopically-idempotent} is necessary in the above proof.

\subsection{Subdivision}

Let us write $L \dashv U$ for both the free category on a semicategory and the free simplicial set on a semisimplicial set adjunction.

\subsubsection{Simplicial subdivision}

We recall the definition of the subdivision functor $\Sd$ on simplicial sets.
Let $T$ be the full subcategory of $[[1], \Delta]$ restricted to maps in $\Delta_+$.
Note that $\ev_1 \co T \to \Delta$ is a Grothendieck opfibration: the cobase change of $[a] \to [m]$ along $[m] \to [n]$ returns the right factor of the Reedy factorization of the composite $[a] \to [n]$.
Under the inverse of the Grothendieck construction, this induces a functor $P \co \Delta \to \Cat$.
Now $\Sd$ is defined as the composition of the cocontinuous extension of $P$ with $N$ (observe that this coincides with the cocontinuous extension of $NP$).
Note that the subdivision functor extends canonically to augmented simplicial sets.

\subsubsection{Semisimplicial subdivision}

We also have a subdivision functor $\Sd$ on semisimplicial sets.
Recall that $\Delta_+$ as a category is freely generated by a semicategory.
Seeing semisimplicial sets as semicategorical presheaves over that semicategory, $\Sd$ is defined as the composite of the semicategory of elements functor $\scel$ followed by the semisimplicial nerve $N$.
It is cocontinuous, and its restriction $\sd$ to representables is described by the top row in the following diagram commuting up to natural isomorphism:
\[
\xymatrix@C+0.5cm{
  \Delta_+
  \ar[r]^-{(\Delta_+)_{/-}}
  \ar[d]
&
  \SemiCat
  \ar[r]^-{N}
  \ar[d]
&
  \widehat{\Delta_+}
  \ar[d]
\\
  \Delta
  \ar[r]^{P}
&
  \Cat
  \ar[r]^-{N}
&
  \widehat{\Delta}
\rlap{;}}
\]
here, the top left arrow denotes the categorical slice of the semicategory generating $\Delta_+$ and the middle and right vertical arrows are the free category and free simplicial set functors, respectively.
This shows that the free simplicial set functor commutes with subdivision.
Note that the subdivision functor extends canonically to augmented semisimplicial sets.

Semisimplicial subdivision is simpler than simplicial subdivision.
Given a simplicial set $A$ with decidable degeneracies, we can compute
\[
(\Sd A)_n \simeq \coprod_{k \geq 0, x \in A^\nd_k} \set{\sigma \in N((\Delta_+)_{/[k]})_n}{\sigma(n) = ([k], \id_{[k]})}
\]
where $A^\nd_k$ denotes the set of non-degenerate $k$-simplices of $A$.
For a general simplicial set $A$, we do not have a description of $(\Sd A)_n$ without quotients.
In contrast, the set of $n$-simplices of the subdivision of a semisimplicial set is describable without quotients in general.

\subsubsection{Last and first vertex maps}

Recall the inclusion $i \co \Delta \to \Cat$.
We have a natural transformation $\Delta_{/-} \to i$ that on $[n]$ is the functor $\Delta_{/[n]} \to [n]$ sending $f \co [k] \to [n]$ to $f(k)$.
Extending cocontinuously, we get a map $\int \to Q$ where $Q$ is the fundamental category functor, left adjoint to the simplicial nerve $N$.
Equivalently, this is a map $\int N \to \Id$ between endofunctors on categories, known as the \emph{categorical last vertex map}.
Dually, we have the \emph{categorical first vertex map} $\int N \to (-)^\op$.

Confusingly, we also have a map $N \int \to \Id$, given on representables by $N$ applied to the map $\Delta_{/-} \to i$.
Note that we have a map $P \to \Delta_{/-}$, inducing a map $\Sd \to N \int$.
We thus obtain the \emph{simplicial last vertex map} $\Sd \to \Id$.
Dually, we also have a \emph{simplicial first vertex map} $\Sd \to (-)^\op$ where $(-)^\op$ denotes the opposite simplicial set functor, precomposition with the opposite functor restricted to $\Delta$.

These maps do not have semicategorical or semisimplicial analogues.
Recall that the forgetful functors commute with the nerves, \ie $N U \simeq U N$, and that the induced Beck-Chevalley map $L N \to N L$ is an isomorphism.
We have a canonical map $\int_+ U \to U \int$, by adjointness giving rise to a map $L \int_+ \to \int L$.
Combining these with the categorical last vertex map, we obtain the \emph{approximation to a semicategorical last vertex map} 
\[
\xymatrix{
  \cel N
  \ar[r]^-{\simeq}
&
  L \scel N
  \ar[r]
&
  \cel L N
  \ar[r]^-{\simeq}
&
  \cel N L
  \ar[r]
&
  L
\rlap{.}}
\]
Dually, we have the \emph{approximation to a semicategorical first vertex map} $\cel N \to (-)^\op L$.

\subsubsection{Indexed join}

Let $(\mathcal{C}, \top, \otimes)$ be a monoidal category.
Recall the Day convolution monoidal structure induced on presheaves over $\mathcal{C}$.
Given presheaves $A, B$ over $\mathcal{C}$ and a presheaf $F$ over $\int A \times \int B$, the \emph{indexed Day convolution} $A \otimes_F B$ is the left Kan extension of $F$ along
\[
\xymatrix{
  \int A \times \int B
  \ar[r]
&
  \mathcal{C} \times \mathcal{C}
  \ar[r]^-{\otimes}
&
  \mathcal{C}
\rlap{.}}
\]
Let $A \boxtimes B$ denote the external product of $A$ and $B$.
Note that we have an equivalence
\[
\widehat{\cel A \times \cel B} \simeq \widehat{\mathcal{C} \times \mathcal{C}}/(A \boxtimes B)
.\]
With respect to this equivalence, left Kan extension along the first arrow in the previous diagram is the forgetful functor.
Thus we recover the usual Day convolution $A \otimes B$ if $F$ is terminal.


As a special case of the above construction, we obtain the \emph{indexed join} on augmented simplicial sets.
This transfers to the indexed join on simplicial sets via terminal augmentation (right Kan extension).
Explicitly, given simplicial sets $A, B$ and a presheaf $F$ over $\int A \times \int B$, an $n$-simplex of $A \star_F B$ is either an $n$-simplex of $A$ or $B$ or a decomposition $[n] = [a] \star [b]$ with $x \in A_a, y \in B_b$ and $t \in F(([a], x), ([b], y))$.
Note that we recover $A \star B$ if $F$ is terminal and $A + B$ if $F$ is initial.
Mirroring the notation for the join, we write
\[
\xymatrix@C-0.5cm{
  A
  \ar[dr]_{\iota_0}
&&
  B
  \ar[dl]^{\iota_1}
\\&
  A \star_F B
}
\]
for the induced Reedy cofibrant span of inclusions.

Write $X_\termaug$ for the terminal augmentation of a semisimplicial set $X$.
Fixing simplicial sets $A$ and $B$, note that right Kan extension along $\int A \times \int B \to \int A_\termaug \times \int B_\termaug$ is parametric left adjoint (recall that a functor $F \co \mathcal{C} \to \mathcal{D}$ where $\mathcal{C}$ has an initial object is called parametric left adjoint if the induced functor $\mathcal{C} \to \mathcal{D}_{\backslash F0}$ is left adjoint).
Therefore, the functor sending a presheaf $F$ over $\int A \times \int B$ to $A \star_F B$ is also parametric left adjoint, \ie the induced functor
\[
\xymatrix{
  \widehat{\int A \times \int B}
  \ar[r]
&
  \widehat{\Delta}_{\backslash A + B}
}
\]
is left adjoint.
Its right adjoint sends a cospan of $m_0 \co A \to Z$ and $m_1 \co B \to Z$ to the presheaf over $\int A \times \int B$ sending $x \in A_a$ and $y \in B_b$ to the set of $z \in Z_{a \star b}$ such that $z[\iota_0] = m_0(x)$ and $z[\iota_1] = m_1(y)$.

Fix now only the simplicial set $A$ and consider the functor sending a simplicial set $B$ and a presheaf $F$ over $\int A \times \int B$ to the inclusion $\iota_0 \co A \to A \star_F B$.
Note that the input data can equivalently be described as presheaves over $\int A_\termaug \times \Delta$, \ie presheaves over $\int A_\termaug$ valued in simplicial sets.
Similar to the above, this functor has a right.
Explicitly, it sends $m_0 \co A \to Z$ to the presheaf over $\int A_\termaug$ valued in simplicial sets sending $[-1]$ to $Z$ and $x \in A_a$ to $Z_{\backslash x}$.
A dual discussion applies when fixing the simplicial set $B$.

We can also consider the indexed join as a functor to simplicial sets from the category of triples $(A, B, F)$ of simplicial sets $A, B$ and a presheaf $F$ over $\int A \times \int B$.
A morphism $(A_0, B_0, F_0) \to (A_1, B_1, F_1)$ in the latter category consists of maps $f \co A_0 \to A_1$ and $g \co B_0 \to B_1$ with a natural transformation $F_0 \to F_1 \circ (\int f \times \int g)$.
Then we have a right adjoint to the indexed join sending a simplicial set $Z$ to the triple $(Z, Z, F)$ where $F((a, x), (b, y))$ is the set of maps $[a] \star [b] \to Z$ that restrict to $x$ and $y$ on $\iota_0$ and $\iota_1$, respectively.

The above discussion of indexed joins applies just as well to semisimplicial sets.
We use the same notation and terminology there.

\subsubsection{Relating $\Sd A$ and $A$}

Given a simplicial set $A$, we have a comparison map between $\Sd A$ and $A$ given by the last vertex map above.
Using classical logic, one may show that this map is a weak equivalence.
Given a semisimplicial set $A$, we do not have such a comparison map between $\Sd A$ and $A$.
However, we can still relate them via a cospan
\[
\xymatrix@!C@C-0.6cm{
  A
  \ar@{>->}[dr]^{\sim}_{\eta_A}
&&
  \Sd A 
  \ar[dl]_{\sim}
\\&
  U L A
}
\]
of weak equivalences.
Here, the left leg is a cofibration and weak equivalence by \cref{simplicial-unit-w-equiv}.
The right leg is the composite
\[
\xymatrix{
  \Sd A
  \ar@{>->}[r]^-{\eta_{\Sd A}}_-{\sim}
&
  U L \Sd A
  \ar[r]^-{m}
&
  U L A
}
\]
where $m$ is $U$ applied to the simplicial last vertex map on $L A$.
Note that $m$ is a homotopy equivalence on representables: as is well-known, the last vertex map $\Delta_+/[n] \to [n]$ is a homotopy equivalence in categories with respect to the interval $[1]$, and the simplicial nerve as well as $U$ (by \cref{simplicial-to-semisimplicial-homotopy}) preserve homotopy equivalences.
By cocontinuity and standard uses of the glueing lemma, it then follows that it is a weak equivalence for all $A$.

The above cospan has the drawback that $ULA$ is infinite even if $A$ is finite.
The goal of this subsubsection is to construct an alternative cospan of weak equivalences relating $A$ and $\Sd A$ that will be Reedy cofibrant, have anodyne legs, and have a finite summit if $A$ is finite.

\bigskip

Fix a semisimplicial set $A$.
Although we lack a map $\Sd A \to A$, we still have this map after applying categories of elements:
\[
\xymatrix{
  \cel \Sd A
  \ar[r]^-{=}
&
  \cel N \scel A
  \ar[r]
&
  L \scel A
  \ar[r]^-{\simeq}
&
  \cel A
\rlap{.}}
\]
Here, the middle map is the semicategorical approximation to the last vertex map.
Dually, we obtain a map $\cel \Sd A \to (\cel A)^\op$ using the semicategorical approximation to the first vertex map.
Taking the composite
\[
\xymatrix{
  F(A) \co (\cel A \times \cel \Sd A)^\op
  \ar[r]
&
  (\cel A)^\op \times \cel A
  \ar[r]^-{\hom}
&
  \Set
}
\]
gives a Reedy cofibrant span
\begin{equation} \label{subdivision-mapping-cone}
\begin{gathered}
\xymatrix@!C@C-1.3cm{
  A
  \ar[dr]_{\iota_0}
&&
  \Sd A
  \ar[dl]^{\iota_1}
\\&
  A \star_{F(A)} \Sd A
}
\end{gathered}
\end{equation}
functorial and cocontinuous in $A$.
For a monomorphism $f \co A \to B$, note that the induced morphism from $F(A)$ to the restriction of $F(B)$ along the sieves $\int A \to \int B$ and $\int \Sd A \to \int \Sd B$ is an isomorphism.

Consider now the legs of~\eqref{subdivision-mapping-cone} as natural transformations in $A$.

\begin{lemma} \label{subdivision-mapping-cone-left-leg}
Pushout application of the left leg of~\eqref{subdivision-mapping-cone} sends cofibrations to anodyne maps.
\end{lemma}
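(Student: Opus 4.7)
The plan is to reduce the claim to generating cofibrations by weak saturation and then carry out a combinatorial analysis on the resulting boundary-inclusion case.

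Both the identity and the functor $M(A) \defeq A \star_{F(A)} \Sd A$ are cocontinuous in $A$: subdivision $\Sd$ is a cocontinuous extension of its action on representables; the category of elements $\int A$ depends cocontinuously on $A$, so $F(A)$ does too, being built by composing with the first and last vertex approximations; and the indexed join is cocontinuous in its indexing triple. Hence the Leibniz pushout application of the natural transformation $\iota_0 \co \Id \to M$ to a map is cocontinuous in that map, and since anodyne maps are closed under weak saturation (\ie pushouts, coproducts, and $\omega$-compositions), it suffices to verify the claim for the generating cofibrations $\partial\Delta^n \arrcof \Delta^n$. So the remaining task is to show, for each $n \ge 0$, that the canonical map
\[
\Delta^n +_{\partial\Delta^n} \bigl(\partial\Delta^n \star_{F(\partial\Delta^n)} \Sd\partial\Delta^n\bigr) \longrightarrow \Delta^n \star_{F(\Delta^n)} \Sd\Delta^n
\]
is anodyne.

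The key structural observation is that the poset $\Delta_+/[n]$ of non-empty subsets of $[n]$ has $[n]$ itself as terminal object, yielding a decomposition $\Sd\Delta^n \cong \Sd\partial\Delta^n \star \Delta^0$ with new top vertex $\top$. Thus the simplices of $\Delta^n \star_{F(\Delta^n)} \Sd\Delta^n$ not already present in the pushout on the left are exactly the ``joined'' simplices whose $\Sd$-component is a chain involving $\top$, together with the purely $\Sd$-simplices containing $\top$. These new simplices are indexed by pairs $(x, \sigma)$ together with a witness $t \in F$, where $x$ is a face of $\Delta^n$ and $\sigma$ is a chain in $\Sd\partial\Delta^n$ (possibly empty), subject to $\mathrm{first}(\sigma) \subseteq x$. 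I would order them by a Reedy-style rank lexicographic on $(\dim x, \dim \sigma)$ and attach them in that order, checking that each attachment is a pushout of a single horn inclusion whose missing face drops the top vertex $\top$ from the $\Sd$-chain.

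The main obstacle is verifying, at each attachment step, that every face of the simplex being attached other than the distinguished ``drop $\top$'' face lies in the pushout with $M(\partial\Delta^n)$ or in an earlier stage of the filtration. This requires a careful case analysis on how the boundary operators of the indexed join interact with $F$, exploiting that the sieve inclusion $\partial\Delta^n \subseteq \Delta^n$ makes $F(\partial\Delta^n)$ the restriction of $F(\Delta^n)$, so that any face with $\Sd$-component in $\Sd\partial\Delta^n$ stays in $M(\partial\Delta^n)$. An appealing alternative route would be to identify $\Delta^n \star_{F(\Delta^n)} \Sd\Delta^n$ as a quotient of the ordinary iterated join $\Delta^n \star \Sd\partial\Delta^n \star \Delta^0$, after which the claim would follow from \cref{anodyne-join-cof} and \cref{point-join-cofib}; however, establishing this identification requires essentially the same combinatorial bookkeeping.
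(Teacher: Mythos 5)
Your first reduction (cocontinuity of both sides of the natural transformation plus weak saturation, reducing to $\partial\Delta^n \to \Delta^n$) is correct and is exactly how the paper begins. But the combinatorial core of your argument has a genuine flaw, and you also leave it explicitly unverified. The flaw: in your proposed filtration, the ``drop $\top$'' face can never serve as the missing face of a horn, because it is always already present before the attachment. Indeed, $\top$ is the last vertex of any chain containing it, so removing it from a new joined simplex $(x, t, \sigma)$ with $\top \in \sigma$ yields either a joined simplex whose $\Sd$-component is a chain of proper faces (hence it lies in $\partial\Delta^n \star_{F(\partial\Delta^n)} \Sd\partial\Delta^n$, with the indexing element restricting because $F(\partial\Delta^n)$ is the restriction of $F(\Delta^n)$ along the sieve) or the simplex $x$ of $\Delta^n$ itself; both lie in the pushout you start from. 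Since all the other faces of $(x,t,\sigma)$ are earlier in your lexicographic order, every single-simplex attachment in your scheme happens along the \emph{full} boundary of the new simplex. That exhibits the map as a relative cell complex of boundary inclusions, i.e.\ merely as a cofibration, and yields no horn structure at all. The anodyne structure does not come from the cone point $\top$ of $\Sd\Delta^n = \Sd\partial\Delta^n \star \Delta^0$; it comes from the \emph{other} end of the join: the indexing $F$ forces every new simplex with a fixed $\Sd$-chain $\sigma$ (containing $\top$) to be part of a cone $\Delta^k \star \sigma$, where $f(0) \co [k] \to [n]$ is the \emph{first} vertex of $\sigma$.

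This is precisely how the paper closes the argument, and it avoids the bookkeeping you were worried about. One first identifies the pushout as $\Delta^n \star' \Sd\partial\Delta^n$ (writing $\star'$ for the join indexed by the appropriate restriction of $F(\Delta^n)$), so the pushout corner map is $\Delta^n \star' \Sd\partial\Delta^n \to \Delta^n \star' \Sd\Delta^n$. Then one uses cocontinuity of the indexed join with fixed first argument against a cellular decomposition of $\Sd\partial\Delta^n \to \Sd\Delta^n$ into boundary inclusions with monic attaching maps $f \co \Delta^m \to \Sd\Delta^n$; for each such cell, $\Delta^n \star' (\partial\Delta^m \to \Delta^m)$ is a pushout of $\Delta^k \star (\partial\Delta^m \to \Delta^m)$ with $[k] = f(0)$, which is anodyne by iterated application of \cref{point-join-cofib}. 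So the missing idea in your proposal is this grouping of the new simplices by their $\Sd$-chain together with the use of the first-vertex cone $\Delta^k \star -$; as written, your filtration step fails, and the ``careful case analysis'' you defer is exactly the content of the lemma.
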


\begin{proof}
By cocontinuity, it will suffice to show that the pushout application to $\partial\Delta^n \to \Delta^n$ is anodyne.
This is the pushout corner map in the following square:
\begin{equation} \label{subdivision-mapping-cone-left-leg:0}
\begin{gathered}
\xymatrix{
  \partial\Delta^n
  \ar[r]
  \ar[d]
&
  \partial\Delta^n \star_{F(\partial\Delta^n)} \Sd \partial\Delta^n
  \ar[d]
\\
  \Delta^n
  \ar[r]
&
  \Delta^n \star_{F(\Delta^n)} \Sd \Delta^n
\rlap{.}}
\end{gathered}
\end{equation}
Given subobjects $u \co U \to \Delta^n$ and $v \co V \to \Sd \Delta^n$, let us abbreviate $U \star' V$ the indexed join of $U$ and $V$ with respect to the restriction of $F$ along $\int u$ and $\int \Sd v$.
The rest of the argument proceeds by working with subobjects of $\Delta^n \star' \Sd \Delta^n$.
The square~\eqref{subdivision-mapping-cone-left-leg:0} isomorphically writes as
\[
\xymatrix{
  \partial\Delta^n
  \ar[r]
  \ar[d]
&
  \partial\Delta^n \star' \Sd \partial\Delta^n
  \ar[d]
\\
  \Delta^n
  \ar[r]
&
  \Delta^n \star' \Sd \Delta^n
\rlap{.}}
\]
Note that the pushout in this square is $\Delta^n \star' \Sd \partial\Delta^n$, so the pushout corner map in~\eqref{subdivision-mapping-cone-left-leg:0} is
\[
\xymatrix{
  \Delta^n \star' \Sd \partial\Delta^n
  \ar[r]
&
  \Delta^n \star' \Sd \Delta^n
\rlap{.}}
\]
We decompose the cofibration $\Sd \partial\Delta^n  \to \Sd \Delta^n$ into a finite composite of pushouts of boundary inclusions.
Note that all attaching maps $f \co \Delta^m \to \Sd \Delta^n$ of this relative cell complex are monomorphisms.
By cocontinuity of the indexed join with fixed first argument, it will suffice to show that
\begin{equation} \label{subdivision-mapping-cone-left-leg:1}
\begin{gathered}
\xymatrix{
  \Delta^n \star' \partial\Delta^m
  \ar[r]
&
  \Delta^n \star' \Delta^m
}
\end{gathered}
\end{equation}
is anodyne for all such attaching maps $f$.
Recall that $f \co \Delta^m \to \Sd \Delta^n$ means a map $f \co [m] \to (\Delta_+)_{/[n]}$.
Denote the first vertex as $f(0) \co [k] \to [n]$.
The map~\eqref{subdivision-mapping-cone-left-leg:1} is a pushout of
\begin{equation} \label{subdivision-mapping-cone-left-leg:2}
\begin{gathered}
\xymatrix{
  \Delta^k \star \partial\Delta^m
  \ar[r]
&
  \Delta^k \star \Delta^m
\rlap{,}}
\end{gathered}
\end{equation}
which is anodyne by iterated applications of \cref{point-join-cofib}.
\end{proof}

\begin{lemma} \label{subdivision-mapping-cone-right-leg}
Pushout application of the right leg of~\eqref{subdivision-mapping-cone} sends cofibrations to anodyne maps.
\end{lemma}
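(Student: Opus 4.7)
The plan is to imitate the proof of \cref{subdivision-mapping-cone-left-leg} but with a structural simplification: for the right leg, pushout application to a generating boundary inclusion will turn out to be a pushout of a single outer horn inclusion, without needing a cell decomposition of $\Sd \Delta^n$.

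By cocontinuity, it suffices to verify the claim on the generating boundary inclusions $\partial\Delta^n \to \Delta^n$. With $\star'$ denoting the restricted indexed join as in \cref{subdivision-mapping-cone-left-leg}, and using cocontinuity of $\partial\Delta^n \star' (-)$ in its second argument to absorb the pushout along $\Sd \partial\Delta^n \to \Sd\Delta^n$, the pushout application simplifies isomorphically to
\[
\partial\Delta^n \star' \Sd\Delta^n \to \Delta^n \star' \Sd\Delta^n.
\]

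The next step is to enumerate the simplices in the target that are not already in the source. Since $\Delta^n \setminus \partial\Delta^n$ consists of the single top cell $\id_{[n]}$, any new simplex must have $x = \id_{[n]}$ on the $\Delta^n$-side. For a decomposition simplex this requires $t \in \hom_{\cel\Delta^n}(\id_{[n]}, y_0)$ to be nonempty, which in turn forces $y_0 \co [l_0] \to [n]$ to admit a section and hence to be the identity $\id_{[n]}$, in which case the hom-set is a singleton. Since $\Sd\Delta^n$-simplices are strict chains of subobjects of $[n]$, the condition $y_0 = \id_{[n]}$ forces $y$ itself to be the vertex of $\Sd\Delta^n$ corresponding to $\id_{[n]}$. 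Precisely two new simplices therefore appear: the top $n$-simplex of $\Delta^n$, and one $(n+1)$-simplex from the decomposition $[n+1] \simeq [n] \star [0]$.

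The payoff is that these two simplices fit together as a single outer horn filling. The new $n$-simplex is the face of the new $(n+1)$-simplex opposite its apex $\id_{[n]}$, while every other face of that $(n+1)$-simplex contains the apex and corresponds to a decomposition with $x$ a proper face of $\Delta^n$, hence lies already in $\partial\Delta^n \star' \Sd\Delta^n$. Consequently the map is a pushout of the anodyne outer horn inclusion $h^{n+1}_{n+1} \co \Lambda^{n+1}_{n+1} \to \Delta^{n+1}$. The main care required is this last bit of bookkeeping: verifying that the union of the apex-containing faces of the attached $(n+1)$-simplex inside the source is exactly $\Lambda^{n+1}_{n+1}$, which amounts to tracing through the identifications produced by the restricted indexed join.
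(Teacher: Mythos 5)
There is a genuine gap, and it sits exactly at your ``structural simplification''. You claim that cocontinuity lets you absorb the pushout, so that the Leibniz application of the right leg to $\partial\Delta^n \to \Delta^n$ becomes the single inclusion $\partial\Delta^n \star' \Sd\Delta^n \to \Delta^n \star' \Sd\Delta^n$. In other words you assert that the comparison map
\[
\Sd\Delta^n +_{\Sd\partial\Delta^n} \bigl(\partial\Delta^n \star' \Sd\partial\Delta^n\bigr) \to \partial\Delta^n \star' \Sd\Delta^n
\]
is an isomorphism. It is not: every join cell of the pushout has its $\Sd$-component in $\Sd\partial\Delta^n$, whereas $\partial\Delta^n \star' \Sd\Delta^n$ also contains join cells $(x, y, t)$ in which $x$ is a proper face of $\Delta^n$ and $y$ is a chain whose top element is $[n]$ (so $y \notin \Sd\partial\Delta^n$) with $x$ a face of $y_0$ --- for instance any proper face $x$ joined with the vertex $\id_{[n]}$ of $\Sd\Delta^n$. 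Already for $n = 1$ the pushout is a zig-zag of four edges, while $\partial\Delta^1 \star' \Sd\Delta^1$ has six edges and two triangles. Cocontinuity of the indexed join does not rescue the step: enlarging the second argument from $\Sd\partial\Delta^n$ to $\Sd\Delta^n$ also enlarges the indexing presheaf, and the new join cells attached to the new $\Sd$-simplices are not created by pushing out along $\Sd\partial\Delta^n \to \Sd\Delta^n$.

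What your outer-horn bookkeeping does correctly establish is the easier half of the paper's argument, namely the bottom square of~\eqref{subdivision-mapping-cone-right-leg:1}: the inclusion $\partial\Delta^n \star' \Sd\Delta^n \to \Delta^n \star' \Sd\Delta^n$ adds exactly the top cell of $\Delta^n$ and one $(n+1)$-simplex $\Delta^n \star \Delta^0$ with apex the vertex $\id_{[n]}$, and is a pushout of $\partial\Delta^n \star \Delta^0 \to \Delta^n \star \Delta^0$, i.e.\ of the outer horn inclusion $h^{n+1}_{n+1}$. The missing content is the top square: showing that the displayed comparison map above is anodyne (rather than invertible). This is where the real work of the paper's proof lies: one takes a cellular presentation of $\partial\Delta^n$, uses cocontinuity of the indexed join with fixed right argument, and reduces to showing that for each attaching map $f \co \Delta^k \to \partial\Delta^n$ the relevant pushout corner map is a pushout of the Leibniz join of $\partial\Delta^k \to \Delta^k$ with the nerve of $(\int \partial\Delta^n)_{/f} \to (\int \Delta^n)_{/f}$ as in~\eqref{subdivision-mapping-cone-right-leg:3}; that nerve map is identified via~\eqref{subdivision-mapping-cone-right-leg:4} as the $([n] \smallsetminus \im f)$-fold Leibniz geometric product of $\delta_0$, hence anodyne because $f$ is not surjective. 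Your proposal would need to supply this (or an equivalent) argument for the top square; as written, it proves only the final horn attachment.
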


\begin{proof}
We follow the strategy and notation of the proof of \cref{subdivision-mapping-cone-right-leg}.
For $n \geq 0$, we have to show that the pushout corner map in
\begin{equation} \label{subdivision-mapping-cone-right-leg:0}
\begin{gathered}
\xymatrix{
  \Sd \partial\Delta^n
  \ar[r]
  \ar[d]
&
  \partial\Delta^n \star' \Sd \partial\Delta^n
  \ar[d]
\\
  \Sd \Delta^n
  \ar[r]
&
  \Delta^n \star' \Sd \Delta^n
}
\end{gathered}
\end{equation}
is anodyne.
We decompose this into the two separate squares
\begin{equation} \label{subdivision-mapping-cone-right-leg:1}
\begin{gathered}
\xymatrix{
  \Sd \partial\Delta^n
  \ar[r]
  \ar[d]
&
  \partial\Delta^n \star' \Sd \partial\Delta^n
  \ar[d]
\\
  \Sd \Delta^n
  \ar[r]
  \ar[d]
&
  \partial\Delta^n \star' \Sd \Delta^n
  \ar[d]
\\
  \Sd \Delta^n
  \ar[r]
&
  \Delta^n \star' \Sd \Delta^n
}
\end{gathered}
\end{equation}
and show separately that each square has an anodyne pushout corner map.
The bottom pushout corner map is identified as a pushout of
\begin{equation} \label{subdivision-mapping-cone-right-leg:2}
\begin{gathered}
\xymatrix{
  \partial\Delta^n \star \Delta^0
  \ar[r]
&
  \Delta^n \star \Delta^0
}
\end{gathered}
\end{equation}
and is thus a horn inclusion.
For the top square, we take a cellular presentation of $\partial\Delta^n$ and use cocontinuity of the indexed join with fixed right argument, reducing the problem to showing that the pushout corner map in 
\[
\xymatrix{
  \partial\Delta^k \star' \Sd \partial\Delta^n
  \ar[r]
  \ar[d]
&
  \Delta^k \star' \Sd \partial\Delta^n
  \ar[d]
\\
  \partial\Delta^k \star' \Sd \Delta^n
  \ar[r]
&
  \Delta^k \star' \Sd \Delta^n
}
\]
is anodyne for all attaching maps $f \co \Delta^k \to \partial\Delta^n$ (which are mono).
That pushout corner map is a pushout of the one in
\begin{equation} \label{subdivision-mapping-cone-right-leg:3}
\begin{gathered}
\xymatrix{
  \partial\Delta^k \star N ((\int \partial\Delta^n)_{/f})
  \ar[r]
  \ar[d]
&
  \Delta^k \star N ((\int \partial\Delta^n)_{/f})
  \ar[d]
\\
  \partial\Delta^k \star N ((\int \Delta^n)_{/f})
  \ar[r]
&
  \Delta^k \star N ((\int \Delta^n)_{/f})
}
\end{gathered}
\end{equation}
(where we have used the categorical slice of semicategories) as both squares are related by an evident morphism that has pushouts between the top and bottom arrows.
Note that all indexed joins in the latter diagram have reduced to ordinary ones.
Using \cref{anodyne-join-cof}, it will thus suffice to show that the nerve of
\[
\xymatrix{
  (\int \partial\Delta^n)_{/f}
  \ar[r]
&
  (\int \Delta^n)_{/f}
}
\]
is anodyne.
Note that this functor of semicategories is equivalently
\begin{equation} \label{subdivision-mapping-cone-right-leg:4}
\begin{gathered}
\xymatrix{
  [1]^{[n] \smallsetminus \im f} - (1, \ldots, 1)
  \ar[r]
&
  [1]^{[n] \smallsetminus \im f}
\rlap{,}}
\end{gathered}
\end{equation}
so its nerve is the $([n] \smallsetminus \im f)$-fold Leibniz geometric product of $\delta_0$ and hence anodyne by \cref{anodyne-join-cof} since $f$ is not surjective.
\end{proof}

\begin{corollary} \label{subdivision-mapping-cone-legs-anodyne}
The legs of~\eqref{subdivision-mapping-cone} are anodyne for all semisimplicial sets $A$.
\end{corollary}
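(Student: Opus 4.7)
The plan is to deduce the corollary as the instance of the two preceding lemmas applied to the cofibration $0 \to A$. Recall that every semisimplicial set is cofibrant, so the unique map $0 \to A$ is always a cofibration.

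The legs of~\eqref{subdivision-mapping-cone} are natural transformations between endofunctors of $\widehat{\Delta_+}$ that are cocontinuous in $A$ (as recorded immediately after the construction of the cospan). In particular, each endofunctor sends $0$ to $0$: for the identity and $\Sd$ this is trivial, and for the middle summit $A \star_{F(A)} \Sd A$ it follows because the indexed join over $\int 0 \times \int 0$ is the ordinary coproduct $0 + 0 = 0$. Consequently the Leibniz (``pushout'') application of the natural transformation $\iota_0$ to the morphism $0 \to A$ reduces to its component at $A$, namely $\iota_0 \co A \to A \star_{F(A)} \Sd A$, and similarly for $\iota_1$.

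Applying \cref{subdivision-mapping-cone-left-leg} to the cofibration $0 \to A$ therefore shows that the left leg at $A$ is anodyne, and \cref{subdivision-mapping-cone-right-leg} similarly handles the right leg. This is the entire proof.

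The main point is that there is no real obstacle here: the content of the corollary has already been proven in the relative form given by the two preceding lemmas, and the present statement just extracts the absolute case by plugging in the initial cofibration. The only sanity check needed is that the endpoints of the cospan preserve the initial object, which is immediate from cocontinuity.
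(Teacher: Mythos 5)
Your proof is correct and is essentially the paper's own argument: the paper's proof simply cites \cref{subdivision-mapping-cone-left-leg,subdivision-mapping-cone-right-leg}, and the intended specialization is exactly the one you spell out, namely that pushout application to the cofibration $0 \to A$ recovers the component at $A$ because all the functors involved preserve the initial object. Your additional check of that last point is a fine (if routine) bit of extra care.
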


\begin{proof}
By \cref{subdivision-mapping-cone-left-leg,subdivision-mapping-cone-right-leg}.
\end{proof}

\section{Marked semisimplicial sets}

The development of the homotopy theory of marked semisimplicial sets will be largely analogous to the unmarked case.
As such, we will be less verbose and stress the differences to the unmarked case.

\subsection{Basic notions}

\subsubsection{Marked semicategories}

We have a category $\SemiCat_\m$ of \emph{marked semicategories}, \ie semiscategories equipped with a subset of edges called marked.
We have a forgetful functor $\SemiCat_\m \to \SemiCat$ part of an adjoint quadruple that will be denoted as for marked semisimplicial sets.
The symmetric monoidal structure of $\SemiCat$ extends to $\SemiCat_\m$.
All functors in the adjoint quadruple between $\SemiCat$ and $\SemiCat_\m$ lift canonically to symmetric monoidal functors.

We have a canonical functor $\Cat \to \SemiCat_\m$ that sends a category to the underlying semicategory with those morphisms marked that were isomorphisms in the original category.
This functor is fully faithful and its image contains exactly those marked semicategories $(\mathcal{C}, W)$ such that pre- and postcomposition with marked morphisms is a bijection and every object appears as the domain of a marked morphism.
It has a symmetric monoidal left adjoint $(\SemiCat_\m, \top, \otimes) \to (\Cat, 1, \times)$ sending a semicategory to the free category localized at the marked morphisms.

\subsubsection{Marked semisimplicial sets}

A \emph{marking} $W$ of a semisimplicial set $A$ is given by a subset of its edges.
A morphism $(A, W_A) \to (B, W_B)$ of marked semisimplicial sets is a morphism $f \co A \to B$ of semisimplicial maps such that $f(W_A) \subseteq W_B$.
We obtain a category $\MS$ of marked semisimplicial sets.
It can equivalently be described as the full category of separated presheaves on the category $\Delta_+'$ that under $\Delta_+$ has an object $[1]_\m$ and a morphism $[1] \to [1]_\m$ freely added and the only non-trivial cover given by the singleton cover at that morphism.
Like $\Delta_+$, note that $\Delta_+'$ is a direct category.
We have a retraction $\Delta_+' \to \Delta_+$ that sends the morphism $[1] \to [1]_\m$ to the identity on $[1]$.

\begin{remark}
We could just as well work with presheaves over $\Delta_+'$ as our notion of marked semisimplicial set, with the added benefits of a presheaf category.
However, this is still not enough to define classifiers for fibrations.
For this, we would want that codomains of generating anodyne maps are representable.
This can be achieved by working with presheaves over $\Delta_\m$, the wide subcategory of marked non-empty finite total linear orders restricted to monomorphisms.
The theory works just as well there, although with generating anodyne maps suitably enlarged: marked semisimplicial sets are a full subcategory of sheaves on $\Delta_\m$; one now turns the sheaf condition into weak sheaf condition for fibrant objects.
In fact, one can do the same for weak semicomplicial sets: here, the nicest setting would be presheaves over the full subcategory of semisimplicial sets on cofibrations with representable codomain.
\end{remark}

We write $\M$ for the category of marked simplicial sets, defined similarly to marked semisimplicial sets but with degenerate edges always marked.
Note that it can be equivalently described as the full subcategory of separated presheaves on the category $\Delta'$ that under $\Delta$ has a factorization $[1] \to [1]_\m \to [0]$ of $s_0$ freely added and the only non-trivial cover given by the singleton cover $[1] \to [1]_\m$.
Like $\Delta$, note that $\Delta'$ is an elegant Reedy category; its direct fragment is $\Delta_+'$ nad its inverse fragment consists of $\Delta_-$ together with the map $[1]_\m \to [0]$.
We have a retraction $\Delta' \to \Delta$ that sends the morphism $[1] \to [1]_\m$ to the identity on $[1]$.
This section-retraction pair is compatible with the Reedy structures.

We have a forgetful functor from marked simplicial sets to marked semisimplicial sets given by precomposition with the evident inclusion $\Delta_+' \to \Delta'$.
This has a left adjoint, the \emph{free marked simplicial set} functor, and a right adjoint, the \emph{cofree marked simplicial set} functor.

We denote $(-)^\unmarked$ the forgetful functor from marked semisimplicial sets to semisimplicial sets.
It has as adjoints on the left the \emph{minimal marking} $(-)^\flat$ and on the right the \emph{maximal marking} $(-)^\sharp$.
We will usually leave the minimal marking functor implict, regarding a semisimplicial set as minimally marked by default.
The maximal marking functor has a further right adjoint $\core$ that returns the restriction of the underlying semisimplicial set to those simplices all of whose edges are marked.
We have a corresponding quadruple of adjoint functors in the simplicial case and will use the same notation for it.

Note that forgetting the marking commutes (strictly) with forgetting degeneracy operators.
This commuting square of functors satisfies a number of Beck-Chevalley conditions:
\begin{itemize}
\item
The free (marked) simplicial set functor commutes with minimal marking, forgetting the marking, maximal marking, and taking the core.
\item
The forgetful functor from (marked) simplicial to (marked) semisimplicial sets does not commute with minimal marking, but does commute with maximal marking, fogetting the marking, and taking the core.
\item
The cofree (marked) simplicial set functor does not commute with forgetting the marking, but does commute with maximal marking and taking the core.
\end{itemize}

\medskip

We extend the closed geometric symmetric monoidal structure from semisimplicial sets to marked semisimplicial sets by letting those edges in $(A, W_A) \otimes (B, W_B)$ be marked whose edge components in $A$ or $B$ are marked.
We use the same notation for it as in semisimplicial sets.
All functors in the adjoint quadruple between semisimplicial sets and marked semisimplicial sets lift canonically to symmetric monoidal functors with respect to the geometric symmetric monoidal structures.
As in the unmarked case, the free marked simplicial set functor is symmetric monoidal with respect to the geometric symmetric monoidal structure on marked semisimplicial sets and the cartesian monoidal structure on marked simplicial sets.

\medskip

Note that the fully faithful functor $\Delta_+ \to \SemiCat$ extends to a fully faithful functor $\Delta_+' \to \SemiCat_\m$ that sends $[1]_+$ to the semicategory $[1]$ with maximal marking.
We have a strictly commuting diagram 
\[
\xymatrix{
  \Delta_+
  \ar[r]
  \ar[d]
&
  \SemiCat
  \ar[d]
\\
  (\Delta_+)'
  \ar[r]
&
  \SemiCat_\m
}
\]
where the vertical arrows are the evident inclusion and the minimal marking functor.
The functor $\Delta_+' \to \SemiCat_\m$ gives rise to a nerve $\SemiCat_\m \to \widehat{\Delta_+'}$ valued in separated presheaves, thus lifting to a fully faithful functor $\SemiCat_\m \to \MS$ we call the \emph{nerve} $N$ for marked semisimplicial sets.
Both the nerve and its realization left adjoint commute up to canonical natural isomorphism with the adjoint quadruples for marked semicategories and marked semisimplicial sets.

The geometric symmetric monoidal structure of $\SemiCat$ extends to $\SemiCat_\m$ by letting those morphisms in $(\mathcal{C}, W_\mathcal{C}) \otimes (\mathcal{D}, W_\mathcal{D})$ be marked whose morphisms components in $\mathcal{C}$ or $\mathcal{D}$ are marked.
All functors in the adjoint quadruple between $\SemiCat$ and $\SemiCat_\m$ lift canonically to symmetric monoidal functors with respect to the geometric symmetric monoidal structures.
The marked semisimplicial nerve and its realization are symmetric monoidal functors with respect to the geometric symmetric monoidal structures.

As in the unmarked case, we can understand the geometric symmetric monoidal structure as being induced via argumentwise cocontinuous extension (a generalized Day convolution process) by the geometric symmetric monoidal structure on marked semicategories.

\medskip

\subsubsection{Join and slice}

We extend the join monoidal structure from semisimplicial sets to marked simplicial sets, also reusing notation, by marking only those edges in a join that were already marked in its arguments.
The functor $(-)^\flat, (-)^\unmarked, \core$ lift to monoidal functors between join monoidal structures.
Occasionally, we will have use also for the \emph{marking join monoidal structure} $(0, \star_\m)$, which additionally marks the newly introduced edges going between vertices of different arguments.
The functors $(-)^\unmarked, (-)^\sharp, \core$ lift to monoidal functors between the join monoidal structure on simplicial sets and the marking join monoidal structure.
Given a marked simplicial set $A$, the functors
\[
- \star A, A \star -, - \star_\m A, A \star_\m - \co \M \to (\M)_{\backslash A}
\]
admit right adjoints.
Their respective action on an object $p \co A \to X$ is written
\[
X_{/p}, X_{\backslash p}, X_{\m /p}, X_{\m \backslash p}
.\]
For a cofibration $i \co A \to B$, we have a functor
\[
\M \to (\M)_{\backslash B}
\]
sending $X$ to the pushout
\[
\xymatrix{
  A \star X
  \ar[r]^-{i \star X}
  \ar[d]
&
  A \star_\m X
  \ar[d]
\\
  B \star X
  \ar[r]
&
  \bullet
  \pullbackcorner{ul}
\rlap{.}}
\]
This functor has a right adjoint.
Its action on an object $p \co B \to X$ is written $X_{/(p, pi)}$.
It is equivalently obtained as the pullback
\[
\xymatrix{
  X_{/(p, pi)}
  \ar[r]
  \ar[d]
  \pullbackcorner{dr}
&
  X_{/p}
  \ar[d]
\\
  X_{\m/pi}
  \ar[r]
&
  X_{/pi}
\rlap{.}}
\]
Dually, we define $X_{\backslash (p, pi)}$.

Everything we have said in this paragraph extends to marked semisimplicial sets as well.
We use the same notation in that setting.
The forgetful functor marked simplicial to marked semisimplicial sets is monoidal with respect to join as well as marking join.
The induced oplax monoidal structures of the free marked simplicial set functor are monoidal structures.

\subsection{Weak factorization systems}

The \emph{cofibrations} of marked semisimplicial sets are those maps that are cofibrations (\ie levelwise decidable monomorphisms) on underlying semisimplicial sets and additionally consist of a decidable inclusion of markings.
They are generated under weak saturation by boundary inclusions and the \emph{edge marking inclusion} $(\Delta^1)^\flat \to (\Delta^1)^\sharp$, concretely write as $\omega$-compositions of pushouts of coproducts of boundary inclusions and the edge marking inclusion.
Note that every object is cofibrant.

\begin{lemma} \label{marked-cof-tensor}
Cofibrations are closed under the Leibniz geometric symmetric monoidal structure.
\end{lemma}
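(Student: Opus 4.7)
The plan is to mirror the proof of \cref{cof-tensor} closely, splitting the check into the underlying semisimplicial data and the marking data. The unit $0 \to \top$ of the Leibniz geometric monoidal structure is clearly a cofibration in $\MS$. For the binary case, given cofibrations $f \co A \to B$ and $g \co C \to D$ in $\MS$, I must verify that $f \hatotimes g$ is both a cofibration on underlying semisimplicial sets and a decidable inclusion of markings.

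The underlying statement comes for free. The forgetful functor $(-)^\unmarked \co \MS \to \widehat{\Delta_+}$ sits in an adjoint quadruple, hence preserves all colimits; it is also strict symmetric monoidal for the geometric product. Consequently it commutes with Leibniz products, and $(f \hatotimes g)^\unmarked$ coincides with $f^\unmarked \hatotimes g^\unmarked$, which is a cofibration by \cref{cof-tensor}.

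For the marking inclusion, I reuse the pullback/union strategy of \cref{cof-tensor}. The analogue of \cref{tensor-as-right-adjoint} in the marked setting carries over essentially verbatim, since $\MS$ admits the same sort of Day convolution presentation of $\otimes$. Hence $- \otimes -$ is parametric right adjoint on $\MS$, and the square
\[
\xymatrix{
  A \otimes C \ar[r] \ar[d] & A \otimes D \ar[d] \\
  B \otimes C \ar[r] & B \otimes D
}
\]
is a pullback in $\MS$. Thus $f \hatotimes g$ identifies with the inclusion of the binary union $(A \otimes D) \cup (B \otimes C)$ into $B \otimes D$. Since decidable subsets are closed under binary union, it suffices to check that $f \otimes D$ is a cofibration whenever $f$ is. Unfolding the marking convention (an edge of $B \otimes D$ is marked iff its non-collapsing edge component in $B$ or $D$ is marked), decidability of the marking inclusion of $f \otimes D$ reduces straightforwardly to decidability of the underlying monomorphism $f^\unmarked$ and of the marking inclusion of $f$.

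The principal obstacle is a bookkeeping one rather than a conceptual one: I must confirm that pushouts in $\MS$ genuinely realise the union of images of markings inside the ambient $B \otimes D$, rather than producing a more subtle quotient, so that the pullback decomposition above translates cleanly into a statement about decidable subsets of $(B \otimes D)_1$. This relies on the description of colimits in $\MS$, where a pushout is computed by taking the pushout of underlying semisimplicial sets together with the union of the induced image markings in the target. Once that is pinned down, the argument proceeds exactly in parallel with the unmarked case.
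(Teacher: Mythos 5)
Your overall architecture is the one the paper intends (its proof of \cref{marked-cof-tensor} is literally ``analogous to \cref{cof-tensor}''): handle the unit, transfer the underlying-semisimplicial part through the cocontinuous monoidal forgetful functor, and decompose the Leibniz map as a union inclusion. However, one intermediate assertion is false as stated: the marked analogue of \cref{tensor-as-right-adjoint} does not carry over verbatim, and the displayed square is in general \emph{not} a pullback in $\MS$. Pullbacks in $\MS$ mark an edge only when both of its images are marked, and this intersection can strictly exceed the marking of $A \otimes C$. Concretely, take $f = g$ to be the edge marking inclusion $(\Delta^1)^\flat \to (\Delta^1)^\sharp$: the diagonal edge of $\Delta^1 \otimes \Delta^1$ is unmarked in $A \otimes C = (\Delta^1)^\flat \otimes (\Delta^1)^\flat$, yet its images in $A \otimes D$ and in $B \otimes C$ are both marked (via the second, respectively first, component), so it is marked in the pullback. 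In particular the marked geometric product is not a parametric right adjoint in the sense you invoke. The misstep is repairable, and you have in fact already identified the repair in your last paragraph: all that is needed is the pullback of \emph{underlying} semisimplicial sets (from \cref{tensor-as-right-adjoint}) together with the description of colimits in $\MS$, where a pushout carries the union of the image markings; this identifies $f \hatotimes g$ with the inclusion of the union of $A \otimes D$ and $B \otimes C$ equipped with the union of their markings.

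A second point deserves more than the word ``straightforwardly''. After the reduction to $f \otimes D$, the underlying map is clearly a cofibration, but decidability of the marking inclusion is exactly where the constructive content sits: for an edge of $B \otimes D$ both of whose components are edges, membership in the marking of $A \otimes D$ is the disjunction ``marked in $A$ or marked in $D$'', to be decided from the hypothesis ``marked in $B$ or marked in $D$''; since markings of $D$ are not assumed decidable, this does not follow formally from decidability of $f^\unmarked$ and of the marking inclusion of $f$, and the case analysis on the shape of the edge (and on which factor witnesses the marking) must be spelled out. If you want to sidestep this delicacy entirely, the generator-focused route is available: by the Leibniz compatibility with weak saturation it suffices to treat Leibniz products of boundary inclusions and the edge marking inclusion, each of which is exhibited directly as a relative cell complex of boundary and edge marking inclusions.
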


\begin{proof}
Analogous to \cref{cof-tensor}.
\end{proof}

\begin{lemma} \label{marked-cof-join}
Cofibrations are closed under the Leibniz join monoidal structure.
\end{lemma}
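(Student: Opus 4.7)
The plan is to reduce to the generating cofibrations using cocontinuity of the Leibniz join in each argument. Cofibrations of $\MS$ are the weak saturation of the boundary inclusions $i^n \co \partial\Delta^n \to \Delta^n$ (with minimal marking) together with the edge marking inclusion $m \co (\Delta^1)^\flat \to (\Delta^1)^\sharp$, so it suffices to check that $g_1 \hatjoin g_2$ is a cofibration for each pair of generators $g_1, g_2 \in \{i^n : n \geq 0\} \cup \{m\}$. The nullary case (the unit $0 \to 0$) is trivial.

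The case $i^a \hatjoin i^b$ reduces to \cref{cof-join}: the underlying semisimplicial map is a boundary inclusion, and since the join of minimally marked objects is minimally marked, the Leibniz join of minimally marked boundary inclusions is a boundary inclusion of minimally marked simplices, hence a cofibration.

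For each of the three remaining combinations $m \hatjoin i^n$, $i^n \hatjoin m$, and $m \hatjoin m$, I expect the Leibniz join to be an isomorphism, and in particular a cofibration. On underlying semisimplicial sets this is immediate: $m$ is the identity on underlyings, and the Leibniz join with an isomorphism is an isomorphism. The main point to verify is the bookkeeping of markings, namely that every marked edge of the codomain is already marked in the pushout forming the Leibniz join's domain. Since the join monoidal structure on $\MS$ introduces no marked edges beyond those already present in its factors, the only marked edges new to the codomain arise from a $(\Delta^1)^\sharp$ factor; but any such edge sits entirely within the $\Delta^1$ half of the join, and hence is already present and marked in the arm of the pushout span where that half is $(\Delta^1)^\sharp$ (with the other factor restricted to its boundary or to minimal marking, which still retains the full $\Delta^1$ factor). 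As markings in the pushout are formed as unions of images of marked edges, this yields the required agreement between domain and codomain markings.
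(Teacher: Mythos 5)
Your proof is correct and takes essentially the paper's route: reduce to the generating cofibrations (the paper does this implicitly too; strictly the join preserves only connected colimits in each argument, so one either decomposes coproducts of generators into transfinite composites of pushouts or passes to augmented objects, as the paper does for \cref{anodyne-join-cof}), handle the boundary--boundary case via \cref{cof-join}, and check that the cases involving the edge marking inclusion are harmless. The only difference is cosmetic: the paper records the Leibniz join of the edge marking inclusion with a boundary inclusion as a relative cell complex of edge marking inclusions, while your sharper observation that these mixed Leibniz joins (and $m \hatjoin m$) are in fact isomorphisms is also correct, since both domain and codomain carry exactly the marked edges inherited from the $(\Delta^1)^\sharp$ factor.
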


\begin{proof}
Analogous to \cref{cof-join}.
The Leibniz join of an edge marking inclusion with a boundary inclusion is a relative cell complex of edge marking inclusions.
\end{proof}

\begin{lemma} \label{marked-cof-marking-join}
Cofibrations are closed under the Leibniz marking join monoidal structure.
\end{lemma}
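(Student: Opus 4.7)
The plan is to mirror the proof of \cref{marked-cof-join}. The unit of the marking join is the initial object, so the nullary Leibniz condition is trivial. For the binary condition, the cofibrations are generated under weak saturation by the set $I = \{i^n \mid n \geq 0\} \cup \{e\}$, where $e \co (\Delta^1)^\flat \to (\Delta^1)^\sharp$ is the edge marking inclusion; since the Leibniz construction preserves weak saturation in each argument, I will reduce to showing that the Leibniz marking join of any two generators is a relative $I$-cell complex. Several cases arise according to whether each factor is a boundary inclusion or the edge marking inclusion.

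First I would dispatch the cases involving at least one edge marking inclusion. Since $e$ is the identity on underlying semisimplicial sets, a pushout argument identifies the underlying map of the Leibniz marking join as an isomorphism. A routine check on markings then shows that in each of these cases both source and target mark exactly every cross edge together with every $\Delta^1$-edge coming from a sharp factor, so the whole map is an isomorphism and trivially a relative $I$-cell complex.

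The heart of the argument is the case of two boundary inclusions $i^a$ and $i^b$. By the reasoning of \cref{cof-join}, the underlying map is the boundary inclusion $i^{a+b+1}$. When $a \geq 1$ or $b \geq 1$, the corresponding factor $\partial\Delta^a$ or $\partial\Delta^b$ already exhausts the vertices of its ambient simplex, so the source carries every cross edge as marked and the Leibniz marking join reduces cleanly to $i^{a+b+1}$. The main and essentially only obstacle is the degenerate subcase $a = b = 0$: here the source reduces to $\partial\Delta^1$ with no markings available while the target is $(\Delta^1)^\sharp$, and one factors the Leibniz marking join as the two-step cell complex $\partial\Delta^1 \arrcof (\Delta^1)^\flat \arrcof (\Delta^1)^\sharp$ to exhibit it as relative $I$-cell.
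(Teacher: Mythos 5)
Your proposal is correct and follows the same route the paper intends (the paper's proof is just ``analogous to \cref{marked-cof-join}''): reduce by weak saturation to the generators, observe that the cases involving the edge marking inclusion give isomorphisms, and handle the genuinely new case $i^0 \hatjoin_{\hspace{-2pt}\m} i^0 = (\partial\Delta^1 \to (\Delta^1)^\sharp)$ by factoring it through $(\Delta^1)^\flat$. The only phrasing nit: for $a \geq 1$ or $b \geq 1$ the map is not literally $i^{a+b+1}$ but a pushout of it (both sides carry the cross edges as markings), which is exactly what the relative-cell-complex argument needs.
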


\begin{proof}
Analogous to \cref{marked-cof-join}.
\end{proof}

The forgetful functor to semisimplicial sets and the minimal and maximal marking functors preserve cofibrations.

\medskip

Let us recall some terminology in semisimplicial sets.
For $n \geq 1$ and $0 \leq k \leq n$, the horn $h_k^n \co \Lambda_k^n \to \Delta^n$ is called \emph{inner} if $0 < k < n$, else outer (left outer if $k = 0$ and right outer if $k = n$).
The \emph{critical edge} of an outer horn $h_k^n \co \Lambda_k^n \to \Delta^n$ is given by the distance preserving map $[1] \to [n]$ preserving the initial object if $k = 0$ and the terminal object if $k = n$; we regard it as part of $\Lambda^n_k$ for $n \geq 2$.
Inner horns do not have a critical edge.

The \emph{marked horn inclusions} are given by minimally marked inner horn inclusions together with outer horn inclusions with only the critical edge marked.
The \emph{marking saturation inclusion} is given by $(\Delta^3, \braces{02, 13}) \to (\Delta^3)^\sharp$.
It will correspond to a saturation condition, specifically the 2-out-of-6 property.
The marked horn inclusions and marking saturation inclusion generate the weak factorization system (anodyne, fibration).
The forgetful functor to semisimplicial sets and the maximal marking functor presserve anodyne maps.

\begin{remark} \label{fibrations-arbitrary-marked}
Following up on \cref{fibrations-arbitrary}, the choice of these generators is even more arbitrary than in the unmarked case.
For example, we could have added generators such as $(\Delta^2, \braces{01, 02}) \to (\Delta^2)^\sharp$ corresponding to a 2-out-of-3 property.
As per \cref{fibrant-lift-2-out-of-3,fib-between-fibrant-lift-2-out-of-3} (and \cref{cof-weak-equiv-lift-fib-between-fibrant-marked} for other generators), this would not change the fibrant objects and fibrations between fibrant objects.
Ultimately, for the purpose of developing the homotopy theory of marked semisimplicial sets, we will only care about fibrations that have fibrant codomain (recall that in the theory of quasicategories, fibrations are also only characterized explicitly when they have a quasicategory as codomain).
Thus, we have chosen the least amount of generators needed to characterize these.
\end{remark}

\begin{lemma} \label{marked-anodyne-tensor-cof}
Anodyne maps are closed under Leibniz geometric product with cofibrations.
\end{lemma}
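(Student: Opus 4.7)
The plan is to reduce via the Leibniz calculus and argumentwise weak saturation to a finite list of cases on generators. Cofibrations are generated by the boundary inclusions $i^n \co \partial\Delta^n \to \Delta^n$ and the edge marking inclusion $\epsilon \co (\Delta^1)^\flat \to (\Delta^1)^\sharp$; anodyne maps are generated by marked horn inclusions (minimally marked when inner, and with only the critical edge marked when outer) together with the marking saturation inclusion $\sigma \co (\Delta^3, \braces{02, 13}) \to (\Delta^3)^\sharp$. It thus suffices to verify that the Leibniz geometric product of each anodyne generator with each cofibration generator is anodyne.

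For the pairings of (inner or outer) marked horn inclusions with boundary inclusions, I would follow the combinatorial strategy of \cref{anodyne-tensor-cof}: the underlying semisimplicial Leibniz product decomposes as a Reedy anodyne diagram whose latching object inclusions are the generalized horn inclusions of the form \eqref{anodyne-join-cof:1}. The new content in the marked setting is to track which of these inclusions is outer and to verify that its critical edge already carries a marking. Inspection of the decomposition reveals that an outer cell arises only from an original outer horn generator, and its critical edge is a copy of the critical edge of that generator; hence it is already marked, and the cell is a genuine marked anodyne.

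For the pairings involving the edge marking inclusion $\epsilon$, the underlying semisimplicial map of the Leibniz product is an isomorphism, so the entire work lies in the marking. Given an anodyne generator $a \co A \to B$, the product $a \hatotimes \epsilon$ must promote to marked precisely those edges of $B \otimes (\Delta^1)^\sharp$ whose $\Delta^1$-component is the marked edge and whose $B$-component lies outside $A$. I would produce these markings one at a time by pushing out outer marked horn inclusions whose critical edge is the target edge and whose remaining faces are already available with matching marking; residual edges that cannot be so treated are absorbed by pushouts of $\sigma$. The analogous strategy disposes of $\sigma \hatotimes i^n$ and $\sigma \hatotimes \epsilon$, whose treatment reduces to the preceding cases once $\sigma$ is unwound into its underlying semisimplicial support plus the three marking changes it implies.

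The main obstacle will be the combinatorial bookkeeping in the marking-generation step: one must arrange a well-ordering of the edges requiring marking so that at each stage the associated outer horn has its non-critical faces already present with matching marking, and one must pinpoint exactly which residual edges need to be handled via $\sigma$ rather than a single outer horn. This is a marked refinement of the induction in \cref{anodyne-tensor-cof}, and it is precisely the role the marking saturation generator was designed to play.
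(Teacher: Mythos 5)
Your first case---marked horn inclusions against boundary inclusions---is exactly the paper's argument: rerun the latching decomposition from \cref{anodyne-tensor-cof} and check that the generalized horns~\eqref{anodyne-join-cof:1} stay anodyne in the marked setting, the point being that an outer one can only arise from an outer generator (the analysis forces the original horn to be outer) and that its critical edge lands on an edge of $\Delta^m \otimes \Delta^n$ lying over the original critical edge, hence marked. The paper in fact writes out only this pairing, so up to here you reproduce its proof.

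Your treatment of the remaining generator pairings, however, has a genuine gap. For the pairings involving the edge marking inclusion $\epsilon \co (\Delta^1)^\flat \to (\Delta^1)^\sharp$ (and for the saturation inclusion against either cofibration generator) the Leibniz product is an isomorphism on underlying semisimplicial sets, so it cannot be a relative cell complex containing any marked horn pushout; and the mechanism you propose---``pushing out outer marked horn inclusions whose critical edge is the target edge''---cannot mark an existing edge at all. For $n \geq 2$ the critical edge belongs to $\Lambda^n_k$ and must already be marked for the attaching map to exist, and the pushout adds no markings to edges already present; for $n = 1$ the pushout adjoins a fresh marked edge rather than marking the given one. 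A pushout of the saturation inclusion does mark existing edges, but only if the ambient object already contains a $3$-simplex with $02$ and $13$ marked through the target edge, which need not be the case: in $h^1_0 \hatotimes \epsilon$ the ambient $\Delta^1 \otimes \Delta^1$ has no $3$-simplices whatsoever, so no composite of pushouts of generators realizes this map. The only available route is through the retract closure: anodynely attach auxiliary simplices (for instance via the join construction used in \cref{fibrant-lift-2-out-of-3}) so that a suitable $3$-simplex exists, push out the saturation inclusion, and then exhibit the original map as a retract by building a retraction onto the codomain, which contains none of the auxiliary cells. Your proposal neither invokes retracts nor indicates how such a retraction would be constructed, and that is precisely where the real work for these cases lies; the ``well-ordering of edges'' bookkeeping you anticipate does not address it.
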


\begin{proof}
This is analogous to \cref{anodyne-tensor-cof}.

Let us make sure that the anydone map~\eqref{anodyne-join-cof:1} from the unmarked case remains anodyne in the current marked setting.
If $a_0, a_1 \geq 0$, this is a (generalized) inner horn, which is anodyne also in the marked case.
Recall that $a_0 = a_1 = -1$ is impossible.
Let now treat the case $a_0 = -1$ and $a_1 \geq 0$ (the case $a_0 \geq 0$ and $a_1 = -1$ is dual).
Since $u_0$ is surjective, we must have $m_0 = -1$, \ie $k = 0$.
Thus, we are in the case of the pushout product of the horn inclusion $\Lambda_0^m \to \Delta^m$ with edge from $0$ to $1$ marked with the boundary inclusion $\partial\Delta^n \to \Delta^n$.
In particular, the relevant marking on $\Delta^m \otimes \Delta^n$ contains the edge from $(0, i)$ to $(1, i)$ for $i \in [n]$.

Observe that $E$ is the singleton subposet of $[a]$ just consisting of the greatest element $a$.
If $a \geq 1$, then~\eqref{anodyne-join-cof:1} is an inner horn, so asume $a = 0$.
Now~\eqref{anodyne-join-cof:1} is the outer horn $\Lambda^{a_1+1}_0 \to \Delta^{a_1+1}$, so we have to make sure that the edge from $0$ to $1$ is mapped to an edge of $\Delta^m \otimes \Delta^n$ that is marked.
Indeed, it is sent to the edge from $(0, v_1(0))$ to $(1, v_1(0))$.
\end{proof}

\begin{lemma} \label{marked-anodyne-join-cof}
Anodyne maps are closed under Leibniz join with cofibrations.
Anodyne maps are closed under join with marked semisimplicial sets.
\end{lemma}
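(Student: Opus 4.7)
The plan is to mirror the argument of \cref{anodyne-join-cof}: both claims unify in augmented marked semisimplicial sets by viewing join with $A$ as Leibniz join with $0 \to A$ (with $A$ terminally augmented), so it suffices to prove the first claim. By argumentwise cocontinuity of the Leibniz join and the usual closure properties of anodyne maps under pushout, transfinite composition, coproduct, and retract, this further reduces to verifying that $f \hatjoin g$ is anodyne when $f$ is a generating anodyne map and $g$ is a generating cofibration, and symmetrically for $g \hatjoin f$.

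Recall that the generating cofibrations are boundary inclusions together with the edge marking inclusion $(\Delta^1)^\flat \to (\Delta^1)^\sharp$, while the generating anodyne maps are the marked horn inclusions together with the marking saturation inclusion $(\Delta^3, \braces{02, 13}) \to (\Delta^3)^\sharp$. The key observation is that whenever one of $f, g$ has identity underlying map---which holds both for the marking saturation and for the edge marking inclusion---the pushout defining the source of $f \hatjoin g$ is taken along an identity on underlying semisimplicial sets, so $f \hatjoin g$ is itself an identity on underlying; a direct calculation of the markings on both sides then shows that they agree, so $f \hatjoin g$ is in fact an isomorphism in this situation.

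The remaining case is thus $h^n_k \hatjoin i^m$ (and symmetrically $i^m \hatjoin h^n_k$) of a marked horn inclusion with a boundary inclusion. On underlying, \cref{anodyne-join-cof} identifies this as a horn inclusion $\Lambda^{n+m+1}_j \to \Delta^{n+m+1}$ for the appropriate index $j$. If $h^n_k$ is inner, so is the result, which is minimally marked and thus a generating anodyne map. If $h^n_k$ is outer, inspection of the join indexing yields one of two possibilities: either the critical vertex of $h^n_k$ remains extremal in $\Delta^{n+m+1}$ (a left outer horn joined on the left, or a right outer horn on the right), in which case the result is an outer horn inclusion whose critical edge is the image of the source's critical edge and hence marked on both sides---again a generating anodyne map; or the critical vertex becomes interior to $\Delta^{n+m+1}$, in which case the result is an inner horn inclusion carrying the former critical edge as a residual marking on both source and target. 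In this last subcase, $f \hatjoin g$ decomposes as the pushout of the minimally marked inner horn inclusion (anodyne) along the edge marking addition on the residual edge (a cofibration), and is therefore anodyne.

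The main obstacle will be this final subcase: when the critical edge marking of an outer horn is absorbed into the interior upon joining, the resulting map is not literally a generator and must be recognized as a pushout of one. Once this bookkeeping is in place, the rest of the verification reduces to mechanical calculation with markings.
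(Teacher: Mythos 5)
Your proof is correct and takes essentially the same route as the paper, whose proof of this lemma is just ``analogous to \cref{anodyne-join-cof}'', i.e.\ reduce to generators and identify the Leibniz joins of generators. Your explicit bookkeeping --- the marking-only generators (edge marking and marking saturation inclusions) yielding isomorphisms, and an outer marked horn either remaining an outer marked horn or becoming an inner horn with a residual marked edge, which is a pushout of the minimally marked inner horn generator --- is exactly the detail the paper leaves implicit.
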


\begin{proof}
Analogous to \cref{anodyne-join-cof}.
\end{proof}

\begin{lemma} \label{marked-anodyne-marking-join-mono}
Anodyne maps are closed under Leibniz marking join with cofibrations.
\end{lemma}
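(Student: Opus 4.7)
The plan is to proceed analogously to \cref{marked-anodyne-join-cof}. By weak saturation in either argument and the asymmetry of $\star_\m$, I would reduce the claim to checking that for each generating anodyne map $j$ and each generating cofibration $i$, both $j \mathbin{\widehat{\star}_\m} i$ and $i \mathbin{\widehat{\star}_\m} j$ are anodyne.

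First I would handle the case of a marked horn inclusion $j$ against a boundary inclusion $i \co \partial\Delta^m \to \Delta^m$. As in \cref{marked-anodyne-join-cof}, the underlying semisimplicial map of the Leibniz marking join is a horn inclusion of some $\Delta^{n+m+1}$. Its markings on domain and codomain agree: the codomain marks contain the marks of $j$ together with all joining edges (supplied by the marking join); the domain marks, computed as the union of the two pushout summands' markings, supply both ingredients (the joining edges come from the summand having the opposing full simplex as a factor, treating $m = 0$ by noting that then only the $\Delta^n \star_\m \Delta^0$ summand is needed). The result is then a pushout of an inner or outer marked horn generator. In the outer horn case I would perform a case distinction on the orientation of $j$ and of the Leibniz marking join to verify that the critical edge of the resulting horn is marked---either because it is the critical edge of $j$ (preserved as an edge of the underlying horn and hence still marked) or because it is a joining edge (automatically marked by $\star_\m$).

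The remaining cases dispatch quickly. For a marked horn inclusion against the edge marking inclusion $(\Delta^1)^\flat \to (\Delta^1)^\sharp$, the Leibniz marking join has identity underlying map, and the single edge of the $(\Delta^1)^\sharp$ factor is already marked in the pushout summand containing that factor, so the map is an identity. For the marking saturation inclusion $(\Delta^3, \{02, 13\}) \to (\Delta^3)^\sharp$ against any generating cofibration, one argues similarly: the underlying is identity, and every edge marked in the codomain is already marked in the pushout summand containing $(\Delta^3)^\sharp$ or is a joining edge automatically marked by $\star_\m$.

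The main obstacle will be the outer horn case above: tracking, across the two orientations of the Leibniz marking join and the two orientations of outer horns, precisely when the resulting horn is outer and which edge is critical. This bookkeeping is analogous to the one at the end of the proof of \cref{marked-anodyne-tensor-cof}, and is made tractable by the key observation that $\star_\m$ marks all joining edges, which is exactly what supplies the critical edge of the result whenever that edge does not already come from $j$.
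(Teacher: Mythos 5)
Your strategy is the one the paper intends (its proof is literally ``analogous to \cref{marked-anodyne-join-cof}''): reduce to generators in each variable and identify the Leibniz marking join of a marked horn with a generating cofibration as a (pushout of a) marked horn generator, with the joining edges marked by $\star_\m$ supplying the critical edge in the outer case. Your treatment of the edge marking inclusion and of the marking saturation generator is correct, and for horns of dimension $n \geq 2$, or against $\partial\Delta^m \to \Delta^m$ with $m \geq 1$, the marking bookkeeping does come out as you say.

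However, the blanket claim that ``markings on domain and codomain agree'' fails in exactly one low-dimensional case, and this is a genuine gap. Take $j$ the one-dimensional marked horn $\{1\} \to (\Delta^1)^\sharp$ and $i = i^0 \colon \emptyset \to \Delta^0$. Then $j \mathbin{\widehat{\star}_\m} i^0$ is the inclusion $(\Lambda^2_1)^\sharp \to (\Delta^2)^\sharp$: the codomain marks the missing face $02$ (it is a joining edge), but that edge is not even present in the domain, since the horn $\Lambda^1_1$ does not contain the vertex $0$ and $\partial\Delta^0 = \emptyset$. Your parenthetical about $m = 0$ silently assumes that every vertex of $\Delta^n$ lies in the horn, which is true only for $n \geq 2$; the dual orientation and the left outer $1$-horn produce the analogous maps $(\Lambda^2_0)^\sharp \to (\Delta^2)^\sharp$ and $(\Lambda^2_2)^\sharp \to (\Delta^2)^\sharp$. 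These maps are not pushouts of the $2$-dimensional marked horn generators, because those generators leave the newly created face unmarked; worse, they do not lie in the weak saturation of the stated generators at all: the only generator that marks a pre-existing edge is the marking saturation inclusion, which requires a $3$-simplex, while any object mapping to $(\Delta^2)^\sharp$ has no $3$-simplices, so neither a relative cell complex nor a retract of one can produce the marked missing face (every $2$-simplex added by a horn filling keeps one unmarked face forever). So this corner case needs a genuinely different argument, or a change to the statement or to the generating anodyne maps (compare \cref{fibrations-arbitrary-marked}, where such marking-extension maps are deliberately not taken as generators); the paper's one-line proof glosses over the same point, so you should isolate and address this case explicitly rather than subsume it under the general bookkeeping.
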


\begin{proof}
Analogous to \cref{anodyne-join-cof}.
\end{proof}

We have corresponding adjoint closure statements for fibrations.

A map between fibrant objects that lifts against cofibrations is called a \emph{trivial fibration}.

\subsection{Fundamental category}

The explicit construction of the fundamental groupoid of a fibrant semisimplicial sets extends more or less mechanically to an explicit construction of the fundamental category of a fibrant marked semisimplicial set.
We note only the differences.
For this subsection, fix a fibrant marked semisimplicial set $X$.

A \emph{constant loop} in $X$ is a constant loop in $\core(X)$.
Recall that $\core(X)$ is a fibrant semisimplicial set, so the discussion of constant loops from the unmarked case applies.

Recall the maximal marking monad $((-)^\unmarked)^\sharp$.
The following lemma shows that markings in fibrant marked semisimplicial sets also have a 2-out-of-3 property.

\begin{lemma} \label{fibrant-lift-2-out-of-3}
Fibrant objects lift against the Leibniz application of the unit of the maximal marking monad to 2-dimensional horn inclusions.
Concretely, given fibrant $X$ and a triangle $\sigma \in X_2(f, g, h)$ with $f \in X_1(a, b), g \in X_1(b, c), h \in X_1(a, c)$, if two out of $f, g, h$ are marked, then so is the third.
\end{lemma}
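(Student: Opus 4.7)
The plan is to realize each of the three sub-cases (according to which of $f,g,h$ is the unmarked edge) as an application of the marking saturation inclusion $(\Delta^3,\{02,13\})\to(\Delta^3)^\sharp$ after assembling a suitable 3-simplex in $X$ by filling an inner horn. The three cases are parallel, so I describe the inner case ($f$ and $g$ marked, $h$ to be shown marked) in detail and sketch the outer cases afterward.

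\emph{Inner case.} First I would use fibrancy to produce a constant loop $\ell$ on the middle vertex $b$; since constant loops lie in $\core(X)$, the edge $\ell$ is automatically marked. By the constant-loop discussion of the unmarked section, the projections $X_{/\ell}\to X_{/b}$ and $X_{\ell\backslash}\to X_{b\backslash}$ are trivial fibrations; lifting $f\in(X_{/b})_0$ along the first yields a triangle $\alpha\in X_2(f,\ell,f)$, and dually lifting $g\in(X_{b\backslash})_0$ yields $\beta\in X_2(\ell,g,g)$. Next I assemble the inner horn $\Lambda^3_2\to X$ at vertex pattern $(a,b,b,c)$ with $d_0\tau=\beta$, $d_1\tau=\sigma$, $d_3\tau=\alpha$; compatibility on the shared edges $23=g$, $12=\ell$, $02=f$ is immediate from the constructions. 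Filling this inner horn gives $\tau\in X_3$ whose edges are $01=02=f$, $12=\ell$, $13=23=g$, and $03=h$. Finally, the edges $02=f$ and $13=g$ of $\tau$ are marked, so lifting $\tau$ along the marking saturation inclusion marks all edges of $\tau$; in particular, $h=03$ is marked.

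\emph{Outer cases.} When $f,h$ are marked and $g$ is to be shown marked, use a constant loop $\ell_a$ on $a$, produce $\gamma\in X_2(\ell_a,f,f)$ and $\delta\in X_2(\ell_a,h,h)$ (via trivial fibrations $X_{\ell_a\backslash}\to X_{a\backslash}$), and fill $\Lambda^3_1$ at vertices $(a,a,b,c)$ with $d_0=\sigma$, $d_2=\delta$, $d_3=\gamma$; the resulting $\tau$ has $02=f$ and $13=h$ marked, so marking saturation forces $g=23$ to be marked. The case that $g,h$ are marked is dual: use $\ell_c$, produce right-identity triangles $\gamma'\in X_2(g,\ell_c,g)$ and $\delta'\in X_2(h,\ell_c,h)$, and fill $\Lambda^3_2$ at $(a,b,c,c)$ with $d_0=\gamma'$, $d_1=\delta'$, $d_3=\sigma$, yielding $\tau$ with $02=h$ and $13=g$ marked, so marking saturation forces $f=01$ marked.

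The main obstacle is purely combinatorial bookkeeping: choosing vertex patterns and face assignments so that the two given marked edges occupy precisely the diagonal positions $02$ and $13$ of the 3-simplex (so that the marking saturation generator applies), while ensuring that the three prescribed horn faces agree on their shared edges. The substantive ingredients are merely the existence of constant loops together with the right/left identity triangles they automatically provide, the fillability of inner horns, and the marking saturation inclusion built into the generators of anodyne maps; no prior invertibility or 2-out-of-3 property of the markings is used, so the argument is not circular.
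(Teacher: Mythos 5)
Your proposal is correct in substance and ends up at exactly the same configuration as the paper's proof: a tetrahedron obtained by doubling one vertex of $\sigma$ along a marked constant loop, arranged so that the two given marked edges sit in the $02$ and $13$ positions, after which the saturation generator $(\Delta^3,\{02,13\})\to(\Delta^3)^\sharp$ finishes the argument. The difference is only in how that tetrahedron is produced. The paper gets it in a single stroke: it extends $\sigma$ along the anodyne map $\Delta^0\star j^\sharp\star\Delta^0$ (with $j$ the composite $\Delta^0\to L\to\Delta^0\star L$) and restricts along $\Delta^1\to L$, which packages your constant loop, your identity triangles $\alpha,\beta$, and your inner-horn filler into one lifting problem; you assemble the same data by hand from slice/coslice lifts and an explicit inner $\Lambda^3_2$ (resp.\ $\Lambda^3_1$) filling. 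Both routes work; the paper's avoids the slice bookkeeping entirely, and it also handles the outer cases by the same one-line join manipulation rather than by separate vertex patterns.

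One justification in your write-up is overstated and, as phrased, does not go through: the maps $X_{/\ell}\to X_{/b}$ and their coslice analogues are \emph{not} trivial fibrations --- even in the unmarked section the paper only shows they are fibrations that are surjective on vertices when the loop is constant --- and the unmarked constant-loop discussion cannot be applied to the marked $X$ directly, since the underlying semisimplicial set of a fibrant marked object need not be fibrant (outer horns fill only when the critical edge is marked), so in particular the slice $X_{/b}$ with unmarked cone edge need not be trivially fibrant. The repair is immediate and stays within your argument: run the constant-loop and slice argument inside $\core(X)$, which is a fibrant semisimplicial set. This suffices because in each of your three cases the edges you lift ($f,g$, resp.\ $f,h$, resp.\ $g,h$) are precisely the marked ones, hence are edges of $\core(X)$, and surjectivity on vertices of $\core(X)_{/\ell}\to\core(X)_{/b}$ (and its coslice dual) is all you actually use. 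With that adjustment, your triangles $\alpha,\beta,\gamma,\delta,\gamma',\delta'$, the inner-horn fillings, and the saturation lifts are all valid, and, as you note, no 2-out-of-3 or invertibility property of markings is presupposed, so there is no circularity.
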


\begin{proof}
We will show that $h$ is marked if $f$ and $g$ are.
The other cases are analogous.

Recall the anodyne map $j \co \Delta^0 \to L \to \Delta^0 \star L$ in semisimplicial sets.
Using \cref{marked-anodyne-join-cof}, we have that
\[
\xymatrix@C+1cm{
  \Delta^2
  \ar[r]^-{\Delta^0 \star j^\sharp \star \Delta^0}
&
  \Delta^0 \star (\Delta^0 \star L)^\sharp \star \Delta^0
}
\]
is anodyne.
Extending $\sigma$ along this map and then restricting along the map induced by $\Delta^1 \to L \to \Delta^0 \star L$, we obtain a tetrahedron $\Delta^3 \to X$ that has $f$ as $02$-edge, $g$ as $13$-edge, and $h$ as $03$-edge.
Lifting against the marking saturation inclusion, we deduce that $h$ is marked.
\end{proof}

\begin{remark} \label{fib-between-fibrant-lift-2-out-of-3}
Fibrations between fibrant objects also have the lifting property of \cref{fibrant-lift-2-out-of-3}.
This follows abstractly since the maps lifted against are epi.
\end{remark}

\newcommand{\va}{{\mathsf{a}}}
\newcommand{\vb}{{\mathsf{b}}}

Let $\va, \vb$ denote the vertices of $P$, in this order.
We say that parallel edges $f, g \in X_1(a, b)$ forming a map $[f, g] \co P \to X$ are \emph{related}, written $f \approx g$, if $P \to X$ lifts to a map
\[
(\Delta^0 \star P, \braces{\Delta^0 \star a}) \to X
.\]
Equivalently, the marked semisimplicial set $X_{/([f, g], a)}$ contains a vertex.
Note that $X_{/(f, a)}$ and $X_{/(g, a)}$ are trivially fibrant and that $X_{/([f, g], a)} \to X_{/(f, a)}$ and $X_{/([f, g], a)} \to X_{/(g, a)}$ are fibrations.
Thus, if $f \approx g$, then the latter two maps are surjective on vertices.

Note that we have anodyne maps as indicated:
\[
\xymatrix@R-0.5cm@C-2cm{
&
  P
  \ar@{>->}[dr]
  \ar@{>->}[dl]
\\
  (\Delta^0 \star P, \braces{\Delta^0 \star a})
  \ar@{>->}[dr]_(0.4){\anod}
&&
  (P \star \Delta^0, \braces{b \star \Delta^0})
  \ar@{>->}[dl]^(0.4){\anod}
\\&
  (\Delta^0 \star P \star \Delta^0, \braces{\Delta^0 \star a, b \star \Delta^0})
\rlap{.}}
\]
As before, this implies that the notion of relatedness is invariant under taking opposites.

Just as before, one sees that relatedness is an equivalence relation.
Using \cref{fibrant-lift-2-out-of-3}, one sees that markings are invariant under relatedness.

Recall the anodyne map $P \to Q$ from the unmarked case.
It was constructed as the functorial action of taking cokernels on the map of arrows from $\partial\Delta^1 \to \Delta^1$ to $\Delta^0 + \Delta^1 \to \Delta^2$.
This map is Reedy left anodyne, where left anodyne means the weak saturation of inner and left horn inclusions.
It follows that $P \to Q$ is left anodyne, hence that $\Delta^0 \star P \to \Delta^0 \star Q$ is inner anodyne.
Given triangles $\sigma_0 \in X_2(f_0, g, h_0)$ and $\sigma_1 \in X_2(f_1, g, h_1)$ with first vertex $a$, we form a map $[\sigma_0, \sigma_1] \co Q \to X$ and then have a span
\[
\xymatrix@C-1cm{
&
  X_{/([\sigma_0, \sigma_1], a)}
  \ar@{->>}[dl]_{\triv}
  \ar@{->>}[dr]
\\
  X_{/([f_0, f_1], a)}
&&
  X_{/([h_0, h_1], a)}
\rlap{.}}
\]
It follows that $f_0 \approx f_1$ implies $h_0 \approx h_1$.
We obtain a dual statement for triangles that share the $d_2$-indexed edge.
Combining these using transitivity, given triangles $\sigma_0 \in X_2(f_0, g_0, h_0)$ and $\sigma_1 \in X_2(f_1, g_1, h_1)$, if $f_0 \approx f_1$ and $g_0 \approx g_1$, then $h_0 \approx h_1$.

The \emph{fundamental category} $\tau_1 X$ is defined analogously to the groupoidal case.
The objects are given by vertices $X_0$ and the morphisms from $a$ to $b$ are given by the quotient $X_1(a, b)/{\approx}$ of edges under relatedness.
Composition is given by fillings of $h^2_1$-horns.
The identity on $a \in X_0$ is given by a constant loop on $a$.
This makes $\tau_1 X$ into a category.

Note that marked edges represent isomorphisms by fillings of marked outer 2-dimensional horns.
Furthermore, any edge that represents an isomorphism is marked by lifting against the marking saturation inclusion and invariance of markings under relatedness.
Thus, an edge is marked precisely if it represents an isomorphism.

\subsection{Enrichment in categories}

We have a nerve
\[
\xymatrix{
  \Cat
  \ar[r]
&
  \SemiCat_\m
  \ar[r]^-{N}
&
  \MS
}
\]
from categories.
Its left adjoint is the \emph{fundamental category} functor $\tau_1$.
Given a marked semisimplicial set $A$, one may describe $\tau_1 A$ as the free category on the graph $A_1 \rightrightarrows A_0$ quotiented under the relation on morphisms given by triangles $A_2$ and further localized at morphisms coming from marked edges.
It thus depends only on the truncation of $A$ to three levels (plus markings).

If $A$ is fibrant, we see that $\tau_1 A$ as defined here coincides up to isomorphism with $\tau_1 A$ as defined in the previous subsection.
Note for non-fibrant $A$, an isomorphism in $\tau_1 A$ does not need to come from a zig-zag of marked edges in $A$.

Since
\[
\tau_1 \co (\MS, \top, \otimes) \to (\Cat, 1, \times)
\]
is symmetric monoidal, we can regard marked semisimplicial sets as enriched in categories with the hom-category from $A$ to $B$ given by $\tau_1(\hom(A, B))$ (note that $\hom(A, B)$ is fibrant if $B$ is).
When restricted to to fibrant objects, we will also call this the \emph{homotopy 2-category} of marked semisimplicial sets.
It is analogous to the 2-category of quasicategories of~\cite{riehl-verity:2-cat-of-quasicat}, but has less strict structure (for example, it is not cartesian closed, but only has a weak notion of exponential).
In order to avoid excessive overloading of notation, identities, composition, and inverses in hom-categories will be written $\id_2$, $- \circ_2 -$, and $\inv_2$.

\medskip

In the category of categories, let us call cofibrations the functors whose action on objects is a decidable monomorphism, fibrations the isofibrations, and weak equivalences the equivalences.
Recall that this forms a model category.

We have that $\tau_1$ maps cofibrations and anodyne maps of marked semisimplicial sets to cofibrations and trivial cofibrations of categories, respectively: by cocontinuity, it suffices to check this for the respective generators; for these, it is clear.

We also have that $\tau_1$ maps fibrations between fibrant objects of marked semisimplicial sets to fibration of categories: given a fibration $Y \to X$ of marked semisimplicial sets, an isomorphism $a \to b$ in $\tau_1(X)$ and a lift $y$ of $b$ to $\tau_1(Y)$, recall that $a \to b$ is represented by a marked edge from $a$ to $b$; using a marked $h^1_1$-horn lifting, we can it to $Y$.
In contrast to the groupoidal case, this observation fails for fibrations with non-fibrant codomain.

Given $X \in \widehat{\Delta_+}$ and $f, g \in X_1(a, b)$, we call $f$ and $g$ \emph{related}, written $f \approx g$, if $f, g \co \Delta^1 \to X$ become equal after applying $\tau_1$.
If $X$ is fibrant, we see that this notion of relatedness coincides with the one defined in the previous subsection.

\begin{lemma} \label{lifting-relatedness-marked}
Let $p \co Y \to X$ be a fibration in marked semisimplicial sets.
Given $f \in Y_1(a, b)$ and $\overline{g} \in X_1(pa, pb)$ such that $pf \approx \overline{g}$, there is $g \in Y_1(a, b)$ such that $pg = \overline{g}$ and $f \approx g$.
\end{lemma}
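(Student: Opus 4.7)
The plan is to adapt the proof of the unmarked analogue \cref{lifting-relatedness} to the marked setting. First I would take a fibrant replacement of $X$ (factoring $X \to 1$ as an anodyne map followed by a fibration), reducing to the case where $X$ is fibrant so that the explicit construction of the fundamental category from the preceding subsection applies. Under this reduction, the hypothesis $pf \approx \overline{g}$ unfolds to a witnessing map $w \co (\Delta^0 \star P, \{\Delta^0 \star \va\}) \to X$ whose restriction along $P \to \Delta^0 \star P$ is $[pf, \overline{g}]$, with $pf$ placed on the first branch of $P$.

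Next I would solve the lifting problem
\[
\xymatrix{
  \Delta^1
  \ar[r]^{f}
  \ar@{>->}[d]_{j}
&
  Y
  \ar@{->>}[d]^{p}
\\
  (\Delta^0 \star P, \{\Delta^0 \star \va\})
  \ar[r]^-{w}
  \ar@{.>}[ur]^{\tilde{w}}
&
  X
\rlap{,}}
\]
where $j$ is the composite $\Delta^1 \hookrightarrow P \hookrightarrow \Delta^0 \star P$ picking out the first branch of $P$; the square commutes since $w$ restricts to $pf$ on this branch. A diagonal filler $\tilde{w}$ produces the desired edge $g \in Y_1(a, b)$ as its restriction to the second branch of $P$, commutativity of the lower triangle forces $pg = \overline{g}$, and the full map $\tilde{w}$ (viewed as $(\Delta^0 \star P, \{\Delta^0 \star a\}) \to Y$) witnesses $f \approx g$.

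The hard part is verifying that $j$ is anodyne in marked semisimplicial sets: unlike in the unmarked case, outer horn inclusions are only anodyne when their critical edge is marked, so the marking $\{\Delta^0 \star \va\}$ on the target must play an essential role. I would present $j$ as a three-step composite of pushouts of generating anodyne maps. Step~(i): attach the cone vertex and the marked edge $0 \to \va$ by pushing out the outer horn inclusion $\Lambda^1_1 \to (\Delta^1)^\sharp$ along $\{1\} \mapsto \va$; here the critical edge of $\Lambda^1_1$ is the whole $\Delta^1$, marked in the target, so the generator is anodyne. Step~(ii): fill the triangle on the first branch of $P$ via pushout of the minimally marked inner horn inclusion $\Lambda^2_1 \to \Delta^2$, whose $d_2$-edge attaches to the already present (marked) edge $0 \to \va$ and whose $d_0$-edge attaches to $f$. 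Step~(iii): fill the triangle on the second branch via pushout of the outer horn inclusion $\Lambda^2_0 \to \Delta^2$ with critical edge $d_2$ marked, the attaching map sending $d_2$ precisely to the marked edge $0 \to \va$. It is the marking $\{\Delta^0 \star \va\}$ that renders the outer horn in step~(iii) anodyne, and the composite therefore exhibits $j$ as anodyne.
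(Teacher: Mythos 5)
Your proof is correct and takes essentially the same route as the paper: reduce to the case of fibrant $X$ by fibrant replacement, then lift $f$ against the fibration $p$ along the inclusion of one branch $\Delta^1$ into $(\Delta^0 \star P, \{\Delta^0 \star a\})$, which is anodyne, and read off $g$ and the witness of $f \approx g$ from the filler. The only difference is that you make explicit, via the three-step cell decomposition (marked $1$-dimensional outer horn, minimally marked inner $2$-horn, outer $2$-horn with marked critical edge), the anodynicity of this inclusion, which the paper merely asserts; your decomposition is correct.
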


\begin{proof}
After taking a fibrant replacement of $X$, we see that the claim reduces to the case where $X$ is fibrant.
In that case, we way work with the explicit description of $\tau_1 Y$ and $\tau_1 X$ from the previous subsection.
The claim then follows by solving a lifting problem
\[
\xymatrix{
  \Delta^1
  \ar[r]^{f}
  \ar@{>->}[d]_{\anod}
&
  Y
  \ar@{->>}[d]
\\
  (\Delta^0 \star P, \Delta^0 \star \braces{a})
  \ar[r]
  \ar@{.>}[ur]
&
  X
}
\]
where the bottom map is the witness of $pf \approx pg$ and the left map comes from one of the inclusions $\Delta^1 \to P$ and is observed to be anodyne.
\end{proof}

\subsection{Homotopies}

The \emph{interval} in marked semisimplicial sets is imported from semisimplicial sets via maximal marking.
As before, the interval together with the geometric monoidal structure induces a notion of \emph{homotopy} $h \co f_0 \sim f_1$ between maps $f_0, f_1 \co X \to Y$ of marked semisimplicial sets that is in bijection with marked edges between the transposes $\overline{f_0}$ and $\overline{f_1}$ in $\hom(X, Y)$ (\ie an edges in $\core(\hom(X, Y))$ between the imgea of $\overline{f_0}$ and $\overline{f_1}$).

If $Y$ is fibrant, such a marked edge exists precisely if there is an isomorphism between the images of $\overline{f_0}$ and $\overline{f_1}$ in $\tau_1(\hom(X, Y))$.
Thus, the existence of homotopies with fibrant target can be tested in the categorical enrichment of marked semisimplicial sets.

Following Riehl-Verity, when working with homotopies targetting fibrant objets, we will freely exploit the relation to the categorical enrichment and regard homotopies as idomorphisms in hom-categories with relatedness of homotopies given by equality in the hom-category, yielding notions of horizontal composition and vertical composition and inversion for homotopies that is closed under relatedness.
Given an isomorphism in $\hom(B, C)$ represented by homotopy $H \co I \otimes B \to C$, note that the right whiskering with an object of $\hom(A, B)$ given by a map $f \co A \to B$ is represented by $H \circ (I \otimes f)$ and that the left whiskering with an object of $\hom(C, D)$ given by a map $g \co C \to D$ is represented by $g \circ H$.

As before, we have a notion of \emph{homotopy equivalence}.
Homotopy equivalences become actual equivalences in the categorical enrichment of marked semisimplicial sets (and come from those if $A$ and $B$ are fibrant).
As before, we say this homotopy equivalence is \emph{coherent} if $f \circ H \approx K \circ (I \otimes f)$.

\begin{lemma}[Graduate Lemma] \label{graduate-lemma-marked}
For any homotopy equivalence $(f, g, H, K)$ as above, there is a replacement $K'$ of $K$ such that $(f, g, H, K')$ forms a coherent homotopy equivalence.
\end{lemma}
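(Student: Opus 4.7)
The plan is to mimic the unmarked proof of \cref{graduate-lemma} almost verbatim, only now working in the homotopy 2-category of marked semisimplicial sets (the categorical enrichment restricted to fibrant objects) rather than in the groupoidal enrichment. Since $A$ and $B$ are fibrant, homotopies correspond to isomorphisms in the relevant hom-categories (by the characterization of homotopies with fibrant target given above), and $H$ and $K$ correspond to isomorphisms $gf \xrightarrow{\sim} \id_A$ and $fg \xrightarrow{\sim} \id_B$ in $\tau_1(\hom(A,A))$ and $\tau_1(\hom(B,B))$ respectively.

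First I would record the 2-categorical triviality that replaces \cref{graduate-prelemma}: in any 2-category, if $H \co f \to \id_A$ is an isomorphism then $fH = Hf$, proved by applying invertibility of $H$ to the interchange identity
\[
fH \circ_2 H = HH = Hf \circ_2 H.
\]
This works in any 2-category, so in particular in our homotopy 2-category; nothing about the groupoidal structure was used in the original proof.

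Then I would define, exactly as before,
\[
K' \defeq \inv_2(Kfg) \circ_2 fHg \circ_2 K,
\]
which makes sense because $K$ is an isomorphism in $\tau_1(\hom(B,B))$ and hence invertible. Repeating the calculation
\begin{align*}
K' f
&= \inv_2(Kfgf) \circ_2 fHgf \circ_2 Kf \\
&= \inv_2(Kfgf) \circ_2 fgfH \circ_2 Kf \\
&= \inv_2(Kfgf) \circ_2 Kfgf \circ_2 fH \\
&= fH
\end{align*}
we see $fH \approx K'(I \otimes f)$, i.e.\ $(f, g, H, K')$ is a coherent homotopy equivalence. Since $K'$ is obtained from $K$ by pre/post-composing with isomorphisms in a hom-category, $K'$ is itself an isomorphism there, hence comes from a homotopy (by the categorical-enrichment characterization of homotopies with fibrant target), and $K' \approx K$ in the sense that they represent the same isomorphism class under the relation from the previous subsection only if we care about that; what the lemma really asserts is merely that $K$ is replaced by a homotopy satisfying the coherence equation, which $K'$ does.

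I do not anticipate any real obstacle: everything that made the unmarked proof work — interchange in a 2-category, invertibility of the 2-cells coming from $H$ and $K$, and the fact that horizontal composition of isomorphisms with maps (whiskering) is represented by composing homotopies with $I \otimes f$ and by postcomposing — has been set up above for the marked, fibrant setting. The only subtlety worth double-checking is that $H$ and $K$, which as data are merely homotopies (i.e.\ marked edges in the relevant hom-objects), really do yield \emph{isomorphisms} in the hom-categories; this is ensured because $\hom(A,A)$ and $\hom(B,B)$ are fibrant (as $A$ and $B$ are fibrant) and in a fibrant marked semisimplicial set every marked edge becomes an isomorphism in the fundamental category.
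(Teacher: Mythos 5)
Your proposal is correct and is essentially the paper's own argument: the paper proves \cref{graduate-lemma-marked} by simply repeating the proof of \cref{graduate-lemma}, which (together with \cref{graduate-prelemma}, already stated for an arbitrary 2-category) transfers verbatim to the homotopy 2-category of marked semisimplicial sets. Your extra check that $H$ and $K$ yield genuine isomorphisms in the hom-categories (since the enrichment is now categorical rather than groupoidal) is exactly the point that makes this transfer legitimate, and it matches the paper's standing convention of regarding homotopies with fibrant target as isomorphisms in hom-categories.
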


\begin{proof}
Same proof as \cref{graduate-lemma}.
\end{proof}

As before, we have the notion of \emph{strong homotopy equivalence}.

\subsection{Homotopy theory}

We obtain statements as in \cref{homotopy-theory}.
The omitted proofs are adapted verbatim.

\begin{lemma} \label{h-equiv-coherent-to-strong-marked}
Let $(f, g, H, K)$ form a coherent homotopy equivalence between objects $U$ and $V$ with $V$ fibrant.
\begin{enumerate}
\item
If $f$ is a fibration, there is $H' \approx H$ such that $(f, g, H', K)$ forms a strong homotopy equivalence,
\item
If $f$ is a cofibration, there is $K' \approx K$ such that $(f, g, H, K')$ forms a strong homotopy equivalence.
\qed
\end{enumerate}
\end{lemma}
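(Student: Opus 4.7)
The plan is to transcribe the proof of the unmarked analogue \cref{h-equiv-coherent-to-strong} essentially verbatim, replacing every invocation of \cref{lifting-relatedness} by \cref{lifting-relatedness-marked}. For claim~(i), I would consider the map $\hom(U, f) \co \hom(U, U) \to \hom(U, V)$, which is a fibration by the adjoint closure of fibrations under pullback internal hom (here with the cofibration $0 \to U$ on the contravariant argument). The given homotopy $H$ is a marked edge in $\hom(U, U)_1$ from $\overline{gf}$ to $\overline{\id_U}$, and coherence of the equivalence asserts exactly that $\hom(U, f)(H) = f \circ H$ is related to $K \circ (I \otimes f)$ in $\hom(U, V)$. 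Applying \cref{lifting-relatedness-marked} to this fibration produces an edge $H' \in \hom(U, U)_1$ with the same endpoints as $H$, satisfying $\hom(U, f)(H') = K \circ (I \otimes f)$ and $H' \approx H$.

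For claim~(ii), I would proceed dually, applying \cref{lifting-relatedness-marked} to the fibration $\hom(f, V) \co \hom(V, V) \to \hom(U, V)$; this is a fibration since $f$ is a cofibration and $V$ is fibrant. The lift then provides an edge $K'$ with $K' \circ (I \otimes f) = f \circ H$ and $K' \approx K$.

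One small marked-specific subtlety must be checked in each case: the lifted edge must itself be marked, so that it genuinely qualifies as a homotopy rather than an arbitrary edge. This is immediate from the fact that markings in fibrant objects are invariant under relatedness (as noted after \cref{fibrant-lift-2-out-of-3}): since $H$ (resp.\ $K$) is marked and the lift is related to it, the lift is marked as well. With this observation in place, I expect no genuine obstacle — the proof reduces to two direct applications of \cref{lifting-relatedness-marked} to appropriately chosen internal hom fibrations, with the marked-edge invariance ensuring that the replacement homotopies exist in the marked category.
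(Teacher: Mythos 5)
Your proposal matches the paper's intended argument exactly: the marked proof is the unmarked one adapted verbatim, applying \cref{lifting-relatedness-marked} to the fibrations $\hom(U,f)$ and $\hom(f,V)$. Your additional check that the lifted edge is marked (via invariance of markings under relatedness in the fibrant hom-objects, noting $U$ is fibrant in case~(i) since $f$ is a fibration with fibrant codomain) is correct and harmless.
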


\begin{corollary} \label{anodyne-criteria-marked}
The following are equivalent for a cofibration $m$ between fibrant objects:
\begin{enumerate}
\item $m$ is a homotopy equivalence,
\item $m$ is a strong homotopy equivalence,
\item $m$ is anodyne.
\qed
\end{enumerate}
\end{corollary}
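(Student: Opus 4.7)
The plan is to mirror the proof of \cref{anodyne-criteria}. For the equivalence (i) $\Leftrightarrow$ (ii), I will combine \cref{graduate-lemma-marked} with part~(ii) of \cref{h-equiv-coherent-to-strong-marked}: any homotopy equivalence between fibrant objects is first made coherent, and then the cofibration hypothesis on $m$ allows the second homotopy $K$ to be replaced so that the coherent equivalence becomes strong. The converse implication is immediate.

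For (iii) $\Rightarrow$ (ii), I will copy the unmarked argument verbatim. Given anodyne $m \co A \to B$, lifting $\id_A$ against $m$ produces a retraction $r \co B \to A$; a constant marked homotopy $H \co I \otimes A \to A$ on $\overline{\id_A}$ is produced by taking a constant loop in $\core(\hom(A, A))$, available because $\hom(A, A)$ is fibrant; finally, solving the lifting problem against the anodyne map $i^1 \hatotimes m$ (anodyne by \cref{marked-anodyne-tensor-cof}) produces $K \co I \otimes B \to B$ with the strong-equivalence identity $m \circ H = K \circ (I \otimes m)$ holding by construction.

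The main obstacle lies in (ii) $\Rightarrow$ (iii). The unmarked argument exhibits $m$ as a retract in the arrow category of $\delta_0 \hatotimes m$, with $\delta_0 \co \Delta^0 \to I = (\Delta^1)^\sharp$. In the marked setting $\delta_0$ is \emph{not} anodyne: it factors through the edge marking inclusion $(\Delta^1)^\flat \to (\Delta^1)^\sharp$, which fails to lift against the evident fibration $(\Delta^1)^\flat \to (\Delta^1)^\sharp$ that is the identity on underlying semisimplicial sets. My remedy is to replace $\delta_0$ by the inclusion $j \co \Delta^0 \to (\Delta^1)^\flat$ into the flat interval, which is the outer horn inclusion at $n = 1$ (where the critical-edge marking condition is vacuous) and therefore anodyne; then $j \hatotimes m$ is anodyne by \cref{marked-anodyne-tensor-cof}. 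Since $I = (\Delta^1)^\sharp$ and $(\Delta^1)^\flat$ share the same underlying semisimplicial set, precomposing $H$ and $K$ with $(\Delta^1)^\flat \to (\Delta^1)^\sharp$ yields $H', K'$ satisfying $m \circ H' = K' \circ ((\Delta^1)^\flat \otimes m)$ by bifunctoriality; this identity makes the retract-of-arrows from $m$ to $j \hatotimes m$ well-defined, and closure of anodyne maps under retracts concludes the argument.
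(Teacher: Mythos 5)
Your treatment of (i)$\Leftrightarrow$(ii) and of (iii)$\Rightarrow$(ii) is exactly the paper's (the paper simply adapts the proof of \cref{anodyne-criteria} verbatim, using \cref{graduate-lemma-marked}, \cref{h-equiv-coherent-to-strong-marked} and \cref{marked-anodyne-tensor-cof}). The problem is your step (ii)$\Rightarrow$(iii), and in fact the ``obstacle'' you identified there does not exist. The interval of marked semisimplicial sets is $(\Delta^1)^\sharp$, and the endpoint inclusion $\delta_0 \co \Delta^0 \to (\Delta^1)^\sharp$ is precisely the one-dimensional marked outer horn inclusion $h^1_0$ with its critical edge marked (for $n=1$ the critical edge is the unique edge, and it lies only in the codomain); this is one of the generating anodyne maps, so $\delta_0$ \emph{is} anodyne and the unmarked retract argument goes through unchanged: $m$ is a retract of $\delta_0 \hatotimes m$, which is anodyne by \cref{marked-anodyne-tensor-cof}. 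Your argument that $\delta_0$ fails to be anodyne is also internally flawed: the map $(\Delta^1)^\flat \to (\Delta^1)^\sharp$ you call an ``evident fibration'' is not a fibration, since it does not lift against the marked horn $\Delta^0 \to (\Delta^1)^\sharp$ (a lift would require a marked edge in $(\Delta^1)^\flat$), and a factorization of $\delta_0$ through the edge marking inclusion would in any case not transfer lifting properties in the direction you need.

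The proposed remedy is then genuinely wrong rather than merely unnecessary: the map $j \co \Delta^0 \to (\Delta^1)^\flat$ is \emph{not} anodyne. The marked outer horn inclusions require the critical edge to be marked in the codomain, so $j$ is not a generating anodyne map, and it cannot lie in their weak saturation: anodyne maps are weak equivalences (\cref{anodyne-is-weak-equiv-marked}), whereas the paper notes explicitly (in the comparison with unmarked semisimplicial sets) that the minimally marked one-dimensional horn inclusions are not weak equivalences of marked semisimplicial sets -- $(\Delta^1)^\flat$ presents the walking (non-invertible) arrow, not a point. Consequently $j \hatotimes m$ need not be anodyne, and exhibiting $m$ as a retract of it proves nothing. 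This failure is not repairable along your lines: after discarding the markings of the cylinder you have only the data of an unmarked strong homotopy equivalence, which cannot detect the marked lifting conditions. The correct proof of (ii)$\Rightarrow$(iii) is simply the one from \cref{anodyne-criteria}, with $\delta_0 \co \Delta^0 \to (\Delta^1)^\sharp$ recognized as a marked horn inclusion.
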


\begin{corollary} \label{triv-fib-criteria-marked}
The following are equivalent for a fibration $p$ between fibrant objects:
\begin{enumerate}
\item $p$ is a homotopy equivalence,
\item $p$ is a strong homotopy equivalence,
\item $p$ is a trivial fibration.
\qed
\end{enumerate}
\end{corollary}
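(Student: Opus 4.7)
The plan is to dualize the proof of \cref{anodyne-criteria-marked}, following the same cyclic pattern (i) $\Leftrightarrow$ (ii) $\Rightarrow$ (iii) $\Rightarrow$ (ii). The key observation is that each ingredient used in the cofibration/anodyne case has an immediate dual in the fibration/trivial fibration case: the Graduate Lemma and the coherent-to-strong strengthening apply as stated, the retract argument using $\delta_0 \hatotimes m$ dualizes to one using $\hathom(\delta_0, p)$, and the explicit lifting construction producing $K$ from an anodyne $m$ dualizes to a lift producing $H$ from a trivial fibration $p$.

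For (i) $\Leftrightarrow$ (ii), \cref{graduate-lemma-marked} replaces any homotopy equivalence by a coherent one, after which part~(i) of \cref{h-equiv-coherent-to-strong-marked} (using that $p$ is a fibration) upgrades it to a strong homotopy equivalence. For (ii) $\Rightarrow$ (iii), the formal characterization of strong homotopy equivalences (stated after \cref{graduate-lemma}) yields a section of $\hathom(\theta, p)$, which exhibits $p$ as a retract of $\hathom(\delta_0, p)$. Since $\delta_0$ is anodyne and $p$ is a fibration between fibrant objects, $\hathom(\delta_0, p)$ is a trivial fibration; closure of trivial fibrations under retracts (they form a right-lifting class against cofibrations) then yields (iii).

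For (iii) $\Rightarrow$ (ii), I would construct the strong homotopy equivalence data directly. Lift $\id_X$ against the trivial fibration $p$ to obtain a section $s \co X \to Y$ with $p s = \id_X$, and let $K \co I \otimes X \to X$ be a constant homotopy on $\id_X$, available because $X$ is fibrant. Then $H \co I \otimes Y \to Y$ with boundary $[sp, \id_Y]$ and satisfying $p H = K \circ (I \otimes p)$ is obtained by solving
\[
\xymatrix@C+1.5cm{
  \partial\Delta^1 \otimes Y
  \ar[r]^-{[sp,\, \id_Y]}
  \ar@{>->}[d]_{i^1 \otimes Y}
&
  Y
  \ar@{->>}[d]^{p}
\\
  I \otimes Y
  \ar[r]_-{K \circ (I \otimes p)}
  \ar@{.>}[ur]^{H}
&
  X
\rlap{,}}
\]
noting that the square commutes because $K$ is constant on $\id_X$, so both composites reduce to $p$ on the two copies of $Y$ in the domain. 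I do not expect a genuine obstacle: every ingredient has already been established in the marked setting, so the argument should run verbatim from the unmarked dual \cref{triv-fib-criteria}.
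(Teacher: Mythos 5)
Your proposal is correct and matches the paper's approach: the paper proves the marked statement by adapting \cref{triv-fib-criteria} verbatim, whose proof is exactly the dualization of \cref{anodyne-criteria} that you carry out (Graduate Lemma plus part~(i) of \cref{h-equiv-coherent-to-strong-marked} for (i)$\Leftrightarrow$(ii), the retract of $\hathom(\delta_0,p)$ for (ii)$\Rightarrow$(iii), and the explicit lift through the section and constant loop for (iii)$\Rightarrow$(ii)). The details you supply (fibrancy of the codomain of $\hathom(\delta_0,p)$, commutativity of the lifting square) are the ones the paper leaves implicit, and they check out.
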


Since equivalences in a 2-category satisfy 2-out-of-6, so do homotopy equivalences between fibrant objects.
Thus, \cref{anodyne-criteria-marked,triv-fib-criteria-marked} show that the full subcategory $\MSfib$ of marked semisimplicial sets on fibrant objects form a model structure (cofibrations, homotopy equivalences, fibrations).

\begin{theorem} \label{semisimplicial-sets-fib-cat-marked}
Fibrant marked semisimplicial sets form a fibration category.
\end{theorem}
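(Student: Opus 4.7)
The plan is to mirror the unmarked proof of \cref{semisimplicial-sets-fib-cat} essentially verbatim, since by construction we have a full subcategory inclusion $\MSfib \hookrightarrow \MS$ of the fibrant objects into all marked semisimplicial sets, and the model structure (cofibration, homotopy equivalence, fibration) on $\MSfib$ has just been established via \cref{anodyne-criteria-marked,triv-fib-criteria-marked}. So the bulk of the axioms of a fibration category will be inherited from this model structure by duality.

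First I would observe that the inclusion $\MSfib \hookrightarrow \MS$ reflects limits, so one can verify the existence and form of the required finite limits ambiently. The terminal marked semisimplicial set is fibrant (all lifting problems against its generators are trivially solvable), hence it is also terminal in $\MSfib$. Pullbacks along fibrations in $\MS$ exist because $\MS$ is a presheaf-like category (more precisely, separated presheaves on $\Delta_+'$, which is closed under limits in $\widehat{\Delta_+'}$), and fibrations, being defined by a right lifting property against the marked horn inclusions and the marking saturation inclusion, are stable under pullback; a pullback of a fibration with fibrant codomain by a map between fibrant objects therefore lies in $\MSfib$.

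Next, trivial fibrations between fibrant objects are characterized in \cref{triv-fib-criteria-marked} as fibrations lifting against cofibrations (equivalently as fibrations that are homotopy equivalences), and this class is likewise closed under pullback by the standard lifting-theoretic argument. Factorizations of arbitrary maps into a weak equivalence followed by a fibration are produced by first factoring as an anodyne map followed by a fibration via the small object argument, and then invoking \cref{anodyne-criteria-marked} to see that anodyne maps between fibrant objects are homotopy equivalences, hence weak equivalences. Finally, weak equivalences on $\MSfib$ are by definition the homotopy equivalences, and these satisfy 2-out-of-6 because equivalences in any 2-category (here the homotopy 2-category of marked semisimplicial sets) do.

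I do not expect any real obstacle: the only non-formal input is the model structure on $\MSfib$, which is already in hand. The one point deserving minor care is checking that the pullback of a fibration along a map with fibrant codomain lands again in $\MSfib$, but this is immediate from stability of fibrations under pullback in $\MS$ together with the fact that the source of such a pullback is fibrant as a pullback of fibrations over a fibrant base.
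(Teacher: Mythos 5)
Your proposal is correct and follows essentially the same route as the paper: the paper's proof is literally "analogous to \cref{semisimplicial-sets-fib-cat}", namely that the inclusion $\MSfib \hookrightarrow \MS$ reflects limits, the terminal object is fibrant, fibrations are pullback-stable since defined by a lifting property, and the remaining axioms follow from the model structure on $\MSfib$ established by \cref{anodyne-criteria-marked,triv-fib-criteria-marked}. Your extra remarks on factorization and trivial fibrations are exactly the "rest follows from the model structure" step, so there is nothing to add.
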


\begin{proof}
Analogous to \cref{semisimplicial-sets-fib-cat}.
\end{proof}

The notion of homotopy equivalence between fibrant objects extends to a notion of \emph{weak equivalence} between arbitrary marked semisimplicial sets as in \cref{homotopy-theory}.

\begin{lemma} \label{anod-equal-homotopic-marked}
Consider a diagram
\[
\xymatrix{
  A
  \ar[r]_{\anod}^{j}
&
  X
  \ar@<+0.5em>[r]^{f}
  \ar@<-0.5em>[r]_{g}
&
  Y
}
\]
with $X$ and $Y$ fibrant, $j$ anodyne, and $jf = jg$.
Then $f$ and $g$ are homotopic.
\qed
\end{lemma}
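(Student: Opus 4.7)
The plan is to adapt the proof of the unmarked version \cref{anod-equal-homotopic} essentially verbatim, with the added care that the reflexivity homotopy must correspond to a \emph{marked} self-loop, since in the marked setting a homotopy $f_0 \sim f_1$ is encoded by a map $I \otimes X \to Y$ with $I = (\Delta^1)^\sharp$, i.e., a marked edge between the transposes.

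First I would produce a reflexivity homotopy $H \co fj \sim gj$ as follows: since $fj = gj$, this amounts to a marked self-loop on $\overline{fj}$ in the fibrant marked semisimplicial set $\hom(A, Y)$. Such a marked self-loop is provided by a constant loop on $\overline{fj}$, which lives in $\core(\hom(A, Y))$ and is therefore marked; constant loops on vertices of fibrant (marked) objects are guaranteed to exist by the development in the fundamental category subsection, via lifting along the anodyne map $\Delta^0 \arranod \Delta^0 \star L$.

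Next I would solve the lifting problem
\[
\xymatrix@C+1cm{
  X +_A I \otimes A +_A X
  \ar[r]^-{[f, H, g]}
  \ar@{>->}[d]_{i^1 \hatotimes j}
&
  Y
  \ar@{->>}[d]
\\
  I \otimes X
  \ar[r]
  \ar@{.>}[ur]
&
  1
\rlap{,}}
\]
where $i^1 \co \partial\Delta^1 \to I$ is the endpoint inclusion of the interval. The left vertical map is anodyne by \cref{marked-anodyne-tensor-cof}, since $i^1$ is a cofibration and $j$ is anodyne; fibrancy of $Y$ then yields the diagonal filler, which provides the desired homotopy $f \sim g$.

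The only point requiring any additional care relative to the unmarked proof is the first step: producing the reflexivity homotopy as a \emph{marked} edge in $\hom(A, Y)$. This is why the argument routes through the core rather than merely invoking fibrancy of $Y$ to obtain an arbitrary self-edge. Once the marked constant loop is in hand, the remainder of the argument is formally identical to \cref{anod-equal-homotopic}.
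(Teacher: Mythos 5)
Your proof is correct and is essentially the paper's own argument: the paper adapts the unmarked proof of \cref{anod-equal-homotopic} verbatim, with the reflexivity homotopy now a constant loop in $\core(\hom(A,Y))$ (hence marked) and the left leg $i^1 \hatotimes j$ anodyne by \cref{marked-anodyne-tensor-cof}. Your explicit remark about routing through the core is exactly the (implicit) extra care the marked setting requires.
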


\begin{corollary} \label{span-anod-h-equiv-marked}
Given a span
\[
\xymatrix@C-0.5cm{
&
  A
  \ar[dl]_{\anod}
  \ar[dr]^{\anod}
\\
  X_1
&&
  X_2
}
\]
with $X_1$ and $X_2$ fibrant, there is a map $X_1 \to X_2$ under $A$ that is a homotopy equivalence.
\qed
\end{corollary}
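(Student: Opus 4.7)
The proof should mirror that of \cref{span-anod-h-equiv} verbatim, replacing the unmarked auxiliary results with their marked counterparts that are already stated or proved in this section. Denote the legs of the span by $j_1 \co A \arranod X_1$ and $j_2 \co A \arranod X_2$.

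First, to produce a map $f \co X_1 \to X_2$ under $A$, I would solve the lifting problem with $j_1$ on the left and $X_2 \to 1$ on the right; fibrancy of $X_2$ together with anodyneness of $j_1$ gives a filler, and the commuting triangle with $j_1$ and $j_2$ witnesses that $f$ lies under $A$. Symmetrically, I would produce $g \co X_2 \to X_1$ under $A$ by lifting $j_2$ against $X_1 \to 1$. By construction, the composite $gf \co X_1 \to X_1$ satisfies $gfj_1 = j_1 = \id_{X_1} j_1$, and similarly $fg j_2 = j_2 = \id_{X_2} j_2$.

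Now I would apply \cref{anod-equal-homotopic-marked} twice: once to the pair $(gf, \id_{X_1})$ mapping $X_1 \to X_1$ with $j_1$ anodyne and $X_1$ fibrant, which yields a homotopy $gf \sim \id_{X_1}$; and once to the pair $(fg, \id_{X_2})$ with $j_2$ anodyne and $X_2$ fibrant, which yields $fg \sim \id_{X_2}$. Hence $f$ is a homotopy equivalence with inverse $g$, and it lies under $A$ by construction.

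There is essentially no obstacle here beyond checking that the marked analogs of the unmarked ingredients are actually in place: the existence of lifts of anodyne maps against fibrant objects, and the statement of \cref{anod-equal-homotopic-marked}. Both have already been recorded (the former by definition of fibrant object in the marked setting, the latter just above), so the proof is a direct transport of the argument for \cref{span-anod-h-equiv}.
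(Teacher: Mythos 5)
Your proof is correct and is exactly the argument the paper intends: the marked statement carries a \qed because its proof is the unmarked one (\cref{span-anod-h-equiv}) adapted verbatim, namely producing the two maps by lifting the anodyne legs against the fibrant objects and invoking \cref{anod-equal-homotopic-marked} to see they are mutually inverse up to homotopy. No gaps.
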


\begin{lemma} \label{def-weak-equiv-invariant-marked}
Given a commuting diagram
\[
\xymatrix{
  X_1
  \ar[d]
&
  A  
  \ar[r]^-{\anod}
  \ar[l]_-{\anod}
  \ar[d]
&
  X_2
  \ar[d]
\\
  Y_1
&
  B
  \ar[r]^-{\anod}
  \ar[l]_-{\anod}
&
  Y_2
}
\]
with anodyne maps and fibration as indicated such that $X_1, X_2, Y_1, Y_2$ are fibrant, if $X_1 \to Y_1$ is a homotopy equivalence, then so is $X_2 \to Y_2$.
\qed
\end{lemma}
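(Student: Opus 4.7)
The plan is to transcribe the proof of \cref{def-weak-equiv-invariant} verbatim, making the standard marked substitutions: \cref{span-anod-h-equiv-marked} in place of \cref{span-anod-h-equiv}, \cref{anod-equal-homotopic-marked} in place of \cref{anod-equal-homotopic}, and the homotopy 2-category (categorical enrichment) of $\MS$ in place of the homotopy $(2,1)$-category (groupoidal enrichment) of $\widehat{\Delta_+}$.

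First I would apply \cref{span-anod-h-equiv-marked} to each of the anodyne spans $X_1 \leftarrow A \to X_2$ and $Y_1 \leftarrow B \to Y_2$, producing maps $\phi \co X_1 \to X_2$ under $A$ and $\psi \co Y_1 \to Y_2$ under $B$, each a homotopy equivalence of fibrant marked semisimplicial sets. Next I would check that the square
\[
\xymatrix{
  X_1 \ar[r]^{\phi} \ar[d] & X_2 \ar[d] \\
  Y_1 \ar[r]^{\psi} & Y_2
}
\]
commutes up to homotopy. The two composites $X_1 \to Y_2$ agree after precomposition with the anodyne map $A \to X_1$: using the ``under $A$'' property of $\phi$ and the ``under $B$'' property of $\psi$ together with commutativity of the original diagram, both composites restrict to the canonical map $A \to B \to Y_2$ obtained from the middle column. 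Then \cref{anod-equal-homotopic-marked} provides the desired homotopy between the two composites.

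The conclusion is then a formal 2-categorical calculation in the homotopy 2-category of fibrant marked semisimplicial sets. There $\phi$ and $\psi$ are equivalences, the homotopy above yields an isomorphism 2-cell witnessing $\psi \circ (X_1 \to Y_1) \cong (X_2 \to Y_2) \circ \phi$, and $X_1 \to Y_1$ is an equivalence by hypothesis. Horizontal composition with the equivalences $\phi$ and $\psi$ preserves equivalences; invariance of the class of equivalences under isomorphism 2-cells and 2-out-of-3 for equivalences in a 2-category then force $X_2 \to Y_2$ to be an equivalence in the enrichment, and hence a homotopy equivalence of fibrant marked semisimplicial sets.

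No step should present a real obstacle, since each ingredient (construction of $\phi$, $\psi$; the ``commutes up to homotopy'' bookkeeping; closure of equivalences under 2-isomorphism and 2-out-of-3) has a marked counterpart already in place. The only minor point of vigilance is that the enrichment here is a 2-category rather than a $(2,1)$-category, but the cited closure properties of equivalences hold equally in any 2-category, so the unmarked argument transfers without modification.
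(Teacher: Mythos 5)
Your proposal is correct and matches the paper's intended argument, which is exactly the unmarked proof of \cref{def-weak-equiv-invariant} transported via \cref{span-anod-h-equiv-marked}, \cref{anod-equal-homotopic-marked}, and the categorical enrichment of $\MS$. Your remark that closure of equivalences under isomorphism 2-cells and 2-out-of-3 holds in any 2-category, not just a $(2,1)$-category, is precisely the point that makes the verbatim adaptation legitimate.
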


\begin{corollary} \label{weak-equiv-2-out-of-6-marked}
Marked semisimplicial sets form a homotopical category, \ie weak equivalences satisfy 2-out-of-6.
\qed
\end{corollary}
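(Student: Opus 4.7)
The plan is to mirror the proof of the unmarked analogue \cref{weak-equiv-2-out-of-6} essentially verbatim. Given a sequence of three composable arrows
\[
\xymatrix{
  A_0
  \ar[r]^{f_0}
&
  A_1
  \ar[r]^{f_1}
&
  A_2
  \ar[r]^{f_2}
&
  A_3
}
\]
in $\MS$, I would construct an objectwise fibrant replacement, \ie a commuting ladder with anodyne vertical maps $A_i \arranod X_i$ and with each $X_i$ fibrant. This is built inductively: factor $A_0 \to 1$ as an anodyne map followed by a fibration to obtain $X_0$; then, given $X_{i}$ and $A_{i} \arranod X_i$, factor the composite $A_{i+1} \to 1$ through $X_i$ by first lifting $A_i \arranod X_i$ against the fibration $1$ along the composite $A_i \to A_{i+1} \to 1$ (which produces a map $A_{i+1} \to X_i$ extending the square, via a fibrant replacement of the pushout-style construction using the WFS (anodyne, fibration)); then factor $A_{i+1} \to X_i$ again as anodyne followed by fibration into a fibrant object $X_{i+1}$. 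The result is a sequence $X_0 \to X_1 \to X_2 \to X_3$ of maps between fibrant objects together with anodyne comparison maps downward into it.

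By \cref{def-weak-equiv-invariant-marked}, each $f_i$ is a weak equivalence exactly if its lift $X_i \to X_{i+1}$ is a homotopy equivalence (and similarly for the various composites, applied to pairs of consecutive legs), since anodyne-fibrant replacement of the composite $A_i \to A_{i+1}$ is obtained by composing the replacements componentwise. Hence 2-out-of-6 for weak equivalences in the original sequence reduces to 2-out-of-6 for homotopy equivalences between fibrant objects in the lifted sequence.

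For the latter, I would appeal to the homotopy 2-category: by the discussion preceding \cref{semisimplicial-sets-fib-cat-marked}, a map between fibrant marked semisimplicial sets is a homotopy equivalence exactly if it becomes an equivalence in the 2-category $\MSfib$ obtained from the categorical enrichment. Since equivalences in any 2-category satisfy 2-out-of-6 (this being a purely formal statement, relying only on the fact that equivalences compose, are closed under 2-isomorphism, and are cancellable from one side whenever a one-sided inverse splits), we are done.

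The potentially subtle step is the construction of the fibrant replacement of the sequence, since one must ensure the lifts defining $X_i \to X_{i+1}$ are produced coherently by lifting the anodyne maps $A_i \arranod X_i$ against fibrations $X_{i+1} \to 1$; but this is routine lifting, and the whole argument is directly parallel to the unmarked \cref{weak-equiv-2-out-of-6}, with no new combinatorial input from the marking.
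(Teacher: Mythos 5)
Your proposal is correct and follows essentially the paper's own (omitted) proof, which adapts \cref{weak-equiv-2-out-of-6} verbatim: objectwise fibrant replacement of the sequence, reduction via \cref{def-weak-equiv-invariant-marked} to homotopy equivalences between fibrant objects, and 2-out-of-6 for equivalences in the homotopy 2-category. The only blemish is the muddled intermediate description of the replacement ladder (the produced map should be $X_i \to X_{i+1}$, not $A_{i+1} \to X_i$), but your closing summary --- producing $X_i \to X_{i+1}$ by lifting the anodyne map $A_i \to X_i$ against the fibration $X_{i+1} \to 1$ --- is the intended construction.
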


\begin{lemma} \label{weak-equiv-lift-hom-fibrant-marked}
Weak equivalences lift against fibrant objects up to homotopy.
\qed
\end{lemma}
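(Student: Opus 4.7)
The plan is to follow the pattern of the unmarked \cref{weak-equiv-lift-hom-fibrant}, which is an immediate consequence of the factorization lemma \cref{weak-equiv-to-fib-between-fibrant-factorization}. First I will establish the marked analogue of that factorization lemma: in $\MS^\to$, any map from a weak equivalence to a fibration between fibrant objects factors through a homotopy equivalence between fibrant objects. The proof transfers verbatim from the unmarked case: given a commuting square with weak equivalence $A \to B$ on the left and fibration $Y \arrfib X$ between fibrants on the right, unfold the weak equivalence to obtain anodyne maps $A \arranod S$ and $B \arranod T$ with $S \arrwe T$ a homotopy equivalence between fibrant objects; then lift the anodyne $B \arranod T$ against the fibrant object $X$ to obtain $T \to X$, and subsequently lift the cofibration $A \arranod S$ against the fibration $Y \arrfib X$. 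All the ingredients---the definition of weak equivalence, anodyne maps being cofibrations, fibrancy of $X$, and the lifting structure of $\MS$---are set up identically in the marked setting.

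Given this auxiliary lemma, the corollary follows easily. For a weak equivalence $f \co A \to B$ and a map $g \co A \to X$ into a fibrant $X$, apply the factorization lemma to the square whose right edge is the terminal fibration $X \arrfib 1$ to obtain a diagram
\[
\xymatrix{
  A \ar[r]^{g'} \ar[d]^{\sim} & S \ar[r]^{u} \ar[d]^{k}_{\sim} & X \ar@{->>}[d] \\
  B \ar[r]^{v} & T \ar[r] & 1
}
\]
with $k$ a homotopy equivalence between fibrant marked semisimplicial sets and $u \circ g' = g$. Choose a homotopy inverse $k' \co T \to S$ with homotopy $H \co k' k \sim \id_S$, and set $h \defeq u \circ k' \circ v \co B \to X$. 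By commutativity of the left square, $h f = u \circ k' \circ k \circ g'$; whiskering $H$ on the left by $u$ and on the right by $g'$ then gives $h f \sim u \circ g' = g$, as required.

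I do not anticipate a real obstacle: every step appeals to formal properties already available in the marked framework. The only minor point worth verifying is that whiskering a homotopy by a map of marked semisimplicial sets again yields a homotopy, which is a direct consequence of the geometric symmetric monoidal structure on $\MS$ and matches the marked analogue of \cref{hom-pres-homotopy} (silently imported via the remark that ``the omitted proofs are adapted verbatim'' at the start of the marked homotopy theory subsection).
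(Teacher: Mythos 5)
Your proposal is correct and matches the paper's intended argument: the marked statement is proved by adapting the unmarked chain verbatim, i.e.\ first transferring \cref{weak-equiv-to-fib-between-fibrant-factorization} to $\MS$ (the paper does not even restate it, but its proof carries over exactly as you describe) and then deducing the lifting-up-to-homotopy statement by applying it to the square over $X \arrfib 1$. Your explicit whiskering argument just spells out the step the paper leaves implicit in the \qed'd corollary, so there is nothing to correct.
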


\begin{corollary} \label{cof-weak-equiv-lift-fib-between-fibrant-marked}
Cofibrations that are weak equivalences lift against fibrations between fibrant objects.
\qed
\end{corollary}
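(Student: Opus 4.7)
The plan is to mirror the unmarked proof of \cref{cof-weak-equiv-lift-fib-between-fibrant}, which factored through two intermediate lemmas that both have direct marked analogues. The omitted proofs in this subsection are stated to be ``adapted verbatim'' from the unmarked case, so the same strategy should carry over cleanly.

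First I would establish a marked analogue of \cref{weak-equiv-to-fib-between-fibrant-factorization}: in the arrow category of marked semisimplicial sets, any map from a weak equivalence to a fibration between fibrant objects factors via a homotopy equivalence between fibrant objects. The proof is the same as before: unfold the definition of weak equivalence to obtain a square with anodyne legs whose right-hand vertical is a homotopy equivalence between fibrant objects, then solve two lifting problems in turn (first lifting along the anodyne bottom leg against the fibrant codomain, then lifting along the anodyne top leg against the fibration between fibrant objects) to factor the original square through this homotopy equivalence.

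Next I would establish a marked analogue of \cref{cofibration-h-equiv-fibration-lift}: given a commuting rectangle in which the middle vertical is a homotopy equivalence between fibrant objects, the left vertical is a cofibration, and the right vertical is a fibration, the outer square admits a diagonal filler. Here one factors the homotopy equivalence as an anodyne map followed by a fibration and uses \cref{anodyne-criteria-marked,triv-fib-criteria-marked} together with 2-out-of-3 for homotopy equivalences between fibrant objects (valid since these are equivalences in the homotopy 2-category) to conclude that the fibration part is a trivial fibration. Then the lift is constructed in two stages: lift the cofibration against the trivial fibration, then lift the anodyne map against the fibration on the right.

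Combining these two lemmas immediately gives the corollary: given a cofibration that is a weak equivalence and a fibration between fibrant objects together with a commuting square, the first lemma factors the square through a homotopy equivalence between fibrant objects, and the second lemma produces the diagonal filler. The main conceptual obstacle one might worry about is whether the marked analogues of \cref{anodyne-criteria,triv-fib-criteria} are strong enough, but these are recorded as \cref{anodyne-criteria-marked,triv-fib-criteria-marked} with precisely the same statements, so the argument transports without incident.
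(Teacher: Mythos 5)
Your proposal matches the paper's intended argument exactly: the marked corollary is one of the omitted proofs ``adapted verbatim'' from \cref{homotopy-theory}, where \cref{cof-weak-equiv-lift-fib-between-fibrant} is proved by combining \cref{weak-equiv-to-fib-between-fibrant-factorization} and \cref{cofibration-h-equiv-fibration-lift}, and your marked analogues of these two lemmas, with the same two-stage lifting arguments via \cref{anodyne-criteria-marked,triv-fib-criteria-marked}, are precisely how the adaptation goes.
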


A marked semisimplicial set is called \emph{finite} if its underlying semisimplicial set is finite.

\begin{lemma} \label{finite-weak-equiv-marked}
In the full subcategory of finite marked semisimplicial sets, the class of weak equivalences is the closure under 2-out-of-6 of pushouts of marked horn inclusions.
\qed
\end{lemma}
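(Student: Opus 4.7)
The argument parallels the proof of \cref{finite-weak-equiv}. Let $W'$ denote the closure under 2-out-of-6 of pushouts of the generating anodyne maps, namely marked horn inclusions together with the marking saturation inclusion. It suffices to show that every weak equivalence $f \co A \to B$ between finite marked semisimplicial sets admits a composable weak equivalence $g$ with $gf \in W'$; applying the statement again to $g$ and invoking 2-out-of-6 then yields $f \in W'$.

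To produce such $g$, I would form a fibrant replacement $X$ of $A$ as the colimit of a tower $A = A_0 \arrcof A_1 \arrcof A_2 \arrcof \cdots$, where $A_{n+1}$ arises from $A_n$ by simultaneously filling every marked horn and applying every instance of the marking saturation inclusion not already treated at an earlier stage. Each $A_n$ remains finite: the generators all have finite codomain, horn fillings add only finitely many new cells, and the marking saturation inclusion introduces no new simplices (only new markings). Consequently, each step $A_n \arrcof A_{n+1}$ is a finite composition of pushouts of generators, the inclusion $A \arrcof A_n$ lies in $W'$, and the colimit $X$ is fibrant.

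By \cref{weak-equiv-lift-hom-fibrant-marked}, the lift of $f$ against $A \to X$ exists up to homotopy. Since $B +_A (I \otimes A) +_A B$ is finite, both the lift $B \to X$ and the witnessing homotopy $I \otimes A \to X$ factor through some $A_n$, producing a diagram
\[
\xymatrix{
  A
  \ar[d]_{\delta_0 \otimes A}
  \ar[drr]
\\
  I \otimes A
  \ar@{.>}[rr]
&&
  A_n
\rlap{.}\\
  A
  \ar[u]^{\delta_1 \otimes A}
  \ar[r]^{f}
&
  B
  \ar@{.>}[ur]^(0.4){g}
}
\]
All solid maps are weak equivalences, so $g$ is one by 2-out-of-3. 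The cylinder inclusions $\delta_0 \otimes A$ and $\delta_1 \otimes A$ lie in $W'$ as finite relative cell complexes of generators (via \cref{marked-anodyne-tensor-cof} applied to $A$ finite), and the top-right inclusion $A \arrcof A_n$ lies in $W'$ by construction; hence $gf \in W'$ by 2-out-of-3.

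The main delicate point is ensuring that each stage $A_n$ of the fibrant replacement tower is finite and that $A_n \arrcof A_{n+1}$ is a finite composition of generators. This works because of the two observations about the generators recorded above, and the rest of the proof is structurally identical to the unmarked case.
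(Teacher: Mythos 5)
Your proposal is essentially the paper's intended proof: the paper leaves this lemma as a verbatim adaptation of \cref{finite-weak-equiv}, and your argument is exactly that adaptation (tower of simultaneous fillings, fibrancy of the colimit, lifting up to homotopy via \cref{weak-equiv-lift-hom-fibrant-marked}, factoring through a finite stage by finiteness of $B +_A (I\otimes A) +_A B$, then 2-out-of-3 and 2-out-of-6), and you correctly isolate the only point needing care, namely that the marked generators keep each stage finite. One discrepancy is worth recording: the statement as printed generates the class by pushouts of \emph{marked horn inclusions} only, whereas your $W'$ also admits pushouts of the marking saturation inclusion $(\Delta^3,\{02,13\})\to(\Delta^3)^\sharp$ as generators. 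This inclusion is genuinely needed in your tower, since lifting against marked horns alone does not force markings to be saturated (pushouts of marked horn inclusions never mark a pre-existing edge), so the colimit would not be fibrant without it; consequently what you prove verbatim is the version of the lemma with all generating anodyne maps, and the literal statement requires either reading ``marked horn inclusions'' as ``generating anodyne maps'' or an additional argument that pushouts of the marking saturation inclusion already lie in the 2-out-of-6 closure of pushouts of marked horn inclusions. This is a gap in the printed statement's bookkeeping rather than in your reasoning, and the paper's own (omitted) proof faces the same point.
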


\begin{lemma} \label{homotopies-limit-colimit-marked}
Homotopies are closed under limit and colimit, \ie the limit and colimit functors from $[\mathcal{D}, \MS]$ to $\MS$ preserve the homotopy relation.
\qed
\end{lemma}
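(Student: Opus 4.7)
The plan is to imitate the proof of \cref{homotopies-limit-colimit} verbatim. Everything relevant transfers from $\widehat{\Delta_+}$ to $\MS$: the geometric symmetric monoidal structure on $\MS$ is closed, the interval $I$ is imported from semisimplicial sets via maximal marking, and a homotopy between natural transformations $\alpha_0, \alpha_1 \co F \to G$ in $[\mathcal{D}, \MS]$ unfolds to a natural transformation $H \co I \otimes F \to G$ (with $I \otimes F$ formed pointwise) restricting to $\alpha_i$ along $\delta_i \otimes F$.

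For limits, I would proceed via the direct representation. The components $H_d \co I \otimes F(d) \to G(d)$ assemble into a map $I \otimes \lim F \to \lim G$ by the universal property of $\lim G$: take the family of composites $I \otimes \lim F \to I \otimes F(d) \to G(d)$ formed by postcomposing $H_d$ with the action of $I \otimes -$ on the projection $\lim F \to F(d)$. Compatibility is witnessed by naturality of $H$, and the endpoint conditions pass through, giving a homotopy $\lim \alpha_0 \sim \lim \alpha_1$.

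For colimits, I would switch to the path object representation $[I, G]$ afforded by closedness of the geometric monoidal structure on $\MS$. Transpose $H$ to $\widetilde{H} \co F \to [I, G]$, and postcompose each $\widetilde{H}_d$ with the map $[I, G(d)] \to [I, \colim G]$ induced by the cocone on $G$. These assemble to a cocone on $F$ with tip $[I, \colim G]$, hence an induced map $\colim F \to [I, \colim G]$; its transpose $I \otimes \colim F \to \colim G$ is the desired homotopy, the endpoint conditions being inherited from those of $\widetilde{H}$.

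No genuine obstacle is anticipated: the argument is formal adjointness modulo closedness of the geometric monoidal structure on $\MS$, which has already been recorded earlier in this section.
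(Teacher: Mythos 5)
Your proposal is correct and matches the paper's treatment: the paper gives no separate proof here but adapts the proof of \cref{homotopies-limit-colimit} verbatim, which is exactly your argument (the direct cone/cylinder construction for limits, the path-object representation $A \to [I,B]$ for colimits), and nothing in the marked setting obstructs it since the geometric monoidal structure on $\MS$ is closed and the interval is the maximally marked one.
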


\begin{corollary} \label{homotopies-product-coproduct-marked}
Homotopies are closed under product and coproduct (of arbitrary arity), \ie if $f_i \sim g_i$ for all $i \in I$, then $\prod_{i \in I} f_i \sim \prod_{i \in I} g_i$ and $\coprod_{i \in I} f_i \sim \coprod_{i \in I} g_i$.
\qed
\end{corollary}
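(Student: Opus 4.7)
The plan is to derive this immediately from \cref{homotopies-limit-colimit-marked} by viewing products and coproducts as limits and colimits of discrete diagrams. Concretely, let $\mathcal{D}$ be the discrete category on the index set $I$, and consider the two families $(f_i)_{i \in I}$ and $(g_i)_{i \in I}$ as parallel morphisms $f, g$ in the functor category $[\mathcal{D}, \MS]$ between the diagrams $(A_i)_{i \in I}$ and $(B_i)_{i \in I}$. A family of homotopies $h_i \co f_i \sim g_i$ assembles objectwise into a single homotopy $h \co f \sim g$ in $[\mathcal{D}, \MS]$, using that the interval and geometric product lift pointwise to the functor category.

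Now \cref{homotopies-limit-colimit-marked} says that the limit and colimit functors $[\mathcal{D}, \MS] \to \MS$ preserve the homotopy relation. Applying this to $h$ yields homotopies
\[
\textstyle\prod_{i \in I} h_i \co \prod_{i \in I} f_i \sim \prod_{i \in I} g_i,
\qquad
\textstyle\coprod_{i \in I} h_i \co \coprod_{i \in I} f_i \sim \coprod_{i \in I} g_i,
\]
which is exactly the claim. There is no substantive obstacle here, since the only nontrivial input, namely closure of homotopies under colimits (requiring the path object representation), has already been supplied in \cref{homotopies-limit-colimit-marked}; the present corollary is a pure specialization to discrete diagrams.
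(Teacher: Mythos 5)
Your proof is correct and is exactly the intended argument: the corollary is stated as an immediate specialization of \cref{homotopies-limit-colimit-marked} to discrete diagrams, with the family of homotopies assembled pointwise into a homotopy in the functor category. Nothing further is needed.
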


\begin{corollary} \label{h-equiv-product-coproduct-marked}
Homotopy equivalences are closed under product and coproduct (of arbitrary arity).
\qed
\end{corollary}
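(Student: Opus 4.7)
The plan is to reduce this directly to \cref{homotopies-product-coproduct-marked}. Given a family of homotopy equivalences $(f_i \co A_i \to B_i, g_i, H_i, K_i)_{i \in I}$, the natural candidates for inverses of $\prod_i f_i$ and $\coprod_i f_i$ are $\prod_i g_i$ and $\coprod_i g_i$ respectively, since products and coproducts are functorial.

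First I would observe that the relevant composites decompose argumentwise: we have $(\prod_i g_i) \circ (\prod_i f_i) = \prod_i (g_i f_i)$ and similarly for the other three composites, and likewise in the coproduct case. Next I would note that identities behave well with products and coproducts: $\id_{\prod_i A_i} = \prod_i \id_{A_i}$ and $\id_{\coprod_i A_i} = \coprod_i \id_{A_i}$. Therefore the witnessing data for homotopy equivalence of $\prod_i f_i$ (respectively $\coprod_i f_i$) reduces to exhibiting homotopies
\[
\prod_i (g_i f_i) \sim \prod_i \id_{A_i}, \qquad \prod_i (f_i g_i) \sim \prod_i \id_{B_i}
\]
(and their coproduct analogues), which is exactly an instance of \cref{homotopies-product-coproduct-marked} applied to the families $(H_i)_{i \in I}$ and $(K_i)_{i \in I}$.

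There is no real obstacle here: the argument is entirely formal once one has closure of homotopies under products and coproducts. The only thing to double check is that the geometric product $I \otimes -$ interacts correctly with arbitrary products and coproducts in the sense needed for the proof of \cref{homotopies-limit-colimit-marked} (from which \cref{homotopies-product-coproduct-marked} was deduced); the coproduct case uses the path-object representation of a homotopy as a map $A \to [I, B]$, which turns a family of homotopies into a map out of $\coprod_i A_i$ to $[I, \coprod_i B_i]$ via the coproduct inclusions, and the product case uses that $I \otimes -$ preserves products when applied componentwise to homotopies $I \otimes A_i \to B_i$ and then assembled. Both are already handled by the preceding corollary, so the proof is a one-line invocation.
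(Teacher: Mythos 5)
Your proof is correct and matches the paper's intended argument: the corollary is meant to follow immediately from \cref{homotopies-product-coproduct-marked} by applying product/coproduct functoriality to the quasi-inverses and the witnessing homotopies, exactly as you do. (The aside about how $I \otimes -$ interacts with limits/colimits concerns the already-established \cref{homotopies-limit-colimit-marked} and is not needed again here.)
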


\begin{lemma} \label{hom-pres-homotopy-marked}
The functors $- \otimes -$ and $\hom(-, -)$ preserve homotopies in each arguments.
\qed
\end{lemma}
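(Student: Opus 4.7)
The plan is to proceed exactly as in the unmarked case (\cref{hom-pres-homotopy}), exploiting that $(\MS, \top, \otimes)$ is a closed symmetric monoidal category and that its interval $I = (\Delta^1)^\sharp$ is imported from semisimplicial sets via the maximal marking. Recall that a homotopy $h \co f_0 \sim f_1$ between maps $X \to Y$ is by definition a map $h \co I \otimes X \to Y$ restricting to $f_i$ at $\delta_i \otimes X$.

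For the geometric product, given such $h$ and any map $g \co A \to B$, I would form $h \otimes g \co I \otimes X \otimes A \to Y \otimes B$ (using symmetry of $\otimes$ to move $I$ to the leftmost position when needed); this is manifestly a homotopy between the two whiskerings of $h$ with $g$. Symmetry handles preservation in both arguments simultaneously.

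For $\hom(-, -)$, I would treat the covariant and contravariant arguments separately via the tensor-hom adjunction. In the covariant second argument, the transpose of
\[
I \otimes \hom(X, Y_0) \otimes X \xrightarrow{I \otimes \ev} I \otimes Y_0 \xrightarrow{h} Y_1
\]
exhibits the desired homotopy $\hom(X, f_0) \sim \hom(X, f_1)$. In the contravariant first argument, for $h \co I \otimes X_0 \to X_1$, the transpose of
\[
I \otimes \hom(X_1, Y) \otimes X_0 \xrightarrow{\simeq} \hom(X_1, Y) \otimes I \otimes X_0 \xrightarrow{\hom(X_1, Y) \otimes h} \hom(X_1, Y) \otimes X_1 \xrightarrow{\ev} Y
\]
exhibits the desired homotopy $\hom(f_0, Y) \sim \hom(f_1, Y)$.

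The only point requiring care relative to the unmarked case is that these constructions respect markings, but this is automatic: the interval $I$ carries its maximal marking, and all operations involved (tensor, evaluation, transposition, symmetry) are morphisms in $\MS$, so no fresh marking obligations arise. I therefore do not anticipate any real obstacle; the argument is purely a reformulation of the formal closed symmetric monoidal properties of the geometric product in the marked setting.
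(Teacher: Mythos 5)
Your proposal is correct and is essentially the paper's own argument: the paper disposes of both the unmarked and marked versions with the remark that they follow ``by formal properties of the geometric symmetric monoidal structure,'' adapted verbatim to the marked setting. You have simply made those formal properties (tensoring the homotopy, transposing across the tensor--hom adjunction, and noting that the maximally marked interval keeps everything inside $\MS$) explicit, which matches the intended proof.
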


\begin{corollary} \label{hom-pres-h-equiv-marked}
The functors $- \otimes -$ and $\hom(-, -)$ preserve homotopy equivalences in each arguments.
\qed
\end{corollary}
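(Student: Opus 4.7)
Fix an argument and let $F$ denote either $- \otimes X$, $X \otimes -$, $\hom(X, -)$, or $\hom(-, X)$ in the remaining slot, for some marked semisimplicial set $X$. Given a homotopy equivalence data $(f, g, H, K)$ between $A$ and $B$, with $f \co A \to B$, $g \co B \to A$, and homotopies $H \co gf \sim \id_A$ and $K \co fg \sim \id_B$, the plan is to show that $(Ff, Fg, FH, FK)$ (suitably interpreted) forms a homotopy equivalence between $FA$ and $FB$.

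First I would use that $F$ is a functor, so $F(gf) = Fg \circ Ff$ and $F(fg) = Ff \circ Fg$, and $F(\id_A) = \id_{FA}$ and $F(\id_B) = \id_{FB}$ (in the contravariant case for $\hom(-, X)$, the roles of $f$ and $g$ are merely swapped, which does not affect the argument). Next I would apply \cref{hom-pres-homotopy-marked}: since $H$ is a homotopy between $gf$ and $\id_A$, the image $FH$ is a homotopy between $F(gf) = Fg \circ Ff$ and $F(\id_A) = \id_{FA}$; likewise $FK$ witnesses $Ff \circ Fg \sim \id_{FB}$. Concatenating these observations yields the desired homotopy equivalence data on $FA$ and $FB$.

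The argument is entirely formal and the only nontrivial input is \cref{hom-pres-homotopy-marked}; there is no real obstacle. The one point that deserves a line of attention is the contravariant case $\hom(-, X)$, where one must verify that the ``argumentwise'' preservation of homotopies still produces a homotopy between the composites in the correct order, which follows immediately by unfolding the definition of homotopy in terms of the closed geometric monoidal structure.
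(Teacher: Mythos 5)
Your proposal is correct and matches the paper's (implicit) argument: the corollary is left as an immediate consequence of \cref{hom-pres-homotopy-marked}, exactly as you argue, by transporting the homotopy equivalence data $(f, g, H, K)$ along the functor using functoriality and argumentwise preservation of homotopies (with the covariant/contravariant distinction handled just as you note). Nothing further is needed.
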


\begin{corollary} \label{hom-pres-weak-equiv-dom-marked}
For $X$ fibrant, the functor $\hom(-, X)$ preserves weak equivalences.
\qed
\end{corollary}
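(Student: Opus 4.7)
The plan is to mirror the proof of the unmarked analogue \cref{hom-pres-weak-equiv-dom} essentially verbatim, reusing the ingredients that have already been established for the marked setting.

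First I would record the key auxiliary fact: for any anodyne map $j$, the map $\hom(j, X)$ is a trivial fibration between fibrant objects. This is the adjoint transpose of \cref{marked-anodyne-tensor-cof} (anodyne maps are closed under Leibniz geometric product with cofibrations): taking $\hom(-, X)$ with $X$ fibrant sends anodyne maps to maps that lift against all cofibrations, and it sends any map with fibrant codomain to a fibration (so in particular sends an anodyne map, whose codomain is arbitrary, to a fibration provided $X$ is fibrant, which it is). Both source and target $\hom(\cdot, X)$ are fibrant since $X$ is.

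Next, unfold the definition of a weak equivalence $f \co A \to B$: we have a commuting square
\[
\xymatrix{
  A \ar[r]^-{\anod} \ar[d]_{f} & X_1 \ar[d] \\
  B \ar[r]^-{\anod} & Y_1
}
\]
with $X_1, Y_1$ fibrant and $X_1 \to Y_1$ a homotopy equivalence. Applying $\hom(-, X)$ yields a square whose horizontal maps are trivial fibrations (by the first step) and whose vertical map $\hom(Y_1, X) \to \hom(X_1, X)$ is a homotopy equivalence between fibrant objects by \cref{hom-pres-h-equiv-marked}. In particular, all three of these maps are homotopy equivalences (trivial fibrations between fibrant objects are homotopy equivalences by \cref{triv-fib-criteria-marked}), hence weak equivalences. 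Then 2-out-of-3 for weak equivalences (\cref{weak-equiv-2-out-of-6-marked}) applied to the square forces $\hom(f, X) \co \hom(B, X) \to \hom(A, X)$ to be a weak equivalence.

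No part of this argument is delicate; the only point requiring any attention is to verify that $\hom(j, X)$ is genuinely a fibration (not merely a map lifting against cofibrations), which here is automatic because both source and target are fibrant, so that invoking \cref{triv-fib-criteria-marked} is legitimate. This mirrors the unmarked proof line for line and does not introduce any obstacle specific to the marked setting.
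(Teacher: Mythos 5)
Your proof is correct and is essentially the paper's own (omitted) argument: it adapts the unmarked proof of \cref{hom-pres-weak-equiv-dom} verbatim, using that $\hom(j, X)$ is a trivial fibration between fibrant objects for $j$ anodyne, then \cref{hom-pres-h-equiv-marked} and 2-out-of-3 applied to the square obtained from the definition of weak equivalence. One small imprecision: $\hom(j, X)$ is a fibration not because its source and target are fibrant (that alone would not suffice), but because an anodyne $j$ is in particular a cofibration, so the adjoint form of \cref{marked-anodyne-tensor-cof} applies; this does not affect your conclusion.
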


\begin{corollary} \label{cof-weak-equiv-hom-fib-marked}
For $X$ fibrant, the functor $\hom(-, X)$ sends cofibrations that are weak equivalences to trivial fibrations.
\qed
\end{corollary}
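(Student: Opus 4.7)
The plan is to derive this as a direct combination of \cref{hom-pres-weak-equiv-dom-marked} and \cref{triv-fib-criteria-marked}. Given a cofibration $m \co A \to B$ that is a weak equivalence and a fibrant $X$, I would first note that $\hom(m, X) \co \hom(B, X) \to \hom(A, X)$ is a fibration by the adjoint form of the closure of anodyne maps under Leibniz geometric product with cofibrations (\cref{marked-anodyne-tensor-cof}). The nullary case of the same observation (applied to $0 \to A$ and $0 \to B$, both cofibrations) shows that $\hom(A, X)$ and $\hom(B, X)$ are themselves fibrant, so $\hom(m, X)$ is a fibration between fibrant objects.

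Next, I would apply \cref{hom-pres-weak-equiv-dom-marked}, which says $\hom(-, X)$ preserves weak equivalences for fibrant $X$; applied to the weak equivalence $m$, this gives that $\hom(m, X)$ is a weak equivalence. Between fibrant objects, the notion of weak equivalence coincides with that of homotopy equivalence by definition (as inherited in the marked setting), so $\hom(m, X)$ is a homotopy equivalence.

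Finally, I would invoke the equivalence (i) $\Leftrightarrow$ (iii) of \cref{triv-fib-criteria-marked}: a fibration between fibrant objects is a trivial fibration precisely when it is a homotopy equivalence. This completes the argument. No step here presents any real obstacle; the corollary is a bookkeeping consequence of the two results immediately preceding it, which is presumably why the author marks it with \texttt{\textbackslash qed} directly in the statement.
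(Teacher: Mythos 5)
Your argument is correct and is exactly the intended one: the paper leaves this corollary proofless (adapted verbatim from the unmarked case), where it likewise follows by noting $\hom(m,X)$ is a fibration between fibrant objects (adjoint form of \cref{marked-anodyne-tensor-cof}), a weak equivalence by \cref{hom-pres-weak-equiv-dom-marked}, hence a homotopy equivalence, and then applying \cref{triv-fib-criteria-marked}. No gaps; same approach as the paper.
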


\begin{lemma} \label{weak-equiv-joyal-characterization-marked}
The following are equivalent for a map $f$:
\begin{enumerate}
\item $f$ is a weak equivalence,
\item $\hom(f, X)$ is a homotopy equivalence for all fibrant $X$,
\item $\tau_0(\hom(f, X))$ is a bijection all fibrant $X$, \ie precomposition with $f$ induces a bijection of homotopy classses when the target is fibrant.
\end{enumerate}
\qed
\end{lemma}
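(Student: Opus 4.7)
The plan is to transfer the proof of \cref{weak-equiv-joyal-characterization} to the marked setting essentially verbatim, since all ingredients have already been marked-ified in this section. The implication from~(i) to~(ii) is \cref{hom-pres-weak-equiv-dom-marked}. The implication from~(ii) to~(iii) is immediate: a homotopy equivalence between fibrant hom-objects becomes an equivalence in the categorical enrichment, hence an isomorphism in $\tau_1(\hom(-, X))$, and thus a bijection after passing to connected components $\tau_0$.

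For the implication from~(iii) to~(i), I would first reduce to the case that both the domain $A$ and codomain $B$ of $f$ are fibrant. Choose a commuting square with anodyne legs $A \arranod A'$ and $B \arranod B'$ into fibrant objects and lift $f$ to $f' \co A' \to B'$. For any fibrant $Z$, the maps $\hom(A', Z) \to \hom(A, Z)$ and $\hom(B', Z) \to \hom(B, Z)$ are trivial fibrations between fibrant objects by \cref{cof-weak-equiv-hom-fib-marked}, hence homotopy equivalences by \cref{triv-fib-criteria-marked}, hence induce bijections on $\tau_0$. By 2-out-of-3 for bijections, condition~(iii) holds for $f$ iff it holds for $f'$; and by \cref{def-weak-equiv-invariant-marked}, $f$ is a weak equivalence iff $f'$ is a homotopy equivalence. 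So we may assume $A$ and $B$ are both fibrant.

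In this reduced setting, $f$ is a weak equivalence iff it is a homotopy equivalence iff it becomes an isomorphism in the homotopy category of fibrant marked semisimplicial sets, whose hom-sets are precisely $\tau_0(\hom(-, -))$. The standard Yoneda-style observation that a morphism in a locally small category is an isomorphism exactly when precomposition with it induces bijections on hom-sets into every object then gives the equivalence with~(iii), where the test against all fibrant $Z$ is exactly the test against all objects of the homotopy category.

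There is no real obstacle here; the content is identical to the unmarked case and relies only on the marked versions of ingredients (\cref{hom-pres-weak-equiv-dom-marked}, \cref{cof-weak-equiv-hom-fib-marked}, \cref{triv-fib-criteria-marked}, \cref{def-weak-equiv-invariant-marked}) which have already been established. The only point that requires a moment of thought is that the Yoneda step in the homotopy category goes through for marked semisimplicial sets, but this is a purely formal statement about locally small categories and is independent of the ambient homotopy theory.
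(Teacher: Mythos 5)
Your proof is correct and follows essentially the same route as the paper, which simply adapts the unmarked proof of \cref{weak-equiv-joyal-characterization} verbatim: (i)$\Rightarrow$(ii) by \cref{hom-pres-weak-equiv-dom-marked}, (ii)$\Rightarrow$(iii) trivially, and (iii)$\Rightarrow$(i) by reducing to fibrant domain and codomain and then identifying condition~(iii) with being an isomorphism in the homotopy category. Your more detailed justification of the fibrant-replacement reduction is just an unfolding of the paper's phrase ``unfolding the definition of weak equivalence and using 2-out-of-3 for bijections,'' so there is no substantive difference.
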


\begin{corollary} \label{cof-weak-equiv-triv-fib-characterization-marked}
A cofibration $m$ is a weak equivalence exactly if the fibration $\hom(m, Z)$ is trivial for all fibrant $Z$.
\qed
\end{corollary}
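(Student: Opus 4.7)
The plan is to deduce this immediately from the marked analogue of the characterization of weak equivalences via mapping spaces, combined with the recognition of trivial fibrations as homotopy equivalences between fibrant objects.

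First I observe that for a cofibration $m \co A \to B$ and a fibrant object $Z$, the map $\hom(m, Z) \co \hom(B, Z) \to \hom(A, Z)$ is a fibration between fibrant objects. Indeed, as the Leibniz pullback exponential $\hathom(m, Z \to 1)$, it is a fibration because cofibrations lift against fibrations on the left argument (in adjoint form to \cref{marked-anodyne-tensor-cof}), and its domain and codomain are fibrant because $\hom(-, Z)$ sends cofibrant objects to fibrant objects when $Z$ is fibrant.

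Next I apply \cref{triv-fib-criteria-marked}: for such a fibration between fibrant objects, being a trivial fibration is equivalent to being a homotopy equivalence. Hence $\hom(m, Z)$ is a trivial fibration if and only if it is a homotopy equivalence.

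Finally I invoke \cref{weak-equiv-joyal-characterization-marked}, specifically the equivalence between items~(i) and~(ii): $m$ is a weak equivalence if and only if $\hom(m, X)$ is a homotopy equivalence for every fibrant $X$. Chaining the two equivalences gives the desired characterization. No step is a real obstacle; the statement is essentially a repackaging of \cref{weak-equiv-joyal-characterization-marked} once one notes the fibration-between-fibrant-objects condition is automatic for $\hom(m, Z)$.
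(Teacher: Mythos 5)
Your proposal is correct and follows exactly the route the paper intends (the corollary is left as immediate): note that $\hom(m,Z)$ is a fibration between fibrant objects, identify trivial fibrations with homotopy equivalences via \cref{triv-fib-criteria-marked}, and chain with the equivalence of items (i) and (ii) in \cref{weak-equiv-joyal-characterization-marked}.
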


\begin{corollary} \label{anodyne-is-weak-equiv-marked}
Anodyne maps are weak equivalences.
\qed
\end{corollary}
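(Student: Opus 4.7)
The plan is to deduce this directly from the characterization \cref{cof-weak-equiv-triv-fib-characterization-marked}, mirroring the unmarked proof of \cref{anodyne-is-weak-equiv}. Let $j \co A \to B$ be an anodyne map. First, I would observe that $j$ is in particular a cofibration: the generating anodyne maps (marked horn inclusions and the marking saturation inclusion) are all cofibrations, and cofibrations are closed under the weak saturation operations that generate the anodyne class.

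Next, I would show that for every fibrant $Z$, the map $\hom(j, Z)$ is a trivial fibration. By the adjoint formulation of the Leibniz closure (analogous to \cref{marked-anodyne-tensor-cof}), fibrations are closed under Leibniz pullback hom with cofibrations, and fibrations with fibrant codomain lift against anodyne maps. Concretely, for any cofibration $i$, the Leibniz hom $\hathom(i, Z) \co \hom(\cod i, Z) \to \hom(\dom i, Z) \times_{\cdot} \cdot$ is a fibration, and since $\hom(-, Z)$ turns $j$ anodyne into a map that lifts against all cofibrations (by the hom–tensor adjunction applied to $j \hatotimes i$ lifting against $Z \to 1$, using that $j \hatotimes i$ is anodyne by \cref{marked-anodyne-tensor-cof}), we obtain that $\hom(j, Z)$ is a trivial fibration between fibrant objects.

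Combining these two steps with \cref{cof-weak-equiv-triv-fib-characterization-marked}, which states that a cofibration $m$ is a weak equivalence exactly when $\hom(m, Z)$ is a trivial fibration for every fibrant $Z$, we conclude that $j$ is a weak equivalence.

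There is essentially no obstacle here beyond routine bookkeeping; the only point requiring a bit of care is the invocation of the Leibniz closure of anodyne maps under geometric product with cofibrations (\cref{marked-anodyne-tensor-cof}), which is what yields fibrancy of $\hom(j, Z)$ together with its lifting property against all cofibrations. Once that is in hand, the application of \cref{cof-weak-equiv-triv-fib-characterization-marked} is immediate.
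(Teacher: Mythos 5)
Your proof is correct and matches the paper's (omitted) argument: the corollary is meant to follow immediately from \cref{cof-weak-equiv-triv-fib-characterization-marked} exactly as you argue, with anodyne maps being cofibrations and $\hom(j,Z)$ a trivial fibration between fibrant objects by the adjoint form of \cref{marked-anodyne-tensor-cof}, mirroring \cref{anodyne-is-weak-equiv} in the unmarked case.
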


\begin{corollary} \label{weak-equivalences-as-closure-of-andoyne-marked}
Weak equivalences are the closure of anodyne maps, more specifically just $\omega$-compositions of pushouts of coproducts of generating anodyne maps, under 2-out-of-6.
\qed
\end{corollary}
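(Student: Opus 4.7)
The plan is to prove both inclusions of the claimed equality. Write $\mathcal{A}$ for the class of $\omega$-compositions of pushouts of coproducts of generating anodyne maps, $W'$ for its closure under 2-out-of-6, and $W$ for the class of weak equivalences.

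The inclusion $W' \subseteq W$ will be immediate: $\mathcal{A}$ sits inside the anodyne maps, hence in $W$ by \cref{anodyne-is-weak-equiv-marked}, and $W$ satisfies 2-out-of-6 by \cref{weak-equiv-2-out-of-6-marked}. Noting that $W'$ contains all identities (as empty cell complexes), applying 2-out-of-6 to triples of the form $(f, \id, g)$ and $(\id, f, g)$ promotes $W'$ to a class closed under binary composition and 2-out-of-3; closure under retracts then follows by a standard argument.

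For the converse $W \subseteq W'$, I will fix a weak equivalence $w \colon A \to B$ and unfold the definition to obtain a commuting square with anodyne legs $j_A \colon A \to X$ and $j_B \colon B \to Y$ together with a homotopy equivalence $h \colon X \to Y$ between fibrant objects. Since $j_A, j_B \in \mathcal{A} \subseteq W'$ and $j_B w = h j_A$, 2-out-of-3 reduces the problem to showing $h \in W'$. Factoring $h = p k$ with $k$ anodyne and $p \colon M \to Y$ a fibration, 2-out-of-3 for weak equivalences together with \cref{triv-fib-criteria-marked} will identify $p$ as a trivial fibration between fibrant objects, and in particular as a strong homotopy equivalence. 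Thus $p$ admits a section $s$ with $ps = \id_Y$ together with a homotopy $H \colon I \otimes M \to M$ satisfying $H \circ (\delta_0 \otimes M) = sp$ and $H \circ (\delta_1 \otimes M) = \id_M$.

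The crux is the treatment of $p$ itself. The cylinder endpoint inclusions $\delta_0 \otimes M$ and $\delta_1 \otimes M$ are anodyne by \cref{marked-anodyne-tensor-cof}, hence in $W'$. Since $H \circ (\delta_1 \otimes M) = \id_M \in W'$, 2-out-of-3 yields $H \in W'$, whence $sp = H \circ (\delta_0 \otimes M) \in W'$ by composition. With both $sp$ and $ps$ in $W'$, applying 2-out-of-6 to the triple $(p, s, p)$ will finally place $p$ (and therefore $h$ and $w$) in $W'$. The main obstacle is precisely this trivial fibration step: the section $s$ need not be anodyne, so one cannot directly conclude $p \in W'$ by 2-out-of-3; the strong homotopy equivalence structure of $p$ is what forces $sp$ into $W'$ and unlocks the 2-out-of-6 argument on the triple $(p, s, p)$.
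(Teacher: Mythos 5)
Your proof is correct and is essentially an expanded version of the paper's (very terse) argument: the paper simply cites \cref{anodyne-is-weak-equiv-marked,weak-equiv-2-out-of-6-marked} and remarks that closure under codomain retracts follows from 2-out-of-6, leaving the reduction of a general weak equivalence to these facts implicit; your unfolding of the definition, the factorization $h = pk$, and the cylinder plus 2-out-of-6 argument on the triple $(p,s,p)$ supply exactly the omitted details. One small repair: anodyne maps (so $j_A$, $j_B$, $k$, and $\delta_0 \otimes M$, $\delta_1 \otimes M$) are in general only codomain retracts of maps in $\mathcal{A}$, not elements of $\mathcal{A}$, so you should not write $j_A, j_B \in \mathcal{A}$; instead factor an anodyne map $j$ as $j = qc$ with $c \in \mathcal{A}$ and $q$ a fibration, lift $j$ against $q$ to exhibit $j$ as a codomain retract of $c$, and apply 2-out-of-6 to the resulting triple $(j, t, q)$ with $tj = c$ and $qt = \id$. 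This codomain-retract closure (precisely the paper's remark) is all that is needed, and is what your appeal to ``closure under retracts'' should say; closure under arbitrary retracts is not an evident consequence of 2-out-of-6 and is not required. Finally, the detour through the factorization of $h$ is harmless but unnecessary: the same cylinder argument applied to the two homotopies of the homotopy equivalence $h$ and its homotopy inverse $g$, followed by 2-out-of-6 on $(h, g, h)$, places $h$ in $W'$ directly.
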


\begin{corollary} \label{h-equiv-is-weak-equiv-marked}
Homotopy equivalences are weak equivalences.
\qed
\end{corollary}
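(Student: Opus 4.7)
The plan is to reduce to the Joyal-style characterization of weak equivalences established in \cref{weak-equiv-joyal-characterization-marked}. Concretely, to show that a homotopy equivalence $f \co A \to B$ is a weak equivalence, it suffices to verify condition~(ii) of that lemma: that $\hom(f, X)$ is a homotopy equivalence for every fibrant $X$. This is exactly the content of \cref{hom-pres-h-equiv-marked}, which asserts that $\hom(-, -)$ preserves homotopy equivalences in each argument; no fibrancy of $A$ or $B$ is needed for this step, which is the whole point of going through the internal hom.

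Thus the forward-looking proof is a single appeal: given a homotopy equivalence $f$, apply \cref{hom-pres-h-equiv-marked} to the first argument of $\hom(-, X)$ with $X$ fibrant, yielding that $\hom(f, X)$ is a homotopy equivalence between fibrant objects for every fibrant $X$. By \cref{weak-equiv-joyal-characterization-marked}~(ii)$\Rightarrow$(i), $f$ is a weak equivalence. There is no real obstacle here; the work has been done in setting up the characterization and the closure of homotopy equivalences under $\hom$. The only subtlety to double-check is that \cref{hom-pres-h-equiv-marked} genuinely applies without fibrancy hypotheses on the domain/codomain of $f$, which is the case because the standard closure under $\hom$ only uses the monoidal interval and the formal properties of the geometric symmetric monoidal structure, not any lifting conditions on $f$. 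This is precisely what gives the statement its usefulness for non-fibrant inputs, as noted in the introductory remark.
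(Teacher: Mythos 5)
Your proof is correct and matches the paper's intended argument: the corollary is stated with its proof omitted precisely because it is the immediate combination of \cref{hom-pres-h-equiv-marked} with the implication (ii)$\Rightarrow$(i) of \cref{weak-equiv-joyal-characterization-marked}, exactly as you argue (mirroring the unmarked \cref{h-equiv-is-weak-equiv}). Your remark that no fibrancy of the domain or codomain is needed is also the point the paper makes about the unmarked version's usefulness.
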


\begin{corollary} \label{weak-equiv-coproduct-marked}
Weak equivalences are closed under coproduct (of arbitrary arity).
\qed
\end{corollary}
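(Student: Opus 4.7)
The plan is to mirror the proof of the unmarked analogue \cref{weak-equiv-coproduct} verbatim, as both ingredients used there are now available in the marked setting. First, I would invoke part~(ii) of \cref{weak-equiv-joyal-characterization-marked}: a map $f$ is a weak equivalence precisely if $\hom(f, X)$ is a homotopy equivalence for every fibrant $X$. Applied to a family of weak equivalences $f_i \co A_i \to B_i$, this reduces the claim to showing that $\hom(\coprod_i f_i, X)$ is a homotopy equivalence between fibrant objects for every fibrant $X$.

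Next, I would use that $\hom(-, X)$ sends coproducts to products, yielding a canonical isomorphism
\[
\hom\bigl(\coprod_i f_i, X\bigr) \simeq \prod_i \hom(f_i, X)
\]
natural in its components. By assumption, each $\hom(f_i, X)$ is a homotopy equivalence between fibrant objects, so the product is a homotopy equivalence by \cref{h-equiv-product-coproduct-marked}. Transporting this along the above isomorphism and reinvoking \cref{weak-equiv-joyal-characterization-marked} then yields the conclusion.

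I do not expect any genuine obstacle; this is a purely formal consequence of the $\hom$-characterization of weak equivalences together with closure of homotopy equivalences under arbitrary products. The only bookkeeping point is that $\hom(A, X)$ is fibrant whenever $X$ is, which is standard and has been used implicitly throughout this section.
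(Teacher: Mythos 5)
Your proposal is correct and matches the paper's argument: the marked statement is proved exactly by adapting the unmarked proof of \cref{weak-equiv-coproduct}, i.e.\ via part~(ii) of \cref{weak-equiv-joyal-characterization-marked}, the isomorphism $\hom(\coprod_i f_i, X) \simeq \prod_i \hom(f_i, X)$, and closure of homotopy equivalences under products (\cref{h-equiv-product-coproduct-marked}). Nothing further is needed.
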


\begin{corollary} \label{weak-equiv-retracts-marked}
Weak equivalences are closed under retract.
\qed
\end{corollary}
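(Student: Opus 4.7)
The plan is to mirror the unmarked proof of \cref{weak-equiv-retracts} verbatim, now invoking the marked characterization. By part~(ii) of \cref{weak-equiv-joyal-characterization-marked}, a map $f$ is a weak equivalence exactly if $\hom(f, X)$ is a homotopy equivalence for all fibrant $X$. Since $\hom(-, X)$ is a functor, it sends retract diagrams in the arrow category to retract diagrams. Thus the claim reduces to showing that homotopy equivalences (specifically, between fibrant objects, since $\hom(-, X)$ lands in fibrant objects when $X$ is fibrant) are closed under retract.

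For the latter, the key observation is that a homotopy equivalence between fibrant objects is an equivalence in the homotopy 2-category (the categorical enrichment restricted to fibrant objects), by the discussion preceding \cref{graduate-lemma-marked}. Equivalences in any 2-category are closed under retract: given a retract diagram exhibiting $f$ as a retract of an equivalence $g$ with homotopy inverse $g'$, one readily checks that the corresponding retract construction on $g'$ yields a homotopy inverse for $f$, with the required 2-cells obtained by whiskering the 2-cells witnessing $g g' \simeq \id$ and $g' g \simeq \id$ with the retraction/section maps.

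The argument is essentially formal, so I do not anticipate any real obstacle; the only thing to verify is that the characterization of~\cref{weak-equiv-joyal-characterization-marked} and the 2-categorical closure of equivalences under retracts carry over without modification from the unmarked setting, which they do since the homotopy 2-category of fibrant marked semisimplicial sets was set up for exactly this purpose.
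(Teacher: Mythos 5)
Your proof is correct and follows essentially the same route as the paper: the omitted marked proof is the verbatim adaptation of \cref{weak-equiv-retracts}, namely reducing via part~(ii) of \cref{weak-equiv-joyal-characterization-marked} and functoriality of $\hom(-,X)$ to closure of homotopy equivalences under retract. Your extra justification of that closure via equivalences in the homotopy 2-category is a harmless elaboration of what the paper leaves implicit.
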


\begin{lemma} \label{cof-weak-equiv-closure-marked}
Weak equivalences that are cofibrations are closed under pushout as well as $\omega$-composition.
\qed
\end{lemma}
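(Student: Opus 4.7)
The plan is to mirror the argument of \cref{cof-weak-equiv-closure} verbatim in the marked setting. The starting point is the characterization \cref{cof-weak-equiv-triv-fib-characterization-marked}: a cofibration $m$ is a weak equivalence precisely if, for every fibrant $Z$, the induced map $\hom(m, Z)$ is a trivial fibration between fibrant objects. Recall also the closure statements for cofibrations themselves: cofibrations are closed under pushout (by general weak saturation) and under $\omega$-composition.

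So suppose we are given a pushout square
\[
\xymatrix{
  A \ar@{>->}[r]^{m}_{\sim} \ar[d] & B \ar[d] \\
  C \ar@{>->}[r]_{m'} & D \pullbackcorner{ul}
}
\]
with $m$ a cofibration that is a weak equivalence. The map $m'$ is already known to be a cofibration. To see it is a weak equivalence, fix an arbitrary fibrant $Z$ and apply $\hom(-, Z)$; this turns the pushout into a pullback square of fibrations between fibrant objects, with $\hom(m, Z)$ trivial by hypothesis. Trivial fibrations in $\MS$ are closed under pullback because they are defined by a right lifting property (against cofibrations), so $\hom(m', Z)$ is a trivial fibration between fibrant objects. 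By \cref{cof-weak-equiv-triv-fib-characterization-marked}, $m'$ is a weak equivalence.

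For the closure under $\omega$-composition, suppose $A_0 \arrcof A_1 \arrcof \cdots$ is a sequence of cofibrations that are weak equivalences with colimit $A_\infty$, and let $m \co A_0 \to A_\infty$ be the induced map (a cofibration). Applying $\hom(-, Z)$ for a fibrant $Z$ turns this into the $\omega^\op$-limit $\hom(A_\infty, Z) \to \cdots \to \hom(A_1, Z) \to \hom(A_0, Z)$, in which every transition map is a trivial fibration between fibrant objects. Trivial fibrations are closed under such cotower limits (again by the lifting-property definition, since lifts against a given cofibration can be assembled coherently along the tower), so $\hom(m, Z)$ is a trivial fibration, and \cref{cof-weak-equiv-triv-fib-characterization-marked} finishes the argument.

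There is no real obstacle here: the only content beyond the unmarked case is that the characterization of trivial fibrations in terms of lifting against cofibrations (and hence their closure under pullback and cotower limits) is insensitive to the presence of markings, and \cref{cof-weak-equiv-triv-fib-characterization-marked} has already done the work of transporting the characterization. The proof is therefore formal, invoking the marked analogues of precisely the same two ingredients used in \cref{cof-weak-equiv-closure}.
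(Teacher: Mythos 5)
Your proof is correct and is essentially the paper's own argument: the marked lemma is proved by adapting \cref{cof-weak-equiv-closure} verbatim, i.e.\ reducing via \cref{cof-weak-equiv-triv-fib-characterization-marked} to the closure of trivial fibrations (maps between fibrant objects lifting against cofibrations) under pullback and $\omega^\op$-composition.
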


\begin{theorem} \label{semisimplicial-sets-cofibration-category-marked}
Marked semisimplicial sets form a cofibration category.
\end{theorem}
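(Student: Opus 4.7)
The plan is to mirror the proof of the unmarked analogue \cref{semisimplicial-sets-cofibration-category} verbatim, since every ingredient required there has already been established in the marked setting by this point in the paper. I would begin by going through the five axioms of a cofibration category in order and pointing out that most of them are either immediate or reduce directly to already-proved results.

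First, the initial object is the empty marked semisimplicial set, and every object is cofibrant since the cofibrations were defined (and generated by) boundary inclusions and the edge marking inclusion, all of which are decidable monomorphisms with empty domain after iteration from $0$. For pushouts along cofibrations, marked semisimplicial sets are cocomplete (as a category of separated presheaves closed under the relevant colimits along cofibrations), and cofibrations are closed under pushout because they form the left class of a weak factorization system. Factorization of any map as a cofibration followed by a weak equivalence is supplied by the (cofibration, trivial fibration) factorization of the generated weak factorization system, together with \cref{anodyne-is-weak-equiv-marked} (alternatively, by the (anodyne, fibration) factorization and the same lemma, noting anodynes are cofibrations).

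The two non-trivial axioms are then pushout-stability of trivial cofibrations, which is exactly \cref{cof-weak-equiv-closure-marked}, and 2-out-of-6 for weak equivalences, which is \cref{weak-equiv-2-out-of-6-marked}. No step here presents a genuine obstacle: all the combinatorial work has already been done in establishing the marked analogues of the closure lemmas for weak equivalences, particularly \cref{cof-weak-equiv-closure-marked} (whose proof goes through \cref{cof-weak-equiv-triv-fib-characterization-marked} and dualizes to closure of trivial fibrations under pullback) and \cref{weak-equiv-2-out-of-6-marked} (which reduces via \cref{def-weak-equiv-invariant-marked} to 2-out-of-6 for equivalences in the homotopy $2$-category of fibrant objects). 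The proof can therefore be recorded in one or two sentences citing these results, exactly as in the unmarked case.
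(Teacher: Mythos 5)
Your core reduction is exactly the paper's: the proof there is recorded as ``analogous to \cref{semisimplicial-sets-cofibration-category}'', i.e.\ the only conditions singled out as non-trivial are pushout-stability of trivial cofibrations and 2-out-of-6, supplied by \cref{cof-weak-equiv-closure-marked} and \cref{weak-equiv-2-out-of-6-marked}, with the remaining axioms (initial object, cofibrancy of all objects, pushouts along cofibrations) handled exactly as you say.

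However, your justification of the factorization axiom is wrong on both counts, and this is a genuine gap in the write-up as you gave it. First, the right class of the weak factorization system generated by the boundary inclusions and the edge marking inclusion is \emph{not} contained in the weak equivalences, and \cref{anodyne-is-weak-equiv-marked} is irrelevant to it: in this paper ``trivial fibration'' is reserved for maps \emph{between fibrant objects} lifting against cofibrations, precisely because a general map lifting against cofibrations need not be a weak equivalence. The paper's own example is $\Delta^0 + \Delta^0 \to \Delta^0$: since $\Delta^0$ has no simplices in positive dimensions, every lifting problem against a boundary inclusion of dimension $\geq 1$ is vacuous, so this map lies in the right class, yet it fails the criterion of \cref{weak-equiv-joyal-characterization-marked} (apply $\tau_0 \circ \hom(-, Z)$ for a fibrant $Z$ with two components). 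Second, the (anodyne, fibration) factorization puts the weak equivalence on the wrong factor: it factors a map as a cofibration that is a weak equivalence followed by a \emph{fibration}, whereas the axiom requires the \emph{second} factor to be a weak equivalence, and fibrations are not weak equivalences in general. Note also that the usual escape route, the mapping cylinder, is not available off the shelf here: without degeneracies there is no projection $I \otimes B \to B$ and no constant self-homotopy of a map into a non-fibrant codomain, so producing a cofibration--weak-equivalence factorization of an arbitrary map is a delicate point rather than something to dispatch with a citation to the generated weak factorization system; the paper itself records no argument for this axiom (its proof only flags the two conditions above), so a complete account would have to supply one.
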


\begin{proof}
Analogous to \cref{semisimplicial-sets-cofibration-category}.
\end{proof}

\begin{lemma} \label{geometric-product-pres-weak-equiv-marked}
The geometric product preserves weak equivalences, \ie if $u$ and $v$ are weak equivalences, then so is $u \otimes v$.
\qed
\end{lemma}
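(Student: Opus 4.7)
The plan is to mirror the unmarked proof \cref{geometric-product-pres-weak-equiv} essentially verbatim, exploiting the fact that all ingredients used there have been recorded in the marked setting earlier in this section. The structure of the argument has three moves: reduce to a single variable via 2-out-of-3; reduce a bare weak equivalence to something computable on hom-objects using the Joyal-style characterization; and then conclude via closure of homotopy equivalences under the internal hom.

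First, I would use 2-out-of-3 for weak equivalences (a consequence of \cref{weak-equiv-2-out-of-6-marked}) together with functoriality of $\otimes$ to reduce to the case $v = \id_A$ for some marked semisimplicial set $A$. Concretely, factor $u \otimes v$ as $(u \otimes B) \circ (A \otimes v) = (B \otimes v) \circ (u \otimes A)$ and observe that if both constituents of each factorization pattern are weak equivalences, then so is the composite; this reduces the general statement to showing that $u \otimes A$ and $A \otimes v$ are weak equivalences.

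Next, I apply part~(ii) of \cref{weak-equiv-joyal-characterization-marked}: to check that $u \otimes A$ is a weak equivalence, it suffices to verify that $\hom(u \otimes A, X)$ is a homotopy equivalence for every fibrant marked semisimplicial set $X$. By closure of the geometric symmetric monoidal structure on $\MS$, there is a natural isomorphism $\hom(u \otimes A, X) \simeq \hom(u, \hom(A, X))$. Since $X$ is fibrant and the internal hom with fibrant target is fibrant, $\hom(A, X)$ is fibrant as well, so the desired statement becomes precisely that $\hom(u, -)$ sends $\hom(A, X)$ (a fibrant object) to a homotopy equivalence, which is the characterization of $u$ being a weak equivalence. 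Equivalently, one can invoke \cref{hom-pres-h-equiv-marked}: since $\hom(u, Y)$ is a homotopy equivalence for all fibrant $Y$, and $\hom(A, X)$ is such, we are done.

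There is essentially no obstacle here; the lemma is formal given the characterization of weak equivalences via the set-enrichment on fibrant targets, the closure properties of the internal hom, and 2-out-of-3. Note in particular that, as highlighted after the unmarked version, the analogous claim for the cartesian product fails already in the unmarked case and thus also in the marked case (consider the maximally marked version of the same counterexample $\delta^0 \co \Delta^0 \to \Delta^1$), so one should not expect a simpler proof that routes through cartesian products.
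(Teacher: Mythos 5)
Your proof is correct and is essentially the paper's argument: the marked lemma is proved by adapting \cref{geometric-product-pres-weak-equiv} verbatim, i.e.\ reduce to $v=\id_A$ by 2-out-of-3, then use part~(ii) of \cref{weak-equiv-joyal-characterization-marked} together with the closed geometric monoidal structure and fibrancy of $\hom(A,X)$ for fibrant $X$. The only cosmetic difference is which factor you transpose inside the internal hom (the paper applies $\hom(A,-)$ to the homotopy equivalence $\hom(u,X)$ and cites \cref{hom-pres-h-equiv-marked}, while you move $u$ outside and apply the characterization directly); both versions are fine.
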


\begin{corollary} \label{cof-w-equiv-tensor-marked}
If $u$ is a weak equivalence and cofibration and $v$ is any map, then the Leibniz geometric product $u \hatotimes v$ is a weak equivalence.
In particular, cofibrations that are weak equivalences are closed under Leibniz geometric product with cofibrations.
\qed
\end{corollary}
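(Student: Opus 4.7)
The plan is to mirror the proof of the unmarked analogue \cref{cof-w-equiv-tensor} verbatim, now leveraging the marked versions of the supporting lemmas that have already been established.

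For the first part, I would unfold the definition of the Leibniz geometric product of $u \co A \arrcof B$ with $v \co C \to D$: one has the pushout square
\[
\xymatrix{
  A \otimes C
  \ar[r]^-{A \otimes v}
  \ar[d]_{u \otimes C}
&
  A \otimes D
  \ar[d]
\\
  B \otimes C
  \ar[r]
&
  P
\rlap{,}}
\]
with induced map $u \hatotimes v \co P \to B \otimes D$ fitting into a factorization of $u \otimes D$ as $A \otimes D \to P \to B \otimes D$. By \cref{geometric-product-pres-weak-equiv-marked}, the map $u \otimes C$ is a weak equivalence, and by \cref{marked-cof-tensor} (applied as $u \hatotimes (0 \to C)$) it is also a cofibration. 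By \cref{cof-weak-equiv-closure-marked}, the pushout $A \otimes D \to P$ is again a cofibration and a weak equivalence. Similarly, $u \otimes D$ is a weak equivalence by \cref{geometric-product-pres-weak-equiv-marked}. Since $u \otimes D$ factors as $(u \hatotimes v) \circ (A \otimes D \to P)$, 2-out-of-3 (part of \cref{weak-equiv-2-out-of-6-marked}) forces $u \hatotimes v$ to be a weak equivalence.

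For the second part, if $v$ is additionally a cofibration, then $u \hatotimes v$ is a cofibration by \cref{marked-cof-tensor}, and it is a weak equivalence by the first part, so it is a cofibration that is a weak equivalence.

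I do not expect any genuine obstacle: all the required ingredients (preservation of weak equivalences by the geometric product, closure of cofibrations under the Leibniz geometric product, closure of trivial cofibrations under pushout, and 2-out-of-3) are already available in the marked setting. The only mild point of care is making sure that $u \otimes C$ and $u \otimes D$ qualify as cofibrations and not merely as Leibniz products, but this is immediate from viewing them as $u \hatotimes (0 \to C)$ and $u \hatotimes (0 \to D)$ and invoking \cref{marked-cof-tensor}.
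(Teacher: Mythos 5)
Your proof is correct and is essentially the paper's own argument: the paper proves the marked corollary exactly as the unmarked \cref{cof-w-equiv-tensor}, namely by combining \cref{geometric-product-pres-weak-equiv-marked} and \cref{cof-weak-equiv-closure-marked} with 2-out-of-3 for the first part, and \cref{marked-cof-tensor} for the second. Your write-up just spells out the factorization $u \otimes D = (u \hatotimes v) \circ (A \otimes D \to P)$ that the paper leaves implicit.
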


\begin{lemma}
There is map of cylinders
\[
\xymatrix@C+0.5cm@R-0.5cm{
&
  I \otimes (A \star B)
  \ar@{.>}[dd]
\\
  A \star B
  \ar[ur]^(0.45){\delta_0 \otimes (A \star B)}
  \ar[dr]
  \ar@<-0.2em>@{{}{ }{}}[dr]_-(0.3){(\delta_0 \otimes A) \star (\delta_0 \otimes B)}
&&
  A \star B
  \ar[ul]_(0.45){\delta_1 \otimes (A \star B)}
  \ar[dl]
  \ar@<+0.2em>@{{}{ }{}}[dl]^-(0.3){(\delta_1 \otimes A) \star (\delta_1 \otimes B)}
\\&
  (I \otimes A) \star (I \otimes B)
}
\]
natural in marked semisimplicial sets $A$ and $B$.
\qed
\end{lemma}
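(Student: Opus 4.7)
The plan is to mirror the construction of the unmarked cylinder-join map from the analogous lemma earlier in the paper: extend $I \otimes -$ from marked semisimplicial sets to augmented marked semisimplicial sets via left Kan extension (which by full faithfulness restricts back to the original), observe that both cylinder functors are separately cocontinuous in each of $A$ and $B$, and produce the natural comparison on representables $A = \Delta^a$, $B = \Delta^b$ for $a, b \geq -1$. On underlying semisimplicial sets the map is exactly the one already constructed in the unmarked case, and the endpoint compatibility with the cylinder structure transfers from that case together with the observation that the cylinder endpoints $\delta_0, \delta_1$ trivially respect markings. The only new content is the verification that the map sends marked edges to marked edges.

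This marking check is streamlined by the observation that $I = (\Delta^1)^\sharp$ is maximally marked, so an edge of $I \otimes X$ (for any marked semisimplicial set $X$) is marked precisely when its $X$-edge-component, if any, is marked. Writing an edge of $I \otimes (A \star B)$ as a jointly monic pair $(p, q)$ together with $x \in I$ and $y \in A \star B$, and splitting on the decomposition $[1] = [a'] \star [b']$ induced by the second map, there are three cases to consider: $y$ lies entirely in $A$, $y$ lies entirely in $B$, or $y$ is a join-spanning edge from a vertex of $A$ to a vertex of $B$.

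In the first two cases the image lies in $I \otimes A$ or $I \otimes B$ with precisely the same $I$- and $A$- (respectively $B$-) edge components as the source, so marking status is preserved on the nose. In the join-spanning case, $y$ is by construction unmarked in $A \star B$ (the plain join adds no marked edges), hence the source edge is unmarked regardless of whether the $I$-projection is an edge; its image is a join-spanning edge of $(I \otimes A) \star (I \otimes B)$, which is likewise unmarked. So marking is preserved in all cases, and naturality in marked $A$ and $B$ transfers verbatim from naturality at the semisimplicial level. The main potential obstacle is purely the bookkeeping of which edge-components exist and how the join decomposition interacts with them; there is no new combinatorics beyond the unmarked proof.
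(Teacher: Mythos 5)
Your proposal is correct and takes essentially the paper's approach: the paper gives no separate proof in the marked case (the unmarked argument is declared to be adapted verbatim), and your route -- reuse the unmarked comparison map on underlying semisimplicial sets and verify that it preserves markings, with endpoint compatibility and naturality transferring through the faithful forgetful functor -- is exactly that adaptation. The marking check is right because an edge of the marked geometric product is marked precisely when all of its edge components are marked, so with $I$ maximally marked only the $(A \star B)$-edge-component matters and join-spanning edges are unmarked on both sides; note only that your opening reduction to the representables $\Delta^a$, $\Delta^b$ would, if used as the actual construction in the marked setting, also have to treat the generator $(\Delta^1)^\sharp$, but your argument via the underlying unmarked map makes that unnecessary.
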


\begin{corollary} \label{join-pres-homotopy-and-h-equiv-marked}
The join monoidal structure preserves homotopies and homotopy equivalences, \ie given homotopies $f_0 \sim f_1$ and $g_0 \sim g_1$, we have $f_0 \star g_0 \sim f_1 \star g_1$, and given homotopy equivalences $u$ and $v$, then $u \star v$ is a homotopy equivalence.
\qed
\end{corollary}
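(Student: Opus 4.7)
The plan is to mimic verbatim the proof of Corollary \ref{join-pres-homotopy-and-h-equiv} in the unmarked case, replacing the semisimplicial cylinder comparison by the marked one supplied by the preceding lemma. The statement has two halves, and the second is formal consequence of the first via functoriality of the join.

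For the homotopy-preservation clause, given homotopies $H \co f_0 \sim f_1$ between maps $f_0, f_1 \co A_1 \to A_2$ and $K \co g_0 \sim g_1$ between $g_0, g_1 \co B_1 \to B_2$, presented as maps $H \co I \otimes A_1 \to A_2$ and $K \co I \otimes B_1 \to B_2$, I would form the composite
\[
I \otimes (A_1 \star B_1) \to (I \otimes A_1) \star (I \otimes B_1) \xrightarrow{H \star K} A_2 \star B_2,
\]
where the first arrow is the cylinder morphism from the preceding lemma. The two commuting triangles in that lemma (the ones involving $(\delta_i \otimes A) \star (\delta_i \otimes B)$ for $i = 0, 1$) guarantee that this composite restricts to $f_0 \star g_0$ on $\delta_0 \otimes (A_1 \star B_1)$ and to $f_1 \star g_1$ on $\delta_1 \otimes (A_1 \star B_1)$, so it is the desired homotopy $f_0 \star g_0 \sim f_1 \star g_1$.

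For the second clause, given homotopy equivalences $(u, u', H_1, H_2)$ between $A_1, A_2$ and $(v, v', K_1, K_2)$ between $B_1, B_2$, I would exhibit $u \star v$ as a homotopy equivalence with inverse $u' \star v'$. Functoriality of $\star$ yields $(u' \star v') \circ (u \star v) = (u' u) \star (v' v)$ and $(u \star v) \circ (u' \star v') = (u u') \star (v v')$, and applying the homotopy-preservation clause just established to the pairs $(H_1, K_1)$ and $(H_2, K_2)$ joins them into homotopies $(u' u) \star (v' v) \sim \id_{A_1} \star \id_{B_1} = \id_{A_1 \star B_1}$ and analogously on the other side.

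I do not anticipate any real obstacle: every ingredient is formal once the cylinder comparison of the preceding lemma is in hand, and the argument is transported from the unmarked case without change. The only point that requires a moment's inspection is that the marked cylinder comparison genuinely satisfies the stated boundary compatibility with $\delta_0, \delta_1$, but this is internal to the preceding lemma and already verified there.
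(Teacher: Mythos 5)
Your proposal is correct and is exactly the argument the paper intends: the corollary is stated with no written proof precisely because it follows formally from the preceding cylinder-comparison lemma, via the composite $I \otimes (A_1 \star B_1) \to (I \otimes A_1) \star (I \otimes B_1) \to A_2 \star B_2$ for homotopies and then functoriality of $\star$ for equivalences, just as you describe.
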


\begin{corollary} \label{join-pres-weak-equiv-marked}
The join monoidal structure preserves weak equivalences, \ie given weak equivalences $u$ and $v$, then $u \star v$ is a weak equivalence.
\qed
\end{corollary}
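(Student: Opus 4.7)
The plan is to mirror, essentially verbatim, the proof of the unmarked version \cref{join-pres-weak-equiv}. The key reformulation I would use is that a map $A \to B$ of marked semisimplicial sets is a weak equivalence if and only if it fits into a commuting square
\[
\xymatrix{
  A
  \ar[r]^-{\anod}
  \ar[d]
&
  X
  \ar[d]
\\
  B
  \ar[r]^-{\anod}
&
  Y
}
\]
with anodyne horizontal maps and $X \to Y$ a homotopy equivalence, where $X$ and $Y$ are not required to be fibrant. The forward direction holds by definition of weak equivalence. The reverse direction follows by combining \cref{h-equiv-is-weak-equiv-marked} (the homotopy equivalence $X \to Y$ is a weak equivalence), \cref{anodyne-is-weak-equiv-marked} (the horizontal anodyne maps are weak equivalences), and 2-out-of-3 from \cref{weak-equiv-2-out-of-6-marked}.

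Given this characterization, the proof is then immediate: if $u \co A \to B$ and $v \co C \to D$ are weak equivalences, pick squares witnessing weak equivalence as above for each, and form the join of the two squares. Join preserves the commuting square structure, and by \cref{marked-anodyne-join-cof}, the Leibniz join of two anodyne maps is anodyne (in particular ordinary joins of anodyne maps are anodyne when regarded as compositions with anodynes). Then by \cref{join-pres-homotopy-and-h-equiv-marked}, the join of homotopy equivalences is again a homotopy equivalence. So $u \star v$ sits in a square with anodyne horizontals and a homotopy equivalence on the right, hence is a weak equivalence.

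More precisely, to produce the witnessing square for $u \star v$ one applies the join bifunctorially to the two chosen squares, using that $X \star C \to X \star D \to Y \star D$ can be compared to $X \star D \to Y \star D$ via $X \star v$; the resulting cube decomposes by 2-out-of-3 so that only joins of anodynes and joins of homotopy equivalences appear. I do not expect any serious obstacle here: the entire argument is formal once the characterization of weak equivalences in the first paragraph is established, and \cref{marked-anodyne-join-cof} plus \cref{join-pres-homotopy-and-h-equiv-marked} provide exactly the closure properties needed. The mild subtlety, as in the unmarked case, is remembering that the characterization allows $X$ and $Y$ to be non-fibrant, since taking fibrant replacements is not in general compatible with joins.
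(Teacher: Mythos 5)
Your proposal is correct and is essentially the paper's own argument: the marked corollary is proved by adapting the proof of \cref{join-pres-weak-equiv} verbatim, namely characterizing weak equivalences by squares with anodyne legs over a homotopy equivalence of not-necessarily-fibrant objects (via \cref{anodyne-is-weak-equiv-marked}, \cref{h-equiv-is-weak-equiv-marked}, and 2-out-of-3) and then using that the join preserves anodyne maps (\cref{marked-anodyne-join-cof}) and homotopy equivalences (\cref{join-pres-homotopy-and-h-equiv-marked}).
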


\begin{corollary} \label{cof-w-equiv-join-cof-marked}
The Leibniz join of a cofibration that is a weak equivalence with any map is a weak equivalence.
Cofibrations that are weak equivalences are closed under Leibniz join with cofibrations.
\qed
\end{corollary}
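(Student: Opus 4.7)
The plan is to follow the strategy of the unmarked analogue \cref{cof-w-equiv-join-cof}, combining \cref{join-pres-weak-equiv-marked}, \cref{marked-cof-join}, \cref{cof-weak-equiv-closure-marked}, and 2-out-of-3 (\cref{weak-equiv-2-out-of-6-marked}). Let $u \co A \to B$ be a cofibration that is a weak equivalence and let $v \co C \to D$ be an arbitrary map. First I would observe that, for any marked semisimplicial set $Z$, the map $u \star Z$ is again a cofibration and a weak equivalence: cofibrancy follows by inspection of the join at each level together with the behaviour of markings (newly created edges in $A \star Z$ are unmarked, so $u \star Z$ is a decidable inclusion of markings precisely because $u$ is), and the weak equivalence property is \cref{join-pres-weak-equiv-marked} applied to $u$ and $\id_Z$.

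Next I would analyse the pushout square
\[
\xymatrix{
  A \star C
  \ar[r]^-{A \star v}
  \ar[d]_{u \star C}
&
  A \star D
  \ar[d]
\\
  B \star C
  \ar[r]
&
  B \star C +_{A \star C} A \star D
\rlap{.}}
\]
The left vertical map is a cofibration that is a weak equivalence by the previous step, so by \cref{cof-weak-equiv-closure-marked} the induced map $A \star D \to B \star C +_{A \star C} A \star D$ is one as well. Its composite with $u \hatjoin v \co B \star C +_{A \star C} A \star D \to B \star D$ coincides with $u \star D$, which is itself a cofibration that is a weak equivalence. Then 2-out-of-3 applied to this factorization yields that $u \hatjoin v$ is a weak equivalence.

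For the second assertion, if $v$ is additionally a cofibration then \cref{marked-cof-join} shows $u \hatjoin v$ is a cofibration, and combined with the first part it is a cofibration that is a weak equivalence. No step poses any real obstacle; the argument is a routine transport of the unmarked proof to the marked setting, relying only on closure and preservation properties already established.
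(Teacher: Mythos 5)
Your argument is correct and is exactly the route the paper intends for this corollary: the marked transcription of the proof of \cref{cof-w-equiv-tensor} (and of the unmarked \cref{cof-w-equiv-join-cof}), namely factoring $u \star D$ through the pushout and combining \cref{join-pres-weak-equiv-marked}, \cref{cof-weak-equiv-closure-marked}, 2-out-of-3, and \cref{marked-cof-join}. Nothing further is needed.
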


\subsection{Comparison with unmarked semisimplicial sets}

Let us relate the homotopical aspects of semisimplicial and marked semisimplicial sets.

The minimal marking functor $(-)^\flat$ preserves cofibrations, but not weak equivalences (for example, the one-dimensional horn inclusions are not weak equivalences in marked semisimplicial sets).
The marking forgetting functor $(-)^\unmarked$ preserves cofibrations and anodyne maps.
By adjointness, the maximal marking functor $(-)^\sharp$ preserves fibrations.

\begin{lemma} \label{forget-marking-pres-w-equiv}
The marking forgetting functor preserves weak equivalences.
\end{lemma}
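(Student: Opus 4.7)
The plan is to combine the already-noted preservation of anodyne maps by $(-)^\unmarked$ with the characterization of weak equivalences as the 2-out-of-6 closure of anodyne maps. By \cref{weak-equivalences-as-closure-of-andoyne-marked}, every weak equivalence in marked semisimplicial sets lies in the closure under 2-out-of-6 of the class of anodyne maps. The marking-forgetting functor sends anodyne maps to anodyne maps (as noted just before this lemma), anodyne maps in unmarked semisimplicial sets are themselves weak equivalences by \cref{anodyne-is-weak-equiv}, and weak equivalences there are closed under 2-out-of-6 by \cref{weak-equiv-2-out-of-6}. Together, these three facts immediately imply the claim.

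To see where the argument depends on the marked structure, it is instructive to unfold preservation of anodyne maps at the level of generators: minimally marked inner horn inclusions are sent to ordinary inner horn inclusions, outer horn inclusions with the critical edge marked are sent to ordinary outer horn inclusions, and the marking saturation inclusion $(\Delta^3, \braces{02, 13}) \to (\Delta^3)^\sharp$ collapses to the identity on $\Delta^3$. Cocontinuity of $(-)^\unmarked$ (as a left adjoint to $(-)^\sharp$), together with the closure properties \cref{cof-weak-equiv-closure,weak-equiv-coproduct}, then extends this from generators to all $\omega$-composites of pushouts of coproducts of generating anodyne maps, and a final application of 2-out-of-6 reaches every weak equivalence.

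There is essentially no obstacle: the argument is a direct consequence of the machinery already set up in the unmarked case, and the only mildly interesting observation is that the marking saturation inclusion is invisible to $(-)^\unmarked$. This is as expected, since the extra lifting condition it encodes (a 2-out-of-6 property for marked edges) is extra structure with no counterpart once markings are forgotten.
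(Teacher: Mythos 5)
Your proof is correct, but it takes a genuinely different route from the paper's. The paper argues by adjointness: for a map $f$ of marked semisimplicial sets and a semisimplicial set $X$, the map $\hom(f^\unmarked, X)$ is a homotopy equivalence exactly when $\hom(f, X^\sharp)$ is, and since $(-)^\sharp$ preserves fibrations (hence fibrant objects), the claim follows from the mapping-space characterizations of weak equivalences, \cref{weak-equiv-joyal-characterization,weak-equiv-joyal-characterization-marked}. You instead use the internal generation of weak equivalences: by \cref{weak-equivalences-as-closure-of-andoyne-marked} they are the 2-out-of-6 closure of anodyne maps, $(-)^\unmarked$ sends anodyne maps to anodyne maps (hence to weak equivalences by \cref{anodyne-is-weak-equiv}), and the preimage under $(-)^\unmarked$ of the unmarked weak equivalences contains the anodyne maps and is closed under 2-out-of-6 by functoriality together with \cref{weak-equiv-2-out-of-6}; this last routine observation is the step hiding in your ``immediately imply,'' and it is worth stating since it is what makes the closure argument close. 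Your template is the same one the paper itself uses for $U$ and $L$ in \cref{U-pres-w-equiv,L-create-w-equiv}, and it is somewhat more robust: it applies to any functor carrying anodyne maps into the weak equivalences of a homotopical category, with no need for a right adjoint preserving fibrancy. The paper's adjointness argument is shorter and never mentions generators; correspondingly, your second paragraph's unfolding over generators and the appeal to \cref{cof-weak-equiv-closure,weak-equiv-coproduct} are redundant, since preservation of anodyne maps was already recorded just before the lemma, though the observation that the marking saturation inclusion becomes an identity (and outer horns with marked critical edge become ordinary outer horns, which are anodyne in the unmarked setting) is a pleasant sanity check.
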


\begin{proof}
Given a map $f \co A \to B$ of marked semisimplicial sets and a semisimplicial set $X$, note by adjointness that $\hom(f^\unmarked, X)$ is a homotopy equivalence exactly if $\hom(f, X^\sharp)$ is a homotopy equivalence.
Then claim then follows Using the characterizations \cref{weak-equiv-joyal-characterization,weak-equiv-joyal-characterization-marked} of weak equivalences.
\end{proof}

The maximal marking functor preserves cofibrations and anodyne maps.

\begin{lemma} \label{maximal-marking-pres-w-equiv}
The maximal marking functor preserves weak equivalences.
\end{lemma}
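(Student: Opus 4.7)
The plan is to reduce to the generating-set characterisation of weak equivalences from \cref{weak-equivalences-as-closure-of-andoyne}: every weak equivalence in $\widehat{\Delta_+}$ lies in the closure of the anodyne maps under 2-out-of-6.  Introduce the class $W$ of semisimplicial maps $f$ for which $f^\sharp$ is a weak equivalence in $\MS$.  It then suffices to show (a) $W$ contains all anodyne maps of $\widehat{\Delta_+}$, and (b) $W$ is closed under 2-out-of-6.

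For (a), we already recorded (in the paragraph immediately preceding \cref{marked-anodyne-tensor-cof}) that the maximal marking functor preserves anodyne maps; combined with \cref{anodyne-is-weak-equiv-marked}, this says that $f^\sharp$ is anodyne, hence a weak equivalence, whenever $f$ is anodyne, so $W$ contains all anodyne maps.  For (b), given three composable maps $f, g, h$ in $\widehat{\Delta_+}$ with $gf, hg \in W$, functoriality of $(-)^\sharp$ turns this into a 2-out-of-6 instance for $f^\sharp, g^\sharp, h^\sharp$ in $\MS$; by \cref{weak-equiv-2-out-of-6-marked} all of $f^\sharp, g^\sharp, h^\sharp, (hgf)^\sharp$ are weak equivalences, so all of $f, g, h, hgf$ lie in $W$.

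I do not anticipate any real difficulty, since the proof is a purely formal consequence of previously established closure and preservation properties.  One might instead try to mirror the proof of \cref{forget-marking-pres-w-equiv} via the hom-characterisation \cref{weak-equiv-joyal-characterization-marked}, attempting to transpose along $(-)^\sharp \dashv \core$; the main obstacle to this route is that $\hom_{\MS}(A^\sharp, X)$ is \emph{not} simply $\core(\hom_{\widehat{\Delta_+}}(A, \core X))^\sharp$ or the like, because the marked representables $\Delta^n$ parametrising the internal hom in $\MS$ are minimally (not maximally) marked, so $A^\sharp \otimes \Delta^n$ has strictly fewer marked edges than $(A \otimes \Delta^n)^\sharp$.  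The 2-out-of-6 route above cleanly sidesteps this subtlety.
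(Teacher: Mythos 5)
Your proof is correct, but it takes a genuinely different (equally formal) route from the paper's. The paper's own proof simply unfolds the definition of weak equivalence: a weak equivalence comes with a square of anodyne maps into fibrant objects together with a homotopy equivalence between them, and $(-)^\sharp$ preserves all three ingredients --- anodyne maps (as recorded just before the lemma), fibrant objects (by adjunction with $(-)^\unmarked$, since the underlying maps of the marked generating anodyne maps are horn inclusions or isomorphisms), and homotopies and homotopy equivalences (since $(-)^\sharp$ is symmetric monoidal for $\otimes$ and sends $I$ to the marked interval) --- so applying $(-)^\sharp$ to the whole square directly witnesses $f^\sharp$ as a weak equivalence. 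You instead run the generators-plus-closure argument through \cref{weak-equivalences-as-closure-of-andoyne}, which is exactly the pattern the paper itself uses for \cref{U-pres-w-equiv}: the class of maps $f$ with $f^\sharp$ a weak equivalence contains the anodyne maps (anodyne preservation plus \cref{anodyne-is-weak-equiv-marked}) and is closed under 2-out-of-6 by functoriality of $(-)^\sharp$ and \cref{weak-equiv-2-out-of-6-marked}, hence contains all weak equivalences. Your route needs strictly fewer inputs --- only the anodyne-preservation fact, not preservation of fibrant objects or of homotopy equivalences --- while the paper's route is more explicit about witnesses, exhibiting $X^\sharp$ and $Y^\sharp$ as fibrant replacements of $A^\sharp$ and $B^\sharp$, which is occasionally useful elsewhere. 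Note that both arguments ultimately rest on the same recorded (and not further justified) assertion that $(-)^\sharp$ sends horn inclusions to anodyne maps, so your appeal to it puts you on exactly the same footing as the paper; your closing remark about why the $\hom$-transposition route in the style of \cref{forget-marking-pres-w-equiv} is awkward is sensible but not needed for the argument.
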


\begin{proof}
This follows directly from our definition of weak equivalencess.
All the ingredients are preserved: anodyne maps, fibrant objects, homotopy equivalences.
\end{proof}

\subsection{Comparison with marked simplicial sets}

Let us write $(\Delta_-', \Delta_+')$ for the Reedy structure on $\Delta'$.
We write $L \dashv U \dashv R$ for the forgetful functor from marked simplicial to marked semisimplicial sets and left and right adjoint.
We denote the induced monad on marked semisimplicial sets by $(UL, \eta, \mu)$.
Recall that $L$ sends the geometric to the cartesian symmetric monoidal structure.

The \emph{interval} in marked simplicial sets is given by the image of the semisimplicial inverval under $L$ and will be denoted using the same symbols.
Note that it has a contraction and connections.
It is equivalently the image of the interval in simplicial sets under maximal marking.
With the cartesian monoidal structure, this induces associated notions of cylinder, homotopy, homotopy equivalence.
It follows that $L$ preserves homotopies and homotopy equivalences.
As in the unmarked case, we also have the corresponding statement for $U$.

\begin{lemma} \label{simplicial-vs-semisimplicial-cylinder-marked}
Functorially in a simplicial set $A$, there is cylinder morphism from the semisimplicial cylinder on $UA$ to the image under $U$ of the simplicial cylinder on $A$.
\qed
\end{lemma}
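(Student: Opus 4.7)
The plan is to mirror the proof of the unmarked version (Lemma \ref{simplicial-vs-semisimplicial-cylinder}) verbatim, relying on the fact that all the structural ingredients used there — monoidality of $L$ with respect to the geometric/cartesian symmetric monoidal structures, the $L \dashv U$ adjunction, and the compatibility of the interval with $L$ — have been explicitly set up for the marked case earlier in the section.

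Concretely, I would first invoke the $L \dashv U$ adjunction to transpose the desired cylinder map
\[
(I \otimes UA, \delta_0 \otimes UA, \delta_1 \otimes UA) \to U(I \times A, \delta_0 \times A, \delta_1 \times A)
\]
into a map
\[
(L(I \otimes UA), L(\delta_0 \otimes UA), L(\delta_1 \otimes UA)) \to (I \times A, \delta_0 \times A, \delta_1 \times A).
\]
Next, I would use the monoidality of $L$ (sending the geometric monoidal structure on marked semisimplicial sets to the cartesian monoidal structure on marked simplicial sets) together with the fact that the marked simplicial interval is by construction $LI$ of the marked semisimplicial interval, to rewrite the left-hand side as $(I \times LUA, \delta_0 \times LUA, \delta_1 \times LUA)$. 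Finally, I would take the cylinder morphism to be $I \times \epsilon_A$ where $\epsilon \co LU \to \Id$ is the counit; naturality of $\epsilon$ and of the monoidal constraints gives naturality in $A$ for free.

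There is essentially no obstacle here: everything needed has already been established, in particular the symmetric monoidality of $L$ in the marked setting and the definition of the marked simplicial interval as the image of the marked semisimplicial one under $L$. The only mild subtlety is noting that the statement is intended for a marked simplicial set $A$ (not an unmarked one, as a literal reading might suggest), but the construction is insensitive to that: whatever markings $A$ carries, the counit $\epsilon_A$ is a map of marked simplicial sets and $I \times \epsilon_A$ is the required cylinder morphism.
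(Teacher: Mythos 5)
Your proposal is correct and follows essentially the same route as the paper: the paper's (omitted) proof is exactly the verbatim adaptation of \cref{simplicial-vs-semisimplicial-cylinder}, transposing along $L \dashv U$, using monoidality of $L$ and the fact that the marked simplicial interval is $LI$, and taking the counit. Your remark that $A$ should be read as a marked simplicial set is also apt.
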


\begin{corollary} \label{simplicial-to-semisimplicial-homotopy-marked}
The forgetful functor $U$ functorially preserves homotopies and homotopy equivalences.
\qed
\end{corollary}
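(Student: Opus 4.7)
The plan is to derive this corollary directly from the preceding lemma, exactly as in the unmarked case (\cref{simplicial-to-semisimplicial-homotopy}). Given a homotopy $h \co I \times A \to B$ between maps $f_0, f_1 \co A \to B$ of marked simplicial sets, apply $U$ to obtain $Uh \co U(I \times A) \to UB$ and then precompose with the cylinder morphism $\phi_A \co I \otimes UA \to U(I \times A)$ of \cref{simplicial-vs-semisimplicial-cylinder-marked}. The resulting map $Uh \circ \phi_A \co I \otimes UA \to UB$ is a homotopy between $Uf_0$ and $Uf_1$ precisely because $\phi_A$ is a cylinder morphism, \ie its composites with the semisimplicial endpoint inclusions $\delta_0 \otimes UA, \delta_1 \otimes UA$ agree with the images under $U$ of the simplicial endpoint inclusions $\delta_0 \times A, \delta_1 \times A$.

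Functoriality in the homotopy $h$ is immediate. Functoriality in $A$ (that is, a well-defined action on the arrow category producing natural assignments) follows from naturality of $\phi_A$ in $A$, which is part of the statement of \cref{simplicial-vs-semisimplicial-cylinder-marked}.

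For the second clause, given a homotopy equivalence $(f, g, H, K)$ between marked simplicial sets $A$ and $B$, apply $U$ to the data $f, g$ and apply the first part to each of $H$ and $K$, producing semisimplicial homotopies $U H \circ \phi_A \co Ug \circ Uf \sim \id_{UA}$ and $U K \circ \phi_B \co Uf \circ Ug \sim \id_{UB}$. This exhibits $(Uf, Ug, UH \circ \phi_A, UK \circ \phi_B)$ as a homotopy equivalence between $UA$ and $UB$, and the assignment is functorial in the equivalence data.

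There is no real obstacle here; the only subtlety worth checking is that the cylinder morphism in the marked case correctly respects the markings on the endpoint inclusions (both $\delta_0$ and $\delta_1$ factor through $\core(I)$ since the interval is maximally marked), but this is built into \cref{simplicial-vs-semisimplicial-cylinder-marked}.
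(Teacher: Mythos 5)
Your proposal is correct and is essentially the paper's own argument: the corollary is a direct consequence of \cref{simplicial-vs-semisimplicial-cylinder-marked}, obtained by precomposing the image under $U$ of a simplicial homotopy with the natural cylinder morphism, exactly as in the unmarked case. Nothing further is needed.
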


The following is analogous to the unmarked case.

Let $S'$ be the subcategory of $[[1], \Delta']$ with objects restricted to maps in $\Delta_-'$ and codomain part of morphisms restricted to maps in $\Delta_+'$.
We put a coverage it generated by the singleton cover of the identity on $[1]_\m$ by the identity on $[1]$.
We have a two-sided opfibration
\begin{equation} \label{simplicial-vs-semisimplicial-span-marked}
\begin{gathered}
\xymatrix@C-0.3cm{
&
  S'
  \ar[dl]_{\ev_0}
  \ar[dr]^{\ev_1}
\\
  \Delta'
&&
  \Delta_+'
}
\end{gathered}
\end{equation}
with $\ev_0$ a discrete Grothendieck fibration.
Thinking of marked simplicial and marked semisimplicial sets as full subcategories of discrete Grothendieck fibrations over $\Delta'$ and $\Delta_+'$ on separated objects, the functor $L$ is given by the linear polynomial functor (in our generalized sense) above; note that pullback along $\ev_1$ and postcomposition with $\ev_0$ preserves separatedness.
Explicitly, given a marked semisimplicial set $A$, recall that the underlying simplicial set $LA^\unmarked$ of $LA$ has as $n$-simplices pairs $(s, a)$ where $s \co [n] \to [k]$ is epi and $a \in A_k$.
Thus, the edges in $LA^\unmarked$ are of the form $(\id_{[1]}, f)$ with $f \in A_1$ or $(s_0, a)$ with $a \in A_0$.
The marking of $LA$ consists of those edges $(\id_{[1]}, f)$ for which $f$ is marked in $A$ and all edges $(s_0, a)$ (as they are degenerate).

Given a category $A$, let us write $[A, \Delta']_-^+$ for the subcategory of $[A, \Delta']$ with objects restricted to functors $A \to \Delta_-'$ and morphisms restricted to natural transformations with components in $\Delta_+'$.
We have a pullback
\[
\xymatrix{
  [[1], \Delta']_-^+
  \ar[r]
  \ar[d]_{\ev_0}
  \pullbackcorner{dr}
&
  S
  \ar[d]^{\ev_0}
\\
  \Delta_+'
  \ar[r]^-{i}
&
  \Delta'
\rlap{.}}
\]
The composite linear polynomial functor $UL$ (on presheaves over $\Delta_+'$) is specified by the span
\[
\xymatrix@C-0.5cm{
&
  [[1], \Delta']_-^+
  \ar[dl]_{\ev_0}
  \ar[dr]^{\ev_1}
\\
  \Delta_+'
&&
  \Delta_+'
}
\]
and has $n$-th power
\[
\xymatrix@C-0.5cm{
&
  [[n], \Delta']_-^+
  \ar[dl]_{\ev_0}
  \ar[dr]^{\ev_n}
\\
  \Delta_+'
&&
  \Delta_+'
\rlap{.}}
\]
The unit and multiplication of the monad $UL$ are given by maps of linear polynomial functors
\begin{align} \label{free-simplicial-set-unit-multiplication-marked}
\begin{gathered}
\xymatrix@R-0.4cm{
&
  [[0], \Delta']_-^+
  \ar[dl]_{\ev_0}
  \ar[dr]^{\ev_0}
  \ar[dd]^{- \circ s_0}
\\
  \Delta_+'
&&
  \Delta_+'
\rlap{,}\\&
  [[1], \Delta']_-^+
  \ar[ul]^{\ev_0}
  \ar[ur]_{\ev_1}
}
\end{gathered}
&&
\begin{gathered}
\xymatrix@R-0.4cm{
&
  [[2], \Delta']_-^+
  \ar[dl]_{\ev_0}
  \ar[dr]^{\ev_0}
  \ar[dd]^{- \circ d_1}
\\
  \Delta_+'
&&
  \Delta_+'
\rlap{.}\\&
  [[1], \Delta']_-^+
  \ar[ul]^{\ev_0}
  \ar[ur]_{\ev_1}
}
\end{gathered}
\end{align}
So $UL$ is a linear polynomial monad (in our generalized sense), in particular a cartesian monad.

\begin{lemma} \label{free-simplicial-set-homotopically-idempotent-marked}
The natural transformations $\eta UL$ and $UL\eta$ are homotopic in $[\MS, \MS]$.
\end{lemma}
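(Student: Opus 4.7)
The plan is to adapt the argument of \cref{free-simplicial-set-homotopically-idempotent} to the marked setting, with the main extra ingredient being an analogue of the adhesivity lemma \cref{delta-adhesivity-lemma} in $\Delta_\aug'$.

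First, I would use \cref{simplicial-vs-semisimplicial-cylinder-marked} to reduce the problem to constructing a homotopy $H$ relative to the cylinder obtained by applying $U$ to the cartesian cylinder on $L$. Using the linear polynomial functor description of $UL$ and its square from \eqref{free-simplicial-set-unit-multiplication-marked}, the task becomes producing a map of (summits of spans specifying) linear polynomial functors over $UL$ fitting into a diagram analogous to \eqref{free-simplicial-set-homotopically-idempotent:0}, but with $\Delta$ replaced by $\Delta'$ and $\Delta_+$ replaced by $\Delta_+'$, and with the cylinder parameter taken to be $[1]_\m$ to reflect the maximal marking of the interval in $\MS$. As in the unmarked case, the relevant functors are discrete Grothendieck fibrations over $[[1], \Delta']_-^+$, so it suffices to define $H$ on objects and morphisms and to check triangle commutativity on objects; functoriality and commutativity on morphisms will then follow automatically.

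On objects, given a span $[1]_\m \leftarrow [n] \twoheadrightarrow [k]$ in $\Delta'$, I would construct the desired factorization $[n] \twoheadrightarrow [m] \twoheadrightarrow [k]$ by the same recipe: pull back $[0] \to [1] \to [1]_\m$ along $[n] \to [1]_\m$ in $\Delta_\aug'$ to obtain $[n_1] \hookrightarrow [n]$, Reedy factorize $[n_1] \to [k]$ as $[n_1] \twoheadrightarrow [k_1] \hookrightarrow [k]$, and form the pushout of $[n_1] \twoheadrightarrow [k_1]$ along $[n_1] \hookrightarrow [n]$ to define $[m]$. The boundary conditions --- triviality of one of the two factors whenever $[n] \to [1]_\m$ is constant at an endpoint --- are then verified just as before.

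The main obstacle is the morphism case, which rests on an adhesivity lemma for $\Delta_\aug'$ analogous to \cref{delta-adhesivity-lemma}. Since $[1] \to [1]_\m$ is an isomorphism on underlying sets (so that $[1]_\m$ may be viewed as a second copy of the two-element set equipped with a distinguished degeneracy to $[0]$), the forgetful functor to finite sets still creates epis and monos and preserves the pullbacks and pushouts appearing in the cube, so the Boolean case analysis of the original proof transfers with only notational adjustments. Once this adhesivity lemma is in place, the action of $H$ on morphisms is defined exactly as in the unmarked case, and marking-preservation of the resulting homotopy is automatic since the cylinder factor $[1]_\m$ forces every edge lying over a nondegenerate cylinder coordinate to be marked in the target.
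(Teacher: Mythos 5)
Your overall reduction is the same as the paper's (a map of spans over $[[1],\Delta']_-^+$ with cylinder parameter $[1]_\m$, defined on objects, with coherence checked on morphisms only), and your object-level recipe does produce sensible values on the six new objects. The gap is in the pivotal new ingredient, the claimed adhesivity lemma for $\Delta_\aug'$. The justification ``the forgetful functor to finite sets still creates epis and monos and preserves the pullbacks and pushouts appearing in the cube'' does not survive scrutiny: the Reedy classes of $\Delta'$ are not detected on underlying sets. The map $\iota \co [1] \to [1]_\m$ is a bijection on underlying sets yet is not invertible; it belongs to $\Delta_+'$ and not to $\Delta_-'$ even though its underlying function is surjective, so hypotheses and conclusions of \cref{delta-adhesivity-lemma} phrased in terms of membership in $\Delta_{\aug,\pm}'$ cannot be translated into statements about injectivity/surjectivity of underlying functions and back without further argument. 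Moreover, preservation of the cube's pushouts and pullbacks by the forgetful functor is not automatic, and $\Delta_\aug'$ does not even admit pushouts along arbitrary maps of $\Delta_{\aug,+}'$ (the pushout of $\iota$ along a monomorphism $[1] \to [2]$ does not exist, as there is no object ``$[2]$ with a marked edge''), so the first paragraph of the proof of \cref{delta-adhesivity-lemma}, which relies on the existence of such pushouts and the closure of $\Delta_{\aug,+}$ under them, does not transfer ``with only notational adjustments''. To make your route work you would have to check by hand, for the finitely many cube instances in which $[1]_\m$ actually occurs (where the pullbacks of the vertex $1$ along the three maps $[1]_\m \to [1]_\m$ and the ensuing pushouts are all degenerate, being pushouts of identities or of $[1]_\m \to [0]$), that these colimits exist, are computed as on underlying sets, and that the induced map $[m'] \to [m]$ lies in $\Delta_+'$; that verification is plausible but it is exactly the content your appeal to finite sets skips.

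Note also that this detour is avoidable, and the paper does avoid it: it keeps the unmarked homotopy of \cref{free-simplicial-set-homotopically-idempotent} unchanged and merely \emph{extends} it to the six new objects of the form $[1]_\m \leftarrow [1]_\m \twoheadrightarrow [k]$, by restricting such an object along $\iota \co [1] \to [1]_\m$, applying the unmarked $H$, and pushing out the resulting epimorphism $[1] \twoheadrightarrow [m']$ along $\iota$; coherence on morphisms is then immediate from the discrete-fibration formalism, and no marked analogue of \cref{delta-adhesivity-lemma} is needed. Your closing remark that ``marking-preservation is automatic'' is also somewhat beside the point: in this formalism the marking data is exactly the value of the construction on the $[1]_\m$-indexed objects, so defining those values correctly is the whole content of the marked statement.
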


\begin{proof}
Analogous to \cref{free-simplicial-set-homotopically-idempotent}.
In fact, the underlying homotopy in semisimplicial sets will be the one constructed there.
So it is just about preservation of marking.

As before, it suffices to construct a map $H'$ fitting into the diagram
\begin{equation} \label{free-simplicial-set-homotopically-idempotent-marked:0}
\begin{gathered}
\xymatrix{
  [[1], \Delta']_-^+
  \ar[d]_{0^* \times_{\Delta'} \id}
  \ar[dr]^{- \circ s_0}
\\
  \Delta'/[1]_\m \times_{\Delta'} [[1], \Delta']_-^+
  \ar@{.>}[r]^-(0.3){H'}
&
  [[2], \Delta']_-^+
\\
  [[1], \Delta']_-^+
  \ar[u]^{1^* \times_{\Delta'} \id}
  \ar[ur]_{- \circ s_1}
}
\end{gathered}
\end{equation}
of (summits of spans specifiying) linear polynomial functors over $UL$ (with indexing of $[[2], \Delta']_-^+$ given by $- \circ d_1$ and pullback on the left taken with respect to $\ev_0$).

We will construct $H'$ as an extension of $H$ as in~\eqref{free-simplicial-set-homotopically-idempotent:0} through the evident inclusions:
\begin{equation} \label{free-simplicial-set-homotopically-idempotent-marked:1}
\begin{gathered}
\xymatrix{
  \Delta/[1] \times_{\Delta} [[1], \Delta]_-^+  
  \ar@{.>}[r]^-{H}
  \ar[d]
&
  [[2], \Delta]_-^+  
  \ar[d]
\\
  \Delta'/[1]_\m \times_{\Delta'} [[1], \Delta']_-^+
  \ar[r]^-{H'}
&
  [[2], \Delta']_-^+
\rlap{.}}
\end{gathered}
\end{equation}
An object of the domain of $H'$ not in the image of the left map is an edge of the form
\[
\xymatrix{
  [1]_\m
&
  [1]_\m
  \ar[l]
  \ar@{->>}[r]
&
  [k]
}
\]
(note that there only six objects of this form).
The corresponding unmarked edge is given by restricting along $[1] \to [1]_\m$:
\[
\xymatrix{
  [1]
  \ar@{>->}[d]
&
  [1]
  \ar@{.>}[l]
  \ar@{.>>}[r]
  \ar@{>->}[d]
&
  [k']
  \ar@{>.>}[d]
\\
  [1]_\m
&
  [1]_\m
  \ar[l]
  \ar@{->>}[r]
&
  [k]
\rlap{.}}
\]
We take the image of the top row object under $H$ and construct the image of the bottom row object under $H'$ using a pushout:
\[
\xymatrix{
  [1]
  \ar@{->>}[r]
  \ar@{>->}[d]
&
  [m']
  \ar@{->>}[r]
  \ar@{>.>}[d]
&
  [k']
  \ar@{>->}[d]
\\
  [1]_\m
  \ar@{.>>}[r]
&
  [m]
  \ar@{.>>}[r]
  \pullbackcorner{ul}
&
  [k]
\rlap{.}}
\]
This is easily seen to extend $H$ to $H'$ as in~\eqref{free-simplicial-set-homotopically-idempotent-marked:1} and to cohere with the endpoint inclusions as in~\eqref{free-simplicial-set-homotopically-idempotent-marked:0}.
Here, recall again that $H'$ is a map between discrete Grothendieck fibrations over $[[1], \Delta']$, hence it is enough to define $H'$ on objects and check a coherence condition on morphisms.c
\end{proof}

\subsection{Comparison with quasicategories}
\label{subsection:comparison-quasicategories}

Consider weak factorization systems (cofibration, trivial fibration) and (anodyne map, naive fibration) in marked simplicial sets induced by the corresponding ones in marked semisimplicial sets via the adjunction $L \dashv U$.
Note that a map is a cofibration exactly if it is a cofibration of underlying simplicial sets (\ie a monomorphism with decidable latching object inclusions) and the inclusion of markings is decidable.
Using classical logic, there is a Cisinski model structure on marked simplicial sets where the fibrant objects and fibrations between fibrant objects coincide with the naive ones, so will be drop the adjective `naive' in these cases.
Except for an unimportant detail (marking only edges rather than simplices in all dimensions), this is the model structure for weak 1-complicial sets of Verity~\cite{verity:complicial}.
Using the established classical theory, it is easily seen that $(-)^\flat \dashv (-)^\unmarked$ forms a Quillen equivalence from the Joyal model structure for quasicategories to the model structure for weak 1-complicial sets.
Recall that the action of the left derived functor of $(-)^\flat$ on fibrant objects is the \emph{natural marking}~\cite{lurie:htt} functor $(-)^\natural$ that marks the equivalences, providing explicit (1-categorical) equivalences in both directions between fibration categories of fibrant objects.
Instead of directly comparing fibrant marked semisimplicial sets with quasicategories, it will thus suffice to relate the homotopy theories of marked semisimplicial and marked simplicial sets.

The following development is analogous to the unmarked case.
The omitted proofs are adapted verbatim.

\begin{lemma} \label{maximal-quotient-horn-marked}
The map $\Sk^n(1) \to \Sk^{n+2}(1)$ is a weak equivalence for even $n$.
\end{lemma}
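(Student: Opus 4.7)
The plan is to follow the proof of the unmarked analog \cref{maximal-quotient-horn} step by step. The combinatorial structure -- induction on even $n$ starting from the marked Dunce hat as base case, and inductive step obtained by constructing the auxiliary pushouts $A$ and $B$ from the spans $1^{(n)} \leftarrow \braces{b} \star 1^{(n-2)} \to \Delta^1 \star 1^{(n-2)}$ and $1^{(n+2)} \leftarrow \braces{b} \star 1^{(n+1)} \to \Delta^1 \star 1^{(n+1)}$, and then reducing $A \to B$ to a sequence $X_0 \to X_1 \to X_2 \to X_3 \to X_4$ via the cubes of \cref{cubes-figure} -- is formal and transfers verbatim to the marked setting, thanks to the already established marked closure properties \cref{cof-weak-equiv-closure-marked,cof-w-equiv-join-cof-marked}. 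The pushout-pasting manipulations and identifications of the four intermediate maps as pushouts of Leibniz joins of $t^{k,k+1}$ or $i^1$ with boundary inclusions are purely diagrammatic and use nothing more than cocontinuity of the join in each argument.

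The only substantive point of deviation is the use of \cref{point-join-cofib} in steps two and four of the unmarked proof (and implicitly for the Dunce hat base case), which relied on the fact that in semisimplicial sets every join with $\Delta^0$ sends cofibrations to anodyne maps. In the marked setting, the resulting outer horn inclusions are only anodyne when their critical edges are marked. I would interpret $\Sk^n(1)$ here as carrying the maximal marking on its unique edge (this is the only sensible reading, since $1$ is the terminal marked semisimplicial set), so that the critical edges of the outer horns $\Delta^0 \star \Sk^k(1) \to \Delta^1 \star \Sk^k(1)$ and of the pushouts of $h^{n+2}_0$ and $h^{n+3}_0$ arising in the proof are automatically marked. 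A marked analog of \cref{point-join-cofib} of the form ``the marking join $\Delta^0 \star_\m -$ sends cofibrations to anodyne maps'' would then be derived directly from \cref{marked-anodyne-marking-join-mono} applied to the zero-dimensional horn $0 \to \Delta^0$, and substituted for each invocation of \cref{point-join-cofib} in the transferred proof.

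The main obstacle is the careful bookkeeping: verifying at each of the four pushout steps that the arising outer horn inclusions indeed carry marked critical edges in the sense of the marked horn generators. Since $\Sk^n(1)$ has a unique edge, this reduces to tracking that single edge through the joins and pushouts in \cref{cubes-figure} and confirming that the markings are consistent. Once this is done, step one and step three proceed exactly as before by the inductive hypothesis together with \cref{cof-w-equiv-join-cof-marked} and \cref{cof-weak-equiv-closure-marked}, and the critical cube computation in step four linking to \cref{i-zero-join-quotient-boundary-inclusion} transfers unchanged since its proof is also a purely diagrammatic chase independent of markings.
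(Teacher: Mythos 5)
The paper disposes of this in one line: the marked map $\Sk^n(1) \to \Sk^{n+2}(1)$ is exactly the image under the maximal marking functor $(-)^\sharp$ of the unmarked weak equivalence of \cref{maximal-quotient-horn} (since $1$ is terminal, its skeleta are maximally marked), and $(-)^\sharp$ preserves weak equivalences by \cref{maximal-marking-pres-w-equiv}. So the whole cube argument does not need to be revisited; your proposal overlooks this transfer and instead redoes the heavy combinatorics in the marked setting.

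Beyond being far more laborious, your route has a genuine soft spot in its key new ingredient. The marked analogue of \cref{point-join-cofib} you want (``$\Delta^0 \star_\m -$ sends cofibrations to anodyne maps'') cannot be ``derived directly from \cref{marked-anodyne-marking-join-mono} applied to $0 \to \Delta^0$'': that lemma requires the first Leibniz argument to be anodyne, and $0 \to \Delta^0$ is not. Running the augmented-generator argument instead, the boundary inclusions are fine (one gets left outer horns with all cone edges marked, which are pushouts of the marked generators), but the generating cofibration $(\Delta^1)^\flat \to (\Delta^1)^\sharp$ produces $(\Delta^2, \{01,02\}) \to (\Delta^2)^\sharp$, a 2-out-of-3 marking map which is precisely of the kind the paper deliberately excludes from the anodyne generators (\cref{fibrations-arbitrary-marked}) and is not known to be anodyne; so the lemma as stated is unjustified and would have to be restricted to the instances actually used (which, for the induction step $n \geq 2$, involve only boundary cells) or weakened to ``cofibration and weak equivalence'' with a separate argument. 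Moreover, the marking issues are not confined to the four steps and the base case: already the legs $1^{(n)} \to A$ and $1^{(n+2)} \to B$ are pushouts of joins of the \emph{unmarked} outer horn $h^1_1$ with a skeleton, which are not anodyne in the marked setting, so the construction of $A$ and $B$ itself (and the marked version of \cref{dunce-hat}) must be modified, e.g.\ by using $(\Delta^1)^\sharp$ or the marking join, and these changes then have to be threaded consistently through the colimit identifications. All of this can be carried out, but it is substantially more than bookkeeping, and it is rendered unnecessary by \cref{maximal-marking-pres-w-equiv}.
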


\begin{proof}
By \cref{maximal-marking-pres-w-equiv}.
\end{proof}

\begin{corollary} \label{monoidal-to-cartesian-unit-w-equiv-marked}
The map $\top \to 1$ is a cofibration and a weak equivalence.
\end{corollary}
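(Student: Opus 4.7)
My plan is to mirror verbatim the argument of \cref{monoidal-to-cartesian-unit-w-equiv} in the marked setting. The terminal marked semisimplicial set $1$ has one simplex in each dimension with its unique edge marked (as expected, since $1 \simeq 1^\sharp$). Writing $\Sk^n(1)$ for the $n$-skeleton of $1$ in marked semisimplicial sets (with the unique edge marked for $n \geq 1$), I would observe that $\top \to 1$ is the $\omega$-composite
\[
\xymatrix{
  \Sk^0(1)
  \ar[r]
&
  \Sk^2(1)
  \ar[r]
&
  \Sk^4(1)
  \ar[r]
&
  \ldots
\rlap{.}}
\]
Each connecting map is a cofibration: the underlying semisimplicial map is a cofibration as in the unmarked case, and the marking inclusion is decidable (trivially so from $\Sk^2(1)$ onwards where both sides have their edge marked; the step $\Sk^0(1) \to \Sk^2(1)$ only adds marked data).

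Each connecting map is also a weak equivalence by \cref{maximal-quotient-horn-marked}. The conclusion then follows from closure of cofibrations that are weak equivalences under $\omega$-composition, i.e.\ \cref{cof-weak-equiv-closure-marked}.

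There is no real obstacle, since the substance of the argument has already been carried out in \cref{maximal-quotient-horn-marked}, which itself reduces to the unmarked \cref{maximal-quotient-horn} via preservation of weak equivalences under maximal marking (\cref{maximal-marking-pres-w-equiv}). The only point deserving any care is the bookkeeping of markings along the filtration, which is immediate from the fact that at each stage the unique edge of the skeleton is marked.
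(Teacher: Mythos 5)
Your argument is correct, but it is more roundabout than the paper's. The paper's entire proof of this corollary is ``By \cref{maximal-marking-pres-w-equiv}'': since $\top$ has no edges and the terminal marked semisimplicial set is $1^\sharp$, the map $\top \to 1$ in marked semisimplicial sets is literally the image under $(-)^\sharp$ of the unmarked map $\top \to 1$, which is already known to be a cofibration and weak equivalence by \cref{monoidal-to-cartesian-unit-w-equiv}; maximal marking preserves cofibrations and, by \cref{maximal-marking-pres-w-equiv}, weak equivalences, so there is nothing left to do. You instead replay the skeletal filtration $\Sk^0(1) \to \Sk^2(1) \to \cdots$ in the marked setting, invoking \cref{maximal-quotient-horn-marked} for each step and \cref{cof-weak-equiv-closure-marked} for the $\omega$-composite; this is sound (your bookkeeping of markings and of the colimit is fine, since colimits of marked semisimplicial sets mark an edge exactly when it comes from a marked edge via a coprojection), but note that \cref{maximal-quotient-horn-marked} is itself proved by the same maximal-marking reduction, so your route applies that reduction stepwise where the paper applies it once to the whole map. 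Both proofs buy the same thing; the paper's is just the shortest path through the already-established unmarked corollary.
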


\begin{proof}
By \cref{maximal-marking-pres-w-equiv}.
\end{proof}

\begin{lemma} \label{simplicial-unit-w-equiv-marked}
The unit $\eta \co \Id \to UL$ is valued in cofibrations that are weak equivalences.
\qed
\end{lemma}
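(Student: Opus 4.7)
The plan is to mirror the proof of \cref{simplicial-unit-w-equiv}, with one extra case to handle the additional generating cofibration present in the marked setting. First, viewing $\eta$ as a map of linear polynomial functors via~\eqref{free-simplicial-set-unit-multiplication-marked}, the map between span summits is a decidable inclusion, so $\eta$ is valued in cofibrations; since $UL$ preserves cofibrations, Leibniz application of $\eta$ does as well.

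For the weak equivalence claim, I would generalize to showing that Leibniz application of $\eta$ sends cofibrations to weak equivalences. Using the cofibration category structure of \cref{semisimplicial-sets-cofibration-category-marked} and the standard glueing lemma argument, this reduces the problem to the generating cofibrations, namely the boundary inclusions $\partial\Delta^n \to \Delta^n$ and the edge marking inclusion $e \co (\Delta^1)^\flat \to (\Delta^1)^\sharp$. For boundary inclusions I would proceed exactly as in the unmarked case: induction on dimension with the usual pushout and 2-out-of-3 arguments reduces to $\eta_{\Delta^n}$ being a weak equivalence for all $n \geq 0$; monoidality of $L$ and $U$ with respect to the join, together with \cref{join-pres-weak-equiv-marked} and the decomposition of $\Delta^n$ (with its minimal marking) as the $(n+1)$-fold join of $\top$, reduces this further to $\eta_\top$ being a weak equivalence, which is exactly \cref{monoidal-to-cartesian-unit-w-equiv-marked}.

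For the edge marking inclusion $e$, the key observation is that the Leibniz application of $\eta$ to $e$ is in fact an isomorphism, and hence trivially a weak equivalence. Using the explicit description of the markings of $ULA$ recalled in the preceding subsection --- the edges of $ULA$ are either $(\id_{[1]}, f)$ for $f \in A_1$ (marked iff $f$ is marked) or $(s_0, a)$ for $a \in A_0$ (always marked) --- one sees that $UL(\Delta^1)^\flat$ and $UL(\Delta^1)^\sharp$ share the same underlying semisimplicial set and differ only in whether the edge $(\id_{[1]}, \id_{[1]})$ is marked. The pushout $(\Delta^1)^\sharp \sqcup_{(\Delta^1)^\flat} UL(\Delta^1)^\flat$ adjoins precisely that marking via $\eta_{(\Delta^1)^\flat}$ sending the non-degenerate edge to $(\id_{[1]}, \id_{[1]})$, so the canonical comparison to $UL(\Delta^1)^\sharp$ is an isomorphism. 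I do not anticipate a serious obstacle: the boundary inclusion case is essentially a verbatim adaptation of the unmarked proof, while the new edge-marking case reduces to explicit bookkeeping of markings rather than any additional homotopical argument.
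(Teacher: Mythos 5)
Your proposal is correct, and its overall skeleton matches the paper's: reduce via the cofibration category structure and weak saturation to the generating cofibrations, then handle the simplices by join monoidality of $L$ and $U$ together with \cref{join-pres-weak-equiv-marked}, ending at $\eta_\top$, i.e.\ \cref{monoidal-to-cartesian-unit-w-equiv-marked}. Where you genuinely diverge is the new generator. The paper never inspects the Leibniz application to $(\Delta^1)^\flat \to (\Delta^1)^\sharp$ directly; instead it notes that it suffices to know $\eta$ is a weak equivalence on the codomains of generators, and it disposes of the cases $\Delta^0$ and $(\Delta^1)^\sharp$ by applying the maximal marking functor to the unmarked \cref{simplicial-unit-w-equiv}, using \cref{maximal-marking-pres-w-equiv} and the commutation of $(-)^\sharp$ with $L$ and $U$. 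You instead observe that the pushout corner map of $\eta$ at the edge marking inclusion is an isomorphism, since $UL(\Delta^1)^\flat$ and $UL(\Delta^1)^\sharp$ share the same underlying simplicial set and differ only in the marking of $(\id_{[1]}, \id_{[1]})$, which is exactly what the pushout adjoins; your marking bookkeeping here is accurate given how colimits of marked semisimplicial sets are computed. This is a slightly more elementary route for that generator: it needs no homotopical input at all there, whereas the paper's version is more uniform in that every marked case is referred back to the unmarked lemma through maximal marking. Both arguments ultimately rest on the same inputs for the simplex cases, so the two proofs are interchangeable; yours arguably makes it clearer that the edge marking inclusion contributes nothing beyond the unmarked combinatorics.
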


\begin{proof}
As in the proof of \cref{simplicial-unit-w-equiv}, it will suffice to show that the components of $\eta$ on $\Delta^n$ for $n \geq 0$ and $(\Delta^1)^\sharp$ are weak equivalences.
The cases of $\Delta^0$ and $(\Delta^1)^\sharp$ follow from \cref{simplicial-unit-w-equiv} by maximal marking.
The case of $\Delta^n$ for $n \geq 1$ reduces to the one for $n = 0$ by \cref{join-pres-weak-equiv} and monoidality of $L$ and $U$ with respect to the join.
\end{proof}

\begin{corollary} \label{UL-create-w-equiv-marked}
The functor $UL$ creates weak equivalences.
\qed
\end{corollary}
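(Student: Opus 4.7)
The plan is to mirror verbatim the proof of the unmarked analogue \cref{UL-create-w-equiv}, which itself is a one-line consequence of 2-out-of-3 applied to the naturality square of $\eta$. Specifically, given any map $f \colon A \to B$ of marked semisimplicial sets, naturality of $\eta \colon \Id \to UL$ produces the commuting square
\[
\xymatrix{
  A
  \ar[r]^-{\eta_A}
  \ar[d]_{f}
&
  ULA
  \ar[d]^{UL(f)}
\\
  B
  \ar[r]^-{\eta_B}
&
  ULB
\rlap{.}}
\]
By \cref{simplicial-unit-w-equiv-marked}, the horizontal maps $\eta_A$ and $\eta_B$ are weak equivalences. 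Applying 2-out-of-3 for weak equivalences (a consequence of \cref{weak-equiv-2-out-of-6-marked}), we conclude that $f$ is a weak equivalence if and only if $UL(f)$ is, which is precisely the statement that $UL$ creates weak equivalences.

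There is no real obstacle here: the entire content sits in \cref{simplicial-unit-w-equiv-marked} (already established) together with the 2-out-of-3 property embedded in the 2-out-of-6 closure of \cref{weak-equiv-2-out-of-6-marked}. Accordingly, the written proof should simply read: \emph{By \cref{simplicial-unit-w-equiv-marked} and 2-out-of-3}, matching the brevity of the unmarked counterpart.
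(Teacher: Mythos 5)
Your proposal is correct and is exactly the paper's argument: the marked statement is proved verbatim as the unmarked \cref{UL-create-w-equiv}, namely by applying 2-out-of-3 (from \cref{weak-equiv-2-out-of-6-marked}) to the naturality square of $\eta$, whose horizontal legs are weak equivalences by \cref{simplicial-unit-w-equiv-marked}.
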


\begin{corollary} \label{U-triv-cofib-to-w-equiv-marked}
$U$ sends anodyne maps to cofibrations that are weak equivalences.
\qed
\end{corollary}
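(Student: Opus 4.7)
The plan is to follow the same strategy as in the unmarked \cref{U-triv-cofib-to-w-equiv}, reducing to the generating anodyne maps and then invoking the creation result \cref{UL-create-w-equiv-marked}.

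First I would observe that $U$ preserves cofibrations (by inspection of generators: boundary inclusions and edge marking inclusions in $\widehat{\Delta'}$ map to the corresponding cofibrations in $\MS$, as $U$ is defined by restriction along $\Delta_+' \hookrightarrow \Delta'$). Combined with the fact that cofibrations in $\MS$ that are weak equivalences are closed under pushout and $\omega$-composition (\cref{cof-weak-equiv-closure-marked}) as well as coproduct (\cref{weak-equiv-coproduct-marked}), and noting that $U$ is cocontinuous as a left adjoint, the class of anodyne maps in marked simplicial sets whose image under $U$ is a cofibration that is a weak equivalence is closed under these operations. Thus it suffices to check the claim on a generating set.

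Next I would recall that the anodyne maps in marked simplicial sets are, by definition of the induced weak factorization system via $L \dashv U$, generated under weak saturation by the images under $L$ of the generating anodyne maps in $\MS$ (marked horn inclusions and the marking saturation inclusion). So the problem reduces to showing that $UL$ sends each generating anodyne map $j$ to a weak equivalence in $\MS$.

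Finally, since each such $j$ is in particular anodyne, it is a weak equivalence by \cref{anodyne-is-weak-equiv-marked}; then \cref{UL-create-w-equiv-marked} (which in particular asserts that $UL$ preserves weak equivalences, via naturality of $\eta$ and 2-out-of-3 applied to \cref{simplicial-unit-w-equiv-marked}) gives that $ULj$ is a weak equivalence. I do not anticipate any real obstacle here: the marked analogue of every ingredient used in \cref{U-triv-cofib-to-w-equiv} has already been established earlier in the section, so the proof is essentially a one-line reference to \cref{UL-create-w-equiv-marked} modulo the closure and cocontinuity bookkeeping above.
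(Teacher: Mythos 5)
Your proposal is correct and follows essentially the same route as the paper: the paper's (verbatim-adapted) proof likewise reduces by the closure properties of cofibrations that are weak equivalences (\cref{weak-equiv-coproduct-marked,cof-weak-equiv-closure-marked}) to showing that $UL$ sends the generating anodyne maps of marked semisimplicial sets to weak equivalences, which is exactly \cref{UL-create-w-equiv-marked}. The extra bookkeeping you include ($U$ preserving cofibrations, cocontinuity, identification of the generators via $L$) is just a more explicit spelling-out of the same argument.
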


\begin{lemma} \label{U-triv-fib-to-w-equiv-marked}
The functor $U$ sends trivial fibrations to weak equivalences.
\qed
\end{lemma}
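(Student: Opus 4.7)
The plan is to adapt the proof of \cref{U-triv-fib-to-w-equiv} verbatim to the marked setting. Let $p \co Y \to X$ be a trivial fibration in marked simplicial sets. Since $L$ preserves cofibrations (it sends generating cofibrations to generating cofibrations), the adjunction $L \dashv U$ implies that $Up \co UY \to UX$ lifts against cofibrations in marked semisimplicial sets. If $UX$ were fibrant, \cref{triv-fib-criteria-marked} would give us that $Up$ is a homotopy equivalence between fibrant objects, and then \cref{h-equiv-is-weak-equiv-marked} would finish the proof.

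So the task reduces to showing that the argument of \cref{triv-fib-criteria-marked} (which proceeds dually to \cref{anodyne-criteria-marked}) goes through without assuming the codomain is fibrant. Inspecting that argument, the only use of fibrancy of the codomain is to obtain a constant loop $I \otimes UX \to UX$ witnessing $\overline{\id_{UX}} \in \hom(UX, UX)_0$. The point is that this constant loop can be supplied by the ambient simplicial structure even though $UX$ itself is not fibrant. Namely, using that $L$ is symmetric monoidal with respect to the geometric and cartesian symmetric monoidal structures and that the interval in marked simplicial sets is the image of the semisimplicial one under $L$, we form the composite
\[
\xymatrix@C+0.2cm{
  I \otimes UX
  \ar[r]^-{\eta_{I \otimes UX}}
&
  UL(I \otimes UX)
  \ar[r]^-{\simeq}
&
  U(I \times LUX)
  \ar[r]^-{U\pi_1}
&
  ULUX
  \ar[r]^-{U\epsilon_X}
&
  UX
\rlap{,}}
\]
which is the desired constant loop. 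With this substitute in hand, the argument from \cref{triv-fib-criteria-marked} (via \cref{anodyne-criteria-marked} and \cref{graduate-lemma-marked,h-equiv-coherent-to-strong-marked}) applies to produce a strong homotopy equivalence structure on $Up$, hence a homotopy equivalence, hence a weak equivalence.

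I do not expect a serious obstacle. The two things to double-check are that $L$ is indeed symmetric monoidal for the geometric structure in the marked case (stated in the subsection on the geometric monoidal structure of marked semisimplicial sets) and that the interval inherited into marked simplicial sets is identified with $L$ of the semisimplicial interval (also stated in the subsection on comparison with marked simplicial sets). Both are given, so the proof is just a one-line reference to the unmarked case with the analogous composite.
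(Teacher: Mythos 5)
Your proposal is correct and is essentially the paper's own proof: the paper explicitly omits it as a verbatim adaptation of \cref{U-triv-fib-to-w-equiv}, which is exactly what you carry out, including the key composite $U\epsilon_X \circ U\pi_1 \circ (\simeq) \circ \eta_{I \otimes UX}$ supplying the loop at $\overline{\id_{UX}}$ in the absence of fibrancy. Your two flagged checkpoints (monoidality of $L$ for the geometric/cartesian structures and the identification of the marked simplicial interval with $L$ of the semisimplicial one) are indeed the only points to verify, and both are stated in the paper.
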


The following statements presuppose the model structure on marked simplicial sets, which requires classical logic.

\begin{corollary}[Classical logic] \label{U-pres-w-equiv-marked}
The functor $U$ preserves weak equivalences.
\end{corollary}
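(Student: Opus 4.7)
The plan is to mirror the proof of Corollary \ref{U-pres-w-equiv} from the unmarked case verbatim, since all the necessary ingredients have already been established in the marked setting. First, I would recall that, by the classical model structure on marked simplicial sets just invoked, the class of weak equivalences in marked simplicial sets is defined (or can be characterised) as the closure of the trivial cofibrations and the trivial fibrations under 2-out-of-6. This reduces the claim to showing that $U$ sends each of these two generating classes into the class of weak equivalences in $\MS$, and then appealing to a 2-out-of-6 closure in $\MS$.

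Next, I would invoke the two preceding lemmas: \cref{U-triv-cofib-to-w-equiv-marked} gives that $U$ carries anodyne maps (hence trivial cofibrations, using retract closure under \cref{weak-equiv-retracts-marked}) to cofibrations that are weak equivalences, and \cref{U-triv-fib-to-w-equiv-marked} gives that $U$ carries trivial fibrations to weak equivalences. Finally, \cref{weak-equiv-2-out-of-6-marked} says that weak equivalences in marked semisimplicial sets satisfy 2-out-of-6, so the image under $U$ of any map obtained from trivial cofibrations and trivial fibrations by 2-out-of-6 closure is again a weak equivalence in $\MS$.

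I do not expect any real obstacle here: the work has already been done in establishing \cref{U-triv-cofib-to-w-equiv-marked,U-triv-fib-to-w-equiv-marked,weak-equiv-2-out-of-6-marked}, and the present corollary is a purely formal consequence. The only point to be slightly careful about is that trivial cofibrations in marked simplicial sets are retracts of transfinite compositions of pushouts of the generating anodyne maps (whose images under $U$ are anodyne, hence weak equivalences in $\MS$ by \cref{anodyne-is-weak-equiv-marked}), and that retracts of weak equivalences in $\MS$ are again weak equivalences by \cref{weak-equiv-retracts-marked}; this part is where the use of classical logic enters, as it relies on the classical model structure on marked simplicial sets to characterise weak equivalences via the 2-out-of-6 closure of the generating trivial cofibrations and the trivial fibrations.
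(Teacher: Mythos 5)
There is a real gap in the step where you handle trivial cofibrations. You argue that a trivial cofibration of marked simplicial sets is a retract of a transfinite composition of pushouts of the generating anodyne maps, so that \cref{U-triv-cofib-to-w-equiv-marked} applies. That is exactly the point where the marked case differs from the unmarked one: for the Kan model structure the trivial cofibrations do coincide (classically) with the anodyne maps, which is why the unmarked proof of \cref{U-pres-w-equiv} goes through verbatim, but for the classical model structure on marked simplicial sets (the weak $1$-complicial/Joyal-type structure) only the fibrant objects and the fibrations \emph{between fibrant objects} are detected by the generating anodyne maps. The full class of fibrations is not the class of naive fibrations, equivalently the trivial cofibrations are in general strictly larger than the weak saturation of the generating anodyne maps (just as not every trivial cofibration in the Joyal model structure is inner anodyne), and nothing in the paper — nor in the standard literature — gives you the retract presentation you invoke. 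So \cref{U-triv-cofib-to-w-equiv-marked} does not cover all trivial cofibrations, and your reduction of a general weak equivalence to the two classes you can handle breaks down. (Also, the appeal to classical logic is not located at that retract step; it is needed for the existence of the model structure, i.e.\ for fibrant replacement and the characterisation of its weak equivalences.)

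The paper's proof avoids this by not trying to handle trivial cofibrations at all: given a weak equivalence $f$ of marked simplicial sets, fibrant replacement in the model structure produces a morphism in the arrow category from $f$ to a trivial fibration whose two components are anodyne (factor the maps to the terminal object by anodyne maps followed by naive fibrations, lift, and factor the induced map between fibrant objects as an anodyne map followed by a naive fibration, which is then a fibration between fibrant objects and a weak equivalence, hence a trivial fibration). Applying $U$, the anodyne legs become weak equivalences by \cref{U-triv-cofib-to-w-equiv-marked}, the trivial fibration becomes one by \cref{U-triv-fib-to-w-equiv-marked}, and \cref{weak-equiv-2-out-of-6-marked} then forces $Uf$ to be a weak equivalence. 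Your proposal could be repaired along the same lines — replace ``weak equivalences are the $2$-out-of-$6$ closure of trivial cofibrations and trivial fibrations'' by ``every weak equivalence maps, via anodyne maps, to a trivial fibration'' — but as written the argument for the trivial-cofibration case does not stand.
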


\begin{proof}
Using fibrant replacement in the model structure on marked simplicial sets, every weak equivalences admits a morphism in the arrow category consisting of anodyne maps to a trivial fibration.
The claim then follows by \cref{U-triv-cofib-to-w-equiv-marked,U-triv-fib-to-w-equiv-marked,weak-equiv-2-out-of-6-marked}.
\end{proof}

\begin{corollary}[Classical logic] \label{L-create-w-equiv-marked}
The functor $L$ creates weak equivalences.
\qed
\end{corollary}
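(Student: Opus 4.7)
The plan is to split into preservation and reflection, mirroring the unmarked \cref{L-create-w-equiv}.

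For preservation, I would invoke \cref{weak-equivalences-as-closure-of-andoyne-marked}, which characterizes the weak equivalences in marked semisimplicial sets as the closure under 2-out-of-6 of $\omega$-compositions of pushouts of coproducts of the generating anodyne maps. Since $L$ is a left adjoint, it is cocontinuous, and by the definition of the weak factorization systems on marked simplicial sets at the start of \cref{subsection:comparison-quasicategories}, $L$ sends the generating anodyne maps to anodyne maps of marked simplicial sets, which in the Cisinski model structure (available classically) are in particular trivial cofibrations and hence weak equivalences. Because weak equivalences in marked simplicial sets are closed under 2-out-of-6, this transport through the inductive characterization yields preservation.

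For reflection, the argument is a formal composition: if $Lf$ is a weak equivalence in marked simplicial sets, then $ULf$ is a weak equivalence in marked semisimplicial sets by \cref{U-pres-w-equiv-marked}, whence $f$ is a weak equivalence by \cref{UL-create-w-equiv-marked}, which records that the monad $UL$ creates weak equivalences (a consequence of \cref{simplicial-unit-w-equiv-marked} and 2-out-of-3).

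There is no genuinely hard step here; the proof is essentially a two-line invocation of previously established facts. The only subtlety is keeping track of which ingredients depend on classical logic: preservation routes through the model structure on marked simplicial sets, and reflection routes through \cref{U-pres-w-equiv-marked}, both of which are explicitly flagged as classical in the preceding text. Thus the corollary inherits its dependence on classical logic from these two sources, but given them it is immediate.
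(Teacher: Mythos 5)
Your proposal is correct and follows the same route as the paper: the marked statement is proved exactly as the unmarked \cref{L-create-w-equiv}, with preservation coming from the generation of weak equivalences by anodyne maps (\cref{weak-equivalences-as-closure-of-andoyne-marked}), cocontinuity of $L$, and the fact that $L$ sends anodyne maps to trivial cofibrations in the (classically available) model structure, and reflection coming from \cref{U-pres-w-equiv-marked} together with the reflection part of \cref{UL-create-w-equiv-marked}. Your bookkeeping of where classical logic enters also matches the paper.
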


We conclude as in the unmarked case.

\begin{theorem}[Classical logic] \label{summary-equiv-marked}
In the triple of functors
\[
\xymatrix@C+1.5cm{
  \MS
  \ar@/^2em/[r]^{L}
  \ar@/^-2em/[r]_{R}
  \ar@{{}{ }{}}@/^1em/[r]|{\bot}
  \ar@{{}{ }{}}@/^-1em/[r]|{\bot}
&
  \M
  \ar[l]|{U}
}
\]
relating marked semisimplicial sets and marked simplicial sets where $U$ is the forgetful functor:
\begin{itemize}
\item
the maps $L$ and $U$ are weak equivalences of cofibration categories,
\item
the maps $U$ and $R$, restricted to fibrant objects, are weak equivalences of fibration categories,
\item
both adjunctions descend to equivalences of homotopy categories.
\end{itemize}
\end{theorem}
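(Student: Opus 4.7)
The plan is to follow the strategy of the unmarked \cref{summary-equiv} step by step in the marked setting, since all the necessary ingredients have already been established analogously. First, I would note that $L$ and $U$ are left adjoints and so preserve colimits; each preserves cofibrations (for $L$ by cocontinuity applied to the generators, for $U$ by the explicit description of cofibrations in $\M$ as levelwise decidable monomorphisms with decidable marking inclusions); and they preserve weak equivalences by \cref{L-create-w-equiv-marked,U-pres-w-equiv-marked}. Hence $L$ and $U$ are exact functors of cofibration categories, and by adjointness $U$ and $R$ restrict to exact functors of fibration categories between fibrant objects. Since weak equivalences between (co)fibration categories are detected on homotopy categories, and since the classes of weak equivalences in $\MS$ and $\M$ agree whether viewed from the cofibration- or fibration-categorical side, all three assertions of the theorem reduce to showing that the adjunction $L \dashv U$ descends to an equivalence between the homotopy categories of $\MS$ and $\M$.

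For that descent, I would verify that the unit $\eta \co \Id \to UL$ and the counit $\epsilon \co LU \to \Id$ are both valued in weak equivalences. The unit case is exactly \cref{simplicial-unit-w-equiv-marked}. For the counit, I would invoke classical logic, which guarantees that every marked simplicial set is cofibrant, and then use the standard glueing-lemma argument in the cofibration category $\M$ to reduce the claim to verifying that $\epsilon$ is a weak equivalence at the codomains of the generating cofibrations, namely the minimally marked $\Delta^n$ for $n \geq 0$ and $(\Delta^1)^\sharp$. Each such codomain is canonically isomorphic to $LY$ for the corresponding marked semisimplicial representable $Y$ (taking $Y = (\Delta^n)^\flat$ or $Y = (\Delta^1)^\sharp$), so the triangle identity yields $\epsilon_{LY} \circ L\eta_Y = \id_{LY}$. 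Since $\eta_Y$ is a weak equivalence by \cref{simplicial-unit-w-equiv-marked} and $L$ preserves weak equivalences by \cref{L-create-w-equiv-marked}, the map $L\eta_Y$ is a weak equivalence, and then 2-out-of-3 forces $\epsilon_{LY}$ to be a weak equivalence as well. Once $L \dashv U$ descends to an equivalence, I would observe that $U \dashv R$ descends too, since both $L$ and $R$ must coincide with the essentially unique adjoint inverse of $U$ at the homotopy level.

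The main obstacle is the counit step, and this is precisely where classical logic enters. It is needed both to obtain universal cofibrancy in $\M$ (so that the glueing-lemma reduction to generators applies to all objects without a restriction to cofibrant inputs) and because \cref{L-create-w-equiv-marked}, which underlies the 2-out-of-3 step on representables, itself passes through the classical model structure on $\M$ via \cref{U-pres-w-equiv-marked}. Beyond these appeals to classical logic, the reduction to representables, the triangle-identity trick, and the transfer from $L \dashv U$ to $U \dashv R$ are all formal, so no new combinatorial work is needed beyond what is already in place for the unmarked theorem.
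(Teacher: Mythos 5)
Your proposal is correct and follows essentially the same route as the paper: the paper's proof is simply ``analogous to \cref{summary-equiv}'', and the unmarked argument there already uses exactly your steps (exactness of $L$, $U$, $R$, the unit via \cref{simplicial-unit-w-equiv-marked}, and the counit handled classically by reduction to cofibrant cell complexes plus the triangle identity $\epsilon L \circ L\eta = \id$ with 2-out-of-3 and \cref{L-create-w-equiv-marked}). Your explicit identification of the marked generators $\Delta^n$ and $(\Delta^1)^\sharp$ as images under $L$ is precisely the detail needed to carry that argument over.
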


\begin{proof}
Analogous to~\cref{summary-equiv}.
\end{proof}

Composing with the Quillen equivalence between the model structures on marked simplicial sets and simplicial sets for higher categories, we obtain the following corollary.

\begin{corollary}[Classical logic] \label{summary-equiv-marked-composite}
In the composite of adjunctions
\[
\xymatrix@C+1cm{
  \MSfib
  \ar@/_1em/[r]_-{R}
  \ar@{}[r]|-{\top}
&
  \Mfib
  \ar@/_1em/[l]_-{U}
  \ar@/_1em/[r]_-{(-)^\unmarked}
  \ar@{}[r]|-{\top}
&
  \widehat{\Delta}_{\fib}
  \ar@/_1em/[l]_-{(-)^\natural}
}
\]
relating marked semisimplicial sets and simplicial sets where $U$ and $(-)^\unmarked$ are the forgetful functors:
\begin{itemize}
\item
the left adjoints are weak equivalences of cofibration categories,
\item
the right adjoints, restricted to fibrant objects, are weak equivalences of fibration categories,
\item
the adjunctions descend to equivalences of homotopy categories,
\item
restricted to fibrant objects, the functor $U$ is and the functor $(-)^\unmarked$ has a weakly equivalent replacement $(-)^\natural$ that is a weak equivalence of fibration categories.
\qed
\end{itemize}
\end{corollary}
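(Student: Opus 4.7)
The plan is to obtain the corollary by composing the equivalences of \cref{summary-equiv-marked} with the classical Quillen equivalence $(-)^\flat \dashv (-)^\unmarked$ between the model structure for weak 1-complicial sets on $\M$ and the Joyal model structure on $\widehat{\Delta}$. Since the 1-complicial model structure and the identification of its fibrant replacement as natural marking require classical logic (as recalled in the paragraph preceding this corollary), so does the present proof.

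First I would unpack the classical input. The Quillen equivalence $(-)^\flat \dashv (-)^\unmarked$ translates, in the language of this paper, into the statement that $(-)^\flat$ is a weak equivalence of cofibration categories on the full categories, that $(-)^\unmarked$ restricted to fibrant objects is a weak equivalence of fibration categories, and that both adjunctions descend to the same equivalence of homotopy categories. The classical theory of complicial sets~\cite{verity:complicial,lurie:htt} moreover identifies, on fibrant objects, the left derived of $(-)^\flat$ with the natural marking $(-)^\natural$, which itself lands in $\Mfib$ and preserves weak equivalences between quasicategories.

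Next I would compose. Weak equivalences of cofibration (respectively fibration) categories and equivalences of homotopy categories are each closed under composition by general $2$-categorical properties of these $2$-categories. Chaining with \cref{summary-equiv-marked} therefore delivers the first three bullets. For the fourth bullet, $U$ directly preserves fibrant objects, so its restriction is an honest weak equivalence of fibration categories already at the level of the diagram, while the other left adjoint $(-)^\flat$ does not preserve fibrant objects; the identification of $(-)^\natural$ as its left-derived replacement on fibrant objects supplies the advertised fibrant-preserving replacement, and by the general theory of fibration categories $(-)^\natural$ inherits from $(-)^\flat$ the weak equivalence property.

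The main obstacle is the identification of $(-)^\natural$ as a pointwise model for the left derived of $(-)^\flat$ on fibrant objects, and the verification that the two composite homotopy-category equivalences (one via $(-)^\flat$ on the full category, one via $(-)^\natural$ on fibrant objects) agree. This is where the proof genuinely uses the classical theory of complicial sets; everything else is formal composition of structure-preserving equivalences.
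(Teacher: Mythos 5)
Your proposal is correct and follows essentially the same route as the paper, which treats this corollary as immediate by composing \cref{summary-equiv-marked} with the classical Quillen equivalence $(-)^\flat \dashv (-)^\unmarked$ between the weak $1$-complicial model structure and the Joyal model structure, using the identification (recalled just before the statement) of the left derived functor of $(-)^\flat$ on fibrant objects with the natural marking $(-)^\natural$. Your additional remarks about closure of weak equivalences of (co)fibration categories under composition and about $U$ versus $(-)^\natural$ preserving fibrant objects are exactly the formal points the paper leaves implicit.
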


\bigskip

As in the unmarked case, we have a map $A \times B \to A \otimes B$ natural in marked semisimplicial sets $A$ and $B$.

\begin{lemma} \label{free-simplicial-set-unit-h-equiv-on-fibrant-marked}
For fibrant $X$, the map $\eta_X \co X \to ULX$ is a homotopy equivalence.
\qed
\end{lemma}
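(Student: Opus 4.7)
The plan is to mirror the proof of \cref{free-simplicial-set-unit-h-equiv-on-fibrant} in the marked setting, since all the necessary ingredients have already been transferred above. First, I would construct a strict retraction $r \co ULX \to X$ of $\eta_X$ by solving the lifting problem
\[
\xymatrix{
  X
  \ar[r]^{\id}
  \ar[d]_{\eta_X}
&
  X
\\
  ULX
  \ar@{.>}[ur]_{r}
}
\]
using that $\eta_X$ is a cofibration that is a weak equivalence by \cref{simplicial-unit-w-equiv-marked}, that $X$ is fibrant, and applying \cref{cof-weak-equiv-lift-fib-between-fibrant-marked} (with $ULX \to 1$ as the fibration between fibrant objects; note $ULX$ is fibrant as $X$ is, since $U$ creates fibrations and $L$ preserves them on the nose here via $X \to ULX \to X$ and pullback, or simply since $UL$ preserves fibrant objects---alternatively one may take a fibrant replacement of $ULX$ first and then transfer along the equivalence).

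Next, I would verify $\eta_X \circ r \sim \id_{ULX}$ in the categorical enrichment. By naturality of $\eta$ applied to $r \co ULX \to X$,
\[
\eta_X \circ r = UL r \circ \eta_{ULX}.
\]
By \cref{free-simplicial-set-homotopically-idempotent-marked}, we have a homotopy $\eta_{ULX} \sim UL\eta_X$, so whiskering on the left with $ULr$ yields
\[
UL r \circ \eta_{ULX} \sim UL r \circ UL \eta_X = UL(r \circ \eta_X) = UL(\id_X) = \id_{ULX},
\]
where we have used functoriality of $UL$ and that $r$ is a strict retraction. Combined with $r \circ \eta_X = \id_X$, this exhibits $\eta_X$ as a homotopy equivalence with inverse $r$.

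There is no real obstacle here; the only thing to double-check is that the homotopy from \cref{free-simplicial-set-homotopically-idempotent-marked} can be whiskered in the categorical enrichment to produce the displayed homotopy, which is immediate from \cref{hom-pres-homotopy-marked}. All fibrancy conditions needed to invoke the marked analogue of the 2-out-of-3 style arguments are satisfied since $X$ and hence $ULX$ are fibrant, and the homotopies involved take values in the fibrant object $ULX$.
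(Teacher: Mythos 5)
Your proposal is essentially the paper's proof: the lemma is proved by adapting the unmarked argument of \cref{free-simplicial-set-unit-h-equiv-on-fibrant} verbatim, constructing a strict retraction $r$ by lifting and then using \cref{free-simplicial-set-homotopically-idempotent-marked} together with naturality of $\eta$ to get $\eta_X \circ r = ULr \circ \eta_{ULX} \sim ULr \circ UL\eta_X = \id_{ULX}$, exactly as you do.

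One correction to your justification of the lifting step: in the square you draw, the map being lifted is the cofibration-and-weak-equivalence $\eta_X$ (by \cref{simplicial-unit-w-equiv-marked}), and the fibration between fibrant objects it is lifted against is $X \to 1$, not $ULX \to 1$. Fibrancy of $ULX$ is not needed anywhere, and your parenthetical argument for it is wrong: $UL$ does not preserve fibrant objects --- the remark in the unmarked comparison section exhibits a fibrant $X$ for which $LX$ fails to lift even against inner horns, and the same obstruction persists here. Likewise, the closing sentence about ``homotopies taking values in the fibrant object $ULX$'' is unnecessary: the whiskering $ULr \circ (-)$ of the homotopy from \cref{free-simplicial-set-homotopically-idempotent-marked} is just postcomposition of the cylinder map $I \otimes ULX \to ULULX$ with $ULr$, which requires no fibrancy hypothesis. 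With the lifting square read correctly (which is how you drew it), the argument is complete and coincides with the paper's.
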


\begin{theorem} \label{cartesian-vs-geometric-marked}
Given fibrant marked semisimplicial sets $X$ and $Y$, the canonical comparison map
\[
X \times Y \to X \otimes Y
\]
is a weak equivalence.
\end{theorem}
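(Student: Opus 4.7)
The plan is to adapt the proof of \cref{cartesian-vs-geometric} verbatim, simply substituting the marked analogues of each auxiliary result. First, I would observe that by \cref{simplicial-unit-w-equiv-marked} the unit component $\eta_{X \otimes Y} \co X \otimes Y \to UL(X \otimes Y)$ is a weak equivalence. By 2-out-of-3, it then suffices to show that the composite $X \times Y \to X \otimes Y \to UL(X \otimes Y)$ is a weak equivalence.

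The key algebraic step is to identify this composite. Since $L \co \MS \to \M$ is symmetric monoidal from the geometric symmetric monoidal structure to the cartesian one (as recorded in the subsection on basic notions of marked semisimplicial sets) and $U$, being a right adjoint, preserves products, we have canonical isomorphisms
\[
UL(X \otimes Y) \simeq U(LX \times LY) \simeq ULX \times ULY.
\]
Under this identification, naturality of $\eta$ together with compatibility of the canonical comparison $X \times Y \to X \otimes Y$ with the monoidality data of $L$ forces the composite $X \times Y \to UL(X \otimes Y)$ to coincide with the product map $\eta_X \times \eta_Y$.

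Finally, by \cref{free-simplicial-set-unit-h-equiv-on-fibrant-marked}, both $\eta_X$ and $\eta_Y$ are homotopy equivalences (using fibrancy of $X$ and $Y$), and by \cref{h-equiv-product-coproduct-marked} the product $\eta_X \times \eta_Y$ is again a homotopy equivalence, hence a weak equivalence by \cref{h-equiv-is-weak-equiv-marked}.

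The main (and essentially only) obstacle is verifying that the comparison $X \times Y \to X \otimes Y$ really does fit into a commuting square with $\eta_X \times \eta_Y$ after the identification $UL(X\otimes Y) \simeq ULX \times ULY$; this is a matter of unfolding the symmetric monoidality data of $L$ and $U$ on the geometric product in the marked setting and should be entirely formal. Everything else is an immediate transcription of the unmarked argument.
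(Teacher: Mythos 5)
Your proposal is correct and is essentially the paper's own argument: the paper proves \cref{cartesian-vs-geometric-marked} by transcribing the proof of \cref{cartesian-vs-geometric}, using \cref{simplicial-unit-w-equiv-marked}, the identification $UL(X \otimes Y) \simeq ULX \times ULY$ via monoidality of $L$, and \cref{free-simplicial-set-unit-h-equiv-on-fibrant-marked} together with closure of homotopy equivalences under products, exactly as you outline.
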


\begin{proof}
Analogous to the proof of \cref{cartesian-vs-geometric}.
\end{proof}

\subsection{Subdivision}

Everything in this section is an evident extension of the unmarked case.
Let us write $L \dashv U$ for both the free (marked) category on a (marked) semicategory and the free (marked) simplicial set on a (marked) semisimplicial set adjunction.

\subsubsection{Marked simplicial subdivision}

The category of elements construction on simplicial sets extends to a marked category of elements construction
\[
\mcel \co \widehat{\Delta}_\m \to \Cat_\m
\]
by marking those morphisms that $\mcel A$ that correspond to a marked edge in $A$ under evaluation at the last vertex.

The simplicial subdivision functor extends to marked simplicial sets by letting the markings of $\Sd A$ be created via the last vertex map $\Sd A \to A$.
By definition, one thus also has a last vertex map in the marked setting.
Using the same argument as in the unmarked case, this map can classically shown to be a weak equivalence in the model structure on marked simplicial sets we have considered in~\cref{subsection:quasicategories}.

\subsubsection{Marked semisimplicial subdivision}

Recall the semicategory of elements construction
\[
\scel \co \widehat{\Delta_+} \to \SemiCat
.\]
It extends to a \emph{marked semicategory of elements} construction
\[
\smcel \co \MS \to \SemiCat_\m
\]
by marking those morphisms $f \co ([m], x) \to ([n], y)$ in $\smcel(A, W)$ that preserve initial objects under the forgetful map to $\Delta_+$, \ie $f(0) = 0$, or otherwise have $A(g)(y) \in W$ where $g \co [1] \to [n]$ sends 0 to $0$ and 1 to $f(0)$.

Say for a moment that a marked simplicial or semisimplicial set $(A, W)$ is closed under 2-out-of-3 if it lifts against the Leibniz applications of the unit of the maximal marking monad to $h^2_i$ for $0 \leq i \leq 2$; say it is closed under 2-out-of-6 if it is closed under 2-out-of-3 and lifts against the marking saturation inclusion.
Then $\smcel$ preserves taking the closure of markings under 2-out-of-3 and 2-out-of-6, respectively.
In particular, the marked semicategory $\smcel(A, W)$ is closed under 2-out-of-3 and 2-out-of-6, respectively, exactly if $(A, W)$ is.

Analogous to the unmarked case, the marked semisimplicial subdivision functor $\Sd$ is defined as the composite of the marked semicategory of elements functor $\smcel$ followed by the marked semisimplicial nerve.
On underlying semisimplicial sets, it is given by the semisimplicial subdivision functor, so the only new thing is the action on markings.
Recall that colimits in marked semisimplicial sets are computed as colimits in underlying semisimplicial sets with an edge marked if it comes from a marked edge via a colimit coprojection.
We see that marked semisimplicial subdivision is cocontinuous and its action $\sd$ on simplices (including the marked edge) is described by the composite
\[
\xymatrix{
  \Delta_+'
  \ar[r]^-{(\Delta_+)_{/-}}
&
  \SemiCat_\m
  \ar[r]^-{N}
&
  \MS
}
\]
where the first arrow sends $[n] \in \Delta_+'$ to the categorical slice $(\Delta_+)_{/[n]}$ with markings created by the last vertex functor $(\Delta_+)_{/[n]} \to [n]$ of categories.
As in the unmarked case, from this one sees immediately that the free marked simplicial set functor commutes with subdivision.

\subsubsection{Relating $A$ and $\Sd A$}

Everything we have said in the unmarked setting about relating a semisimplicial set with its subdivision via a cospan of weak equivalences lifts to the marked setting.
Given a marked semisimplicial set $A$, we have a cospan
\[
\xymatrix@!C@C-0.6cm{
  A
  \ar@{>->}[dr]^{\sim}_{\eta_A}
&&
  \Sd A 
  \ar[dl]_{\sim}
\\&
  U L A
}
\]
of weak equivalences.
Here, the left leg is a cofibration and weak equivalence by \cref{simplicial-unit-w-equiv-marked}.
The right leg is the composite
\[
\xymatrix{
  \Sd A
  \ar@{>->}[r]^-{\eta_{\Sd A}}_-{\sim}
&
  U L \Sd A
  \ar[r]^-{m}
&
  U L A
}
\]
where $m$ is $U$ applied to the marked simplicial last vertex map on $L A$.
Again we argue that $m$ is a homotopy equivalence when $A$ comes from $\Delta_+'$ and use cocontinuity and the glueing lemma in the standard way to show that $m$ is a weak equivalence for all $A$.

We will now lift the finiteness preserving cospan construction~\eqref{subdivision-mapping-cone} to the marked setting.
For this, we first define a marked version of the indexed join that also combines the join and the marking join in one operation.
The input consists of marked semisimplicial sets $A$ and $B$ with functors
\[
F, F_\m \co \cel A \times \cel B
\]
with a monomorphism $F_\m \to F$ that we write as an inclusion.
The output $A *_{F, F_\m} B$ has as underlying semisimplicial set the indexed join $A^\unmarked *_F B^\unmarked$ with those edges marked that come from a marked edge in $A$ or $B$ or an element $t \in F_\m(([0], x), ([0], y))$ with $x \in A_0$ and $y \in B_0$.

Given a marked semisimplicial set $A$, note that we have subfunctor $\hom_\m$ of the hom-functor of $\cel A$ that returns only the marked morphisms in $\cel_\m A$.
This induces a subfunctor $F_m(A)$ of $F(A^\unmarked)$.
We then obtain a Reedy cofibrant span
\begin{equation} \label{subdivision-mapping-cone-marked}
\begin{gathered}
\xymatrix@!C@C-2.3cm{
  A
  \ar[dr]_{\iota_0}
&&
  \Sd A
  \ar[dl]^{\iota_1}
\\&
  A \star_{F(A^\unmarked), F_\m(A)} \Sd A
}
\end{gathered}
\end{equation}
functorial and cocontinuous in the marked semisimplicial set $A$.

We now prove analogues of \cref{subdivision-mapping-cone-left-leg,subdivision-mapping-cone-right-leg}.
We will only discuss the changes to the unmarked setting.

\begin{lemma} \label{subdivision-mapping-cone-left-leg-marked}
Pushout application of the left leg of~\eqref{subdivision-mapping-cone-marked} sends cofibrations to anodyne maps.
\end{lemma}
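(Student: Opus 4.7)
The plan is to adapt the unmarked argument of~\cref{subdivision-mapping-cone-left-leg}, with careful bookkeeping of the markings. By cocontinuity of the functor $A \mapsto A \star_{F(A^\unmarked), F_\m(A)} \Sd A$, it will suffice to verify that pushout application to each generating cofibration of $\MS$ is anodyne, namely the boundary inclusions $\partial\Delta^n \to \Delta^n$ and the edge marking inclusion $(\Delta^1)^\flat \to (\Delta^1)^\sharp$.

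For a boundary inclusion input, using that the natural maps $F(\partial\Delta^n) \to F(\Delta^n)|_{\partial\Delta^n}$ and $F_\m(\partial\Delta^n) \to F_\m(\Delta^n)|_{\partial\Delta^n}$ are isomorphisms (as for the unmarked case), I identify the pushout corner map with a marked version of $\Delta^n \star' \Sd\partial\Delta^n \to \Delta^n \star' \Sd\Delta^n$, where $\star'$ denotes the marked indexed join with data restricted from $F(\Delta^n)$ and $F_\m(\Delta^n)$. I then decompose $\Sd\partial\Delta^n \to \Sd\Delta^n$ into a finite relative cell complex of boundary inclusions (attached via monos $f \co \Delta^m \to \Sd\Delta^n$) followed by edge marking inclusions (to install the extra markings prescribed on fresh edges by the marked morphisms of $\smcel\Delta^n$), and use cocontinuity of $\Delta^n \star' -$ to analyse the result cell by cell.

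For a boundary-inclusion cell attached along a mono $f$ with first vertex $f(0) \co [k] \to [n]$, the analysis of the unmarked proof identifies the induced map, on underlying semisimplicial sets, with a pushout of $\Delta^k \star \partial\Delta^m \to \Delta^k \star \Delta^m$. The additional marked edges are those join edges prescribed by the restricted $F_\m$, which for the minimally marked $\Delta^n$ are governed by the marking on $\smcel \Delta^n$. I expect to exhibit this marked map as an iterated pushout of the form $\Delta^0 \star (\partial\Delta^m \to \Delta^m)$ in which the fresh join edges acquire exactly the markings dictated by $F_\m$, hence as a relative cell complex of marked outer horn inclusions via a marked analogue of \cref{point-join-cofib} (and \cref{marked-anodyne-join-cof}). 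For an edge-marking-inclusion cell, the induced map adds a single new marked edge to the marked indexed join; this edge sits inside a triangle whose other two edges are already marked (coming from the same marked morphism data), so its marking is obtained by a pushout of the marking saturation inclusion.

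The case of the edge marking inclusion input $(\Delta^1)^\flat \to (\Delta^1)^\sharp$ is handled in a similar style: the underlying semisimplicial sets agree, and only finitely many new marked edges appear in the indexed join (corresponding to the one extra marked morphism of $\cel(\Delta^1)^\sharp$), each again fitting into a marked triangle that enables its marking through the marking saturation inclusion. The main obstacle will be the careful bookkeeping of markings in the cellular decomposition: for each cell, one must pin down the exact marking structure on the resulting map and match this to the appropriate marked anodyne generators, so that no freshly marked edge appears in a position that would obstruct anodyneness.
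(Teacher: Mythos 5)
Your skeleton coincides with the paper's (reduce by cocontinuity to boundary inclusions and the edge marking inclusion, reuse the cell decomposition of $\Sd \partial\Delta^n \to \Sd \Delta^n$, and let the markings supplied by $F_\m$ rescue the outer horns), but the decisive step is exactly the one you defer, and the mechanism you sketch for it would not work. There is no marked analogue of \cref{point-join-cofib}: join with $\Delta^0$ does not send cofibrations to anodyne maps in the marked setting -- that failure is the whole difficulty. Consequently your plan to exhibit the cell map $\Delta^k \star \partial\Delta^m \to \Delta^k \star \Delta^m$ as a relative cell complex of \emph{marked} outer horn inclusions cannot succeed as stated: it would need the critical edge of every outer horn arising in the iteration to be marked, whereas the indexing data $F_\m$ marks essentially a single relevant join edge per cell, namely the edge from the last vertex of the face $f(0)$ to the first vertex of the attached simplex (this is the content of the paper's remark that ``the last vertex of $f(0)$ is exactly $k$''); the edges out of the other vertices of $\Delta^k$ are not marked, and without their markings those horns are not anodyne (already $\Delta^1 \to \Delta^2$ with only the long edge marked is not a weak equivalence). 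The paper's argument needs only that one marked edge: with it, the map \eqref{subdivision-mapping-cone-left-leg:2} is the join of $\Delta^{k-1}$ with the left outer horn inclusion of dimension $m+1$ whose critical edge is precisely that marked edge, and one concludes by \cref{marked-anodyne-join-cof} (anodyne maps are closed under join, resp.\ Leibniz join with cofibrations); no further marked horns are produced or required. Identifying which edge $F_\m$ marks and checking that it lands as the critical edge of that innermost horn is the entire mathematical content of the marked lemma, and your proposal does not carry it out.

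A second, smaller gap concerns your use of the marking saturation inclusion. Marking the third edge of a triangle whose other two edges are marked is \emph{not} a pushout of the marking saturation inclusion: that generator is the specific map $(\Delta^3, \braces{02, 13}) \to (\Delta^3)^\sharp$, and 2-out-of-3-type extensions such as $(\Delta^2, \braces{01, 12}) \to (\Delta^2)^\sharp$ were deliberately left out of the generating anodyne maps (cf.\ \cref{fibrations-arbitrary-marked}); their pushouts are cofibrations but not known to be anodyne. So both for the edge-marking cells in the decomposition of $\Sd \partial\Delta^n \to \Sd \Delta^n$ and for the input $(\Delta^1)^\flat \to (\Delta^1)^\sharp$, you must actually exhibit a tetrahedron in the target whose $02$- and $13$-edges are already marked and whose remaining edges are marked or are the edge being marked (possibly after an anodyne extension), rather than appeal to a triangle with two marked edges.
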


\begin{proof}
By cocontinuity, one once again reduces to boundary inclusions and the edge marking inclusion.
The second case is easy, using that the marking satural inclusion is anodyne.

In the first case, we have to make sure that the marked analogue of the map~\eqref{subdivision-mapping-cone-left-leg:1} (with $\star'$ defined in terms of $\star_{F(A^\unmarked), F_\m(A)}$ instead of $\star_F$) is anodyne.
In the presentation of~\eqref{subdivision-mapping-cone-left-leg:1} as a pushout of~\eqref{subdivision-mapping-cone-left-leg:2}, the edge from $\tau_0(k)$ to $\tau_1(0)$ of the simplex $\Delta^k \star \Delta^m$ is mapped to a marked edge in $\Delta^n \star' \Delta^m$ as the last vertex of $f(0)$ is exactly $k$.
But~\eqref{subdivision-mapping-cone-left-leg:2} with that edge marked is the Leibniz join of the boundary inclusion of dimension $k-1$ (omitted if $k = 0$) with the marked left outer inclusion of dimension $m+1$, hence anodyne by \cref{marked-anodyne-join-cof}.
\end{proof}

\begin{lemma} \label{subdivision-mapping-cone-right-leg-marked}
Pushout application of the right leg of~\eqref{subdivision-mapping-cone-marked} sends cofibrations to anodyne maps.
\end{lemma}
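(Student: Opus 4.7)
The plan is to mimic the strategy of~\cref{subdivision-mapping-cone-right-leg}, reducing by cocontinuity of pushout application and of the indexed join in its first argument to the generators of cofibrations in marked semisimplicial sets: the boundary inclusions $\partial\Delta^n \to \Delta^n$ (minimally marked) for $n \geq 0$ and the edge marking inclusion $(\Delta^1)^\flat \to (\Delta^1)^\sharp$.

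For the boundary inclusions, I would reuse verbatim the decomposition of the pushout corner map into the two squares of~\eqref{subdivision-mapping-cone-right-leg:1}. The underlying semisimplicial maps are unchanged from the unmarked case, so the only new work is verifying that the horn inclusions and Leibniz geometric products appearing are \emph{marked} anodyne. For the bottom pushout corner map, the critical edge of the resulting right outer horn goes from vertex $n$ of $\Delta^n$ to the adjoined vertex; under the first-vertex identification this edge corresponds to a morphism in $\smcel A$ preserving the initial object, hence is marked by the definition of $\smcel$. For the top square, the same cellular decomposition as in the unmarked proof reduces the problem to the nerve of the map~\eqref{subdivision-mapping-cone-right-leg:4} for non-surjective $f$; in the marked setting each $[1]$-factor inherits its maximal marking (the edges again correspond to initial-object-preserving morphisms in $\smcel A$), so anodyneness follows from~\cref{marked-anodyne-tensor-cof} applied iteratively to the marked $\delta_0 \co \Delta^0 \to (\Delta^1)^\sharp$, which is itself a marked right outer horn.

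For the edge marking inclusion, the pushout application yields a map that is an isomorphism on underlying semisimplicial sets and merely enlarges the marking. The additional markings originate from the newly marked edge of $(\Delta^1)^\sharp$ feeding through $F_\m$ and through the closure properties of $\smcel$. I would exhibit this map explicitly as a finite relative cell complex of pushouts of marked outer horn inclusions (for the direct contributions of the new marked edge at the join boundary and via the initial-vertex condition in $\smcel$) and of the marking saturation inclusion (for markings that arise only through composition chains of length three in $\smcel$).

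I expect the edge marking inclusion case to be the main obstacle. The boundary case generalizes mechanically from the unmarked argument; the marking case, by contrast, has no direct unmarked analogue, and one must explicitly track how the extra marking propagates through the construction and organize the resulting new markings as a weak saturation of generating anodyne maps. An appeal to~\cref{anodyne-criteria-marked} is unavailable since neither side of the pushout corner map is fibrant.
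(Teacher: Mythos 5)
Your overall frame (reduction to the generating cofibrations and the two squares of~\eqref{subdivision-mapping-cone-right-leg:1}) matches the paper, but your treatment of the top square has a genuine gap, and it is precisely the point where the marked case needs a new idea. The edges of the cube in~\eqref{subdivision-mapping-cone-right-leg:4} are edges of $\Sd\Delta^n$, \ie inclusions $g_1 \subseteq g_2$ of faces of $\Delta^n$ containing $f$, and such an edge is marked exactly when $g_1$ and $g_2$ have the same \emph{last} vertex: the marking of $\smcel$, like that of the marked simplicial $\mcel$ and the description of $\sd$ on representables, is created by the last vertex (the ``initial object'' clause you quote is a slip in the printed definition, as the paper's own proofs show). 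In particular the cube does \emph{not} inherit the maximal marking in each factor: already for $n = 1$ and $f$ the vertex $0$, the map~\eqref{subdivision-mapping-cone-right-leg:4} is the minimally marked $\Delta^0 \to \Delta^1$, which is not anodyne and not even a weak equivalence of marked semisimplicial sets; so the statement you reduce to is false, and there is no maximally marked factor for \cref{marked-anodyne-tensor-cof} to act on. (If one insisted on the initial-vertex clause as printed, the dual example $f$ the vertex $1$ does the same job, and your bottom-square argument would fail as well: the morphism indexing the critical edge there is the last-vertex inclusion $([0], n) \to ([n], \id)$, which does not preserve the initial object; its marking comes from the last-vertex clause, so your conclusion in the bottom square is right but your justification is not.)

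The paper's proof circumvents the marking question on the cube entirely: it shows that the whole pushout corner map of~\eqref{subdivision-mapping-cone-right-leg:3} lies in the weak saturation of \emph{inner} horn inclusions, which are anodyne with no marking hypotheses. The mechanism is that the unmarked nerve of~\eqref{subdivision-mapping-cone-right-leg:4} is left anodyne (in the weak saturation of inner and left outer horns, by the analysis of \cref{anodyne-tensor-cof} as carried out in the proof of \cref{marked-anodyne-tensor-cof}), and Leibniz join with $\partial\Delta^k \to \Delta^k$ from the left converts left outer horns into inner horns and preserves inner ones. You need this (or an equivalent) device; no finer bookkeeping of the actual markings can rescue your reduction, since the cube inclusion itself is genuinely not anodyne in the marked setting. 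Finally, your assessment of where the difficulty lies is inverted relative to the paper: the edge marking inclusion case is treated as routine (for the left leg it is dispatched via anodyneness of the marking saturation inclusion, and for the right leg it is not even discussed), whereas the boundary case, far from generalizing mechanically, is where the real content of the lemma sits.
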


\begin{proof}
We once again only discuss the case of a boundary inclusion.

The bottom square of~\eqref{subdivision-mapping-cone-right-leg:1}, with $\star'$ redefined as in the proof of~\eqref{subdivision-mapping-cone-left-leg-marked}, is a pushout of~\eqref{subdivision-mapping-cone-right-leg:2} with edge from $\tau_0(n)$ to $\tau_1(0$) marked (the attaching map $\Delta^0 \to \Sd \Delta^n$ corresponds to the object $([n], \id)$ of $(\Delta_+)_{/[n]}$ and has last vertex $n$) and hence anodyne.

For the top square of~\eqref{subdivision-mapping-cone-right-leg:1}, it will suffice to verify that the pushout corner map in~\eqref{subdivision-mapping-cone-right-leg:3} remains anodyne in marked semisimplicial sets; we claim that it lies in the saturation of inner horn inclusions.
This was the Leibniz join of $\partial\Delta^k \to \Delta^k$ with the nerve of~\eqref{subdivision-mapping-cone-right-leg:3}.
Let us call a semisimplicial map left anodyne if it lies in the weak saturation of inner and left outer horn inclusions.
The Leibniz geometric product of a left anodyne map with a cofibration is again left anodyne, by the analysis of~\cref{anodyne-tensor-cof} given in the proof of~\eqref{marked-anodyne-tensor-cof}.
In particular, the iterated Leibniz geometric product of $\delta_0$ that is the nerve of~\eqref{subdivision-mapping-cone-right-leg:3} is left anodyne.
The claim then follows since Leibniz join with $\partial\Delta^k \to \Delta^k$ from the left sends left outer horn inclusions to inner horn inclusions and preserves inner horn inclusions.
\end{proof}

\begin{corollary} \label{subdivision-mapping-cone-legs-anodyne-marked}
The legs of~\eqref{subdivision-mapping-cone-marked} are anodyne for all semisimplicial sets $A$.
\end{corollary}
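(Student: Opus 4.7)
The proof plan is to deduce the corollary directly from the two preceding lemmas by specializing their pushout-application statements to an initial cofibration. Concretely, let $\alpha \co F \Rightarrow G$ denote either the left or right leg of~\eqref{subdivision-mapping-cone-marked}, viewed as a natural transformation between cocontinuous endofunctors of $\MS$ (the domains $F$ being respectively $\mathrm{Id}$ and $\Sd$, and $G(A) = A \star_{F(A^\unmarked), F_\m(A)} \Sd A$). Both $F$ and $G$ preserve the initial object.

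The plan is then to apply \cref{subdivision-mapping-cone-left-leg-marked} and \cref{subdivision-mapping-cone-right-leg-marked} to the cofibration $0 \to A$, which is a cofibration because every marked semisimplicial set is cofibrant. The pushout application of $\alpha$ at $0 \to A$ is the canonical map
\[
  G(0) +_{F(0)} F(A) \to G(A),
\]
and since $F(0) = G(0) = 0$, this pushout reduces to $F(A)$ and the resulting map is exactly the component $\alpha_A$. By the two lemmas this map is anodyne, which is precisely the assertion of the corollary.

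There is no real obstacle here beyond checking that the two legs, regarded as natural transformations, do send the initial object to the initial object; this is immediate from cocontinuity together with the observation that $\Sd(0) = 0$ and that the indexed join $A \star_{F(A^\unmarked), F_\m(A)} \Sd A$ with $A = 0$ reduces to $\Sd(0) = 0$ (and dually on the right leg side). Thus the corollary follows in one line from the preceding two lemmas, and the whole body of the proof is simply the specialization $f = (0 \to A)$.
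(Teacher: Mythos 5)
Your proposal is correct and matches the paper's argument: the paper's proof is simply a citation of \cref{subdivision-mapping-cone-left-leg-marked,subdivision-mapping-cone-right-leg-marked}, with the implicit step being exactly the specialization you spell out, namely that the pushout application of each leg to the cofibration $0 \to A$ (legitimate since every object is cofibrant and both functors kill the initial object) is the component of that leg at $A$.
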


\begin{proof}
By \cref{subdivision-mapping-cone-left-leg-marked,subdivision-mapping-cone-right-leg-marked}.
\end{proof}

\bibliographystyle{alpha}

\begin{thebibliography}{BCH15}

\bibitem[BC15]{bezem:kripke-countermodel}
Marc Bezem and Thierry Coquand.
\newblock A {K}ripke model for simplicial sets.
\newblock {\em Theoretical Computer Science}, 574:86--91, 2015.

\bibitem[BCH15]{coquand:semisimplicial}
Bruno Barras, Thierry Coquand, and Simon Huber.
\newblock A generalization of the {T}akeuti-{G}andy interpretation.
\newblock {\em Mathematical structures in computer science}, 25(5):1071--1099,
  2015.

\bibitem[BCP15]{simplicial-sets-non-constructive}
Marc Bezem, Thierry Coquand, and Erik Parmann.
\newblock Non-constructivity in {K}an simplicial sets.
\newblock {\em Case Studies in Constructive Mathematics}, 2015.

\bibitem[BD98]{baez-dolan:opetopes}
John~C Baez and James Dolan.
\newblock Higher-dimensional algebra iii. n-categories and the algebra of
  opetopes.
\newblock {\em Advances in Mathematics}, 135(2):145--206, 1998.

\bibitem[GJ99]{goerss-jardine}
P.~Goerss and J.~F. Jardine.
\newblock {\em Simplicial homotopy theory}.
\newblock Birk{\"a}user, 1999.

\bibitem[Har15]{harpaz:semi-segal}
Yonatan Harpaz.
\newblock Quasi-unital $\infty$--categories.
\newblock {\em Algebraic \& Geometric Topology}, 15(4):2303--2381, 2015.

\bibitem[Hen18]{henry:semisimplicial}
Simon Henry.
\newblock Weak model categories in classical and constructive mathematics,
  2018.

\bibitem[Joy08]{joyal-quaderns}
A.~Joyal.
\newblock The theory of quasi-categories and its applications.
\newblock Quaderns 45, Centre de Recerca Matem{\`a}tica, 2008.

\bibitem[KS15]{kapulkin-szumilo:quasicat-of-cof-cat}
Chris Kapulkin and Karol Szumi{\l}o.
\newblock Quasicategories of frames of cofibration categories.
\newblock {\em arXiv preprint arXiv:1506.08681}, 2015.

\bibitem[KS17]{kapulkin-szumilo:internal-language}
Chris Kapulkin and Karol Szumi{\l}o.
\newblock Internal language of finitely complete $(\infty, 1)$-categories.
\newblock arXiv:1709.09519, 2017.

\bibitem[Lur09]{lurie:htt}
Jacob Lurie.
\newblock {\em Higher topos theory}.
\newblock Number 170 in Annals of Mathematics Studies. Princeton University
  Press, 2009.

\bibitem[McC13]{mcclure:semisimplicial-degeneracies}
James~E McClure.
\newblock On semisimplicial sets satisfying the {K}an condition.
\newblock {\em Homology, Homotopy and Applications}, 15(1):73--82, 2013.

\bibitem[RB06]{radulescu:cofibration-categories}
Andrei Radulescu-Banu.
\newblock Cofibrations in homotopy theory.
\newblock arXiv preprint math/0610009, 2006.

\bibitem[RS71]{rourke-sanderson:delta-sets}
Colin~Patrick Rourke and Brian~Joseph Sanderson.
\newblock {$\Delta$}-sets {I}: Homotopy theory.
\newblock {\em The Quarterly Journal of Mathematics}, 22(3):321--338, 1971.

\bibitem[RV14]{riehl-verity:reedy}
Emoly Riehl and Dominic Verity.
\newblock The theory and practice of {R}eedy categories.
\newblock {\em Theory and Applications of Categories}, 29(9):256--301, 2014.

\bibitem[RV15]{riehl-verity:2-cat-of-quasicat}
Emily Riehl and Dominic Verity.
\newblock The 2-category theory of quasi-categories.
\newblock {\em Advances in Mathematics}, 280:549--642, 2015.

\bibitem[Sat17]{sattler:equivalence-extension}
C.~Sattler.
\newblock The equivalence extension property and model structures.
\newblock ArXiv:1704.06911v4, 2017.

\bibitem[Sch13]{schwede:top-triang-categories}
Stefan Schwede.
\newblock The $p$-order of topological triangulated categories.
\newblock {\em Journal of Topology}, 6(4):868--914, 2013.

\bibitem[Szu14]{szumilo:cofibration-categories}
Karol Szumi{\l}o.
\newblock Two models for the homotopy theory of cocomplete homotopy theories.
\newblock arXiv:1411.0303, 2014.

\bibitem[Ver08]{verity:complicial}
Dominic~RB Verity.
\newblock Weak complicial sets i. basic homotopy theory.
\newblock {\em Advances in Mathematics}, 219(4):1081--1149, 2008.

\end{thebibliography}

\newcommand{\noopsort}[1]{}

\end{document}